\documentclass[12pt]{amsart}
\setcounter{tocdepth}{2}
\pagestyle{plain}
\usepackage{amsfonts,graphics,amsmath,amsthm,amsfonts,amscd,amssymb,amsmath,latexsym,multicol,euscript}
\usepackage{epsfig,url}
\usepackage{flafter}
\usepackage{hyperref}
\usepackage[all,cmtip,line]{xy}
\usepackage{array}
\usepackage[english]{babel}
\usepackage{overpic}
\usepackage{graphicx}
\usepackage{psfrag}
\usepackage[figurename=Diagram, tablename=Diagram, figurewithin=none, tablewithin=none, labelsep=space]{caption}
\usepackage{subfig}
\usepackage{multirow}
\usepackage{wrapfig}



\newtheorem{theorem}{Theorem}[section]
\newtheorem{main theorem}{Main Theorem}
\newtheorem{proposition}[theorem]{Proposition}
\newtheorem{conditional theorem}[theorem]{Conditional Theorem}
\newtheorem{conjecture}[theorem]{Conjecture}

\newtheorem{corollary}[theorem]{Corollary}

\newtheorem{conditional corollary}[theorem]{Conditional Corollary}

\newtheorem{lemma}[theorem]{Lemma}

\theoremstyle{definition}
\newtheorem{example}[theorem]{Example}
\newtheorem{convention}[theorem]{Convention}

\newtheorem{definition-lemma}[theorem]{Definition-Lemma}

\newtheorem{definition-proposition}[theorem]{Definition-Proposition}

\theoremstyle{remark}


\newcommand\R{\mathbb{R}}
\newcommand\Q{\mathbb{Q}}
\newcommand\Z{\mathbb{Z}}

\newcommand\bs{\bigskip}
\newcommand\ms{\medskip}
\newcommand\ssk{\smallskip}
\newcommand\noi{\noindent}

\newcommand{\cn}{{}_{\mathrm{lcm}}}

\newcommand{\wlc}{\sim_{\mathrm{wlc}}}
\newcommand{\lcm}{\sim_{\mathrm{lcm}}}
\newcommand{\md}{\sim_{\mathrm{mod}}}
\newcommand{\mob}{\sim_{\mathrm{mob}}}
\newcommand{\fix}{\sim_{\mathrm{fix}}}
\newcommand{\me}{\sim_{\mathrm{m+e}}}

\newcommand{\Lg}{{}^{\mathrm{log}}}
\newcommand{\ild}{\underline{a}}

\newcommand{\linsys}[1]{\left|{#1}\right|} 

\newcommand{\sK}{\mathcal K}
\newcommand{\fB}{\mathfrak B}
\newcommand{\fN}{\mathfrak N}
\newcommand{\fD}{\mathfrak D}
\newcommand{\fP}{\mathfrak P}                 
\DeclareMathOperator{\rP}{P}                     
\newcommand{\ofP}{\overline{\mathfrak P}}
\newcommand{\fC}{\mathfrak C} 
\newcommand{\fF}{\mathfrak F} 
\newcommand{\fR}{\mathfrak R} 

\newcommand{\fS}{\mathfrak S} 
\newcommand{\ofC}{\overline{\mathfrak C}}
\newcommand{\ofF}{\overline{\mathfrak F}}
\newcommand{\ofR}{\overline{\mathfrak R}}

\DeclareMathOperator{\N^1}{N^1}
\DeclareMathOperator{\Nef}{Nef}
\DeclareMathOperator{\sAmp}{sAmp}
\DeclareMathOperator{\Mob}{Mob}
\DeclareMathOperator{\Eff}{Eff}
\DeclareMathOperator{\Amp}{Amp}
\DeclareMathOperator{\NE}{NE}
\DeclareMathOperator{\iso}{\thickapprox}

\DeclareMathOperator{\nef}{\mathfrak{Nef}}
\DeclareMathOperator{\samp}{s\mathfrak{A}}
\DeclareMathOperator{\mobile}{\mathfrak{M}}
\DeclareMathOperator{\eff}{\mathfrak{E}}
\DeclareMathOperator{\amp}{\mathfrak{A}}


\newcommand{\id}[1]{\iota({#1})} 

\DeclareMathOperator{\mult}{mult}
\DeclareMathOperator{\Int}{Int}
\DeclareMathOperator{\Char}{char}
\DeclareMathOperator{\Supp}{Supp}
\DeclareMathOperator{\kd}{\kappa}
\DeclareMathOperator{\nd}{\nu}
\DeclareMathOperator{\pr}{pr}
\DeclareMathOperator{\Center}{center}
\DeclareMathOperator{\Bl}{Bl}
\DeclareMathOperator{\pt}{pt}
\DeclareMathOperator{\Spec}{Spec}

\begin{document}

\title{Geography of log models:\\
theory and applications.}

\author{V.V.~Shokurov}
\address{Department of Mathematics\newline
\indent  Johns Hopkins University\newline
\indent Baltimore, MD 21218, USA}
\address{Steklov Mathematical Institute\newline
\indent Russian Academy of Sciences\newline
\indent Gubkina str. 8, 119991, Moscow, Russia}
\email{shokurov@math.jhu.edu}
\author{Sung Rak Choi}
\address{Department of Mathematics \newline
\indent University of California, Riverside\newline
\indent 900 University Ave\newline
\indent Riverside, CA 92521, USA}
\email{schoi@math.ucr.edu}

\thanks{Both authors were partially supported
by NSF grant DMS-0701465.}

\begin{abstract}
An introduction to geography of log models with applications
to positive cones of FT varieties and to geometry of minimal models and
Mori fibrations.
\end{abstract}

\maketitle

\tableofcontents

\section{Introduction}

What is a relation between the LMMP and some concrete
geometrical results such as polyhedral properties of
effective and mobile cones for FT varieties, finiteness
results for modifications of FT varieties and
minimal models, factorization of birational isomorphisms of
Mori fibrations into Sarkisov links and birational rigidity?
One of the possible explanations for those relations
can be found by exploring the geography of log models.
This paper presents this explanation that was
originally introduced in \cite{3fold-logmod} \cite{isksh} and partially developed in \cite{choi}.

We always assume the LMMP and the semiampleness conjecture.
One can find a detailed information about
those conjectures and which of their variations
are needed in Section~\ref{section7-LMMP-semi}.

\section{Geography of log models}\label{section2-geography}

Take a pair $(X/Z, S)$ where $X/Z$ is a proper morphism and $S =\sum S_i$
is a reduced b-divisor of $X/Z$ with the distinct prime components $S_i$.
In this section we introduce $5$ equivalence relations on
the \emph{unit cube}:
$$
\fB_S:= \oplus_{i=1}^m[0,1] S_i \cong [0,1]^m.
$$

An element $B\in\fB_S$ will be referred to as a
{\em boundary\/} divisor even when it is actually a b-boundary.
For the concepts of the LMMP used in these relations, see
Section~\ref{section7-LMMP-semi}.

\bs

\paragraph{Model equivalence.}

Boundaries $B,B'\in\fB_S$ are {\em model equivalent\/} and
we write $B\md B'$ if
the pairs $(X/Z,B),(X/Z,B')$ have the same
set of resulting models \cite[2.9]{isksh}.
\ms

\paragraph{Weakly log canonical equivalence.}

Boundaries $B,B'\in\fB_S$ are {\em wlc model equivalent\/}  and we write $B\wlc B'$
if both pairs
$(X/Z,B),(X/Z,B')$ have the same wlc models
and the models are numerically equivalent, that is:
for any model $Y/Z$ of $X/Z$,

$(Y/Z,B\Lg_Y)$ is a wlc model of $(X/Z,B)$
if and only if $(Y/Z, B'\Lg_Y)$ is a wlc model of $(X/Z,B')$; and

the divisors $K_Y+B\Lg_Y,K_Y+B'\Lg_Y$
have the same signatures with respect to the intersection for all curves on $Y/Z$.

\ms

\paragraph{Log canonical model equivalence.}
Boundaries $B,B'\in\fB_S$ are {\em lc model equivalent\/} and we write $B\lcm B'$
if the pairs $(X/Z,B),(X/Z,B')$ have the same (rational) Iitaka
fibrations: for a given (rational) contraction $\varphi\colon X\dashrightarrow Y/Z$,
$\varphi$ is an Iitaka fibration for $(X/Y,B)$ if and only if
$\varphi$ is an Iitaka fibration for $(X/Y,B')$, or equivalently,
$(X/Z,B)$ has a (rational) Iitaka fibration $X\dashrightarrow Y$ if and only if
$(X/Z,B')$ has a (rational) Iitaka fibration $X\dashrightarrow Y'$ and
the fibrations are naturally isomorphic, that is, there is a commutative diagram

$$
\xymatrix{
& X\ar@{-->}[ld]\ar@{-->}[rd]& \\
Y\ar[rr]^{\cong}& & Y'
}
$$
with the natural isomorphism $Y \cong Y'/Z$.

\ms

\paragraph{Mobile equivalence.}
Boundaries $B,B'\in\fB_S$ are {\em mobile equivalent\/} and we write
$B\mob B'$ if the positive parts $P(B),P(B')$ of both b-divisors $\sK+B\Lg,\sK+B'\Lg$ have
the same signatures with respect to the curves on rather high models $Y/Z$.
On a rather high model we can take any model $Y/Z$ over some wlc models of $(X/Z,B),(X/Z,B')$.

\ms

\paragraph{Fixed equivalence.}
Boundaries $B,B'\in\fB_S$ are {\em fix equivalent\/} and we write $B\fix B'$ if
the fixed parts $F(B),F(B')$ of both b-divisors $\sK+B\Lg,\sK+B'\Lg$ have
the same signatures of their multiplicities, that is:
for any prime b-divisor $D$,
$$
\mult_DF(B)>0 \Leftrightarrow \mult_DF(B')>0.
$$

\bs

Note that all the relations above are defined on the whole  $\fB_S$.

\bs

The resulting models obtained by the LMMP are at least projective, and
even slt by the slt LMMP.
An abstract definition of resulting models gives
a larger class of them.
However, according to the following it is enough to
consider only the good ones:
projective $\Q$-factorial wlc models, e.g.,
slt wlc models.

\begin{proposition}\label{prop-qfact-proj}
The equivalence $\wlc$ for projective $\Q$-factorial
wlc models is the same as that
for all wlc models.
\end{proposition}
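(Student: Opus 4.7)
The plan is to establish the non-trivial direction: if $B$ and $B'$ agree on all projective $\Q$-factorial wlc models in the sense of the $\wlc$ definition, then they agree on all wlc models. The reverse direction is tautological, since the full $\wlc$ relation imposes its two clauses on a strictly larger class of models.

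My first step is to fix an arbitrary wlc model $(Y/Z, B\Lg_Y)$ of $(X/Z, B)$ and produce a small $\Q$-factorialization $\pi\colon Y^+ \to Y$ with $Y^+$ projective and $\Q$-factorial. The LMMP assumption in force throughout the paper supplies such a model: one runs a suitable perturbed MMP over $Y$, obtaining a small birational contraction $Y^+ \to Y$ from a projective $\Q$-factorial $Y^+$. Since $\pi$ is an isomorphism in codimension one, the birational transform formalism gives $K_{Y^+} + B\Lg_{Y^+} = \pi^*(K_Y + B\Lg_Y)$, which is both log canonical and nef over $Z$. Hence $(Y^+/Z, B\Lg_{Y^+})$ is itself a wlc model. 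Moreover, for any curve $C$ on $Y/Z$, its strict transform $C^+$ on $Y^+$ satisfies
$$
(K_{Y^+} + B\Lg_{Y^+}) \cdot C^+ = (K_Y + B\Lg_Y) \cdot C,
$$
so intersection signatures on $Y$ are read off from those on $Y^+$, and conversely every numerical class of curves on $Y/Z$ arises in this way from $Y^+$.

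Next I would transfer the $\wlc$ data through $Y^+$. By the assumed restricted equivalence applied to the projective $\Q$-factorial model $Y^+$, the pair $(Y^+/Z, B\Lg_{Y^+})$ is wlc iff $(Y^+/Z, B'\Lg_{Y^+})$ is wlc, and the signatures of $K_{Y^+} + B\Lg_{Y^+}$ and $K_{Y^+} + B'\Lg_{Y^+}$ on curves of $Y^+$ coincide. Pushing both clauses down to $Y$ via the small map $\pi$, one obtains the corresponding wlc property and curve-signature equality for $B'$ on $Y$. Since $Y/Z$ was arbitrary, this yields $B \wlc B'$ in the unrestricted sense.

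The main obstacle I anticipate is the behaviour of $\Q$-factorialization for an abstract wlc model that need not itself be projective or $\Q$-factorial, and the control of curves contracted by $\pi$, which can occur once $\dim Y \geq 3$. In particular one must verify (i) that a small projective $\Q$-factorialization exists in this generality from the LMMP/semiampleness framework of Section~\ref{section7-LMMP-semi}; (ii) that $K_Y + B'\Lg_Y$ is $\R$-Cartier and equals $\pi_*(K_{Y^+} + B'\Lg_{Y^+})$, with $K_{Y^+} + B'\Lg_{Y^+} = \pi^*(K_Y + B'\Lg_Y)$, so that nefness transfers correctly; and (iii) that the b-divisor formalism identifies $B\Lg_{Y^+}$ and $B'\Lg_{Y^+}$ unambiguously as the birational transforms under $\pi$. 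Once these bookkeeping points are in place, the equivalence of the two relations follows from the small-morphism identities above.
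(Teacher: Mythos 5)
Your strategy handles only half of what the paper's proof must cover, and the half it misses is the one that cannot be reached by a $\Q$-factorialization morphism. The definition of $\wlc$ quantifies over \emph{all} models $Y/Z$, including models that are $\Q$-factorial but not projective over $Z$. For such a model $Y$, the small $\Q$-factorialization $\pi\colon Y^+\to Y$ is the identity (a small proper birational morphism onto a $\Q$-factorial target is an isomorphism), so $Y^+=Y$ is still nonprojective and you never reach a model on which the restricted equivalence applies. Even in the genuinely non-$\Q$-factorial case, the $\Q$-factorialization you build is projective \emph{over $Y$}, but projectivity over $Z$ does not follow when $Y/Z$ itself is nonprojective. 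In short, a birational \emph{morphism} from a projective model cannot in general exist; the paper instead uses a small birational \emph{map} — a (generalized) log flop — to connect a $\Q$-factorial nonprojective wlc model to a projective $\Q$-factorial one, and that is precisely the step your proposal omits.

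The rest of your argument is sound and matches the paper's first reduction: for a non-$\Q$-factorial wlc model, passing to a small $\Q$-factorialization $Y^+\to Y$ transfers the nefness of $K+B^{\log}$ and curve signatures faithfully in both directions, because $K_{Y^+}+B^{\log}_{Y^+}=\pi^*(K_Y+B^{\log}_Y)$ and every curve on $Y$ lifts to its strict transform on $Y^+$. To complete the proof you would need to add the second reduction: once one is at a $\Q$-factorial (but possibly nonprojective) wlc model, run a log flop to land on a projective $\Q$-factorial wlc model, then argue that the $\wlc$ data transfers along the flop (here the semiampleness of Conjecture~\ref{conj-semiample} and \cite[Corollary~9, Addendum~5]{orderedterm}-type stability are what make the flop controllable). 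Your identification of the converse direction as tautological agrees with the paper.
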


\begin{proof}
Any non-$\Q$-factorial wlc model can be obtained by
a crepant contraction of some $\Q$-factorial wlc model
that gives a wlc model by the semiampleness.
Any $\Q$-factorial nonprojective wlc model can be
obtained from a projective and $\Q$-factorial
wlc model by a (generalized) log flop.
Both constructions use
the slt LMMP and \cite{3fold-logmod}.
Thus if the pairs have the same
projective and $\Q$-factorial, numerically equivalent models,
they have the same numerically equivalent wlc models.

The converse statement means that each
projective and $\Q$-factorial
wlc model of $(X/Z,B)$ will  be also
projective and $\Q$-factorial
wlc model of
$(X/Z,B')$ when $B\wlc B'$
(cf. the proof of Lemma \ref{lemma-convex relations}).
\end{proof}

\bs

\begin{convention}\label{convention}
The model equivalences $\md,\wlc$ are defined for
projective and $\Q$-factorial resulting models
of pairs.
In fact, we can even use slt wlc models since Proposition \ref{prop-qfact-proj}
holds for slt wlc models too.
In most of constructions below, we use slt wlc resulting models.
However, slt wlc models are not stable for limits of boundaries (see Lemma \ref{lemma-closed N}).
\end{convention}
\bs

The following relations hold for model
equivalences.

\begin{proposition}\label{prop-equivalence}
We have the following implications:
$$
\fix\Leftarrow\md\Leftarrow\wlc\Rightarrow\lcm
\Leftrightarrow\mob
\text{ and }$$
$$
\wlc\Leftrightarrow\fix\cap\lcm 
$$
\end{proposition}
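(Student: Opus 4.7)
The plan is to prove the chain and the final equivalence in three stages. First I establish $\wlc \Rightarrow \md \Rightarrow \fix$. For $\wlc \Rightarrow \md$, the key observation is that every LMMP step (divisorial contraction or log flip) is selected by the signature of $K+B\Lg$ on extremal curves; since $B \wlc B'$ forces $K_Y + B\Lg_Y$ and $K_Y + B'\Lg_Y$ to have the same signatures on every curve of each common wlc model, the two LMMPs run in lockstep and produce identical projective $\Q$-factorial resulting models, which suffices for $\md$ by Convention~\ref{convention} and Proposition~\ref{prop-qfact-proj}. For $\md \Rightarrow \fix$: the fixed b-divisor $F(B) = \sK + B\Lg - P(B)$ is supported exactly on those prime b-divisors contracted when passing from a sufficiently high model to a chosen wlc resulting model, so coincidence of resulting models forces coincidence of fixed supports.

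Next I prove $\lcm \Leftrightarrow \mob$ together with $\wlc \Rightarrow \lcm$. On a sufficiently high model $W$, the trace $P(B)_W$ is semiample by the semiampleness conjecture, and the morphism it induces (after Stein factorization) is exactly the Iitaka fibration of $(X/Z, B)$. A curve on $W$ contracts to a point of the Iitaka image precisely when its intersection with $P(B)_W$ vanishes, so the signature of $P(B)_W$ on curves determines, and is determined by, the Iitaka fibration; this gives $\lcm \Leftrightarrow \mob$. The implication $\wlc \Rightarrow \lcm$ then follows immediately: on a common wlc model $Y$ both $K_Y + B\Lg_Y$ and $K_Y + B'\Lg_Y$ are nef and, by hypothesis, share the set of zero-intersection curves, which is exactly the set contracted by the Iitaka fibration.

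It remains to prove $\fix \cap \lcm \Rightarrow \wlc$, which I expect to be the main obstacle. Given $B \fix B'$ and $B \lcm B'$ (equivalently $B \mob B'$), let $Y$ be any wlc model of $(X/Z, B)$ and pick a sufficiently high model $W/Y$ that also dominates some wlc model of $(X/Z, B')$. On $W$ write $(\sK + B'\Lg)_W = P(B')_W + F(B')_W$; the hypothesis $B \fix B'$ says $F(B')_W$ is supported on the same primes as $F(B)_W$, all of which are $W/Y$-exceptional, so $F(B')_W$ pushes forward to zero on $Y$. Combined with semiampleness of $P(B')_W$, this exhibits $Y$ as a wlc model of $(X/Z, B')$ with $K_Y + B'\Lg_Y$ equal to the pushforward of $P(B')_W$. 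The $\mob$ hypothesis then translates into equality of signatures of $K_Y + B\Lg_Y$ and $K_Y + B'\Lg_Y$ on curves of $Y$, completing $\wlc$. The delicate point is confirming that the contraction of $F$-support extends symmetrically between $B$-wlc and $B'$-wlc models; this relies on Proposition~\ref{prop-qfact-proj} together with the semiampleness conjecture to ensure that wlc models sit, on both sides, as targets of semiample morphisms from a common high model.
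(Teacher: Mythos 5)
Your outline correctly separates the chain into the easy implications and the one substantive one, $\fix\cap\lcm\Rightarrow\wlc$; but there are two points to flag, one minor and one more serious.

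The minor point is $\wlc\Rightarrow\md$. Your ``lockstep LMMP'' argument is not justified: the hypothesis $B\wlc B'$ only gives signature agreement of $K_Y+B\Lg_Y$ and $K_Y+B'\Lg_Y$ on the \emph{wlc} models $Y$, whereas an LMMP passes through intermediate models which are generally not wlc models, and nothing in the definition of $\wlc$ controls signatures there. Fortunately the conclusion follows much more directly: for $B\in\fN_S$ a pair has no Mori log fibration (by \cite[2.4.1]{3fold-logmod}), so its resulting models are precisely its wlc models, and coincidence of those is literally the first clause of $\wlc$.

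The more serious gap is in $\fix\cap\lcm\Rightarrow\wlc$, in the step where you pass from the pushforward identity $K_Y+B'\Lg_Y=g_*P(B')_W$ to ``$Y$ is a wlc model of $(X/Z,B')$.'' The $\fix$ hypothesis gives that $F(B')_W$ is $W/Y$-exceptional and hence the pushforward identity holds, but the pushforward of a nef divisor is not nef in general, and lc and the initial-model inequalities for $(Y,B'\Lg_Y)$ do not come for free. What actually makes this work is the $\mob$ hypothesis, which you have deferred to the very end as only a signature check. Concretely: for any curve $C$ contracted by $g\colon W\to Y$ one has $(P(B)_W,C)=(g^*P(B)_Y,C)=0$, so by $\mob$ also $(P(B')_W,C)=0$; combined with semiampleness of $P(B')_W$ this forces the Iitaka morphism of $P(B')_W$ to factor through $g$, so that $P(B')$ descends to $Y$ as a nef divisor. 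Only then is $K_Y+B'\Lg_Y=P(B')_Y$ nef, and only then does the discrepancy computation (comparing $K_W+B'\Lg_W-g^*(K_Y+B'\Lg_Y)$ with $F(B')_W$) yield $a(E,Y,B'\Lg_Y)=a(E,Y',B'\Lg_{Y'})$ and hence the lc and initial-model conditions. So $\mob$ is essential already for $Y$ to be a wlc model of $(X/Z,B')$, not merely for the numerical equivalence afterward; as written, your argument attributes to $\fix$ plus semiampleness a conclusion that neither can supply alone.
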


\begin{proof}
Immediate by definition and the semiampleness \ref{conj-semiample}.
\end{proof}
\bs

In general,
$$
\fix\not\Rightarrow\md\not\Rightarrow\wlc\not\Leftarrow\lcm.
$$
For example, $\fix\not\Rightarrow\md$ follows from the fact that
the small modifications of flopping facets preserve the $\fix$ equivalence relation
but not $\md$ (see Theorem \ref{thrm-facet types}, \ref{thrm-ridge types}).

\bs

Define the subset of $\fB_S$:
$$
\fN_S:=\{ B\in\fB_S \mid (X/Z,B) \text{ has a wlc model}\}.
$$
Equivalently, we can also use the condition: $\nu(X/Z,B)\geq 0$ (see Numerical Kodaira dimension in
Section \ref{section7-LMMP-semi}). Moreover, by the semiampleness \ref{conj-semiample},
this is equivalent to the nonnegativity of (invariant) Kodaira dimension (see Kodaira dimension in Section \ref{section7-LMMP-semi}):
$\kappa(X/Z,B)\ge 0$.

Proposition \ref{prop-equivalence} implies that $\wlc$ is the finest
of the model equivalences on $\fN_S$.
Thus,  for the pairs with wlc models,
the finiteness and polyhedral properties
of all the equivalence relations
follow from those of $\wlc$.
All the above equivalence relations, except for $\md$, are not interesting outside $\fN_S$
because they are trivial on $\fB_S\setminus\fN_S$.
It is expected that
many similar properties hold for $\md$ on $\fB_S$
in the case of Mori log fibrations \cite{isksh} and
that plays an important role for
the genuine Sarkisov program.

\bs

Recall that $\fD_S$ is the $\R$-vector space of b-divisors spanned by $S$:
$$\fD_S\colon=\oplus_{i=1}^m \R S_i \cong \R^m.$$
 The space $\fD_S$ contains the cube $\fB_S$.

A {\em convex polyhedron}
$\fP$ in $\fD_S$ is a set defined by the intersection of
finitely many (open or closed) half spaces and hyperplanes in $\fD_S$.
A {\em polyhedron\/} is a finite union of convex ones.
A convex polyhedron $\fP$ is said to be
{\em open} in $\fD_S$ if the half spaces in the intersection are open.
The closure $\ofP$, the interior $\Int\fP$,
and the boundary $\partial\fP$
of a convex polyhedron $\fP$ will be taken
in its linear span, the minimal intersection of hyperplanes containing $\fP$.
A {\em face} $\fF$ of a convex polyhedron $\fP$ is the intersection
$\fF=\ofP\cap H$ (possibly $\emptyset$ or $\ofP$ itself)
for some hyperplane $H=\{f=0\}$ such that
$\ofP\subsetneq\{f\ge 0\}$.
A {\em facet\/} of $\fP$ is a maximal face, that is,
a face of dimension $\dim \fP-1$. A \emph{ridge} is a facet of a facet.

For two distinct vectors $A,B\in\fD_S$, $\overrightarrow{AB}$ denotes the open ray starting from $A$
through $B$ and $\overline{AB}$ denotes the line through $A$ and $B$.
$[A,B]$ ($[A,B), (A,B)$, etc) is defined as the closed (resp. half closed, open, etc)  interval between $A$ and $B$.

Let $\fP$ be a convex polyhedron in $\fB_S$.
A facet (face) $\fF$ of $\fP$ is {\em given by a facet $\fF_i$ of\/} $\fB_S$ if
$\fF=\ofP\cap\fF_i$.
The facet (resp. face) $\fF_i$ is given by equation $b_i=0$ or $1$.
Thus $\fF$ can be given in $\ofP$ by one of those equations.
Note the following property of the linear function $b_i$ on $\fD_S$:
if $b_i=1$ (respectively $0$) in $B\in \Int\fF$ and $<1$
 (respectively $>0$) in some point of $\fP$,
then $b_i=1$ (respectively $0$) on $\fF$ and $<1$
 (respectively $>0$) in $\fP\setminus\fF$.

\bs

By a {\em geography\/} on $\fN_S$, we mean a finite rational polyhedral decomposition of it, and the subsets
$\fP\subseteq \fN_S$ of such a decomposition will be referred to as
{\em classes\/} or just {\em polyhedrons\/}.
We call a class $\fC$ of maximal
dimension, $\dim_\R\fN_S$, a {\em country},
a class $\fF$ of codimension 1 a {\em facet}, and
a class $\fR$ of codimension 2 a {\em ridge}.

According to the following, each of the above equivalence relations defines a geography on $\fN_S$,
but the notations $\fN_S$
 will be reserved only for the {\em wlc} geography given by $\wlc$.
Thus we consider $\fN_S$  as a set with the structure.

Let $\fF$ be a minimal face of $\fB_S$ which
contains a polyhedron $\fP$.
We say that a polyhedron $\fP$ is open in $\fB_S$ if it is open in $\fF$, that is,
$\fP=\fP'\cap \fF$, where $\fP'$ is an open polyhedron in $\fD_S$.

\begin{theorem}\label{mainthrm-geography}
The set $\fN_S$ is closed convex rational polyhedral
and it is decomposed into a finite number of $\wlc$ classes $\fP$.
Each class $\fP$ of the wlc geography is
a convex rational polyhedron which is open in $\fB_S$.
For two classes $\fP,\fP'$, $\Int\fP'\cap\ofP\not=\emptyset$ holds
if and only if $\overline{\mathfrak P'}$ is a face of $\ofP$.
\end{theorem}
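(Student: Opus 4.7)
The plan is to work on a common log resolution $\widetilde X / Z$ of the pairs $(X/Z, B)$ for $B \in \fB_S$, chosen so that $B \mapsto B\Lg_{\widetilde X}$ is affine-linear in $B$. On $\widetilde X$ the conditions entering $\wlc$-equivalence — existence of a wlc model, the lc property, and the signs of $(K_Y + B\Lg_Y)\cdot C$ for curves $C$ on candidate models $Y/Z$ — all become piecewise linear in $B$. The theorem then reduces to a finiteness statement for the LMMP data parametrised by the finite-dimensional polytope $\fB_S$.

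First I would establish that $\fN_S$ is closed convex rational polyhedral. Convexity: given $B_1, B_2 \in \fN_S$, running the LMMP with scaling on $(\widetilde X, tB_1 + (1-t)B_2)$ starting from a wlc model of an endpoint produces a wlc model for every $t \in [0,1]$. Rational polyhedrality and closedness follow from finite generation of the log canonical ring (LMMP plus semiampleness \ref{conj-semiample}): the locus $\{\kd(X/Z, B) \geq 0\} \subseteq \fB_S$ is cut out by finitely many rational linear inequalities. Next, to construct the decomposition, I would use finiteness of wlc models: as $B$ varies over $\fN_S$, only finitely many $Y/Z$ appear as wlc models of some $(X/Z, B)$. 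For each such $Y$, fix a finite set of curves $C_1, \dots, C_k$ generating the extremal rays of $\overline{\NE}(Y/Z)$; a boundary $B$ has $Y$ as a wlc model with prescribed signs of the intersection numbers $(K_Y + B\Lg_Y)\cdot C_j$ exactly when finitely many rational linear (in)equalities in $B$ are satisfied. Hence each $\wlc$-class is a convex rational polyhedron, and it is open in $\fB_S$ because only the coordinate inequalities $0 \le b_i \le 1$ that are active on the whole class appear in its description.

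The face relation is the most delicate part. If $B' \in \Int\fP' \cap \ofP$ and $B_n \in \fP$ tends to $B'$, the finitely many strict linear inequalities defining $\fP$ become non-strict at $B'$; each that degenerates to an equality contributes a defining equation of a face $\fF \subseteq \ofP$ containing $B'$. Since $\fP'$ is open in $\fB_S$, the signature data at $B'$ persists in a neighbourhood of $B'$ inside $\fF$, forcing $\overline{\fP'} = \fF$, a face of $\ofP$. The converse is symmetric: on any face of $\ofP$, relaxing the corresponding strict inequalities to equalities defines a lower-dimensional $\wlc$-class whose interior meets the face. The main obstacle is ensuring that a sign change across a facet corresponds to the expected geometric modification on the model side — a (generalised) log flop or crepant contraction — so that the combinatorics of $\wlc$-classes assembles consistently; Proposition \ref{prop-qfact-proj} and the slt LMMP let us work throughout with projective $\Q$-factorial wlc models, on which $\overline{\NE}(Y/Z)$ is rational polyhedral and the required transitions are standard LMMP moves.
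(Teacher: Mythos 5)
Your outline has the right flavour --- reduce everything to piecewise linear conditions in $B$ on a common resolution, enumerate finitely many models, and cut out classes by rational linear inequalities --- but the two load-bearing steps are either asserted without proof or rest on a false premise.

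First, you take for granted that ``as $B$ varies over $\fN_S$, only finitely many $Y/Z$ appear as wlc models.'' This is precisely the finiteness that the theorem is meant to deliver; it is not an input one can invoke. The paper obtains it through a stability argument (from \cite{orderedterm}): near any $B\in\fN_S$ one produces a stable model $(Y/Z,B\Lg_Y)$ such that, in each direction $\overrightarrow{BD}$, a single (generalized) log flop $Y\dashrightarrow Y'$ yields slt wlc models for all nearby $B'$ on the ray; this makes the decomposition locally conical, and locally conical plus compact gives polyhedral via Lemma~\ref{lemma-locally conical}. Finiteness then follows by induction on dimension together with this stability. Without some argument of this type --- or an appeal to a finiteness result such as BCHM's that you would then have to verify applies to the wlc (not merely lc) setting --- the step ``fix one of the finitely many $Y$'' is circular.

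Second, your reduction to finitely many linear inequalities relies on ``a finite set of curves $C_1,\dots,C_k$ generating the extremal rays of $\overline{\NE}(Y/Z)$.'' For a wlc model $Y/Z$ the divisor $K_Y+B\Lg_Y$ is nef, so the cone theorem gives no control over the nef side of $\overline{\NE}(Y/Z)$, and that cone is in general not rational polyhedral; no such finite generating set of curves exists. The $\wlc$ relation genuinely requires matching signatures of $(K_Y+B\Lg_Y)\cdot C$ for \emph{all} curves (and, equally important and absent from your proposal, matching signatures of the discrepancy-type data encoded in $e(D,B)$). The paper sidesteps the non-polyhedrality of the curve cone by first establishing finiteness and polyhedrality of the decomposition via the stability/locally-conical route, and only \emph{afterwards} deducing that for each class finitely many of the piecewise-linear functions $p(C,B)$, $e(D,B)$ on a rather high model cut it out (Proposition~\ref{prop-p-e} and Corollaries~\ref{cor-property-p-e}--\ref{cor-eqn for facets}). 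Your order of argument --- first write down the finitely many inequalities, then conclude polyhedrality --- cannot get started. Likewise, ``rational polyhedrality and closedness of $\fN_S$ follow from finite generation of the log canonical ring'' is a plausible heuristic but would need the same stability or finiteness input to be turned into a proof; as stated it does not supply the linear inequalities. The face-relation paragraph is then forced to be hand-wavy because the underlying facet/ridge classification (Corollary~\ref{cor-eqn for facets}) is what makes ``degenerating inequalities define a face which is a class-closure'' a theorem rather than an expectation.
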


\begin{example}
Let $X$ be a projective surface
with a contraction $f:X\rightarrow Z$ of
a nonsingular rational curve $C=S,C^2=-n\le -3$.
Consider the pairs $(X/Z,B)$ where $B=aC\in\fB_S=[0,1]C$.
Then by the adjunction formula, we have the equality $K\equiv f^*K_Z+\frac{2-n}{n}C$.
If $0\le a \le \frac{n-2}{n}$,  the pair $(X/Z,aC)$ is a wlc model of itself.
If $\frac{n-2}{n} \le a\le 1$,  $(Z/Z,0)$ is a wlc model of $(X/Z,aC)$.
In total, we have the following geography:

\begin{figure}[h]
\begin{overpic}[scale=0.4]{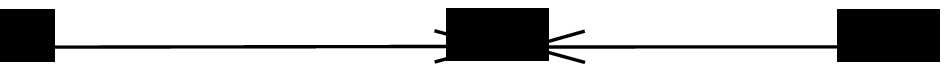}
\put(-1,-10){\small $0$}
\put(42,-10){\small $\frac{n-2}{n}=B_1$}
\put(96,-10){\small $1=B_2$}
\end{overpic}
\setlength{\abovecaptionskip}{20pt}
\caption{}\label{dia ex1}
\end{figure}
\bs
\bs

In this example, we have the following:
\begin{enumerate}
\item $\fB_S=\fN_S$;
\item there are three $\wlc$ classes in $\fN_S$: $\fP_1=[0,\frac{n-2}{n})C,$ $\fP_2=\frac{n-2}{n}C,$ $\fP_3=(\frac{n-2}{n},1]C$;
\item there are two $\lcm$ classes: $\fP'_1=[0,\frac{n-2}{n})C, \fP'_2=[\frac{n-2}{n},1]C$;
\item  $B_1\lcm B_2$, but $\not\wlc$: $\lcm\not\Rightarrow\wlc$.
\end{enumerate}
\end{example}
\ms

\begin{example}\label{example-cremona}
Suppose that $X_1=\mathbb{P}^2$ and $X_2=\mathbb{P}^2$ are
related by a standard quadratic transformation.
Choose a general nonsingular prime b-divisor $D_i\in\linsys{4H_i}$
where $H_i$ is a straight line on $X_i$ for $i=1,2$.
Consider the geography in the unit cube $\fB_S$ where $S=D_1+D_2$.

\begin{figure}[h]
\begin{overpic}[scale=0.4]{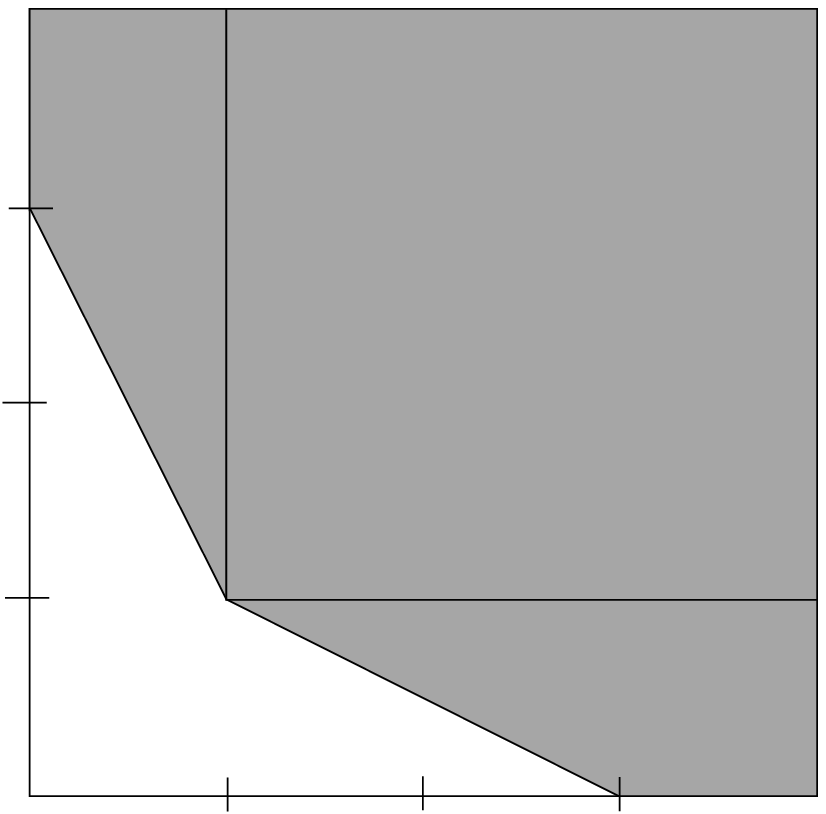}
\put(10,75){$\fC_1$}
\put(50,50){$\fC_2$}
\put(75,10){$\fC_3$}
\put(-23,71){\small $\frac{3}{4}D_2$}
\put(-12,95){\small $D_2$}
\put(65,-12){\small $\frac{3}{4}D_1$}
\put(95,-10){\small $D_1$}
\end{overpic}
\setlength{\abovecaptionskip}{20pt}
\caption{}
\end{figure}

\bs

In particular, $X_1=\mathbb P^2\iso X_2=\mathbb P^2$ are resulting models for $\fC_1$ and $\fC_3$.
Two $\mathbb{P}^2$'s
are isomorphic under an unnatural isomorphism (the quadratic transform):
$\fC_1$ and $\fC_3$
are different classes for $\md$.
However, $\frac{3}{4} D_1\lcm \frac{3}{4}D_2$ and
$\lcm$ is not convex.

\end{example}

\begin{lemma}\label{lemma-convex relations}
$\wlc$ is convex in $\fN_S$.
\end{lemma}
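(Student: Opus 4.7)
The plan is to exploit the linearity of the log pullback $B \mapsto B\Lg_Y$ together with the signature clause built into the definition of $\wlc$. Given $B,B'\in\fN_S$ with $B\wlc B'$ and $t\in[0,1]$, I set $B_t:=tB+(1-t)B'\in\fB_S$ and aim to show $B_t\in\fN_S$ and $B_t\wlc B$. The single identity
$$
K_Y+B_t\Lg_Y \;=\; t(K_Y+B\Lg_Y) \;+\; (1-t)(K_Y+B'\Lg_Y),
$$
valid on every model $Y/Z$, drives the whole argument.

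First I show that any $Y/Z$ that is a wlc model of both $(X/Z,B)$ and $(X/Z,B')$ is also a wlc model of $(X/Z,B_t)$. Both $K_Y+B\Lg_Y$ and $K_Y+B'\Lg_Y$ are nef, so the above positive combination is nef. Since log discrepancies are affine in the boundary, log canonicity passes to the combination as well, so $(Y,B_t\Lg_Y)$ is wlc, which gives $B_t\in\fN_S$ and provides the easy inclusion of wlc-model sets.

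Conversely, let $Y$ be any wlc model of $(X/Z,B_t)$; I need it to be a wlc model of $(X/Z,B)$ (hence, by $B\wlc B'$, of $(X/Z,B')$). For every curve $C/Z$ on $Y$ the nefness of $K_Y+B_t\Lg_Y$ gives $(K_Y+B_t\Lg_Y)\cdot C\ge 0$. But the signature clause in $B\wlc B'$ is stated for \emph{all} models $Y/Z$, so $(K_Y+B\Lg_Y)\cdot C$ and $(K_Y+B'\Lg_Y)\cdot C$ have the same sign. A positive linear combination of two real numbers of the same sign is $\ge 0$ only if each summand is $\ge 0$; therefore both $K_Y+B\Lg_Y$ and $K_Y+B'\Lg_Y$ are nef on $Y$. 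Applying the analogous affine/same-sign analysis to log discrepancies of exceptional valuations yields log canonicity, and $Y$ is a wlc model of $(X/Z,B)$ and $(X/Z,B')$. Finally, the signature clause for $B_t\wlc B$ is immediate from the identity above applied curve by curve.

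The step I expect to be the main obstacle is the log canonicity transfer in the reverse direction: a convex combination of pairs can be lc while the summand pairs are not, so the splitting of the convex combination must be done with care on every valuation. I plan to handle it exactly as in the proof of Proposition \ref{prop-qfact-proj}: restrict first to projective $\Q$-factorial slt wlc models, on which $B\Lg_Y$ and $B'\Lg_Y$ are honest boundaries and lc reduces to coefficient-wise inequalities preserved by splitting convex combinations, and then extend back to arbitrary wlc models by the slt LMMP and \cite{3fold-logmod}, following the same pattern used to show $\wlc$ is determined by projective $\Q$-factorial representatives.
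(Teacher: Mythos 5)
There is a genuine gap in the converse direction, and the plan you describe for filling it does not match what the definitions actually give you.

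The easy inclusion is fine: nefness and the initial-model inequalities (log canonicity, and $\ild(E,X,\cdot)\le a(E,Y,\cdot\Lg_Y)$) are affine in the boundary and survive convex combination, so a common wlc model of $(X/Z,B)$ and $(X/Z,B')$ is a wlc model of $(X/Z,B_t)$. The problem is the claim that every wlc model $(Y/Z,B_t\Lg_Y)$ of $(X/Z,B_t)$ is already a wlc model of $(X/Z,B)$ and $(X/Z,B')$. You deduce nefness of $K_Y+B\Lg_Y$ and $K_Y+B'\Lg_Y$ on $Y$ from the ``signature clause'' read as applying to arbitrary models. But the numerical-equivalence clause in the definition of $\wlc$ is meant for the common wlc models (that is what ``the models are numerically equivalent'' refers to), not for an arbitrary birational model where the two log canonical divisors need not even be $\R$-Cartier; the paper's operational reformulation (Corollary~\ref{cor-property-p-e}(1)) phrases $\wlc$ via the functions $p(C,\cdot),e(D,\cdot)$, which are defined through wlc models and rather high models. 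More importantly, even granting nefness, the definition of $\wlc$ contains no clause at all about log discrepancies or initiality on arbitrary models, so the sentence ``Applying the analogous affine/same-sign analysis to log discrepancies of exceptional valuations yields log canonicity'' has no basis: from $t\,x+(1-t)\,x'\ge 0$ one cannot conclude $x,x'\ge 0$ unless one already knows $x,x'$ share a sign, and there is no such same-sign information for discrepancies on $Y$. Appealing to the proof pattern of Proposition~\ref{prop-qfact-proj} does not repair this either, since that proposition compares two classes of representatives for a fixed boundary and says nothing about transferring lc/initiality from a convex combination to its endpoints.

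The paper's proof avoids the issue entirely. It never tries to show that a given wlc model of $B_t$ is a wlc model of $B$; instead it fixes a projective $\Q$-factorial wlc model $(Y/Z,B\Lg_Y)$ of $B$, notes it serves as a wlc model for every $D\in[B,B']$, observes that along the segment the numerical data and the fixed part $F(D)$ have constant signatures (linearity of $P(D),F(D)$ plus $\wlc\Rightarrow\fix$ from Proposition~\ref{prop-equivalence}), and then uses the fact that the whole set of wlc models is swept out by the (generalized) log flops of this one and that those log flops are determined exactly by the signature data. That ``log flop'' step is the substantive input your argument is missing; without it, or some substitute, the converse inclusion is not established.
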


\begin{proof}
(Cf. the proof of \cite[lemma~2]{orderedterm}.)
If $B\wlc B'$ and $(Y/Z,B\Lg_Y)$ is a projective  $\Q$-factorial wlc model
of $(X/Z,B)$, then $(Y/Z,B'\Lg_Y)$ is
a projective $\Q$-factorial wlc model of $(X/Z,B')$.
Moreover, for any $D\in [B,B']$, $(Y/Z,D\Lg_Y)$ is a projective  $\Q$-factorial wlc model
of $(X/Z,D)$.
The models are numerically equivalent to the edge models
$(Y/Z,B\Lg_Y),$ $(Y/Z,B'\Lg_Y)$.
Thus all $(Y/Z,D\Lg_Y)$ have the same (generalized) log flops and wlc models.
Note that $P(D),F(D)$ are linear with respect to $D$ and by Proposition \ref{prop-equivalence},
$F(D)$ has the same
signatures for all multiplicities with respect to the prime b-divisors.
Thus the log flops of $(Y/Z,D\Lg_Y)$ corresponding to
wlc models are the same for all $D\in [B,B']$
(cf. Corollary \ref{cor-property-p-e} (1)).
\end{proof}

We say that a \emph{set} $\fP$ in a finite dimensional $\R$-vector space $V$ is {\em locally conical}
if for any point $p$ in $V$, there exists an open neighborhood $U$ containing $p$ such that
the intersection $U\cap \fP$ is conical. In other words, for any $p'\in U$, either $(p,p')\subseteq \fP$
or $(p,p')\cap \fP=\emptyset$.
A \emph{decomposition} of a set into disjoint subsets
$\fP$ is said to be \emph{locally conical} if each $\fP$ is locally
conical  with the same $U$. As we will see shortly, any wlc geography is a locally conical decomposition of $\fN_S$.

\begin{lemma}\label{lemma-locally conical}
Let $\fD$ be a closed bounded (convex) set in a finite dimensional $\R$-vector
space. Then $\fD$ is locally conical if and only if $\fD$ is polyhedral.
\end{lemma}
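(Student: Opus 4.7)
The plan is to prove the two implications separately; the backward direction is routine, and the forward direction is the substantive one. For ``polyhedral $\Rightarrow$ locally conical'', I would write $\fD=\bigcap_{i=1}^N H_i$ as a finite intersection of closed half-spaces, and at any $p$ in the ambient space split the indices according to whether $p$ lies in the interior or on the boundary of $H_i$. A sufficiently small neighborhood $U$ of $p$ is contained in each half-space of the first kind, so $U\cap\fD=U\cap\bigcap_{p\in\partial H_i}H_i$; each factor on the right is a cone with apex $p$, since a closed half-space is invariant under nonnegative scaling about any of its boundary points, and the intersection of such cones remains a cone with apex $p$. The case $p\notin\fD$ is handled by closedness, which lets me shrink $U$ until it is disjoint from $\fD$.

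For ``locally conical $\Rightarrow$ polyhedral'', the plan is to show that the set of extreme points of $\fD$ is finite. Once this is done, $\fD$ is the convex hull of its finitely many extreme points---being compact and convex in finite dimension---and hence is a polytope, which is in particular polyhedral. The crucial technical step will be a ball-doubling argument. Fix $p\in\fD$ and a radius $r>0$ such that the open ball $U=B(p,r)$ witnesses local conicality at $p$; then for any $q\in\fD\cap B(p,r/2)$ with $q\ne p$, the reflected point $p':=2q-p$ lies in $B(p,r)=U$, and the midpoint of the open segment $(p,p')$ equals $q\in\fD$. The local-conicality dichotomy then forces $(p,p')\subseteq\fD$, and closedness of $\fD$ promotes this to $p'\in\fD$ as well. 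Thus $q=\tfrac12(p+p')$ is a nontrivial midpoint of two distinct points of $\fD$, so $q$ is not an extreme point.

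It will follow that $B(p,r/2)$ contains no extreme point of $\fD$ except possibly $p$ itself. Covering compact $\fD$ by finitely many such half-radius balls then bounds the total number of extreme points, completing the argument. The main obstacle I foresee is purely bookkeeping: one must verify that the reflected point $p'=2q-p$ stays inside the local-conicality neighborhood $U$, which is exactly what dictates restricting $q$ to the halved ball $B(p,r/2)$. No input beyond closedness and boundedness of $\fD$, together with the convexity already used in Lemma~\ref{lemma-convex relations}-style arguments, is required.
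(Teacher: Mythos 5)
Your proof is correct but proceeds by a genuinely different route than the paper's. The paper handles the substantive direction (locally conical $\Rightarrow$ polyhedral) by induction on $\dim_\R\fD$, observing that local conicality is inherited by hyperplane sections and then invoking Klee's sectional criterion: a convex body is polyhedral iff all its $j$-dimensional plane sections are, for $2\le j\le n-1$. You instead bound the number of extreme points directly: at any $p\in\fD$ with conicality radius $r$, the reflection $p'=2q-p$ of any $q\in\fD\cap B(p,r/2)$, $q\ne p$, lands back in $B(p,r)$, local conicality forces the open segment $(p,p')$ into $\fD$, closedness gives $p'\in\fD$, and $q$ is then a proper midpoint of two distinct points of $\fD$ and hence not extreme. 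Covering compact $\fD$ by finitely many such half-radius balls (each contributing at most one extreme point, its center) shows the extreme set is finite, and Minkowski's theorem then yields that $\fD$ is the convex hull of finitely many points, i.e.\ a polytope. Your argument is more self-contained (it substitutes Minkowski's extreme-point representation, a very standard tool, for Klee's more specialized criterion) and has the pleasant feature of being non-inductive; the paper's is shorter as written precisely because it offloads the work to the cited result. Both arguments use convexity essentially---yours in two places, to conclude $q$ is not extreme and to pass from ``finitely many extreme points'' to ``polytope''---so neither extends to the non-convex setting without modification.
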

\begin{proof}
We use induction on $\dim_\R\fD$.
The case $\dim_\R\fD\le 2$ is trivial.
Let $\dim_\R\fD=n\ge 3$
and assume that the lemma holds in $\dim_\R\fD< n$.
The property of being locally conical is preserved under taking hyperplane
sections. Therefore by the inductive assumption, all the sections of $\fD$ are polyhedral.
This implies that the set $\fD$ is polyhedral by the following standard criterion \cite{klee}:

$\fD$ is polyhedral if and only if
all of its $j$-dimensional plane sections are polyhedral where
$2\leq j \leq n-1$.

The converse is clear.
\end{proof}

\begin{lemma}\label{lemma-closed N}
Let $B_i\in\fN_S$ be a sequence of boundaries
converging to a limit $B=\lim_{i\to+\infty}B_i$, and
$Y/Z$ be a model of $X/Z$ such that
$(Y/Z,B_i\Lg_Y)$ is a sequence of wlc models of $(X/Z,B_i)$.
Then $(Y/Z,B\Lg_Y)$ is a wlc model of $(X/Z,B)$.
In particular, $B\in\fN_S$.
\end{lemma}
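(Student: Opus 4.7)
The plan is to observe that each of the conditions defining a wlc model is closed under continuous variation of the boundary, and then to pass to the limit on the given sequence of models $(Y/Z, B_i\Lg_Y)$. The first step is to see that the log transform $B \mapsto B\Lg_Y$ is itself continuous in $B$: writing $B\Lg_Y = B_Y + E_Y$, where $B_Y$ is the birational transform of $B$ on $Y$ and $E_Y$ is the reduced sum of the prime divisors on $Y$ whose centers on $X$ are not supported on $\Supp B$, the component $E_Y$ is determined by the model $Y/Z$ together with the fixed support $S$ and is therefore the same for every $B_i$ and for $B$. Since $B \mapsto B_Y$ is linear in the coefficients of $B$, we have $B_i\Lg_Y \to B\Lg_Y$ componentwise.

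Next I would verify the defining conditions of a wlc model one at a time. For \emph{log canonicity}, fix a common log resolution $\pi \colon W \to Y$ of $(Y, E_Y + \Supp S_Y)$; the lc condition for $(Y, B_i\Lg_Y)$ then translates into finitely many linear inequalities on the coefficients of $B_i\Lg_Y$, namely $\mathrm{mult}_D(\pi^*(K_Y + B_i\Lg_Y) - K_W) \le 1$ for each prime $D$ on $W$. These inequalities are preserved in the limit, so $(Y, B\Lg_Y)$ is lc. For \emph{nefness over} $Z$, any curve $C$ on $Y/Z$ yields an intersection $(K_Y + B_i\Lg_Y) \cdot C$ that is linear in the coefficients of $B_i$ and $\ge 0$ for every $i$, hence $\ge 0$ in the limit, so $K_Y + B\Lg_Y$ is nef over $Z$. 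Any additional discrepancy inequality built into the definition of wlc model (for instance $a(D; X, B) \le a(D; Y, B\Lg_Y)$ for every geometric valuation $D$) is likewise a linear inequality in the coefficients of $B$ and passes to the limit.

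The main obstacle I foresee is merely bookkeeping: one must check that the reduced part $E_Y$ truly does not shift along the sequence and that the finitely many valuations involved in verifying lc are a fixed finite set independent of $i$. Both facts are automatic from the hypothesis that the model $Y/Z$ and the support $S$ are fixed, so the continuity of the log transform and the finiteness of the inequalities involved reduce everything to the standard fact that finite systems of non-strict linear inequalities are closed under limits. The ``in particular'' statement $B \in \fN_S$ is then immediate from $(Y/Z, B\Lg_Y)$ being a wlc model of $(X/Z, B)$.
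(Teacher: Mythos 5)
Your proof is correct and makes explicit what the paper records simply as ``Immediate by definition'': each condition in the definition of a wlc model (lc, nef over $Z$, the initial-model discrepancy inequalities, $\R$-Cartierness of $K_Y+B\Lg_Y$) is a non-strict linear, hence closed, condition in the boundary coefficients, and the log transform $B\mapsto B\Lg_Y$ is affine with the reduced exceptional part $E_Y$ depending only on $Y/Z$ and $S$, not on $B$. One small imprecision worth noting: $E_Y$ is the sum of prime divisors on $Y$ that are exceptional over $X$ and are not components of $S$ (not those ``whose centers on $X$ are not supported on $\Supp B$,'' which would vary with $B$), but your follow-up observation that $E_Y$ is determined by $Y/Z$ and $S$ is exactly the fact the argument needs.
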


However, $(Y/Z,B\Lg_Y)$ may not be a slt wlc model of $(X/Z,B)$
even if all models $(Y/Z,B_i\Lg_Y)$ are slt wlc models of $(X/Z,B_i)$.
We call a model $(Y/Z,B\Lg_Y)$ in the lemma {\em limiting\/} from a class $\fP$ if all $B_i\in\fP$.

\begin{proof}Immediate by definition.
\end{proof}

\begin{proof}[Proof of \ref{mainthrm-geography}]
First, we establish the finiteness,
convexity, and polyhedral property for
the closures $\ofP$ of the classes in the geography.
Note that the closures do not give a 
 decomposition but, by Lemma \ref{lemma-closed N}, $\fN_S$
is a finite union of these closures. In particular, $\fN_S$ is closed.

The convexity of wlc classes follows from Lemma~\ref{lemma-convex relations}.

The polyhedral property of $\ofP$ follows
from the locally conical property of $\ofP$ by Lemma \ref{lemma-locally conical}
and its compactness.
The conical property for $\ofP$ follows from
that of $\fP$: the closure of a locally conical set is locally conical.
But $\fP$ is locally conical by
the stability of wlc models.
This means the following.
Take $B\in\fN_S$, and let $(Y/Z,B\Lg_Y)$ be a
{\em stable\/} wlc model of $(X/Z,B)$.
The stability here means that,
for all $B'$ in a neighborhood $U$ of $B$ in $\fB_S$,
the pairs $(Y,B'\Lg_Y)$ are slt initial models of $(X/Z,B')$.
Such a model $(Y/Z,B\Lg_Y)$ can be constructed from
a slt wlc model of $(X/Z,B)$ by additional blowups of
lc centers which lie in divisors $(S_i)_Y$ having
$0$ multiplicities in $B$.
Then, for any boundary $D\not=B\in \fN_S\cap U$,
there exists a (generalized) log flop
$(Y/Z,B\Lg_{Y})\dashrightarrow (Y'/Z,B\Lg_{Y'})$
such that, for all $B'\in \overrightarrow{BD}\cap U$,
the models $(Y'/Z,B'\Lg_{Y'})$ are slt
wlc models of $(X/Z,B')$, and
the boundaries $B'$ are wlc equivalent, where $\overrightarrow{BD}$
denotes the open ray from $B$ through $D$.
The existence of $Y'/Z$ follows from \cite[Corolary 9 and Addendum 5]{orderedterm}.
The equivalence of
$B'$ follows from the construction of the flop and from \cite[Corollary 11 and Addendum 6]{orderedterm}
(cf. the proof of Lemma~\ref{lemma-convex relations} above).
Note that $(Y'/Z,B\Lg_{Y'})$ may not be slt, but
by the slt LMMP the dlt condition can be replaced by
the  lc condition everywhere in \cite[remarks after Proposition 1]{orderedterm} by \cite[Conjecture and Heuristic Arguments]{appendix-nikulin}.

The finiteness also follows by induction on dimension $\fN_S$
and the stability. See \cite[Proposition 3.2.5]{choi} for details.

These established facts imply all stated properties of $\fN_S$,
except for the convexity and rationality of $\fN_S$.
Note that convexity follows from the convexity of  positive parts: for all $0\le t,t'\in\R,t+t'=1$,
$$
P(tB+t'B')\ge tP(B)+t'P(B').
$$
The slt LMMP assumption is sufficient here.
The proof of the rationality (of $\fN_S, \fP$) and the open property (of $\fP$)
uses properties of functions $p(C,B),e(D,B)$ introduced below.
Actually, for each $\fP$ there exist finitely many curves $C_i/Z$ on
a rather high model and finitely many prime
b-divisors $D_j$ such that $\fP$
in $\ofP$ can be
given by the system of inequalities:
$$
\begin{cases}
p(C_i,B)>0;\\
e(D_j,B)>0.
\end{cases}
$$
In particular, those inequalities determine any country $\fC$ in
$\fN_S$ while $\fN_S$ itself is given in $\fB_S$
(more accurately, in the minimal face containing $\fN_S$)
by nonstrict inequalities
$l(C_k,B), l(D_l,B)\ge 0$ for
some curves $C_k/Z$ and some b-divisors $D_l$,
where the linear functions $l(C_k,B),l(D_l,B)$ are
extensions of linear functions given by $p(C_k,B),e(D_l,B)$,
respectively on the linear span of $\fN_S$ 
as it will be explained in the continuation
of the proof by the end of the section,
and equations (of the face) $b_i=0$ or $1$.
The closure $\ofP$ of any other
lower dimensional class $\fP$
lies in a face of dimension $\dim_\R\fP$ of some country $\fC$
of (cf. Corollary \ref{cor-facet=face} below)
 and thus $\fP$ can be given in $\fB_S$ by strict inequalities and equations:
$p(C_i,B),e(D_j,B)>0,l(C_k,B),l(D_l,B)=0,b_i=0$ or $b_i=1$.

(To be continued)
\end{proof}

\bs

On a rather high model $V/Z$ of $X/Z$ and
for a curve $C$ on $V/Z$,
the function $p(C,-)=p(C;X/Z,-)\colon \fN_S\to\R$ is defined as
$$
(C,B)\mapsto p(C;X/Z,B)=(P(B),C)=(P(B)_V,C)=(K_Y+B\Lg_Y,g_*C),
$$
where $(Y/Z,B\Lg_Y)$ is a wlc model of $(X/Z,B)$, and
$g\colon V\to Y/Z$ is a birational morphism.
A \emph{rather high} model 
 satisfies the following universal
property: $V$ dominates some wlc model
for any $B\in\fN_S$, and $P(B)$ is $\R$-Cartier over $V$
(cf. definition of $\mob$).
Such a universal model $V/Z$ exists by the finiteness
in the first half of the proof of the theorem.
By \cite[2.4.3]{3fold-logmod}, the function is independent of the choice of such a model $V/Z$.

\bs

For a prime b-divisor $D$ of $X/Z$,
the {\em initial\/} log discrepancy $\ild(D,X,-):\fB_S\rightarrow \R$
is defined as
$$
\begin{array}{rl}
\ild(D,X,B)&=1-mult_D B\Lg\\
 &=\left\{
\begin{array}{ll}
1-mult_D B &\text{if $D$ is in $S$};\\
1 &\text{if $D$ is nonexceptional on $X$, not in $S$};\\
0 &\text{if $D$ is exceptional on $X$, not in $S$.}
\end{array}
\right.
\end{array}
$$

For a prime b-divisor $D$ of $X/Z$, the {\em immobility\/} function
$e(D,-)=e(D;X/Z,-)\colon \fN_S\to\R$
is defined as
$$
(D,B)\mapsto e(D;X/Z,B)=\mult_DF(B)=a(D,Y,B\Lg_Y)-\ild(D,X,B),
$$
where $(Y/Z,B\Lg_Y)$ is a wlc model of $(X/Z,B)$.
It is called immobility because $e(D,B)$ is the multiplicity in  b-divisors $D$ of the fixed
part $F(B)$ of $\sK+B^{log}$.
By the invariant property of log
discrepancies of wlc models, the function $e(D,B)$ is independent of the choice of the wlc model
$(Y/Z,B\Lg_Y)$ \cite[$2.4.2'$]{3fold-logmod}.

\begin{proposition}\label{prop-p-e}
The functions $p(C,B),e(D,B)$ on $\fN_S$ are nonnegative
continuous rational piecewise linear, and
linear on each $\overline{\fP}$, where
$\fP$ is a class of $\fN_S$.
Moreover, $e(D,B)$ is convex from above, and $p(C,B)$
is convex from below for rather general (mobile) curves, that is,
for a curve $C/Z$ through a general point of a rather high model $V/Z$ of $X/Z$.
\end{proposition}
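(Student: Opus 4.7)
The plan is to work class by class: on each closed class $\ofP$ of the wlc geography I pick a single wlc model of $X/Z$ good for the whole closure, read off linearity of $p$ and $e$ on $\ofP$, and then assemble the pieces into continuous rational piecewise linear functions on $\fN_S$. The convexity statements will then be immediate consequences of the convexity of positive parts already recorded in the proof of Theorem~\ref{mainthrm-geography}. Nonnegativity is automatic: $p(C,B)\geq 0$ because $K_Y+B\Lg_Y$ is nef on any wlc model $(Y/Z,B\Lg_Y)$, and $e(D,B)\geq 0$ because it is the multiplicity along $D$ of the effective b-divisor $F(B)$.

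For linearity on each $\ofP$, I pick a single projective $\Q$-factorial (slt) wlc model $Y/Z$ that serves as a wlc model of $(X/Z,B)$ for one, and hence by the stability and log flop argument in the proof of Theorem~\ref{mainthrm-geography} for every, interior point $B$ of $\fP$; by Lemma~\ref{lemma-closed N} the same $Y$ remains a wlc model with log structure $(Y,B\Lg_Y)$ at every $B\in\ofP$. On this fixed $Y$ the b-divisor $B\Lg_Y$ depends linearly on $B$, so
\begin{equation*}
p(C,B)=(K_Y+B\Lg_Y,g_*C)\quad\text{and}\quad e(D,B)=a(D,Y,B\Lg_Y)-\ild(D,X,B)
\end{equation*}
are linear in $B$ on $\ofP$. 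Since $\fN_S$ is the finite union of the rational polyhedra $\ofP$ from Theorem~\ref{mainthrm-geography}, one obtains rational piecewise linearity globally; continuity follows from the pointwise well-definedness of $p$ and $e$, independent of the chosen wlc model (\cite[2.4.3, $2.4.2'$]{3fold-logmod}).

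For the convexity assertions, the proof of Theorem~\ref{mainthrm-geography} records
\begin{equation*}
P(tB+t'B')\geq tP(B)+t'P(B'),\qquad t,t'\geq 0,\ t+t'=1.
\end{equation*}
Since $\sK+B\Lg$ is linear in $B$ and $\sK+B\Lg=P(B)+F(B)$, subtraction yields
\begin{equation*}
F(tB+t'B')\leq tF(B)+t'F(B'),
\end{equation*}
which after taking multiplicity along $D$ gives the asserted convexity from above for $e(D,-)$. For a curve $C/Z$ through a general point of a rather high model $V/Z$, intersection with $C$ is nonnegative on effective b-divisors, so pairing the first inequality with $C$ gives
\begin{equation*}
p(C,tB+t'B')\geq tp(C,B)+t'p(C,B'),
\end{equation*}
which is the convexity from below claimed for mobile curves.

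The main obstacle is producing, on each class $\fP$, a single wlc model of $X/Z$ good for the entire closure $\ofP$: a stable wlc model attached to an interior boundary may fail to be slt at a boundary point of $\fP$, so Lemma~\ref{lemma-closed N} and the stability construction from the proof of Theorem~\ref{mainthrm-geography} must be combined to transfer linearity from $\fP$ to $\ofP$.
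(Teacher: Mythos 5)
Your proposal is correct and follows essentially the same route as the paper's proof: nonnegativity from the nef property of $P(B)$ and effectivity of $F(B)$, linearity on each $\ofP$ via a single wlc model good for the whole closure (using Lemma~\ref{lemma-closed N}), piecewise linearity and continuity from the finiteness established in Theorem~\ref{mainthrm-geography} together with pointwise well-definedness, and the two convexity inequalities from $P(tB+t'B')\ge tP(B)+t'P(B')$ and the complementary inequality for $F$. One small remark: the inequalities you derive are the right ones, but note that the paper's own proof labels them oppositely to the statement (the proof concludes $p$ is ``convex from above'' and $e$ ``from below''), so the statement's naming appears to be a typo that your write-up inherits; the mathematical content matches the paper's proof.
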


\begin{proof}
The function $p(C,B)$ is nonnegative
by the nef property of b-divisor $P(B)$.
The function $e(D,B)$ is nonnegative by definition or because the fixed part $F(B)$ is effective.
The rationality and the linearity on each $\ofP$ follow from the
corresponding properties of the intersection pairing and the discrepancy function
by Lemma \ref{lemma-closed N}.
Thus the finiteness established in the first half of
the proof of Theorem \ref{mainthrm-geography} implies the continuity and
the rational piecewise linearity of the functions.
This includes also the rational property of any maximal
linear subset (cf. facets in Corollary \ref{cor-eqn for facets} below).

The convex properties of $p(C,B)$ from above and of $e(D,B)$ from below follow from the inequalities:
$P(tB+t'B')\ge tP(B)+t'P(B')$  and  $F(tB+t'B')\le tF(B)+t'F(B')$,
where $0\le t,t'$ and $t+t'=1$.
\end{proof}

\begin{corollary}\label{cor-property-p-e}
For any curve $C$ on a rather high model $V/Z$ of $X/Z$ and
for any prime b-divisor $D$ of $X/Z$, the functions $p(C,B),e(D,B)$ satisfy the following.
\begin{enumerate}
\item Each function $p(C,B),e(D,B)$ has
the constant signature on each class $\fP$: either $>0$ or $=0$ on $\fP$.
The converse holds also, that is,
if the values $p(C,B),e(D,B)$ and $p(C,B'),e(D,B')$
have the same signatures, then $B\wlc B'$.

\item $p(C,B)=(P(B),C)=0$ on a class $\fP$ if and only if
$C$ is contracted by the lc contraction
$I_\fP\colon V\to X\cn/Z$
(cf. Log canonical model in Section \ref{section7-LMMP-semi}.)
of the pair $(X/Z,B)$ with $B\in\fP$.
In particular, the contraction depends only on $\fP$.
Equivalently, $p(C,B)>0$ on $\fP$ if and only if
$I_\fP(C)$ is a curve on $X\cn/Z$.

\item $e(D,B)=0$ on a class $\fP$
if and only if $D$ can be blown up on a wlc model of this class, i.e.,
$D$ is nonexceptional on some wlc model $Y/Z$.
Equivalently, $e(D,B)>0$ on a class $\fP$
if and only if  $D$ is exceptional on any wlc model of
$(X/Z,B)$ for $B\in\fP$.

\end{enumerate}

\end{corollary}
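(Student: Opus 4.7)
My plan is to assemble (1)--(3) from Proposition \ref{prop-equivalence}, Proposition \ref{prop-p-e}, the very definitions of $p(C,\cdot)$ and $e(D,\cdot)$, and a short appeal to the slt LMMP for one direction of (3).

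For (1), I would first observe that constancy of signatures on each $\wlc$ class is essentially built into the definitions of the finer equivalences. The signatures of $e(D,B)=\mult_D F(B)$ as $D$ ranges over prime b-divisors are, by definition, exactly what $\fix$ records; the signatures of $p(C,B)=(P(B)_V,C)$ as $C$ ranges over curves on a rather high model are, by definition, exactly what $\mob$ records. Since Proposition \ref{prop-equivalence} gives $\wlc \Rightarrow \fix$ and $\wlc \Rightarrow \mob$, both signatures are constant on a $\wlc$ class. For the converse, coincidence of all these signatures on $B,B'$ yields $B\fix B'$ and $B\mob B'$, and Proposition \ref{prop-equivalence} combines these back into $B\wlc B'$.

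For (2), the semiampleness conjecture applied to a wlc model $(Y/Z,B\Lg_Y)$ makes $K_Y+B\Lg_Y$ semiample over $Z$, and the resulting morphism is the lc contraction $Y\to X\cn/Z$; composing with the birational morphism $g\colon V\to Y$ furnishes $I_\fP\colon V\to X\cn$. The projection formula then identifies $p(C,B)=(K_Y+B\Lg_Y, g_*C)=0$ with $g_*C$ being contracted by the lc contraction of $Y$, equivalently with $C$ being contracted by $I_\fP$. Independence of the chosen $B\in\fP$ is then immediate from (1): the locus of curves on which $p(\cdot,B)$ vanishes is constant on $\fP$, so the contraction $I_\fP$ is intrinsic to $\fP$.

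For (3), the identity $e(D,B)=a(D,Y,B\Lg_Y)-\ild(D,X,B)$ recalled just before Proposition \ref{prop-p-e} settles one direction at once: if $D$ is a prime divisor on a wlc model $Y$ then $\mult_D B\Lg_Y=\mult_D B\Lg$ by the definition of the log boundary, so $a(D,Y,B\Lg_Y)=1-\mult_D B\Lg=\ild(D,X,B)$ and hence $e(D,B)=0$. The converse is where I expect the main work: given $e(D,B)=0$, i.e.\ $a(D,Y,B\Lg_Y)=\ild(D,X,B)\in[0,1]$, I would invoke the slt LMMP to produce a dlt extraction $Y'\to Y$ of $D$ (possible precisely because the target log discrepancy lies in $[0,1]$), and then run a relative wlc MMP that contracts any additionally extracted exceptional divisors but leaves $D$ untouched, the latter because the log discrepancy at $D$ already matches the initial one on $X$ and so the crepant step contracting $D$ is never selected. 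The output is a wlc model on which $D$ appears as an honest prime divisor. Combining with (1) then propagates the conclusion to every $B\in\fP$.
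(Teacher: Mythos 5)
Your proof of (1) and (2) matches the paper's own argument almost verbatim: (1) is exactly the observation that the signatures of $p(C,\cdot)$ and $e(D,\cdot)$ are by definition what $\mob$ and $\fix$ record, combined with Proposition~\ref{prop-equivalence}; and (2) is the same composition-and-projection-formula argument, with independence of $B$ following from (1).

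For (3), the easy direction is fine. In the converse direction the paper takes a cleaner route than you do: since $a(D,Y,B\Lg_Y)=\ild(D,X,B)\le 1$, one extracts \emph{only} $D$, crepantly, by the slt LMMP; the resulting $(Y'/Z,B\Lg_{Y'})$ is then automatically a (slt) wlc model because $K_{Y'}+B\Lg_{Y'}$ is pulled back from the nef divisor $K_Y+B\Lg_Y$, and no further MMP is needed. Your route --- dlt-extract $D$ (perhaps with other divisors) and then run a relative MMP to remove the extras --- reaches the same destination, but the reason you give for $D$ surviving that MMP (``the crepant step contracting $D$ is never selected'') is not, as stated, a valid principle: a $(K+B)$-MMP step is driven by a negative extremal ray, and a divisor with crepant discrepancy on $Y$ can still support such a ray once extra exceptional divisors have been extracted. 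The fact that $D$ does persist is true, but it needs the negativity lemma (the MMP contracts exactly the support of the effective very exceptional part $\sum e(D_i,B)D_i$, in which $D$ has coefficient $0$) rather than your stated heuristic. Extracting only $D$, as the paper does, sidesteps this entirely; I would either do that, or replace the heuristic with the negativity-lemma argument.
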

\begin{proof}

(1) By definition  $\mob$ is equivalent to the same
signatures for all $p(C,B)$, and so is $\fix$ for $e(D,B)$.
Thus by Proposition \ref{prop-equivalence} $\wlc$ is
equivalent to the same signatures
for all $p(C,B),e(D,B)$.

(2) Immediate by construction of $V\rightarrow X\cn$ as a composition of
$g\colon V\to Y$ followed by the
lc contraction  of  a wlc model $(Y/Z,B\Lg_Y)$,
where $(Y/Z,B\Lg_Y)$ is an appropriate wlc model of $(X/Z,B)$.
The composition contracts exactly
the curves $C$ on $V/Z$ with $(K_Y+B\Lg_Y,g_*C)=0$ (cf. \cite[1.17 (ii)]{isksh}).

(3) By definition $e(D,B)=0$ for any wlc model $(Y/Z,B\Lg_Y)$, on which $D$ is nonexceptional.

On the other hand, if $e(D,B)=0$ and
$D$ is exceptional
on a wlc model $(Y/Z,B\Lg_Y)$, then
$a(D,Y,B\Lg_Y)=\ild(D,X,B)\le 1$ and one can blow up $D$
to a crepant slt wlc model $(Y'/Z,B\Lg_{Y'})$ of $(X/Z,B)$ with
a (higher) model $Y'/Y$ of $Y/Y$
by the slt LMMP.
By (1) this depends only on $\fP$.
\end{proof}
\bs

\begin{corollary}\label{cor-facet=face}
Let $\fP$ be a class of $\fN_S$ and $\fF$ be its face.
Then $\Int\fF\subseteq\fP'$ for some class $\fP'$ of $\fN_S$.
In particular, $\Int\ofP\subseteq \fP'=\fP$.

Thus the geography $\fN_S$ is determined up to finitely many
possibilities by its countries $\fC$ and even their closures $\ofC$.
\end{corollary}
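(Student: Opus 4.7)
The plan is to use the characterization of $\wlc$ by the signatures of $p(C,\cdot)$ and $e(D,\cdot)$ in Corollary~\ref{cor-property-p-e}(1), combined with their linearity on each $\ofP$ from Proposition~\ref{prop-p-e}.

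First, I would fix two boundaries $B_1,B_2\in\Int\fF$ for a face $\fF$ of $\ofP$ and aim to show $B_1\wlc B_2$. By Corollary~\ref{cor-property-p-e}(1) it suffices to check that $p(C,B_1),p(C,B_2)$ have the same signature for every curve $C$ on a rather high model and that $e(D,B_1),e(D,B_2)$ have the same signature for every prime b-divisor $D$. Fix such a $C$. By Proposition~\ref{prop-p-e} the restriction $p(C,\cdot)|_{\ofP}$ is a nonnegative linear function, so its zero locus
$$
\fG:=\{B\in\ofP\mid p(C,B)=0\}
$$
is a face of $\ofP$. Consequently $\fF\cap\fG$ is again a face of $\ofP$, hence a face of $\fF$. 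If $p(C,B_1)=0$, this face contains the relative interior point $B_1$ of $\fF$ and must therefore coincide with $\fF$; thus $p(C,\cdot)\equiv 0$ on $\fF$ and in particular $p(C,B_2)=0$. The identical argument for $e(D,\cdot)$ yields constancy of all relevant signatures on $\Int\fF$, hence $\Int\fF\subseteq\fP'$ for a single class $\fP'$.

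For the \emph{in particular} statement I would apply the preceding with $\fF=\ofP$: $\Int\ofP\subseteq\fP'$ for some class $\fP'$. Since $\fP$ is open in $\fB_S$ of the same dimension as $\ofP$ by Theorem~\ref{mainthrm-geography}, one has $\fP\subseteq\Int\ofP$, so $\fP\cap\fP'\ne\emptyset$ and disjointness of classes forces $\fP'=\fP$. Equivalently, every class is the relative interior of its closure.

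For the reconstruction from countries, note that by Lemma~\ref{lemma-closed N} combined with the finiteness from the first half of the proof of Theorem~\ref{mainthrm-geography}, $\fN_S=\bigcup\ofC$; moreover by that theorem every class closure $\ofP'$ is a face of some $\ofC$. The first part of the present corollary then places $\Int\ofP'$ inside a single class. Thus the geography is extracted from the finite face lattices of the closed countries $\ofC$, with only finitely many combinatorial choices needed to pin down the class structure. The main obstacle is the convex-geometric step in the first paragraph: the zero locus of a nonnegative linear function on a polyhedron is a face, and a face of $\ofP$ contained in $\fF$ and meeting $\Int\fF$ must coincide with $\fF$. These are standard facts, but they are the crux of the argument and must be cited carefully.
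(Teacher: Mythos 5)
Your proposal is correct and follows essentially the same route as the paper's proof. The paper's argument is simply stated as ``Immediate by Proposition~\ref{prop-p-e} and Corollary~\ref{cor-property-p-e}~(1),'' together with the observation that a nonnegative linear function on a convex polyhedron $\fF=\ofF$ has constant signature on $\Int\fF$; you have expanded exactly that key linear-algebra fact into the zero-locus-as-face argument, and your handling of the ``in particular'' step and the reconstruction from countries uses the same ingredients (the open property from Theorem~\ref{mainthrm-geography}, Lemma~\ref{lemma-closed N}, and disjointness of classes) that the paper implicitly invokes.
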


Note that $\ofP$ is a convex polyhedron by the first half of
the proof of Theorem \ref{mainthrm-geography}.

\begin{proof}
Immediate by Proposition \ref{prop-p-e} and Corollary \ref{cor-property-p-e} (1).
An elementary property of linear functions is useful here:
a nonnegative linear function on a convex polyhedron $\fF=\ofF$
has the same signature on $\Int\fF$.
\end{proof}

\begin{corollary}\label{cor-eqn for facets}
Let $\fP$ be a class of $\fN_S$,
$\fF$ be its facet, and
$V/Z$ be a rather high model of $X/Z$.
Then either

$\fF$ is given
by a facet of $\fB_S$; 

there exists a curve $C$ on a rather high model $V/Z$ of $X/Z$
such that $p(C,\fP)>0$ and $p(C,\fF)=0$; or

there exists a prime b-divisor $D$ of $X/Z$ such that
$e(D,\fP)>0$ and $e(D,\fF)=0$.

Moreover, if $\Int\fF\cap\fP=\emptyset$, then
$\fF\cap\fP=\emptyset$.
However, if $\Int\fF\cap\fP\not=\emptyset$, then
$\Int\fF\subseteq\fP$, and this is possible only for facets 
given by a facet of $\fB_S$.
The same holds for the faces of $\fP$.
\end{corollary}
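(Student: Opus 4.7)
The plan is to combine the explicit inequality description of $\fP$ established at the end of the proof of Theorem \ref{mainthrm-geography} with the linearity of the functions $p(C,B)$ and $e(D,B)$ on $\ofP$ from Proposition \ref{prop-p-e}.

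First, I would recall from the proof of Theorem \ref{mainthrm-geography} that $\fP$ is cut out inside $\fB_S$ by the strict inequalities $p(C_i,B)>0$, $e(D_j,B)>0$ (for finitely many curves $C_i$ on a common rather high model, which we may take to be $V$, and finitely many prime b-divisors $D_j$) together with the affine-span equations $l(C_k,B)=l(D_l,B)=0$ and the face equations $b_i=0$ or $b_i=1$ defining the minimal face of $\fB_S$ containing $\fP$. Consequently $\ofP$ is the convex polyhedron obtained from the same equations together with the corresponding weak inequalities $p(C_i,B)\ge 0$, $e(D_j,B)\ge 0$, and $0\le b_k\le 1$ for the remaining free indices $k$ (inherited from $\fB_S$). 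Each facet of the polyhedron $\ofP$ arises by turning precisely one of these weak inequalities into an equality, so exactly one of the three alternatives in the statement holds: there is a curve $C=C_i$ with $p(C,\fP)>0$ and $p(C,\fF)=0$; there is a b-divisor $D=D_j$ with $e(D,\fP)>0$ and $e(D,\fF)=0$; or $\fF=\ofP\cap\fF_k$ for a facet $\fF_k$ of $\fB_S$.

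For the moreover part, I would use that each $p(C_i,B)$ and $e(D_j,B)$ is linear and nonnegative on $\ofP$, hence linear and nonnegative on every face $\fF$. A nonnegative linear function on a convex polyhedron is either identically zero on it or strictly positive on its relative interior. Therefore each $p(C_i,B)$ and $e(D_j,B)$ restricted to $\fF$ either vanishes on all of $\fF$ or is strictly positive on $\Int\fF$. If some such function vanishes on $\Int\fF$, then by linearity it vanishes on all of $\fF$, the corresponding strict inequality defining $\fP$ fails throughout $\fF$, and so $\fF\cap\fP=\emptyset$; this yields the first implication. Otherwise every $p(C_i)$ and $e(D_j)$ is positive on $\Int\fF$, so every defining condition of $\fP$ is satisfied on $\Int\fF$ (the strict inequalities by the preceding sentence, the equations because $\Int\fF\subseteq\ofP$), whence $\Int\fF\subseteq\fP$. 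In this second case $\fF$ cannot be of the first two types above, since those force some $p$ or $e$ to vanish on $\fF$, so $\fF$ must be given by a facet of $\fB_S$.

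Finally, the extension to arbitrary faces of $\fP$ follows by the same argument: any face of $\ofP$ is obtained by turning some subset of the defining weak inequalities into equalities, and the dichotomy between \emph{some} $p$ or $e$ vanishing on the face versus \emph{all} of them remaining strictly positive on its relative interior transfers unchanged. I expect the main obstacle to be only the bookkeeping in the case analysis, in particular noting that a facet may fit several of the three descriptions in the first part of the corollary simultaneously, and that the vanishing of any $p$ or $e$ immediately places us in the first (disjoint) case of the moreover.
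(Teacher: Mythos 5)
Your proposal is circular. You ``recall from the proof of Theorem \ref{mainthrm-geography} that $\fP$ is cut out inside $\fB_S$ by the strict inequalities $p(C_i,B)>0$, $e(D_j,B)>0$ (for finitely many $C_i$ and $D_j$) together with \ldots\ face equations $b_i=0$ or $1$.'' But look at the order of the argument in the paper: the first half of the proof of Theorem \ref{mainthrm-geography} only \emph{announces} this finite inequality description (``as it will be explained in the continuation of the proof by the end of the section''), and the continuation explicitly derives the rationality, the open property, and the ``required equations and inequalities'' \emph{from} Corollaries \ref{cor-facet=face} and \ref{cor-eqn for facets}. So the finite inequality description is a downstream consequence of the corollary you are trying to prove, not an input to it. Using it as a starting point makes the whole first part of your proof circular, and with it the case analysis you carry out in the ``moreover'' paragraph, since that paragraph also relies on the claim that these are the only three kinds of facets.

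What you do have available before the corollary is Proposition \ref{prop-p-e} (linearity and nonnegativity of $p,e$ on each $\ofP$), Corollary \ref{cor-property-p-e}(1) (constant signature on classes and the converse), and Corollary \ref{cor-facet=face} ($\Int\fF\subseteq\fP'$ for some class $\fP'$, and $\Int\ofP\subseteq\fP$). From these alone one can deduce, for a facet $\fF$ of $\ofP$ with $\Int\fF\cap\fP=\emptyset$, that some $p(C,\cdot)$ or $e(D,\cdot)$ distinguishes a point $B\in\Int\fF$ from $\fP$, and linearity plus nonnegativity then force it to vanish on all of $\fF$ while being positive on $\fP$. That much of your argument is sound. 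But in the complementary case $\Int\fF\subseteq\fP$ you need to show that $\fF$ must be given by a facet of $\fB_S$, and here nothing in Propositions \ref{prop-p-e}--\ref{cor-facet=face} tells you why such a facet cannot exist without being cube-bordering: you would be silently invoking the open property of $\fP$, which is again only established in the continuation. The paper avoids this by a genuinely different argument: it fixes $B\in\Int\fF$, picks an outer direction $\overrightarrow{BC}$ out of $\fP$, takes a limiting wlc model $(Y/Z,B\Lg_Y)$ along that ray, and classifies what fails for $B'$ just outside $\fP$ (a coordinate $b_i'$ leaves $[0,1]$; $(Y,B'\Lg_Y)$ fails to be lc, producing an $e(D,\cdot)$; $(Y,B'\Lg_Y)$ fails to be nef, producing a $p(C,\cdot)$ via \cite[Corollary~9]{orderedterm}; or it is a wlc model but not of $(X/Z,B')$, producing another $e(D,\cdot)$). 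This case analysis is what proves the trichotomy without presupposing any finite inequality description. You should replace the opening step with this limiting-direction argument or its equivalent; the ``moreover'' paragraph then goes through essentially as you wrote it, modulo replacing appeals to ``the defining inequalities of $\fP$'' by appeals to Corollary \ref{cor-property-p-e}(1).
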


\begin{proof}
Take $B\in \Int\fF$, and an outer direction $\overrightarrow{BC}$
from $\fP$, that is, $\overrightarrow{BC}\cap\fP=\emptyset$ and
$\overline{BC} \cap\fP\not=\emptyset$.
Let $(Y/Z,B\Lg_Y)$ be a wlc model of $(X/Z,B)$ which is
a limit of (slt) wlc model $(Y/Z,B'\Lg_Y)$ of $(X/Z,B')$
for $B'\in \fP$ (e.g., along $\overline{BC}$; see Lemma \ref{lemma-closed N}).

 By the polyhedral property of lc \cite[1.3.2]{3fold-logflips},
the following three cases are only possible:

(i) $B'$ is not a b-boundary
for any $B'\in\overrightarrow{BC}$;

(ii) $(Y,B'\Lg_Y)$ is not lc for any $B'\in\overrightarrow{BC}$;

(iii) for any $B'\in\overrightarrow{BC}$ in
a neighborhood of $B$, $B'$ is a b-boundary and $(Y,B'\Lg_Y)$ is lc.

In the case (i), there exists prime $S_i$,
such that $b'_i=\mult_{S_i} B'\Lg_Y<0$ or $>1$. 
The facet $\fF_i$ of $\fB_S$ with the equation $b_i=0$ or $b_i=1$
respectively gives $\fF$ (see the property of function $b_i$ on page 5 above).

In the case (ii), there exists a prime b-divisor $D$ of $X/Z$
such that $a(D,Y,B'\Lg_Y)<0$ for any $B'\in\overrightarrow{BC}$.
If we do not assume (i), $D$ is exceptional on $Y$
and, by the linear property of discrepancy,
$a(D,Y,B\Lg_Y)=0$ and $a(D,Y,B'\Lg_Y)>0$ for any
$B'\in\overline{BC}\cap\fP$. Thus $e(D,B)=0$ and,  if for every $B'\in \overline{BC}\cap\fP$,
$e(D,B')=0$, then for all $B'$ near $B$ on $\overline{BC}$,
$\ild(D,X,B')=a(D,Y,B'\Lg_Y)$ and $<0$ for $B'\in\overrightarrow{BC}$,
a contradiction again by the negation of (i).
Hence $e(D,B')>0$ for some $B'\in \overline{BC}\cap\fP$.
This gives required $D$ by Corollary \ref{cor-facet=face}.

The case (iii) itself has two subcases:
for all $B'\in\overrightarrow{BC}$ in a neighborhood of $B$,

(iii-1) $(Y/Z,B'\Lg_Y)$ is not a wlc model; or

(iii-2) $(Y/Z,B'\Lg_Y)$ is a wlc model but not of $(X/Z,B')$.

\noindent
Indeed, otherwise, for all those $B'$, the pairs $(Y/Z,B'\Lg_Y)$ are
wlc models of $(X/Z,B')$.
By definition all functions $p(C,B'),e(D,B')$
are linear on $\overline{BC}$ in the neighborhood of $B$.
Thus $p(C,B'),e(D,B')$ have the same signatures and all those $B'$ are $\wlc$,
a contradiction with our assumptions.

In the case (iii-1), by \cite[Corollary~9]{orderedterm}
there exists a curve $C'$ on $Y/Z$ such that
$(K_Y+B'\Lg_Y,C')<0$ for $B'\in\overrightarrow{BC}$ and
$(K_Y+B\Lg_Y,C')=0$.
Thus $(K_Y+B'\Lg_Y,C')>0$
for $B'\in\overline{BC}\cap\fP$,
and for a curve $C$ on a rather high model $V'/Z$ of $X/Z$
with $g'\colon Z'\to Y$
and $C'=g'_*C$, again by Corollary \ref{cor-facet=face},
$p(C,\fP)>0$ and $p(C,\fF)=0$.
The same holds for a birational transform of $C$ on $V/Z$ \cite[2.4.3]{3fold-logmod}.

In the case (iii-2), there exists a b-divisor $D$ with
$a(D,Y,B'\Lg_Y)<\ild(D,X,B')$.
By the linear property of discrepancy and of initial discrepancy,
$e(D,B')>0$
for $B'\in\overline{BC}\cap\fP$ and
$e(D,B)=0$.
Thus $D$ is a required divisor.

The results about $\fF\cap\fP$ follow from
Corollaries \ref{cor-property-p-e} (1) and \ref{cor-facet=face};
in the case of faces by induction.
\end{proof}

\begin{proof}[Continuation of the proof of Theorem \ref{mainthrm-geography}]
Let $\fF\subseteq\fB_S$ be the minimal face of $\fB_S$ containing $\fN_S$.
Then the linear span of $\fN_S$ is the linear span of $\fF$ by
the monotonicity $\nu(X/Z,B')\ge \nu(X/Z,B)$ for $B'\ge B$ in $\fB_S$
(cf.  Proposition \ref{prop-monotone N}).
Moreover, the linear span of $\fN_S$ can be given in $\fD_S$ by
equations $b_i=1$.
In particular, the span is rational.

The rationality of $\fN_S$, of the polyhedrons $\ofP$, and
the open property of the classes $\fP$ follow
from Corollaries \ref{cor-facet=face} and \ref{cor-eqn for facets}.
This gives also required equations and inequalities.
The linear function $l(C,B)$ on the linear span of $\fP$
is determined by the linear function $p(C,B)$ on $\fP$, in
particular, those functions with $p(C,\fF)=0$
for a facet $\fF$ of $\fP$ will suffice.
Similarly, one can introduce functions $l(D,B)$.
Finally, the interior $\Int\fF$ of a face of $\fP$ given by a facet of $\fB_S$,
belongs to $\fP$ or is disjoint from it.

If $\fP,\fP'$ are two classes and $\overline{\fP'}$
is a face of $\ofP$, then
$\Int\fP'\cap\ofP\not=\emptyset$.
Conversely, if $\Int\fP'\cap\ofP\not=\emptyset$ but
$\overline{\fP'}$ is not a face of $\ofP$,
then by Corollary~\ref{cor-facet=face},
$\emptyset\not=\Int\fP'\cap\ofP\not=\Int\fP'$.
Moreover, by the convexity of classes and of $\fN_S$, we can suppose
that $\overline{\mathfrak P'}$ contains a face $\fF'$ of $\ofP$
(maybe for different $\fP$) which is a polyhedron of the same dimension as $\fP'$ and
with $\fF'=\ofP\cap\overline{\mathfrak P'}\not=\overline{\fP'}$.
Hence there exists a facet $\fF\not\supset\fF'$ of $\fP$ which
contains an internal point of $\fP'$.
This gives a contradiction with Corollary~\ref{cor-eqn for facets}.
For example, if $\fF$ is given by a facet of $\fB_S$ with
an equation $b_i=0$ or $1$, then $b_i=0$ or $1$ respectively on $\overline{\mathfrak P'}$,
in particular, on $\fF'$, and thus
on $\ofP$, a contradiction.
Similarly, for other functions.
\end{proof}

\bs
\section{Positive cones}\label{section3-positive cone}

\ms

For first applications of geography,
we recall and define natural cones in $\fD_S$ and $\N^1(X/Z)$.
Let $W/Z$ be a model of $X/Z$, and $S=\sum S_i$ be a reduced b-divisor of $X/Z$.
Our {\em  numerical space\/} $\N^1(X/Z)$ denotes the space of $\R$-divisors,
not necessarily $\R$-Cartier, mudulo the numerical equivalence $\equiv$.
Nonetheless, the relative Weil-Picard (class) number
$\dim_\R \N^1(X/Z)<+\infty$ for proper $X/Z$.

\ms

\paragraph{Semiample cones.}
In the numerical space $\N^1(X/Z)$ and in the space of b-divisors $\fD_S$, respectively, the cones
$$
\begin{array}{rl}
\sAmp(X/Z)&:=\left\{[D] \in \N^1(X/Z)\left|\begin{array}{l}
D\equiv D' \text{ for some semiample}\\
\text{divisor $D'$ on }X/Z
\end{array}\right.\right\}_,\\
\samp_S(W/Z)&:=\{D\in\fD_S\mid D_W \text{ is semiample on } W/Z\}
\end{array}
$$
are defined.

\paragraph{Nef cones.}
In $\N^1(X/Z), \fD_S$, respectively, the cones
$$
\begin{array}{rl}
\Nef(X/Z)&:=\{[D] \in \N^1(X/Z)\mid D \text{ is nef on }X/Z\},\\
\nef_S(W/Z)&:=\{D\in\fD_S\mid D_W \text{ is nef on } W/Z\}
\end{array}
$$
are defined and closed.
In the projective case, they are
the closures of the ample (K\"ahler) cones.
The cone $\Nef(X/Z)$ is dual to the Kleiman-Mori cone,
the closure of the cone of curves.
Thus its polyhedral properties are
related to the cone of curves (the Mori theory).
However, we consider only cones of divisors (see Corollary \ref{cor-positive cones} below).

\paragraph{Mobile cones.}
In $\N^1(X/Z), \fD_S$, respectively, the cones
$$
\begin{array}{rl}
\Mob(X/Z)&:=\left\{[D]\in \N^1(X/Z)\left|\begin{array}{l}
 D\equiv D' \text{ for some $\R$-mobile }\\
\text{divisor $D'$ on }X/Z
\end{array}\right.\right\}_,\\
\mobile_S(W/Z)&:=\{D\in\fD_S\mid D_W \text{ is $\R$-mobile on } W/Z\}
\end{array}
$$
are defined.

\paragraph{Effective cones.}
In $\N^1(X/Z), \fD_S$, respectively, the cones
$$
\begin{array}{rl}
\Eff(X/Z)&:=\left\{D\in\N^1(X/Z)\left|\begin{array}{l}
 D\equiv D' \text{ for some effective divisor}\\
\text{$D'$ on }X/Z
\end{array}\right.\right\}_,\\
\eff_S(W/Z)&:=\left\{D\in\fD_S\left|\begin{array}{l}
 D_W\sim_\R D' \text{ for some effective divisor}\\
\text{$D'$ on }W/Z
\end{array}\right.\right\}
\end{array}
$$
are defined.

\bs

In general, the numerical equivalence $\equiv$ of
divisors does not preserve the semiampleness and mobile
properties of divisors
while clearly $\sim_{\R}$ does.
Thus introducing cones in the numerical space,
we had a choice to assume such a property on the whole class $[D]$
or on some representative $D'\in[D]$.
We chose the second one to have a larger cone.
According to the choice, the linear map $[\ ]$
agrees the $\sim_\R$ equivalence property with the numerical one
(cf. the proof of Corollary \ref{cor-0log num cones}).
It is well-known that the above cones, numerical and linear,
are convex and satisfy the inclusions:
$$
\begin{array}{rl}
\sAmp(X/Z)\subseteq\Nef(X/Z),&\sAmp(X/Z)\subseteq\Mob(X/Z)\subseteq \Eff(X/Z),\\
\samp_S(W/Z)\subseteq\nef_S(W/Z),&\samp_S(W/Z)\subseteq \mobile_S(W/Z)\subseteq \eff_S(W/Z).
\end{array}
$$

\bs

We recall also that:
\begin{itemize}
\item[]
{\em $0$-log pair} $(X/Z,B)$ is a pair with a proper morphism $X/Z$ and a boundary $B$ on $X$ such that $(X,B)$
is klt and $K+B\sim_{\R}0$,  
or equivalently $K+B\equiv0$   according to \cite{ambro}; and

\item[]
(relative) {\em FT (Fano type)\/} variety $X/Z$ is a variety such that there exists
a boundary $B$ on $X$ such that the pair $(X/Z,B)$
is a klt log Fano variety, in particular,
$X/Z$ is projective.

\end{itemize}
For other equivalent characterizations of relative FT  varieties, see \cite[Lemma-Definition 2.8]{prsh}.
The most useful one among them to us is the following.

\begin{lemma}\label{lemma-FT=0log}
 Let $X/Z$ be be an FT variety. Then for any fixed
reduced divisor $S$ of $X$,
there exists an $\R$-boundary $B$
on $X$ such that
$(X/Z,B)$ is a $0$-log pair and $S\subseteq \Supp B$.
Moreover, $B$ is big.

Conversely, if $(X/Z,B)$ is a $0$-log pair with projective $X/Z$ and
big $B$, e.g., prime components of $\Supp B$ generate $\N^1(X/Z)$,
then $X/Z$ is FT.
\end{lemma}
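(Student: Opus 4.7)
The plan is to prove both implications by Kodaira-type perturbations around a given klt pair, using openness of the ample cone and of the klt locus in the space of boundaries.

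For the forward direction, I would fix a klt log Fano pair $(X/Z, B_0)$ witnessing FT, so $-(K+B_0)$ is ample over $Z$. Given the reduced divisor $S = \sum S_i$, I choose $\epsilon > 0$ small enough that $(X, B_0 + \epsilon S)$ remains klt and $A := -(K+B_0) - \epsilon S$ remains ample over $Z$; both conditions are open in $\epsilon$. Then by Bertini applied to a $\Q$-approximation of $A$, I produce an effective $\R$-divisor $D \sim_\R A$ with coefficients so small that $(X, B_0 + \epsilon S + D)$ is still klt. The candidate $B := B_0 + \epsilon S + D$ does the job: $(X, B)$ is klt, $K + B \sim_\R (K + B_0) + \epsilon S + A = 0$, each $S_i$ appears in $B$ with coefficient $\geq \epsilon > 0$, and $B$ is big because $B$ decomposes as the effective divisor $(B_0 + \epsilon S)$ plus the $\R$-divisor $D$ representing the ample class $[A]$.

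For the converse, starting from a $0$-log pair $(X/Z, B)$ with projective $X/Z$ and $B$ big, I would apply Kodaira's lemma to write $B \sim_\R A + E$ with $A$ ample over $Z$ and $E$ an effective $\R$-divisor. The key construction is $B' := (1-\delta) B + \delta E$ for $\delta > 0$ small. Then $K + B' = (K+B) + \delta(E - B) \sim_\R -\delta A$, so $-(K+B')$ is ample over $Z$. For $\delta$ small, openness of klt preserves the klt condition of $(X, B')$, so $(X/Z, B')$ is klt log Fano and $X/Z$ is FT. To justify the ``e.g.'' clause, suppose the prime components $D_1, \ldots, D_n$ of $\Supp B$ generate $\N^1(X/Z)_\R$ and $B = \sum b_i D_i$ with $b_i > 0$; writing any ample class $[A] = \sum \lambda_i [D_i]$ and picking $t > 0$ small enough that each $b_i - t\lambda_i > 0$, we get $B - tA \equiv \sum (b_i - t\lambda_i) D_i \geq 0$, so $[B] = t[A] + [B - tA]$ is ample plus effective, hence big.

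The main technical point in both directions is the simultaneous preservation of klt-ness and of the required $\R$-linear equivalences under the small perturbations, together with a Bertini step to realize ample $\R$-classes by effective $\R$-divisors with small coefficients. These are standard techniques in the $\R$-divisor setup and I do not expect any serious obstacle.
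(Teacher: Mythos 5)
The paper disposes of this lemma by citing \cite[Lemma-Definition 2.8]{prsh}, adding only the one-line remark that the big property of $B$ follows from that of $-K$ for FT varieties. Your proof is a direct, self-contained perturbation argument, so the route is genuinely different; it makes explicit the mechanics the paper delegates to the reference. The converse direction as you wrote it is correct: since $B - E \sim_\R A$ is $\R$-Cartier, $K_X + B'$ is $\R$-Cartier, $-(K_X+B') \sim_\R \delta A$ is ample, and openness of klt finishes; your justification of the ``e.g.''\ clause via $[B] = t[A] + [B - tA]$ is also sound.

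The forward direction has a real gap, however. FT varieties are not assumed $\Q$-factorial (indeed the paper explicitly notes in Section~\ref{section4-finiteness} that a $\Q$-factorialization of an FT variety is again FT, precisely because one sometimes needs it), and $S$ is an arbitrary reduced Weil divisor, so $S$ need not be $\R$-Cartier. Then neither ``$(X, B_0 + \epsilon S)$ remains klt'' (which presupposes $K_X + B_0 + \epsilon S$ is $\R$-Cartier) nor ``$A := -(K+B_0) - \epsilon S$ remains ample'' (which presupposes $A$ is an $\R$-Cartier class) is well-posed, and there is no linear system for your Bertini step to act on. Absorbing an arbitrary Weil $S$ into the support of a klt $0$-log boundary is exactly what is nontrivial in the non-$\Q$-factorial case. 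The standard patch is to pass to a small $\Q$-factorialization $f \colon X' \to X$ (again FT), run your argument on $X'$ for the strict transform $S'$ of $S$, and push the resulting $0$-log structure $B'$ down along the small map $f$; since $f$ is small and $K_{X'} + B' \sim_\R 0$, one has $K_{X'} + B' = f^*(K_X + f_*B')$, so $(X, f_*B')$ inherits the klt and $\sim_\R 0$ properties and contains $S$ in its support. Spelling this reduction out would close the gap.
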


\begin{proof}
See \cite[Lemma-Definition 2.8]{prsh}.
The big property for $B$ follows from
the big property of $-K$ for FT varieties.
\end{proof}

\bs

To obtain polyhedral properties of cones in $\fD_S$
from geography of log models, we
translate polyhedrons in the space.
Let
$$
\begin{array}{rl}
-B\colon \fD_S &\to \fD_S\\
D &\mapsto D-B
\end{array}
$$
be the translation by $-B$.
It translates $B$ into the origin $0\in\fD_S$, the
vertex of cones.
Geography allows to construct only some
polyhedrons in $\fB_S\subset\fD_S$.
However each polyhedron near $B$ is conical.
Thus to compare the translation of a polyhedron
with a cone in a neighborhood $U$ of $0\in\fD_S$ would be sufficient.
In what follows, we say that two sets $\fS_1,\fS_2$ (cones or polyhedrons)
coincide in $U$ if
$$
\fS_1\cap U=\fS_2\cap U.
$$

\begin{proposition}\label{prop-0log cones coincide in U}
Let $(X/Z,B)$  be a $0$-log pair and put $S=\Supp B$.
Then there exists a neighborhood $U$ of $0$ in $\fD_S$ in
which:
$$
\begin{array}{rl}
\samp_S(X/Z) &=\nef_S(X/Z)=\ofP_X-B=\cup\fP_{0,X}-B,\\
\mobile_S(X/Z)&=\cup \ofP^1_X-B=\cup \fP^1_X-B, \\
\eff_S(X/Z)&=\fN_S-B,
\end{array}
$$
where $\ofP_X$ is the closure of a
maximal dimensional class $\fP_X$
of wlc models with $Y=X$,
$\fP_{0,X}$ are the classes of wlc models with $Y=X$, and
$\ofP^1_X$ are the closures of
classes $\fP^1_X$ of wlc models with $Y$ isomorphic to $X$
in codimension $1$.

Each divisor $D\in\eff_S(X/Z)$ has a unique mob+exc decomposition:
$D\sim_\R M+E$ and $\equiv M+E$, where $M$ is $\R$-mobile and
$E$ is effective very exceptional with respect to a $1$-contraction
given by $M$; the $1$-contraction, $E$ are unique and $M$ is unique up to $\sim_\R$ and $\equiv$, respectively.
In particular, in $\fD_S$ the $\R$-mobile and $\sim_\R$ effective properties of
divisors are equivalent to those  for $\equiv$.
Moreover,
for $D\in U\cap\eff_S(X/Z)$, $M\sim_\R P(B')_X,E=F(B')_X$ and
$M\equiv P(B')$, respectively, where $B'=B+D$.
\end{proposition}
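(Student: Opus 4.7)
The plan is to translate every cone equality into a question about the boundary $B'=B+D$ near $B$, using the identity
$$
K_X+B'\Lg \sim_\R D,
$$
which follows from $K_X+B\sim_\R 0$ (the $0$-log assumption) once $U$ is chosen small enough that for every $D\in U$ the sum $B'=B+D$ lies in $\fB_S$ and is a klt boundary, so $B'\Lg=B'$; this is possible because $B$ is klt, whence each $\mult_{S_i}B\in(0,1)$. With this single identity in hand, all of the stated cone equalities become readings of properties of $D$ off of properties of the pair $(X/Z,B')$.

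For the effective cone, combining the identity with the definition of $\fN_S$ and the semiampleness conjecture gives $B'\in\fN_S\iff\kappa(X/Z,B'\Lg)\ge 0\iff D\in\eff_S(X/Z)$, whence $\eff_S(X/Z)\cap U=(\fN_S-B)\cap U$. For the nef cone, $D$ is nef on $X$ iff $K_X+B'$ is nef on the projective $\Q$-factorial variety $X$, iff $(X/Z,B')$ is its own wlc model, iff $B'\in\cup\fP_{0,X}$. Since $B$ itself is a wlc model of $(X/Z,B)$ and lies in the closure of a top-dimensional such class $\fP_X$, Corollary~\ref{cor-facet=face} implies every other $\fP_{0,X}$ meeting a small neighborhood of $B$ is a face of $\ofP_X$, collapsing $\cup\fP_{0,X}$ to $\ofP_X$ locally. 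Semiampleness applied to the wlc model $X$ of $(X/Z,B')$ upgrades $K_X+B'$ from nef to semiample, so $D\sim_\R K_X+B'$ is semiample and $\samp_S(X/Z)\cap U=\nef_S(X/Z)\cap U$.

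For the mobile cone, on a wlc model $Y$ of $(X/Z,B')$ one has $K_Y+B'\Lg_Y=P(B')_Y$ semiample and $F(B')_Y=0$. If $Y$ is isomorphic to $X$ in codimension $1$ (i.e. $B'\in\fP^1_X$), the small birational map $Y\dashrightarrow X$ preserves codimension-$1$ content, giving $F(B')_X=0$ and $M:=P(B')_X$ $\R$-mobile on $X$; combined with the key identity this yields $D\sim_\R M\in\mobile_S(X/Z)$. Conversely, if $D$ is $\R$-mobile then $F(B')_X$ vanishes on every prime divisor of $X$, and Corollary~\ref{cor-property-p-e}(3) lets every prime divisor exceptional for $Y\dashrightarrow X$ be crepantly blown up onto a wlc model, so the wlc model is isomorphic to $X$ in codimension $1$. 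This gives $\mobile_S(X/Z)\cap U=(\cup\fP^1_X-B)\cap U$.

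For the mob$+$exc decomposition, set $M:=P(B')_X$ and $E:=F(B')_X$ on $X$ for any $B'=B+D\in U\cap\fN_S$. The identity $K_X+B'=P(B')_X+F(B')_X$ combined with $K_X+B\sim_\R 0$ yields $D\sim_\R M+E$ and $D\equiv M+E$; $E$ is effective, and very exceptional with respect to the $1$-contraction $X\dashrightarrow X\cn$ from Corollary~\ref{cor-property-p-e}(2), because $F(B')$ is by definition contracted by the Iitaka fibration. Uniqueness of $E$ and of the $1$-contraction follows from the uniqueness of the positive/fixed decomposition of $\sK+B'\Lg$, while $M$ is unique only up to $\sim_\R$ (respectively $\equiv$). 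The main obstacle is the third paragraph: controlling how the b-divisorial positive/fixed decomposition pushes down from a rather high model $V/Z$ to $X$, and pinning down the two-way correspondence between $\fP^1_X$ and $\R$-mobility on $X$ via Corollary~\ref{cor-property-p-e}(3); the two other cone identifications are almost formal consequences of the key identity and the definitions.
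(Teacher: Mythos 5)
Your overall route matches the paper's: translate by $B$, use the identity $D\sim_\R K_X+B'$ for $B'=B+D$ coming from the $0$-log hypothesis, read off cone membership from the resulting models of $(X/Z,B')$, and invoke semiampleness. The effective-cone identification and the mob$+$exc decomposition via $M=P(B')_X$, $E=F(B')_X$ follow the paper's argument.

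There is a genuine gap in the nef/semiample step. You claim Corollary~\ref{cor-facet=face} shows that "every other $\fP_{0,X}$ meeting a small neighborhood of $B$ is a face of $\ofP_X$." That corollary asserts the opposite implication: the interior of a face of a class is \emph{contained in} some class; it does not say that a nearby class is a face of a given one, nor does it produce a top-dimensional class $\fP_X$ with $B\in\ofP_X$, which you assume without argument. What the paper actually uses is the \emph{convexity of $\nef_S(X/Z)$} together with the signature criterion for $\wlc$ (Corollary~\ref{cor-property-p-e}~(1)) and the open property of classes: $(K_X+B',C)$ is linear in $B'$ and nonnegative on the convex set $\nef_S-B$, so it is either identically $0$ or strictly positive on $\Int\nef_S$, hence each $p(C,-)$ has constant signature there; since $e(D,-)$ automatically has constant signature on $\cup\fP_{0,X}$ (fixed part on $X$ vanishes, exceptional $e>0$ by the klt assumption), $\Int\nef_S$ is a single class $\fP_X$ and $\nef_S=\ofP_X$ near $B$. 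Two smaller omissions: you do not address why the unions equal the unions of closures (this needs the limit stability of Lemma~\ref{lemma-closed N}), and you assert $D\equiv M+E$ and the equivalence of $\equiv$ with $\sim_\R$ without invoking that the $\R$-linear mob$+$exc decomposition is essentially numerical by the LMMP (\cite[Def.~3.3, Prop.~3.4]{plflip}); both points are explicitly handled in the paper.
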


\begin{proof}
First we clarify the statement.
Let $B'\in\fP\subseteq\fN_S$ be a boundary in a class $\fP$.
If the pair $(X/Z,B')$ has a wlc model $(Y/Z,B'_Y)$ for which
$X\dashrightarrow Y$ is a small birational isomorphism, that is,
an isomorphism in codimension $1$, we denote the corresponding
class by $\fP=\fP^1_X$.
In particular, if $Y=X$, we have  
 isomorphism in codimension $\le \dim_k X$, or
in dimension $\ge 0$,
and we denote it by $\fP_{0,X}$.
If $X$ is nonprojective or/and non $\Q$-factorial, we allow such
models.  
But if we have to follow Convention \ref{convention}, we need
to replace the models by their small projective $\Q$-factorializations \cite[Corollary 6.7]{isksh}
(cf. Proposition \ref{prop-qfact-proj}).
For $B'\in\fP_{0,X}$, in addition, we suppose that
$(X/Z,B')\dashrightarrow (Y/Z,B'_Y)$ is a small log flop.
Thus we can replace $X/Z$ by a small projective $\Q$-factorialization.

Now we choose a conical 
neighborhood $U$ of $0\in\fD_S$ such
that,

for each $D\in U$, the pair $(X/Z,B')$ with $B'=B+D$ is
a klt initial model of itself, including the boundary condition;

$(U+B)\cap\fP$
is conical for each class $\fP\subseteq\fN_S$
(this is sufficient for the classes $\fP$ with $B\in\ofP$).

\noindent
Such a neighborhood exists by our assumptions and
by Theorem \ref{mainthrm-geography}.

For any divisor $D\in U$,
$$
 D\sim_\R K+B+D=K+B'.
$$

Thus
$$
D \text{ is semiample }\Leftrightarrow K+B' \text{ is semiample};
$$

$$
D \text{ is nef }\Leftrightarrow K+B' \text{ is nef};
$$

$$
D \text{ is $\R$-mobile }\Leftrightarrow K+B' \text{ is $\R$-mobile};
$$

$$
\;D\sim_\R D'\ge 0\Leftrightarrow K+B'\sim_\R D'\ge 0.
$$

This gives the required equalities and mob+exc decompositions.
Indeed, since $(X,B')$ is lc,
the $\R$-mobile part $M$ of $D$ is $\sim_\R P(B')_X$ and
respectively, the very exceptional part $E$ is $F(B')_X$.
The $\R$-linear decomposition gives the numerical one because the former
is essentially numerical by the LMMP (see \cite[Definition 3.3 and Proposition 3.4]{plflip}).
The semiampleness gives the equality $\samp_S(X/Z)=\nef_S(X/Z)$.
The closed property and the equalities with
closures follow from Lemma \ref{lemma-closed N} or Corollary \ref{cor-property-p-e} (1).

Finally, by Corollary \ref{cor-property-p-e} (1), by
convexity of $\nef_S(X/Z)$ and by the open property of classes,
$\Int \nef_S(X/Z)=\fP_X$.
holds for one class $\fP_X$ (cf. \cite[Lemma 2]{orderedterm}).

To construct all the required models, the klt slt LMMP is sufficient, and
for contractions of those
models, the semiampleness for
klt $\Q$-boundaries is sufficient (see Corollary \ref{cor-Qsamp=Rsamp}).
\end{proof}

\begin{corollary}\label{cor-0log cones}
Let $(X/Z,B)$  be a $0$-log pair and $S\subseteq \Supp B$ be a reduced divisor on $X$.
Then the cones
$$
\samp_S(X/Z)=\nef_S(X/Z), \;\mobile_S(X/Z),\eff_S(X/Z)
$$
in $\fD_S$ are
 closed convex rational polyhedral.

Each divisor $D\in\eff_S(X/Z)$ has a unique mob+exc decomposition:
$D\sim_\R M+E$ and $\equiv M+E$, where $M$ is $\R$-mobile and
$E$ is effective very exceptional with respect to a $1$-contraction
given by $M$.
Moreover,
$\equiv$ instead of $\sim_\R$ gives the same cones in $\fD_S$.
\end{corollary}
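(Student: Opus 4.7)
The plan is to deduce everything from Proposition~\ref{prop-0log cones coincide in U} combined with the homogeneity of the cones. All three cones $\samp_S$, $\mobile_S$, $\eff_S$ are closed under positive scaling, so each cone $\fK$ is recovered from its intersection with any neighborhood $U$ of $0$ as $\fK = \R_{\ge 0}\cdot(\fK\cap U)$. Hence it suffices to show that $\fK\cap U$ is a closed rational convex polyhedron for a suitable $U$, since the cone generated by such a polyhedron is automatically a closed rational polyhedral convex cone.

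For $\nef_S=\samp_S$ and $\eff_S$, Proposition~\ref{prop-0log cones coincide in U} identifies them in $U$ with translates of the single polyhedron $\ofP_X$ and of $\fN_S$, respectively. Both of these are closed rational convex polyhedra by Theorem~\ref{mainthrm-geography}, so the argument for these two cones goes through directly; the equality $\samp_S=\nef_S$ is also part of the same proposition.

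The main obstacle is $\mobile_S(X/Z)$, which locally appears only as a finite union $\bigcup(\ofP^1_X - B)$ of rational convex polyhedra, not a priori a single polyhedron. I would resolve this by invoking convexity: $\mobile_S$ is a convex cone, because a positive combination of $\R$-mobile divisors is again $\R$-mobile up to $\sim_\R$. A convex set that is a finite union of rational convex polyhedra must coincide with its own convex hull, and the convex hull of finitely many rational polyhedra is itself rational polyhedral. Applied to $\mobile_S\cap U$, this yields the required polyhedral property, and the cone it generates is then a closed rational polyhedral convex cone.

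For the mob+exc decomposition, Proposition~\ref{prop-0log cones coincide in U} already supplies existence and uniqueness of $D\sim_\R M+E\equiv M+E$ for $D\in U$, via $M\sim_\R P(B')_X$ and $E=F(B')_X$ with $B'=B+D$. To extend this to an arbitrary $D\in\eff_S(X/Z)$, I would pick $\lambda>0$ with $\lambda D\in U$, decompose $\lambda D$ as in the proposition, and rescale by $1/\lambda$; uniqueness transfers because the associated $1$-contraction depends only on the ray $\R_{\ge 0}\cdot D$. The same homogeneity argument transfers the equivalence of $\sim_\R$ and $\equiv$ for mobility and effectivity from the local statement in $U$ to all of $\fD_S$, so that the cones are unchanged when $\sim_\R$ is replaced by $\equiv$.
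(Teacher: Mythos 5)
Your approach — deduce everything from Proposition~\ref{prop-0log cones coincide in U} via the homogeneity of the cones — is essentially the paper's proof, and the explicit convexity argument for $\mobile_S(X/Z)$ is a genuine and useful clarification: the paper's one-line proof relies tacitly on the earlier remark that these cones ``are well-known to be convex,'' and you are right that this is exactly what turns the finite union $\bigcup\ofP^1_X-B$ into a single polyhedron. Your rescaling argument for extending the mob+exc decomposition from $U$ to all of $\eff_S$ is also correct, though Proposition~\ref{prop-0log cones coincide in U} already states that decomposition for every $D\in\eff_S(X/Z)$, so this step is partly a reproof.

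One gap: the corollary allows any reduced divisor $S\subseteq\Supp B$, while Proposition~\ref{prop-0log cones coincide in U} is stated for $S=\Supp B$. Your proposal never addresses $S\subsetneq\Supp B$. The fix is the one the paper uses: prove the result for $S'=\Supp B$ and then observe that each of $\samp_S$, $\nef_S$, $\mobile_S$, $\eff_S$ in $\fD_S$ is the intersection of the corresponding cone in $\fD_{S'}$ with the rational linear subspace $\fD_S\subseteq\fD_{S'}$, and intersecting a closed convex rational polyhedron with a rational subspace yields a set of the same kind; the mob+exc decomposition of any $D\in\eff_S$ is then inherited from its decomposition as an element of $\eff_{S'}$. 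Adding this reduction would close the argument.
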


\begin{proof}
Immediate by Proposition \ref{prop-0log cones coincide in U} and
Theorem \ref{mainthrm-geography} for $S=\Supp B$.
If $S\subseteq\Supp B$, we obtain the required polyhedral properties by
intersection with the subspace $\fD_S$.
\end{proof}

\begin{corollary}\label{cor-0log num cones}
Let $(X/Z,B)$  be a $0$-log pair such that
prime components of $S=\Supp B$ generate $\N^1(X/Z)$.
Then the cones
$$
\sAmp(X/Z)=\Nef(X/Z),\Mob(X/Z),\Eff(X/Z)
$$
in $\N^1(X/Z)$ are closed convex rational polyhedral.
Thus $\Eff(X/Z)$ is the pseudo-effective cone too.

The relations $\equiv$ and $\sim_\R$ coincide on $X/Z$.
Each divisor $D\in\Eff(X/Z)$ has a unique mob+exc decomposition:
$D\equiv M+E$, where $M$ is $\R$-mobile and
$E$ is effective very exceptional with respect to a $1$-contraction
given by $M$.
\end{corollary}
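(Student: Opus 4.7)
The plan is to transfer the polyhedrality in $\fD_S$ supplied by Corollary \ref{cor-0log cones} to $\N^1(X/Z)$ via the linear push-forward $[\ ]\colon\fD_S\to\N^1(X/Z)$ sending each $S_i$ to its class. This map is rational and, by the hypothesis that the prime components of $S$ generate $\N^1(X/Z)$, surjective.

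The first step is to establish that $\equiv$ and $\sim_\R$ coincide on $\R$-divisors of $X/Z$. By the converse in Lemma \ref{lemma-FT=0log}, the generation hypothesis (which forces $B$ to be big) implies $X/Z$ is FT. The standard vanishing $R^1\pi_*\mathcal{O}_X=0$ for FT varieties then yields $\mathrm{Pic}(X/Z)_\R\cong\N^1(X/Z)_\R$, so numerical and $\R$-linear equivalence agree on $\R$-divisors.

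Using this coincidence, I would identify each cone in $\N^1(X/Z)$ with the image of its $\fD_S$ counterpart. For $\Nef$, nefness is numerical, so any lift of a nef class to $\fD_S$ along the surjection $[\ ]$ is itself nef. For $\sAmp$, $\Mob$, $\Eff$, whose definitions permit any representative, the coincidence $\equiv=\sim_\R$ allows passing from a semiample/$\R$-mobile/effective representative to an $\sim_\R$-equivalent one inside $\fD_S$. Thus $[\nef_S(X/Z)]=\Nef(X/Z)$, $[\samp_S(X/Z)]=\sAmp(X/Z)$, $[\mobile_S(X/Z)]=\Mob(X/Z)$, and $[\eff_S(X/Z)]=\Eff(X/Z)$.

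Finally, by Corollary \ref{cor-0log cones} the cones in $\fD_S$ are closed convex rational polyhedral, hence finitely generated by rational vectors (Minkowski--Weyl). Their images under the rational surjection $[\ ]$ are finitely generated by rational vectors, hence closed convex rational polyhedral. In particular $\Eff(X/Z)$ is closed and therefore coincides with the pseudo-effective cone. The unique mob$+$exc decomposition descends: for $D\in\Eff(X/Z)$, lift to $D'\in\eff_S(X/Z)$, apply the decomposition $D'\sim_\R M'+E'$, and push down to $D\equiv[M']+[E']$; uniqueness in $\N^1(X/Z)$ follows from uniqueness up to $\sim_\R$ in $\fD_S$ together with $\equiv=\sim_\R$. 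The main obstacle is the first step, since cleanly establishing $\equiv=\sim_\R$ in the relative FT setting requires cohomological vanishing machinery that lies outside the strictly LMMP-based toolkit emphasized elsewhere in the paper.
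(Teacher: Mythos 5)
Your proposal is correct and follows essentially the same route as the paper: both push the $\fD_S$-cones forward through the surjective linear map $[\ ]\colon\fD_S\to\N^1(X/Z)$, with the key ingredient being that $\equiv$ and $\sim_\R$ can be interchanged. One small organizational difference: the paper derives cone surjectivity from the weaker fact, already recorded in Corollary \ref{cor-0log cones}, that $\equiv$ and $\sim_\R$ produce the same cones in $\fD_S$, and then states the full coincidence $\equiv=\sim_\R$ on $X/Z$ as a separate fact (citing \cite[Lemma~4.1.12]{choi} for projective $X/Z$ and rationality of singularities in general); you front-load the full coincidence via vanishing of $R^1\pi_*\mathcal O_X$. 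Your vanishing argument, as written, directly handles $\R$-Cartier divisors; since the paper's $\N^1(X/Z)$ consists of $\R$-Weil divisors modulo $\equiv$ and FT varieties need not be $\Q$-factorial, one should invoke rationality of singularities (as the paper does) to cover the non-Cartier Weil divisors, but this is a minor imprecision rather than a gap in the overall argument.
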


The corollary gives a new result even for $\Nef(X/Z)$ when $X/Z$ is nonprojective.

\begin{proof}
The required properties of the linear cones in $\fD_S$
imply the same properties of the numerical cones in $\N^1(X/Z)$.
Indeed, the natural linear map
$$
\begin{array}{rl}
[\ ]\colon \fD_S &\rightarrow  \N^1(X/Z)\\
            D  &\mapsto [D],\text{ the numerical class of } D,
\end{array}
$$
maps the cones
$$
\samp_S(X/Z),\nef_S(X/Z)
,\mobile_S(X/Z),\eff_S(X/Z)\rightarrow\qquad\qquad\qquad\qquad$$
$$
\qquad\qquad\sAmp(X/Z),\Nef(X/Z),\Mob(X/Z),\Eff(X/Z) \text{, respectively.}
$$
Since  prime components $S_i$ of $S$ generate $\N^1(X/Z)$,
the map $[\ ]$ and the induced maps of
cones are surjective, that gives the above implication
by Corollary \ref{cor-0log cones}.
The surjectivity for cones uses the following:
$\sim_\R$ can be replaced by $\equiv$ in the definition of cones in $\fD_S$
by the same corollary.
Moreover,
the coincidence of $\equiv$ and $\sim_{\R}$ holds on $X/Z$.
This is well-known for projective $X/Z$
\cite[Lemma 4.1.12]{choi} (the proof uses the assumption that $\Char k=0$;
possibly, Conjecture \ref{conj-semiample} in the positive characteristic allows us to omit the assumption).
The general case uses the rationality of singularities of $X$.

All required models in construction
 will be FT varieties by Lemma \ref{lemma-FT=0log}.
Semiampleness holds for
FT varieties in characteristic $0$:
for $\Q$-divisors, it holds by the base freeness of \cite{nonvanishing},
that is sufficient for $\R$-divisors by Corollary \ref{cor-Qsamp=Rsamp}
(cf. \cite[Corollary~10]{orderedterm}).

By Lemma \ref{lemma-FT=0log}, the big klt slt LMMP is sufficient
for the FT varieties.

\end{proof}

\begin{corollary}\label{cor-positive cones}
Let $X/Z$ be an FT variety, and $S$ be a
reduced divisor of $X$.
Then the cones
$$
\begin{array}{l}
\sAmp(X/Z)=\Nef(X/Z),\Mob(X/Z),\Eff(X/Z)\text { and }\\
\samp_S(X/Z)=\nef_S(X/Z),\mobile_S(X/Z),\eff_S(X/Z)
\end{array}
$$
in $\N^1(X/Z)$ and in $\fD_S$,
 respectively are
closed convex rational polyhedral.

The relations $\equiv$ and $\sim_\R$ coincide on $X/Z$.
Each class $[D]\in\Eff(X/Z)$ (respectively divisor $D\in\eff_S(X/Z)$)
has a unique decomposition
mob+exc: $[D]=[M]+[E]$ (respectively $D\sim_\R M+E$),
where $M$ is $\R$-mobile and
$E$ is effective very exceptional with respect to a $1$-contraction
given by $M$.
\end{corollary}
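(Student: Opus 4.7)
The plan is to bootstrap Corollaries \ref{cor-0log cones} and \ref{cor-0log num cones} via Lemma \ref{lemma-FT=0log}, which converts any FT variety with a prescribed reduced divisor into a big $0$-log pair whose boundary contains that divisor.

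First I would dispatch the linear cones in $\fD_S$. Applying Lemma \ref{lemma-FT=0log} directly to $S$, I obtain a big $\R$-boundary $B$ on $X$ such that $(X/Z,B)$ is a $0$-log pair and $S \subseteq \Supp B$. Corollary \ref{cor-0log cones} applied to this $B$ with the reduced divisor $S$ gives at once that $\samp_S(X/Z) = \nef_S(X/Z)$, $\mobile_S(X/Z)$ and $\eff_S(X/Z)$ are closed convex rational polyhedral, together with the mob+exc decomposition $D \sim_\R M + E$ (interchangeable with $\equiv$ on $\fD_S$) for every $D \in \eff_S(X/Z)$.

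For the numerical cones in $\N^1(X/Z)$ I need the hypothesis of Corollary \ref{cor-0log num cones}, namely that the prime components of some $0$-log boundary generate $\N^1(X/Z)$. Since $X/Z$ is projective and $\N^1(X/Z)$ is finite-dimensional, I enlarge $S$ to a reduced divisor $S' \supseteq S$ whose prime components span $\N^1(X/Z)$, for instance by adjoining finitely many general members of an ample linear system over $Z$. Applying Lemma \ref{lemma-FT=0log} to $S'$ yields a big $0$-log boundary $B'$ with $\Supp B' \supseteq S'$, and then Corollary \ref{cor-0log num cones} applied to $(X/Z,B')$ delivers the closed convex rational polyhedral property of $\sAmp(X/Z) = \Nef(X/Z)$, $\Mob(X/Z)$ and $\Eff(X/Z)$, the coincidence of $\equiv$ and $\sim_\R$ on $X/Z$, and the mob+exc decomposition for every class in $\Eff(X/Z)$.

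The argument is essentially formal given Lemma \ref{lemma-FT=0log}; the only mildly subtle step is the enlargement $S \subseteq S'$, which is performed solely to secure the generation hypothesis of Corollary \ref{cor-0log num cones}. Because the $\fD_S$ statement has already been established independently using the original $B$, no comparison between the $S$- and $S'$-versions of the linear cones is necessary, and the two halves of the corollary combine directly.
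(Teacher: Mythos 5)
Your proof is correct and follows essentially the same route as the paper, which simply cites Lemma \ref{lemma-FT=0log} and Corollaries \ref{cor-0log cones}, \ref{cor-0log num cones} together with the finiteness of $\dim_\R\N^1(X/Z)$. You have merely spelled out the implicit step of enlarging $S$ to a reduced divisor $S'$ whose components generate $\N^1(X/Z)$ (which is where the finiteness of the Weil--Picard number is used) before invoking Corollary \ref{cor-0log num cones}.
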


\begin{proof}
Immediate by  Lemma \ref{lemma-FT=0log} and Corollaries \ref{cor-0log cones}, \ref{cor-0log num cones}.
We use here the finiteness of the Weil-Picard number:
$\dim_\R \N^1(X/Z)<+\infty$.
\end{proof}

We will give a natural decomposition of the cones $\Eff(X/Z), \Mob(X/Z)$ in $\N^1(X/Z)$
and  $\eff_S(X/Z), \mobile_S(X/Z)$ in $\fD_S$
in the next section.

\bs

\section{Finiteness results for log models}\label{section4-finiteness}
\bs

The support of a b-divisor $D=\sum d_iD_i$ is defined as the reduced b-divisor $\Supp D:=\sum_{d_i\neq 0} D_i$.
So, a reduced b-divisor $D=\sum D_i$ is identified with its support: $D=\Supp D$.

Let $(X/Z,B)$ be a $0$-log pair and $S=\Supp B$.
Then the geography of $\fN_S$ in Proposition \ref{prop-0log cones coincide in U} {\em induces\/}
a finite decomposition of the cone $\eff_S(X/Z)$ into  open convex
rational polyhedral subcones $\fP$.
The open and  polyhedral properties follow
from that of classes in
$\fN_S$ by Theorem \ref{mainthrm-geography}.
Note that $B$ is an internal point of $\fB_S$.
This decomposition gives also a decomposition of $\eff_{S'}(X/Z)$
for any reduced $S'\le S$
by Corollary \ref{cor-0log cones};
its subcones
will be called also classes.
To give an internal interpretation of this
decomposition, we introduce the following relation.

For divisors $D,D'\in \eff_S(X/Z)$, $D\me D'$ if, for their mob+exc decompositions
$D\sim_\R M+E,D'\sim_\R M'+E'$,
the rational $1$-contractions $X\dashrightarrow Y_\fP=Y=Y'/Z$ given by $M,M'$
are the same, and $\Supp E=\Supp E'$.
The relation $\sim_\R$ can be replaced by $\equiv$ and
used for classes: $[D]\me[D']$.
The corresponding {\em induced\/} classes of $\Eff(X/Z)$
in $\N^1(X/Z)$ will be denoted by $\rP$
and their  contractions by $X\dashrightarrow Y_{\rP}/Z$.
The rational $1$-contraction depends only on the class $\fP$  and $\rP$,  respectively,
and will be referred to as the $D$-contraction.
If $X/Z$ is a birational contraction with respect to $D$, then $Y_P/Z$ is
small and well-known as a $D$-flip of $X/Z$ \cite[p. 2684]{3fold-logmod}.
By construction, it is also a log flop of $(X/Z,B)$ in the latter case.

(\emph{Warning}: $D$-contraction is not a contraction with respect to a divisor $D$ in the $D$-MMP.)

In particular, this gives polyhedral decompositions of cones
$\Eff(X/Z)$, $\Mob(X/Z)$ in $\N^1(X/Z)$
and  $\eff_S(X/Z),$ $ \mobile_S(X/Z)$ in $\fD_S$ for
$0$-log pairs $(X/Z,B)$, when prime components of $S=\Supp B$ generate $\N^1(X/Z)$,
and for FT varieties $X/Z$.
The decomposition of mobile cones $\Mob(X/Z),$ $\mobile_S(X/Z)$ is
interesting for applications (see Proposition \ref{prop-FT mobile cones}) rather than their general structure:
each class of those cones is a class in $\Eff(X/Z)$ or $\eff_S(X/Z)$
respectively and it is determined by its $1$-contraction ($E=0$).
So, we focus on classes of effective cones.

Let $g\colon X\dashrightarrow Y/Z$ be a $1$-contraction with projective $Y/Z$.
A polarization on $Y/Z$ is an ample $\R$-divisor $H$.
The polarization is called \emph{supported} in $S$ if
$g^*H+E\sim_\R D\in\fD_S$
for some effective divisor $E$ on $X$, very exceptional on $Y$, where
$g^*H$ is the pull back (transform) of $H$ \cite[p. 84]{plflip}; in general, $g^*H$ is an $\R$-Weil divisor.

\begin{corollary}\label{cor-0log eff decomposition}
Let $(X/Z,B)$  be a $0$-log pair and $S\subseteq \Supp B$ be a reduced divisor on $X$.
Then the decomposition of the cone $\eff_S(X/Z)$ induced by
$\fN_S$ can be given by the mob+exc relation $\me$.

There are finitely many rational $1$-contractions $X\dashrightarrow Y_\fP/Z$, and
$Y_\fP/Z$ is projective with a polarization supported in $S$.
Conversely, any such contraction corresponds to a
class $\fP$.
\end{corollary}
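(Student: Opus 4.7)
The plan is to transfer the wlc geography on $\fN_S$ to a decomposition of $\eff_S(X/Z)$ via the translation $D\mapsto B':=B+D$, and then verify this induced decomposition coincides with the $\me$-decomposition. By Proposition \ref{prop-0log cones coincide in U}, in a conical neighborhood $U$ of $0\in\fD_S$ we have $\eff_S(X/Z)\cap U=(\fN_S-B)\cap U$, and each wlc class $\fP$ with $B\in\ofP$ contributes the piece $(\fP-B)\cap U$; positive-scalar extension turns this into a finite decomposition of $\eff_S(X/Z)$ into convex rational polyhedral subcones. The same proposition identifies, for $D$ in the piece corresponding to $\fP$, the mob+exc decomposition as $M\sim_\R P(B')_X$ and $E=F(B')_X$, so the $D$-contraction $X\dashrightarrow Y_\fP$ is precisely the Iitaka/lc fibration of any wlc model of $(X/Z,B')$, and $\Supp E$ is encoded by the signatures of $F(B')$.

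For the first assertion of the corollary I then reduce to proving $B'\wlc B''\Leftrightarrow D\me D'$, where $D=B'-B$ and $D'=B''-B$. This follows immediately from Proposition \ref{prop-equivalence}, which gives $\wlc\Leftrightarrow\fix\cap\lcm$. The $\lcm$ condition is by definition the statement that the two Iitaka fibrations are naturally isomorphic, which translates into the $D$- and $D'$-contractions being the same rational $1$-contraction $X\dashrightarrow Y_\fP/Z$. The $\fix$ condition is the coincidence of multiplicity signatures of $F(B')$ and $F(B'')$, which, through $E=F(B')_X$ and $E'=F(B'')_X$, is exactly $\Supp E=\Supp E'$. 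Combining, $B'\wlc B''$ iff $D\me D'$, identifying the two decompositions.

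For the remaining assertions, finiteness of the contractions $X\dashrightarrow Y_\fP$ is inherited from the finiteness of wlc classes in Theorem \ref{mainthrm-geography}. Given $\fP$, pick $B'\in\fP$ and a wlc model $(Y/Z,B'\Lg_Y)$; by the semiampleness conjecture, $K_Y+B'\Lg_Y$ is semiample, its Iitaka fibration $f\colon Y\to Y_\fP$ exhibits $Y_\fP/Z$ as projective, and $K_Y+B'\Lg_Y=f^*H$ for an ample $H$ on $Y_\fP$. Passing to a common resolution of $g\colon X\dashrightarrow Y_\fP$ and pushing down to $X$ gives $g^*H\sim_\R P(B')_X$, so
\[
g^*H+F(B')_X\sim_\R D\in\fD_S,
\]
realizing $H$ as a polarization supported in $S$. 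Conversely, given any rational $1$-contraction $g\colon X\dashrightarrow Y/Z$ with projective $Y$ and polarization $H$ supported in $S$, the relation $g^*H+E\sim_\R D\in\fD_S$ produces a divisor $D\in\eff_S(X/Z)$, which must lie in some class $\fP$ of the induced decomposition; by uniqueness of the mob+exc decomposition established in Corollary \ref{cor-0log cones}, the $D$-contraction of $\fP$ coincides with $g$.

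I expect the main technical obstacle to be the last converse: checking cleanly that the $D$-contraction of the class containing $D$ is literally $g$ (rather than some coarser contraction through which $g$ factors), which is where the polarization-supported-in-$S$ hypothesis is essential—it is what forces $D\in\fD_S$, placing $D$ inside the geography rather than merely inside $\Eff(X/Z)$, and it is what pins down the very exceptional correction $E$ on the nose up to $\sim_\R$.
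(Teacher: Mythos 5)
Your proof takes essentially the same route as the paper: identify the induced decomposition with the $\me$-decomposition by matching $\me$ against $\wlc=\fix\cap\lcm$ from Proposition~\ref{prop-equivalence}, then get finiteness from Theorem~\ref{mainthrm-geography} and read off the polarization from the Iitaka fibration. The converse is likewise handled via uniqueness of the mob+exc decomposition.

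There is, however, a genuine gap in the $\fix$ step. You write that the $\fix$ condition ``through $E=F(B')_X$ and $E'=F(B'')_X$, is exactly $\Supp E=\Supp E'$.'' That is not an identity: $\fix$ requires matching signatures of the multiplicities of the b-divisors $F(B')$, $F(B'')$ over \emph{all} prime b-divisors, whereas $\Supp E=\Supp E'$ only constrains their traces on $X$ (i.e., the multiplicities over divisors nonexceptional on $X$). One direction is trivial, but the direction you actually need -- from $\Supp E=\Supp E'$ to the full b-divisor statement -- requires an extra observation: for every prime b-divisor $E$ exceptional on $X$ (hence not in $S$), the initial discrepancy $\ild(E,X,B')=0$ and the wlc model $(Y/Z,B'\Lg_Y)$ is klt (since $B'=B+D$ with $D\in U$ small and $(X,B)$ a $0$-log pair), so $e(E,B')=a(E,Y,B'\Lg_Y)>0$ automatically, for all boundaries in play. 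Thus agreement of the supports on $X$ forces agreement of all signatures. The paper supplies exactly this argument; without it, the identification $D\me D'\Rightarrow B'\wlc B''$ is not justified.
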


\begin{proof} Immediate by Proposition \ref{prop-0log cones coincide in U}, Corollary \ref{cor-0log cones}
and Corollary \ref{cor-property-p-e} (1).
Note that, for $D,D'\in U$ such that
$D+B,D'+B\in\fN_S$,
in the proposition,
the contraction condition of $\me$
means that $B+D\mob B+D'$, and
the support condition that $\Supp F(B+D)_X=\Supp F(B+D')_X$.
But for any prime divisor $E$ exceptional  on $X$,
$e(E,B+D),e(E,B+D')>0$ by the klt property in $U$.
Thus the support condition means that $B+D\fix B+D'$, or
equivalently,
$F(B+D),F(B+D')$ have the same signatures.

The finiteness of rational $1$-contractions $X\dashrightarrow Y_\fP/Z$
follows from the finiteness of geography by Theorem~\ref{mainthrm-geography}.
By construction, each $Y_\fP/Z$ is projective with
a polarization supported in $S$.
The converse is by definition.
\end{proof}

\begin{corollary}\label{cor-0log num eff decomposition}
Let $(X/Z,B)$  be a $0$-log pair such that components of $S=\Supp B$
generate $\N^1(X/Z)$.
Then the decomposition of the
cone $\Eff(X/Z)$ induced by
$\fN_S$ can be given by the mob+exc relation $\me$.

There are finitely many rational $1$-contractions $X\dashrightarrow Y_{\rP}/Z$, and
$Y_{\rP}/Z$ is projective.
Conversely, any such contraction corresponds to a class $\rP$.
\end{corollary}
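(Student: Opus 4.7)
The strategy is to transfer Corollary \ref{cor-0log eff decomposition} from the linear setting in $\fD_S$ to the numerical setting in $\N^1(X/Z)$ via the natural linear map $[\ ]\colon \fD_S\to\N^1(X/Z)$, exactly as was done in the proof of Corollary \ref{cor-0log num cones}. By the hypothesis that the prime components $S_i$ generate $\N^1(X/Z)$, this map is surjective; moreover, by Corollary \ref{cor-0log num cones}, the relations $\equiv$ and $\sim_\R$ coincide on $X/Z$, so the image of any class $\fP\subseteq\eff_S(X/Z)$ of the decomposition induced by $\fN_S$ is well-defined as a subcone $\rP\subseteq \Eff(X/Z)$. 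Together with the polyhedral statement of Corollary \ref{cor-0log num cones}, this produces a finite decomposition of $\Eff(X/Z)$ into rational polyhedral subcones $\rP$.

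The next step is to check that this induced decomposition is exactly the one cut out by $\me$. The point is that mob+exc decomposition is intrinsically numerical: by Corollary \ref{cor-0log num cones}, for $[D]\in\Eff(X/Z)$ the decomposition $[D]=[M]+[E]$ with $M$ $\R$-mobile and $E$ very exceptional for the $1$-contraction determined by $M$ is unique, and the associated rational contraction $X\dashrightarrow Y_{\rP}/Z$ and the support $\Supp E$ depend only on $[D]$. Consequently, whenever $D,D'\in\eff_S(X/Z)$ satisfy $[D]=[D']$, they lie in the same $\me$-class in $\fD_S$ in the sense of Corollary \ref{cor-0log eff decomposition}. So the fibers of $[\ ]$ are unions of $\me$-classes, and the partition of $\Eff(X/Z)$ induced from $\eff_S(X/Z)$ agrees with the one defined directly by the numerical version of $\me$.

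Finiteness of the rational $1$-contractions $X\dashrightarrow Y_{\rP}/Z$ follows at once: they form a subset of the set of contractions produced in Corollary \ref{cor-0log eff decomposition}, which is already finite by Theorem \ref{mainthrm-geography}. Projectivity of each $Y_\rP/Z$ is inherited from the linear statement, since the target variety is literally the same. For the converse, any rational $1$-contraction $X\dashrightarrow Y/Z$ that arises as a $D$-contraction for some $[D]\in\Eff(X/Z)$ admits, by surjectivity of $[\ ]$, a lift $D'\in\eff_S(X/Z)$ with $[D']=[D]$, and thus arises from a class $\fP$ in the linear decomposition, whose image is the desired $\rP$.

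The main obstacle is ensuring that the $\me$-structure actually descends through $[\ ]$, i.e., that two linearly inequivalent boundaries with the same numerical class cannot produce different $D$-contractions or different exceptional supports. This is precisely guaranteed by the numerical uniqueness of the mob+exc decomposition in Corollary \ref{cor-0log num cones}, which in turn relied on the essentially numerical nature of the $\R$-linear decomposition produced by the LMMP. Once this is in place, every other step is a direct image computation under a surjective linear map.
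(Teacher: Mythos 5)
Your proposal is correct and follows the same route as the paper, whose own proof is simply to cite Corollary~\ref{cor-0log eff decomposition} and Corollary~\ref{cor-0log num cones} as making the result immediate. You have merely made explicit the transfer through the surjective linear map $[\ ]\colon\fD_S\to\N^1(X/Z)$, and the crucial point that by Corollary~\ref{cor-0log num cones} the mob+exc decomposition is numerically well-defined (since $\equiv$ and $\sim_\R$ coincide), which is exactly what the paper implicitly relies on.
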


\begin{proof}Immediate by
the above corollary and Corollary \ref{cor-0log num cones}.
\end{proof}

\begin{corollary}\label{cor-FT eff decomposition}
Let $X/Z$ be an FT variety and $S$ be a reduced divisor on $X$.
Then the decomposition of cones $\Eff(X/Z),\eff_S(X/Z)$ induced by
$\fN_S$ can be given by the mob+exc relation $\me$.

There are finitely many rational $1$-contractions $X\dashrightarrow Y_{\rP},Y_\fP/Z$, and
$Y_{\rP},Y_\fP/Z$ are projective with a polarization supported in $S$ for $Y_\fP$.
Conversely, any such contraction corresponds to a class $\rP,\fP$.
\end{corollary}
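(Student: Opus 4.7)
The plan is to reduce the FT corollary to the two previously established $0$-log corollaries \ref{cor-0log eff decomposition} and \ref{cor-0log num eff decomposition} via Lemma \ref{lemma-FT=0log}. First I would enlarge the given reduced divisor $S$ to a reduced divisor $S' \supseteq S$ whose prime components generate $\N^1(X/Z)$; this is possible since $\dim_\R \N^1(X/Z) < +\infty$ (cf.\ Corollary \ref{cor-positive cones}), so one simply adjoins finitely many prime divisors to $S$. Since $X/Z$ is FT, Lemma \ref{lemma-FT=0log} applied to $S'$ produces a big $\R$-boundary $B$ on $X$ such that $(X/Z, B)$ is a $0$-log pair and $S' \subseteq \Supp B =: T$; a fortiori the prime components of $T$ generate $\N^1(X/Z)$.

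Next I would apply the $0$-log corollaries to $(X/Z, B)$. Corollary \ref{cor-0log eff decomposition}, used with the subdivisor $S \subseteq T$, yields the decomposition of $\eff_S(X/Z)$ induced by $\fN_S$ as the mob+exc equivalence classes $\fP$ under $\me$, together with the finiteness of the rational $1$-contractions $X \dashrightarrow Y_\fP/Z$, their projectivity over $Z$, and the existence of a polarization supported in $S$. Since the components of $T$ generate $\N^1(X/Z)$, Corollary \ref{cor-0log num eff decomposition} applies and delivers the analogous decomposition of $\Eff(X/Z) \subset \N^1(X/Z)$ together with finitely many projective rational $1$-contractions $X \dashrightarrow Y_\rP/Z$ indexed by the numerical classes $\rP$. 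The coincidence of $\equiv$ and $\sim_\R$ on $X/Z$ from Corollary \ref{cor-positive cones} guarantees that the numerical classes $\rP$ are precisely the images under $[\,\cdot\,] \colon \fD_T \to \N^1(X/Z)$ of the linear classes $\fP$, so the decomposition of $\Eff(X/Z)$ is likewise induced by $\fN_S$ via the composition $\fD_S \hookrightarrow \fD_T \to \N^1(X/Z)$.

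For the converse direction, given any rational $1$-contraction $X \dashrightarrow Y/Z$ with $Y$ projective, and in the linear case a polarization $H$ on $Y$ supported in $S$ (meaning $g^*H + E \sim_\R D \in \fD_S$ for some effective $E$ very exceptional on $Y$), the divisor $D$ represents an element of $\eff_S(X/Z)$ whose $D$-contraction is the given one, hence $D$ belongs to a well-defined class $\fP$; the numerical case is identical. The main, mild, obstacle is the enlargement step and the verification that restricting the decomposition from $\eff_T(X/Z)$ to $\eff_S(X/Z)$ preserves the $\me$-description — but this is immediate by intersecting with the subspace $\fD_S \subset \fD_T$, exactly as already used in the proof of Corollary \ref{cor-0log cones}.
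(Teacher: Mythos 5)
Your proof is correct and follows essentially the same route as the paper's one-line proof ("Immediate by Lemma~\ref{lemma-FT=0log} and Corollary~\ref{cor-0log num eff decomposition}"). The one detail worth noting is that you explicitly supply the enlargement of $S$ to a divisor whose components generate $\N^1(X/Z)$ before invoking Lemma~\ref{lemma-FT=0log} — this step is needed to satisfy the hypothesis of Corollary~\ref{cor-0log num eff decomposition}, and the paper leaves it implicit (it is the same point flagged in the proof of Corollary~\ref{cor-positive cones} by the remark on finiteness of the Weil--Picard number); you also cite Corollary~\ref{cor-0log eff decomposition} for the linear cone $\eff_S(X/Z)$, which the paper again leaves tacit since it is subsumed in the chain leading to Corollary~\ref{cor-0log num eff decomposition}.
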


\begin{proof}
Immediate by Lemma \ref{lemma-FT=0log} and Corollary \ref{cor-0log num eff decomposition}.
\end{proof}

\begin{corollary}\label{cor-0log finite contractions}
Let $(X/Z,B)$  be a $0$-log pair such that
components of $S=\Supp B$
generate $\N^1(X/Z)$.
Then there are only finitely many rational $1$-contractions $X\dashrightarrow Y/Z$
with projective $Y/Z$.
\end{corollary}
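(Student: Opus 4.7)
The plan is essentially to read off this finiteness from the decomposition of the effective cone established just above. Applying Corollary \ref{cor-0log num eff decomposition} with $S = \Supp B$, the cone $\Eff(X/Z)$ decomposes into finitely many classes $\rP$, and each one carries a rational $1$-contraction $X \dashrightarrow Y_{\rP}/Z$ with $Y_{\rP}/Z$ projective; conversely, the corollary asserts that any projective rational $1$-contraction corresponds to some class $\rP$. Thus the statement will follow as soon as this converse is verified in the setting at hand.

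To establish the converse, given a rational $1$-contraction $g\colon X \dashrightarrow Y/Z$ with $Y$ projective, I would pick any ample divisor $H$ on $Y/Z$ and add to its Weil pullback $g^{*}H$ a suitable effective divisor $E$ on $X$ that is very exceptional over $Y$, so that $g^{*}H + E$ is $\R$-Cartier and represents a class in $\Mob(X/Z) \subseteq \Eff(X/Z)$. Because the prime components of $S$ generate $\N^1(X/Z)$, this class can be represented by some $D \in \fD_S$, which then belongs to a class $\fP$ of the induced decomposition of $\eff_S(X/Z)$. After translating by $-B$ and invoking Proposition \ref{prop-0log cones coincide in U}, the mobile part of $D$ is $\sim_\R P(B + D)_X$, whose associated rational $1$-contraction is the wlc Iitaka fibration $Y_\fP/Z$; since $H$ was chosen ample on $Y$, this fibration must agree with the given $g$, so $Y/Z \cong Y_\fP/Z$.

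The only mildly delicate point will be arranging $g^{*}H + E$ to represent a class in $\fD_S$ and identifying its $1$-contraction with $g$; both are handled by the mob+exc uniqueness in Corollary \ref{cor-0log num cones} together with the conical description of the cones in Proposition \ref{prop-0log cones coincide in U}. Once this is in place, the desired finiteness is immediate, since the set of classes $\rP$ of $\Eff(X/Z)$, and hence the set of distinct targets $Y_\rP/Z$, is finite by Theorem \ref{mainthrm-geography}.
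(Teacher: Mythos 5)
Your proposal is correct and follows the paper's own route: the paper's proof is simply ``Immediate by Corollary \ref{cor-0log num eff decomposition},'' which already asserts both the finiteness of the classes $\rP$ in $\Eff(X/Z)$ and that any projective rational $1$-contraction corresponds to some $\rP$, so the extra re-verification of the converse you supply is redundant (as you yourself note, the cited corollary already states it). One small slip worth fixing: when $E>0$ the class $[g^{*}H+E]$ need not lie in $\Mob(X/Z)$ (only in $\Eff(X/Z)$), since adding a genuinely exceptional $E$ destroys mobility; this does not affect the argument because membership in $\Eff$ is all that is used.
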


\begin{proof}Immediate by Corollary \ref{cor-0log num eff decomposition}.
\end{proof}
\begin{corollary}\label{cor-FT finite contractions}
Let $X/Z$ be an FT variety.
Then there are only finitely many rational $1$-contractions $X\dashrightarrow Y/Z$
with projective $Y/Z$.
\end{corollary}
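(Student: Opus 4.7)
The plan is to reduce Corollary \ref{cor-FT finite contractions} to the already established Corollary \ref{cor-0log finite contractions} via Lemma \ref{lemma-FT=0log}. The passage from an FT variety to a $0$-log pair is exactly what that lemma is designed for; the only wrinkle is that Corollary \ref{cor-0log finite contractions} requires the support of the boundary to generate the numerical space.

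First I would exploit projectivity of $X/Z$, which is part of the definition of an FT variety. Since $X/Z$ is projective, the relative Weil--Picard number $\dim_\R \N^1(X/Z)$ is finite. I would then choose a reduced divisor $S$ on $X$ whose prime components generate $\N^1(X/Z)$ as an $\R$-vector space. Such an $S$ exists: starting from any finite collection of Cartier divisor classes that span $\N^1(X/Z)$, write each as a difference of very ample divisors and, by Bertini, replace them by general prime effective representatives; their sum with multiplicity one is the desired $S$.

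Next I would invoke Lemma \ref{lemma-FT=0log} applied to this particular $S$: there exists an $\R$-boundary $B$ on $X$ such that $(X/Z,B)$ is a $0$-log pair with $S\subseteq\Supp B$. In particular, the prime components of $\Supp B$ contain those of $S$ and therefore still generate $\N^1(X/Z)$. Thus the hypothesis of Corollary \ref{cor-0log finite contractions} is satisfied by $(X/Z,B)$.

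Finally, I would apply Corollary \ref{cor-0log finite contractions} to $(X/Z,B)$ to conclude that there are only finitely many rational $1$-contractions $X\dashrightarrow Y/Z$ with projective $Y/Z$; this is exactly what we want, since the set of such rational $1$-contractions depends only on $X/Z$, not on the auxiliary boundary $B$. The only real ingredient beyond a quotation is the construction of $S$ with prime components spanning $\N^1(X/Z)$, and this is the step I would treat with care, but it is routine given projectivity of $X$ and Bertini. No LMMP input is needed at this stage beyond what was already used in the quoted corollaries.
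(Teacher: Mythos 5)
Your proposal is correct and matches the paper's own argument, which cites Lemma \ref{lemma-FT=0log} and Corollary \ref{cor-0log finite contractions} as giving the result "immediately." You simply spell out the implicit step of choosing a reduced divisor $S$ whose components generate the (finite-dimensional) $\N^1(X/Z)$ before invoking the lemma, which is exactly what the paper's one-line proof relies on.
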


\begin{proof} Immediate by Lemma \ref{lemma-FT=0log} and Corollary \ref{cor-0log finite contractions}.
\end{proof}
\begin{theorem}\label{thrm-finite wlc big klt}
Let $(X/Z,B)$ be a klt log pair such that $K+B$ is big.
Then there are only finitely many projective
wlc models $(Y_i/Z,B\Lg_{Y_i})$ of $(X/Z,B)$.
\end{theorem}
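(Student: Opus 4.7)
The plan is to pass to the log canonical model of $(X,B)$ and use it to convert projective wlc models of $(X/Z,B)$ into rational $1$-contractions of a relative $0$-log pair over the log canonical model, where Corollary~\ref{cor-0log finite contractions} then delivers finiteness. The bigness of $K+B$ enters twice: first to guarantee existence of the log canonical model, and second to supply enough effective divisors so that the generation hypothesis of that corollary can be arranged.

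\emph{Step 1 (Log canonical model).} Under the LMMP and semiampleness assumptions, the klt pair $(X,B)$ with big $K+B$ admits a unique log canonical model $X^{\mathrm{can}}/Z$. Given any projective wlc model $(Y_i/Z,B\Lg_{Y_i})$, semiampleness of $K_{Y_i}+B\Lg_{Y_i}$ (nef by the wlc property, plus the semiampleness conjecture) combined with its bigness yields a birational log crepant morphism $f_i\colon Y_i\to X^{\mathrm{can}}/Z$ with $K_{Y_i}+B\Lg_{Y_i}=f_i^{*}(K_{X^{\mathrm{can}}}+B^{\mathrm{can}})$. Relative to $X^{\mathrm{can}}$ this gives $K_{Y_i}+B\Lg_{Y_i}\sim_{\R}0$, so $(Y_i/X^{\mathrm{can}},B\Lg_{Y_i})$ is a relative $0$-log pair.

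\emph{Step 2 (Reduction to $0$-log finiteness).} By Proposition~\ref{prop-qfact-proj} it suffices to count projective $\Q$-factorial wlc models; fix one such reference $(Y_0,B\Lg_{Y_0})$. Using the bigness of $K+B$ together with Kodaira's lemma applied on $Y_0$, we produce a boundary $B_0'$ on $Y_0$ with $B_0'\sim_{\R}B\Lg_{Y_0}$ over $X^{\mathrm{can}}$, with $(Y_0,B_0')$ klt, and with the prime components of $\Supp B_0'$ generating $\N^{1}(Y_0/X^{\mathrm{can}})$. The pair $(Y_0/X^{\mathrm{can}},B_0')$ is then a relative $0$-log pair satisfying the hypotheses of Corollary~\ref{cor-0log finite contractions}, which yields only finitely many rational $1$-contractions $Y_0\dashrightarrow Y/X^{\mathrm{can}}$ with projective target $Y$.

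\emph{Step 3 (Conclusion).} Every projective $\Q$-factorial wlc model $(Y_i,B\Lg_{Y_i})$ of $(X,B)$ determines, via the composition $Y_0\dashrightarrow Y_i/X^{\mathrm{can}}$, such a rational $1$-contraction; non-isomorphic models give non-isomorphic targets. Hence there are finitely many projective $\Q$-factorial wlc models, and by Proposition~\ref{prop-qfact-proj} finitely many projective wlc models of $(X,B)$ in total.

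\emph{Main obstacle.} The delicate point is the boundary enlargement in Step~2: one must produce $B_0'$ whose support generates $\N^{1}(Y_0/X^{\mathrm{can}})$ while keeping the relative $0$-log structure and the klt property intact. This is precisely where bigness of $K+B$ is used in an essential way, since it allows, via Kodaira's lemma, the selection of effective representatives of $-K_{Y_0}$ relative to $X^{\mathrm{can}}$ with arbitrarily enlarged support. If this balancing act turns out to be awkward in a given instance, the fallback is to invoke Corollary~\ref{cor-0log eff decomposition} in place of Corollary~\ref{cor-0log finite contractions}: it requires only $S\subseteq\Supp B\Lg_{Y_0}$ and still yields finitely many $1$-contractions (with polarization supported in $S$) from the finite polyhedral decomposition of $\fN_S$ given by Theorem~\ref{mainthrm-geography}, together with the observation that each wlc model $Y_i$ carries such a polarization over $X^{\mathrm{can}}$.
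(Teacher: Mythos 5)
Your overall plan matches the paper's: pass to the log canonical model $X\cn$, recognize every projective wlc model as a $1$-contraction of a fixed FT variety over $X\cn$, and invoke the FT finiteness result. The gap is in Step 3, where you assert without justification that every projective $\Q$-factorial wlc model $Y_i$ arises as a rational $1$-contraction $Y_0\dashrightarrow Y_i/X\cn$ of your fixed reference $Y_0$. This requires that every prime divisor of $Y_i$ already appears on $Y_0$, and an \emph{arbitrary} projective $\Q$-factorial wlc model $Y_0$ need not have this property. The issue is that projective $\Q$-factorial wlc models of $(X/Z,B)$ are not all slt: the slt condition imposes the \emph{strict} inequality $\ild(E,X,B)<a(E,Y_0,B\Lg_{Y_0})$ on $Y_0$-exceptional divisors, which forces $e(E,B)>0$ for everything $Y_0$ contracts and hence (by Corollary~\ref{cor-property-p-e}(3)) guarantees that $Y_0$ carries \emph{all} divisors $D$ with $e(D,B)=0$. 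A non-slt $\Q$-factorial projective wlc model can contract a divisor $D$ with $e(D,B)=0$ (equality rather than strict inequality), and a different wlc model $Y_i$ can retain that $D$ while contracting some other $D'$; then $Y_0\dashrightarrow Y_i$ extracts $D$ and is not a $1$-contraction.

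The paper sidesteps this by not choosing a wlc model at all: it takes the reference model to be a projective $\Q$-factorial \emph{terminalization} $(Y'/X\cn,B')$, the crepant blow up of all (finitely many, by \cite[Corollary 1.7]{3fold-logmod}) prime divisors $D$ with $a(D,X\cn,B\cn)\le 1$. Since any divisor of a klt wlc model that is exceptional over $X\cn$ has $a(D,X\cn,B\cn)=1-\mult_D B\le 1$, every projective wlc model is automatically a $1$-contraction of $Y'$, and $Y'/X\cn$ is FT by Lemma~\ref{lemma-FT=0log}. To repair your argument, either replace $Y_0$ by such a terminalization, or take $Y_0$ to be an slt wlc model and argue the maximality of its divisor set via the strict inequality and $e(D,B)$ as above. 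A second, smaller inefficiency: your Step~2 (enlarging the boundary's support via Kodaira's lemma so as to apply Corollary~\ref{cor-0log finite contractions}) is unnecessary; once $Y_0/X\cn$ is recognized as FT, Corollary~\ref{cor-FT finite contractions} gives the finiteness of projective $1$-contractions directly, as in the paper.
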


\begin{proof}
Let $(Y/Z,B\Lg_Y)$ be a slt wlc model of $(X/Z,B)$.
Then by the monotonicity \cite[Lemma 2.4]{isksh},
$(Y/Z,B\Lg_Y)$ is also klt and thus $B\Lg_Y=B_Y$.
Let $(X\cn/Z,B\cn)$ be its lc model.
By construction,
$(X\cn/Z,B\cn)$  is also klt and projective.
It is known also that any wlc model $(Y/Z,B_Y)$ of $(X/Z,B)$ is
a generalized log flop of $(X\cn/Z,B\cn)$.
Generalized means that it can blow up divisors $D$
of $X\cn$, but only with log discrepancies $a(D,X\cn,B\cn)\le 1$.
Thus any wlc model $(Y/Z,B_Y)$ of $(X/Z,B)$ is
a $1$-contraction $Y'\dashrightarrow Y/X\cn$ of a projective
\emph{terminalization} $(Y'/Z,B')$
where $(Y'/X\cn,B')$ is a crepant blow up of all prime
$D$ with $a(D,X\cn,B\cn)\le 1$.
Since the set of those $D$ is finite \cite[Corollary 1.7]{3fold-logmod},
such a  blow up exists.
The terminalization $(Y'/X\cn,B')$
 is FT by Lemma \ref{lemma-FT=0log}
(for birational contraction $Y'/X\cn$ any divisor is big) and
the number of $1$-contractions into projective models
is finite by Corollary \ref{cor-FT finite contractions}.
\end{proof}

\begin{corollary}\label{cor-finite big minimal}
Let $X/Z$ be a relative variety of general type.
Then there are only finitely many
projective minimal models of $X/Z$.
\end{corollary}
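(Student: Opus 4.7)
The plan is to reduce Corollary~\ref{cor-finite big minimal} to Theorem~\ref{thrm-finite wlc big klt} applied with zero boundary on a suitably chosen klt model of $X/Z$. The key observation is that both ``variety of general type'' and ``projective minimal model'' are birational notions over $Z$, so we are free to replace $X/Z$ by any birational model that sits well with the theorem we want to invoke.

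First I would pass to a $\Q$-factorial terminalization $\wt X\to X$, constructed by the slt LMMP (as used throughout the paper): this is a small or divisorial crepant blowup producing a $\Q$-factorial, projective, terminal model $\wt X/Z$. Because terminalization is crepant and birational, the set of projective minimal models of $X/Z$ coincides with the set of projective minimal models of $\wt X/Z$. Since general type is a birational invariant, $K_{\wt X}$ is big over $Z$; equivalently, the klt pair $(\wt X/Z,0)$ has $K_{\wt X}+0$ big. This brings us squarely into the hypotheses of Theorem~\ref{thrm-finite wlc big klt}.

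Next I would identify projective minimal models of $\wt X/Z$ with projective wlc models of $(\wt X/Z,0)$. A projective wlc model of $(\wt X/Z,0)$ is a projective pair $(Y/Z,0)$ with $K_Y$ nef together with a birational contraction $\wt X\dashrightarrow Y$ that is $K$-nonpositive; since the source is terminal and the boundary is zero, any such $Y$ is automatically terminal with $K_Y$ nef, which is exactly what is meant by a projective minimal model of $\wt X/Z$. Conversely, any projective minimal model arises this way by the standard description of MMP outputs. Theorem~\ref{thrm-finite wlc big klt} then gives finiteness, and the previous reduction transports this finiteness back to $X/Z$.

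The only real point to verify, and the mild obstacle in the plan, is the identification of minimal models of $X/Z$ with projective wlc models of the klt pair $(\wt X/Z,0)$: one must check that the terminalization does not create or destroy minimal models, and that the paper's notion of wlc model (with its discrepancy inequalities from Section~\ref{section7-LMMP-semi}) reduces, for zero boundary over a terminal source, to the classical notion of minimal model. Both points are bookkeeping consequences of crepancy of the terminalization and the monotonicity of log discrepancies already cited in the proof of Theorem~\ref{thrm-finite wlc big klt}, so no new ingredient beyond the machinery of Sections~\ref{section2-geography}--\ref{section4-finiteness} is needed.
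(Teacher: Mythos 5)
Your plan is fundamentally the same as the paper's: replace $X$ by a birational model with terminal $\Q$-factorial singularities and apply Theorem~\ref{thrm-finite wlc big klt} to the klt pair with zero boundary. The identification you carry out in the second step --- that projective wlc models of the klt pair with zero boundary on the terminal source are exactly projective minimal models --- is correct and is the implicit content of the paper's one-line ``Immediate by'' argument.

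The genuine issue is in the first step. You propose to pass to a ``$\Q$-factorial terminalization $\wt X\to X$ constructed by the slt LMMP,'' described as a crepant blowup. But the corollary only assumes $X/Z$ is a (normal) variety of general type: there is no hypothesis that $K_X$ is $\Q$-Cartier, let alone that $(X,0)$ is klt, so a crepant terminalization of $X$ does not make sense, and the slt LMMP cannot be started from $X$ itself (the slt LMMP requires a slt initial model, i.e.\ a $\Q$-factorial projective pair). The paper avoids this by passing to a resolution of singularities of $X/Z$, which in characteristic $0$ always exists, is automatically smooth (hence terminal and $\Q$-factorial), and has big $K$ by birational invariance of ``general type''; Theorem~\ref{thrm-finite wlc big klt} then applies directly. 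Crepancy plays no role in transporting the finiteness: projective minimal models form a birational invariant, so they agree for $X$ and for any birational model. If you replace ``terminalization'' with ``resolution of singularities'' and drop the crepancy remark, your argument coincides with the paper's.
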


\begin{proof}
 Immediate by Theorem \ref{thrm-finite wlc big klt}
applied to a resolution of singularities of $X/Z$.
\end{proof}

$D$-contractions can be constructed by the $D$-MMP with the initial model $(X,D)$ \cite[1.1]{isksh}.

\begin{corollary}\label{cor-DMMP on 0log}
Let $(X/Z,B)$  be a $0$-log pair with projective $X/Z$ and
$D$ be an $\R$-Cartier divisor on $X$ supported in $\Supp B$.
Then the $D$-MMP holds for $(X,D)$ and, if $D$ is pseudo-effective,
then the $D$-MMP followed by the $D$-contraction of a resulting model
gives a rational $1$-contraction $X\dashrightarrow Y_\fP/Z$
for the class $\fP$ of $D$.
\end{corollary}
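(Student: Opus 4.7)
The plan is to reduce the $D$-MMP to a standard klt LMMP by perturbing with the $0$-log identity $K_X + B \sim_\R 0$, and then to identify the output with the $1$-contraction of the class of $D$ via Proposition \ref{prop-0log cones coincide in U} and Corollary \ref{cor-0log eff decomposition}.

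Since $K_X + B \sim_\R 0$, one has $K_X + B + tD \sim_\R tD$ for every $t \in \R$. The pair $(X, B)$ is klt and $D$ is $\R$-Cartier with $\Supp D \subseteq S := \Supp B$, so for $|t|$ sufficiently small the divisor $B + tD$ remains a klt boundary supported in $S$. Therefore every step of the $D$-MMP on $(X, D)$ is a step of the $(K+B+tD)$-MMP on the klt pair $(X, B+tD)$; by our standing LMMP assumptions this MMP is allowed and terminates. This handles the first assertion.

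Now assume $D$ is pseudo-effective. Then $K_X + B + tD \sim_\R tD$ is pseudo-effective for $t > 0$, so $(X, B+tD)$ has a wlc model. Choosing $t > 0$ small enough that $tD$ lies in the neighborhood $U$ of $0 \in \fD_S$ furnished by Proposition \ref{prop-0log cones coincide in U}, we have $B + tD \in \fN_S$, and it belongs to a single wlc class $\fP$ — namely the class cut out by the ray through $D$ in $\eff_S(X/Z)$ under the mob+exc identification of Corollary \ref{cor-0log eff decomposition}, independently of the precise small $t > 0$ by the conical structure of classes near $B$. The terminating $(K+B+tD)$-MMP produces a birational $1$-contraction $X \dashrightarrow Y/Z$ such that $(Y, (B+tD)_Y)$ is a wlc model of $(X, B+tD)$, so that $K_Y + (B+tD)_Y \sim_\R t D_Y$ is nef, hence semiample. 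The associated lc contraction $Y \to Y_\fP/Z$ is the $D$-contraction of the resulting model, and the composition $X \dashrightarrow Y \to Y_\fP$ is the rational $1$-contraction of the class $\fP$ by Corollary \ref{cor-0log eff decomposition}.

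The main obstacle is calibrating the auxiliary parameter $t$: one must ensure that $B + tD$ simultaneously (i) remains a klt boundary, (ii) lies in the conical neighborhood $U$ of Proposition \ref{prop-0log cones coincide in U} where the effective cone decomposition matches the wlc geography, and (iii) selects the same class $\fP$ for every sufficiently small $t > 0$. Each condition follows from the openness and local conical property of classes in Theorem \ref{mainthrm-geography} together with the klt openness at $B$, but the compatibilities must be tracked. Once they are in place, the identification of the MMP output with $Y_\fP$ reduces to the mob+exc description of classes and the semiampleness conjecture.
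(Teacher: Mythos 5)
Your proof is correct and follows essentially the same route as the paper's: both identify the $D$-MMP with the LMMP for the klt initial model $(X/Z,B+tD)$ for small $t>0$ with $tD$ in the neighborhood $U$ from Proposition~\ref{prop-0log cones coincide in U}, then invoke semiampleness to get the $D$-contraction in the pseudo-effective case and identify the output with the $1$-contraction $X\dashrightarrow Y_\fP$ of the class via Corollary~\ref{cor-0log eff decomposition}. The ``calibration of $t$'' you flag as the main obstacle is in fact already packaged in the construction of $U$ in Proposition~\ref{prop-0log cones coincide in U} (klt initiality of $(X,B+D)$ for all $D\in U$, and conicality of $(U+B)\cap\fP$), so there is no residual gap.
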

\begin{proof}It is well-known that, in this situation, the $D$-MMP is
equal to the LMMP with an initial model $(X/Z,B+\varepsilon D)$
for $0<\varepsilon$ and
$\varepsilon D\in U$ of Proposition \ref{prop-0log cones coincide in U}.
Each birational transformation is a rational $1$-contraction and
they are not isomorphic as log pairs (monotonicity)
of log discrepancies).
Existence of transformations (divisorial contractions and flips) and
their termination hold by the klt slt LMMP.
In the pseudo-effective case, we get a wlc model
and the required rational $1$-contraction
by the semiampleness.
Otherwise, $D$ will be negative on some fibration and
thus not pseudo-effective.
\end{proof}

\begin{corollary}\label{cor-num DMMP 0log}
Let $(X/Z,B)$  be a $0$-log pair with projective $X/Z$ and
such that the components of $S=\Supp B$ generate
$\N^1(X/Z)$.
Then for any $\R$-Cartier divisor $D$ on $X$,
the $D$-MMP holds for $(X,D)$ and, if $D$ is pseudo-effective,
then the $D$-MMP followed by the $D$-contraction of a resulting model
gives a rational $1$-contraction $X\dashrightarrow Y_\fP/Z$
for the class $\fP$ given by $D$.
The class $\fP$ can be replaced by numerical $\rP$.

Termination in this case is universally bounded
(the bound depends only on the variety $X/Z$).
\end{corollary}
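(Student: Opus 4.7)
The plan is to reduce the statement to the previous Corollary~\ref{cor-DMMP on 0log} by replacing an arbitrary $\R$-Cartier $D$ with a numerically equivalent divisor supported in $S = \Supp B$. Since the prime components $S_i$ of $S$ generate $\N^1(X/Z)$, there exists $D' \in \fD_S$ with $D \equiv D'$; by Corollary~\ref{cor-0log num cones}, $\equiv$ and $\sim_\R$ coincide on $X/Z$, so in fact $D \sim_\R D'$. Because each step of the $D$-MMP (extremal ray, divisorial contraction, flip) depends only on the numerical class of the directing divisor, the $D$-MMP and the $D'$-MMP produce the same sequence of birational transformations, and the existence and termination of the $D$-MMP follow from those of the $D'$-MMP.

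Next I would invoke Corollary~\ref{cor-DMMP on 0log} for $D'$: its conclusion---that the $D'$-MMP terminates and, in the pseudo-effective case, its $D'$-contraction yields a rational $1$-contraction $X \dashrightarrow Y_\fP/Z$---transfers verbatim to $D$. The attached class $\fP \subseteq \fN_S$ is the one containing $B + \varepsilon D'$ for sufficiently small $\varepsilon > 0$ with $\varepsilon D' \in U$ (the neighborhood of Proposition~\ref{prop-0log cones coincide in U}), and under the natural surjection $[\ ]\colon \fD_S \to \N^1(X/Z)$ it maps to a well-defined numerical class $\rP \in \Eff(X/Z)$. By Corollary~\ref{cor-0log num eff decomposition}, both $\rP$ and the $1$-contraction $Y_\rP = Y_\fP$ are intrinsic to the numerical class $[D]$, which justifies replacing $\fP$ by $\rP$ in the statement.

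For the universal boundedness of termination, I would observe that the proof of Corollary~\ref{cor-DMMP on 0log} realizes the $D'$-MMP as an LMMP run along the line segment from $B$ into $U$. Each flip or divisorial contraction corresponds to leaving one class of $\fN_S$ and entering an adjacent one, and by the strict monotonicity of log discrepancies along the MMP no intermediate model is visited twice. Since $\fN_S$ consists of only finitely many classes by Theorem~\ref{mainthrm-geography}, the number of MMP steps is bounded by this fixed combinatorial quantity, which depends only on $(X/Z, B)$, hence only on $X/Z$ (the pair structure being determined by the ambient $0$-log data).

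The main obstacle is legitimizing the passage from an arbitrary $\R$-Cartier $D$ to a supported $D' \in \fD_S$ at the level of MMPs: this requires not merely that $D$ and $D'$ share a numerical class, but that the MMP is intrinsically a numerical procedure, which in the non-Cartier setting for $\fD_S$ rests on $\equiv\ =\ \sim_\R$ from Corollary~\ref{cor-0log num cones}. Once this is in hand, the uniform termination bound is a purely combinatorial consequence of the finite geography and requires no analytic control over the particular $D$.
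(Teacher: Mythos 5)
Your proposal follows essentially the same route as the paper: reduce to a $\sim_\R$-equivalent $D'\in\fD_S$ (using the generation hypothesis together with $\equiv\,=\,\sim_\R$ from Corollary~\ref{cor-0log num cones}), invoke Corollary~\ref{cor-DMMP on 0log}, and bound the number of steps by the finiteness of the geography. The paper phrases the termination bound slightly differently, by noting $X/Z$ is FT and citing the finiteness of rational $1$-contractions from Corollary~\ref{cor-FT finite contractions}, but this is the same finiteness you derive from Theorem~\ref{mainthrm-geography}.
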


\begin{proof}
Immediate by Corollary \ref{cor-DMMP on 0log}.
The boundedness for termination follows from Corollary \ref{cor-FT finite contractions},
because $X/Z$ is FT (cf. the following lemma).
\end{proof}

\begin{corollary}\label{cor-DMMP on FT}
Let $X/Z$ be an FT variety. Then
the $D$-MMP holds for $(X,D)$
and, if $D$ is pseudo-effective,
then the $D$-MMP followed
by the $D$-contraction of
a resulting model gives a rational $1$-contraction $X\dashrightarrow Y_{\rP}/Z$
for the class $\rP$ of $\Eff(X/Z)$ containing $D$.

Termination in this case is universally bounded
(the bound depends only on variety $X/Z$).
\end{corollary}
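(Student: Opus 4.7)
The plan is to reduce Corollary~\ref{cor-DMMP on FT} to the already-established Corollary~\ref{cor-num DMMP 0log} by endowing the FT variety $X/Z$ with a suitable $0$-log pair structure whose boundary support generates the numerical space.

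First, I would choose a reduced divisor $S$ on $X$ whose prime components generate $\N^1(X/Z)$. Such an $S$ exists because $\dim_\R \N^1(X/Z)<+\infty$ and, on a normal projective variety, every numerical class is represented by a difference of effective divisors; taking finitely many ample complete linear systems and slicing gives enough distinct prime components. By Lemma~\ref{lemma-FT=0log}, applied to this $S$, there exists an $\R$-boundary $B$ on $X$ making $(X/Z, B)$ into a $0$-log pair with $S\subseteq \Supp B$. Consequently the prime components of $\Supp B$ still generate $\N^1(X/Z)$, so the hypotheses of Corollary~\ref{cor-num DMMP 0log} are in force.

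Now, given an arbitrary $\R$-Cartier divisor $D$ on $X$, I would apply Corollary~\ref{cor-num DMMP 0log} to the pair $(X/Z, B)$ and the divisor $D$. This yields the $D$-MMP for $(X,D)$ and, in the pseudo-effective case, produces a rational $1$-contraction $X\dashrightarrow Y_\fP/Z$ via the $D$-contraction of a resulting model, where $\fP$ is the class of $\fN_S$ containing $B+\varepsilon D$ for small $\varepsilon>0$. By Corollary~\ref{cor-FT eff decomposition}, the class $\fP$ and the numerical class $\rP\subseteq\Eff(X/Z)$ containing $D$ correspond to the same rational $1$-contraction, so $Y_\fP/Z = Y_\rP/Z$, giving the desired statement.

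For the universal bound on termination, I would combine Corollary~\ref{cor-FT finite contractions} with monotonicity of log discrepancies. The former gives a finite number $N(X/Z)$ of rational $1$-contractions $X\dashrightarrow Y/Z$ with $Y/Z$ projective. Each divisorial contraction or flip performed in the $D$-MMP is such a $1$-contraction (after composing with $X\dashrightarrow X_i$), and by the strict monotonicity of log discrepancies under an LMMP step the same $1$-contraction cannot occur twice. Hence the length of the $D$-MMP is bounded by $N(X/Z)$, which depends only on $X/Z$. The only delicate point is this matching of MMP steps with distinct elements of the finite set in Corollary~\ref{cor-FT finite contractions}; everything else is a direct transfer from the $0$-log case handled earlier.
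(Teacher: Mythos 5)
Your proposal is correct and follows essentially the same route as the paper: the paper's proof reads ``Immediate by Lemma~\ref{lemma-FT=0log} and Corollary~\ref{cor-num DMMP 0log},'' and what you have written is precisely an unpacking of that reduction (choose $S$ generating $\N^1(X/Z)$, endow $X/Z$ with a $0$-log pair structure via Lemma~\ref{lemma-FT=0log}, invoke Corollary~\ref{cor-num DMMP 0log}, and trace the termination bound back to Corollary~\ref{cor-FT finite contractions}). The only wrinkle is that your appeal to Corollary~\ref{cor-FT eff decomposition} for the $\fP$--$\rP$ identification is already built into the statement of Corollary~\ref{cor-num DMMP 0log} (``the class $\fP$ can be replaced by numerical $\rP$''), so it is redundant rather than incorrect.
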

\begin{proof}
Immediate by Lemma \ref{lemma-FT=0log} and Corollary \ref{cor-num DMMP 0log}.
\end{proof}

\bs

Theorem \ref{thrm-finite wlc big klt} gives
the finiteness of projective wlc models for
a klt pair $(X/Z,B)$ of general type.
It is well-known that all wlc models are
crepant birationally isomorphic.
Moreover, if $(Y/Z,B_Y)$ and $(Y'/Z,B_{Y'})$ are
two wlc models of $(X/Z,B)$, then their natural
(identical on rational functions)
birational isomorphism
$Y\dashrightarrow Y'$ can be factored into
projective($/Y,/Y'$) crepant
minimizations $(V,B_V)\to (Y,B_Y),$ $(V',B_{V'})\to (Y',B_{Y'})$ and a small log flop
$(V/Z,B_V)\dashrightarrow (V'/Z,B_{V'})$:
$$
\xymatrix{
 V \ar[d]\ar@{-->}[r] &V'\ar[d]\\
 Y\ar@{-->}[r] & Y'/Z.
}
$$
A minimal model $(V,B_V)$ is maximal for the contraction
order $Y\to Y'$ for  wlc models of $(X/Z,B)$:
$V$ is $\Q$-factorial, projective$/Y$ and blows up
all prime $D$ with $e(D,B)=0$ (cf.
the proof of Theorem \ref{thrm-finite wlc big klt}).
Moreover, $(V/Z,B_V)$ is a slt wlc model of $(X/Z,B)$
exactly when $V/Z$ is projective.
In its turn, each crepant birational
contraction can be factored into
a sequence of extremal divisorial and small contractions and their number
is bounded by the relative Picard number
$\rho(V\cn/X\cn)$ for a
projective$/Z$ minimization $(V\cn,B_{V\cn})$ of
the log canonical model.
Thus the remaining factorization concerns the log flop.
Note also that $V\cn/Z$ is projective but $V/Z$ may be not.
If $\dim_k X=2$, any klt model, in particular, above
minimization is projective$/Z$.
However, if $\dim_k X\ge 3$, $V/Z$ can be nonprojective and
factorization of birational isomorphism
$V\dashrightarrow V\cn$ or $V'$
into elementary (extremal) flops is
out of our grasp.
Thus we consider only projective wlc models and
focus on minimal (slt) models $(Y/Z,B_Y),(Y'/Z,B_{Y'})$.
Then their birational isomorphism $Y\dashrightarrow Y'$ gives a small log flop and
can be factored into elementary flops.
(Note that, for an elementary flop,
the flopping locus can be very nonelementary,
e.g., not an irreducible curve in
dimension $3$.)
The general case, not necessarily of  general type models,
will be considered in Corollary \ref{cor-small klt slt wlc models in elementary log flops}
below.
Here we have a little bit stronger result.

It is natural to consider the factorization problem for
flops of projective $0$-log pairs $(X/Z,B)$.
However to control polarization in $\Supp B$ we
need to consider big $B$ and thus FT varieties by Lemma \ref{lemma-FT=0log}.
Note that a $\Q$-factorialization of an FT variety is
again an FT variety, and, for any rational $1$-contraction
$X\dashrightarrow Y/Z$ with projective $Y/Z$, $Y/Z$ is FT \cite[Lemma 2.8]{prsh}.

\bs

\begin{corollary}\label{cor-Qfact FT flops}
Let $Y,Y'/Z$ be two $\Q$-factorial FT varieties.
Then any small birational isomorphism $Y\dashrightarrow Y'$
can be factored into elementary small birational
(flips, antiflips, or flops) transformations$/Z$.
The number of such transformations is bounded
(the bound depends only on $Y/Z$).
\end{corollary}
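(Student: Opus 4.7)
The plan is to embed both $Y$ and $Y'$ as $\Q$-factorial wlc models in a single geography $\fN_S$ and to read off the elementary transformations as a generic segment between the corresponding classes crosses facets. By Lemma~\ref{lemma-FT=0log}, choose a big boundary $B$ on $Y$ making $(Y/Z,B)$ a $0$-log pair, and enlarge $S:=\Supp B$ to include every prime divisor moved by any small modification of $Y/Z$ (only finitely many exist by Corollary~\ref{cor-FT finite contractions}). Since $Y\dashrightarrow Y'$ is small, $Y'$ has the same prime b-divisors in $S$ as $Y$ and appears as a wlc resulting model in the geography $\fN_S$ of $(Y/Z,B)$: by Proposition~\ref{prop-0log cones coincide in U} it corresponds to a class $\fP^1_{Y'}$ inside the decomposition $\mobile_S(Y/Z)=\bigcup\ofP^1_Y-B$, while $Y$ itself is realized by a class $\fP^1_Y$ containing points arbitrarily close to the origin after translation by $-B$.

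Next I would pick a generic segment $[B_1,B_2]$ with $B_1\in\fP^1_Y$ and $B_2\in\fP^1_{Y'}$, chosen so that it avoids all strata of the wlc decomposition of codimension $\ge 2$ and stays inside the mobile part. By Theorem~\ref{mainthrm-geography} the segment meets a finite chain of countries $\fP^1_Y=\fP_0,\fP_1,\dots,\fP_N=\fP^1_{Y'}$ with consecutive countries sharing a single facet. Corollary~\ref{cor-eqn for facets} identifies each such facet as a $p(C,-)=0$ wall, and the stability argument in the proof of Theorem~\ref{mainthrm-geography} together with Corollary~\ref{cor-property-p-e} produces, at every wall, an elementary small birational transformation between the $\Q$-factorial wlc models attached to $\fP_{i-1}$ and $\fP_i$. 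The sign of $(P(B),C)$ as $B$ crosses the facet determines whether this elementary step is a flip, a flop, or an antiflip. Concatenating the steps yields the desired factorization of $Y\dashrightarrow Y'$, and the length $N$ is at most the total number of countries in $\fN_S$, which is finite by Theorem~\ref{mainthrm-geography} and depends only on $Y/Z$.

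The hard part, I expect, is making the genericity of the segment rigorous. It must stay inside $\mobile_S(Y/Z)$ so that every wall crossing remains small and preserves $\Q$-factoriality, it must avoid every $e(D,-)=0$ facet that would correspond to a divisorial contraction, and it must miss every ridge. The first requirement is arranged by working in the relative interior of the mobile cone; the second follows because klt-ness near $B$ forces $e(D,B')>0$ for every $Y$-exceptional prime $D$, so such facets do not appear along a sufficiently nearby segment, as already used in the proof of Corollary~\ref{cor-0log eff decomposition}; the third is achieved by a standard perturbation thanks to the finiteness of the facet set in $\fN_S$. Once these choices are locked in, the small elementary transformations produced by the slt LMMP supply both the factorization and the universal bound depending only on $Y/Z$.
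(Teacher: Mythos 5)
Your argument is correct, but it is a genuinely different route from the one in the paper, which is much terser: the paper simply runs the $D$-MMP on $Y$ with $D$ the birational transform of an ample divisor on $Y'$; since the initial birational isomorphism is small, no divisorial contraction can occur, so every step is an elementary $D$-flip, the program terminates at $Y'$, and the number of steps is bounded by the number of rational $1$-contractions to projective models (Corollary~\ref{cor-FT finite contractions}). Your geography-path (wall-crossing) argument is instead essentially the one the paper deploys for Corollary~\ref{cor-small klt slt wlc models in elementary log flops}: realize both $Y$ and $Y'$ as wlc models in a single geography $\fN_S$, then walk a generic segment through the chamber decomposition. The $D$-MMP proof is shorter and pushes all the work into termination plus Corollary~\ref{cor-FT finite contractions}; yours makes the wall structure explicit and gives you each elementary step directly from a facet crossing, at the cost of more machinery.

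Two points where your write-up is looser than it needs to be. First, ``enlarge $S$ to include every prime divisor moved by any small modification of $Y/Z$'' is not meaningful as stated, since small modifications neither contract nor extract any prime divisor; no enlargement is needed, because taking $S=\Supp B$ with $B$ big already makes the components of $S$ generate $\N^1(Y/Z)$ (Lemma~\ref{lemma-FT=0log}), and any small modification $Y'$ has exactly the same prime divisors. Second, your argument for avoiding divisorial facets via ``klt-ness near $B$ forces $e(D,B')>0$'' only controls extractions of $Y$-exceptional divisors, not contractions of divisors that live on $Y$. The cleaner and complete justification is convexity together with Proposition~\ref{prop-FT mobile cones}: $\Mob(Y/Z)=\coprod_{\rP}\Amp(Y_{\rP}/Z)$, so if $B_1\in\Amp(Y/Z)$ and $B_2\in\Amp(Y'/Z)$ then $[B_1,B_2]\subset\Int\Mob(Y/Z)$ by convexity, hence every wall the segment meets separates two ample cones of small modifications of $Y$ and is necessarily a flopping (not divisorial) facet. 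With that fix the proof is complete and the bound comes out of the finiteness of countries in $\fN_S$, just as you say.
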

\begin{proof}
The factorization is a general fact.
For instance, we can use the $D$-MMP, where $D$ is the birational
transform of any polarization from $Y'$.
Each transformation will be an elementary $D$-flip.
The number of flips is bounded by the number of
rational $1$-contractions with projective $Y_{\rP}/Z$
according to Corollary \ref{cor-FT finite contractions}.
\end{proof}

\begin{corollary}\label{cor-slt flops}
Let $(Y/Z,B_Y),(Y'/Z,B_{Y'})$ be two
klt slt wlc models of
general type.
Then any (small) log flop $(Y/Z,B_Y)\dashrightarrow(Y'/Z,B_{Y'})$
can be factored into elementary log flops and
the number of such flops is bounded
(the bound depends only on $(Y/Z,B_Y)$).
\end{corollary}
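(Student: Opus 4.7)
The plan is to apply Corollary \ref{cor-Qfact FT flops} relatively over the common log canonical model. Since the two slt wlc models $(Y/Z,B_Y)$ and $(Y'/Z,B_{Y'})$ are of general type and related by a log flop, they share the same log canonical model $(X\cn/Z,B\cn)$, and both $Y\to X\cn$ and $Y'\to X\cn$ are small (necessarily crepant) birational contractions with $K_Y+B_Y\equiv_{X\cn}0$ and $K_{Y'}+B_{Y'}\equiv_{X\cn}0$.

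First I would verify that $Y/X\cn$ and $Y'/X\cn$ are $\Q$-factorial FT varieties. The $\Q$-factoriality is built into the slt hypothesis. Since $Y\to X\cn$ is a birational contraction, every nonzero divisor on $Y$ is big over $X\cn$ (cf.\ the proof of Theorem \ref{thrm-finite wlc big klt}); in particular, $B_Y$ is big over $X\cn$, and $(Y/X\cn,B_Y)$ is a $0$-log pair with big boundary. Lemma \ref{lemma-FT=0log} then gives that $Y/X\cn$ is FT, and the same argument applies to $Y'/X\cn$.

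Next, I would invoke Corollary \ref{cor-Qfact FT flops} with base $X\cn$ in place of $Z$: the small birational isomorphism $Y\dashrightarrow Y'$ factors into a bounded sequence of elementary small birational transformations $Y=Y_0\dashrightarrow Y_1\dashrightarrow\cdots\dashrightarrow Y_n=Y'$ over $X\cn$, the bound depending only on $Y/X\cn$ and hence only on $(Y/Z,B_Y)$. Because each intermediate $Y_i$ sits over $X\cn$ and is obtained from $Y$ by small modifications, the pullback identity $K_{Y_i}+(B_Y)_{Y_i}\equiv_{X\cn}0$ persists. Consequently, each elementary extremal transformation $Y_i\dashrightarrow Y_{i+1}$ is numerically trivial with respect to $K+B$ on its flopping locus, so it is an elementary log flop of $(Y_i/Z,(B_Y)_{Y_i})$ rather than a genuine flip or antiflip.

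The main potential obstacle is precisely the last point: arranging that the elementary steps furnished by Corollary \ref{cor-Qfact FT flops} — which are produced via a $D$-MMP for $D$ equal to the transform of a polarization on $Y'$ — are log flops and not flips or antiflips with respect to $K+B$. This is resolved automatically by carrying out the factorization relatively over $X\cn$, where $K+B$ is numerically trivial, so every extremal ray contracted in the $D$-MMP is $(K+B)$-trivial. The boundedness of the number of elementary log flops is inherited directly from the bound in Corollary \ref{cor-Qfact FT flops}, via the finiteness of rational $1$-contractions from the FT variety $Y/X\cn$ (Corollary \ref{cor-FT finite contractions}).
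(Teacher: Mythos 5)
Your proposal is correct and takes essentially the same route as the paper, whose entire proof is the single sentence ``Immediate by Corollary \ref{cor-Qfact FT flops} for FT variety $Y/Y\cn$''; you simply fill in the justification that $Y/Y\cn=X\cn$ is FT and that the elementary modifications produced over $X\cn$ are automatically $(K+B)$-trivial, hence log flops. One small overclaim: the crepant contractions $Y\to X\cn$ and $Y'\to X\cn$ need not be small (they can contract divisors), but this is never used — all that matters is that $Y\dashrightarrow Y'$ is small and that $Y/X\cn$, $Y'/X\cn$ are FT, which hold regardless.
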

\begin{proof}
Immediate by Corollary \ref{cor-Qfact FT flops} for
FT variety $Y/Y\cn$.
\end{proof}
\bs

We conclude the section with results that
shed a light on terminology:
{\em a mobile cone $\Mob(X/Z)$
as a polarizations cone.}

\begin{proposition}\label{prop-small iso mobile cones}
Let $X/Z$ be a variety with a reduced divisor $S$ and
$g\colon X\dashrightarrow Y/Z$ be a small isomorphism.
Then $g$ induces
an isomorphism $\fD_S(X/Z)=\fD_S(Y/Z)$
that preserves cones $\eff_S(X/Z)=\eff_S(Y/Z),\mobile_S(X/Z)=\mobile_S(Y/Z)$.
If the  isomorphism
is a rational $1$-contraction of a class $\fP$
in $\eff_S(X/Z)$,
then under the above identification:
$\fP=\amp_S(Y/Z),\ofP=\nef_S(Y/Z)$.
For FT $X/Y$, the last subcone $=\samp_S(Y/Z)$.
\end{proposition}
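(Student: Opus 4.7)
The plan is to treat the proposition in three pieces: the identification of the b-divisor spaces together with preservation of the cones $\eff_S$ and $\mobile_S$; the identification $\fP=\amp_S(Y/Z)$ and $\ofP=\nef_S(Y/Z)$; and finally the semiample refinement in the FT case.

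For the first piece I would note that since $g\colon X\dashrightarrow Y/Z$ is an isomorphism in codimension one, strict transform gives a canonical bijection between prime divisors of $X$ and of $Y$. Consequently the b-divisor spaces $\fD_S(X/Z)$ and $\fD_S(Y/Z)$ are tautologically identified: each is $\oplus_i \R S_i$ with $S_i$ regarded as a prime b-divisor, which is intrinsic to the birational equivalence class. Under this identification a divisor $D\in\fD_S$ is $\sim_\R$ to an effective divisor on $X$ if and only if the corresponding divisor is $\sim_\R$ to an effective divisor on $Y$, because the small isomorphism neither creates nor contracts divisors. The $\R$-mobile property is a condition on fixed divisorial components, so it is likewise preserved. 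This yields $\eff_S(X/Z)=\eff_S(Y/Z)$ and $\mobile_S(X/Z)=\mobile_S(Y/Z)$.

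For the second piece I would use Corollary \ref{cor-0log eff decomposition} to rephrase the decomposition of $\eff_S$ internally via the relation $\me$: each class $\fP$ records a rational $1$-contraction $X\dashrightarrow Y_\fP/Z$ together with the support of the exceptional part. Across the small isomorphism $g$, these data are functorial: given the $X$-side $1$-contraction $X\dashrightarrow Y_\fP$, composition with $g^{-1}$ gives the $Y$-side $1$-contraction $Y\dashrightarrow Y_\fP$ of the corresponding class (this remains a $1$-contraction because $g$ is small, so mob/exc decompositions match up via strict transform). The hypothesis that $g$ itself is the $1$-contraction of $\fP$ translates on the $Y$-side to: $\fP$ is the class whose $1$-contraction is the identity $Y\dashrightarrow Y$ and whose exceptional support is empty. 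By Proposition \ref{prop-0log cones coincide in U} applied on $Y$, which inherits a $0$-log (or FT) structure via $g$, this class is exactly the interior of $\nef_S(Y/Z)$, namely $\amp_S(Y/Z)$, with closure $\ofP=\nef_S(Y/Z)$.

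For the FT case, $Y/Z$ is again FT since FT is preserved under small modification (Lemma \ref{lemma-FT=0log} combined with \cite[Lemma-Definition 2.8]{prsh}), and then semiampleness of nef $\R$-Cartier divisors on FT varieties—via the base-point-free theorem together with the passage from $\Q$-divisors to $\R$-divisors—gives $\samp_S(Y/Z)=\nef_S(Y/Z)$. The principal technical obstacle is the bookkeeping between the two decompositions across the small isomorphism: one must check that the class on the $X$-side recording the contraction $g$ matches, under the identification of $\fD_S$ and of $\eff_S$, the class on the $Y$-side recording the identity. This reduces to the functoriality of rational $1$-contractions and of the $\me$-relation under small isomorphisms, together with the fact that the mob+exc decomposition transforms correctly under $g$ (strict transform of the mobile part stays mobile, and very-exceptional components remain very-exceptional for the corresponding $1$-contraction since $g$ is small).
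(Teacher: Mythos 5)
Your proposal is correct and follows essentially the same approach as the paper's (extremely terse) proof, which simply states that the result "uses the birational invariance of mob+exc decomposition"; you have identified the same key tool and filled in the details the paper leaves implicit. One small remark: the paper explicitly notes that "the result holds for general $X/Z$ but cones are not necessarily closed nor polyhedral," so when you invoke Corollary~\ref{cor-0log eff decomposition} and Proposition~\ref{prop-0log cones coincide in U} you are implicitly working in the $0$-log/FT regime where classes and the polyhedral decomposition actually exist — which is exactly the regime in which the second and third sentences of the proposition are meaningful anyway, so this is a harmless restriction matching the paper's intent.
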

\begin{proof}
Uses the birational invariance of mob+exc decomposition.
Of course, all definitions work and the result holds for general $X/Z$ but
cones are not necessarily closed nor polyhedral.
\end{proof}

\begin{proposition}\label{prop-FT mobile cones}
Let $X/Z$ be an FT variety, and $\rP$ be
the classes in $\Mob(X/Z)$
with rational $1$-contractions
$X\dashrightarrow Y_{\rP}/Z$.
Then $\Mob(X/Z)=\coprod_{\rP}\Amp(Y_{\rP}/Z)$ in $\N^1(X/Z)$, where
$\Amp(Y_{\rP}/Z)$ is the ample cone.
Furthermore, $\rP=\Amp(Y_{\rP}/Z)$, $\overline{\rP}=\sAmp(Y_{\rP}/Z)=\Nef(Y_{\rP}/Z)$
in $\N^1(X/Z)$.
\end{proposition}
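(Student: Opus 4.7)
The plan is to deduce this from the decomposition of $\Eff(X/Z)$ given in Corollary \ref{cor-FT eff decomposition} together with the cone identifications for small birational maps supplied by Proposition \ref{prop-small iso mobile cones}. The conceptual point is simple: a class $\rP$ of $\Eff(X/Z)$ sits inside $\Mob(X/Z)$ exactly when the exceptional part $E$ in the mob+exc decomposition vanishes, which in turn is equivalent to the rational $1$-contraction $X \dashrightarrow Y_\rP$ being a small birational map. On such small models, the class $\rP$ pulled across the isomorphism on $\N^1$ becomes the ample cone.

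First, I would apply Corollary \ref{cor-FT eff decomposition} to write $\Eff(X/Z)$ as the disjoint union of the classes $\rP$, each carrying a rational $1$-contraction $g_\rP\colon X \dashrightarrow Y_\rP/Z$ with $Y_\rP/Z$ projective and FT (the latter by \cite[Lemma-Definition 2.8]{prsh}). For $[D]\in\rP$ with mob+exc decomposition $[D]\equiv[M]+[E]$, membership $[D]\in\Mob(X/Z)$ forces $E=0$; conversely, $E=0$ means $g_\rP$ contracts no divisor, i.e.\ is a small birational map, and then every class in $\rP$ is mobile. This already identifies $\Mob(X/Z)=\coprod_\rP \rP$, the union ranging over classes whose $1$-contraction is small; disjointness is inherited from the effective-cone decomposition.

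Second, for each such small $g_\rP$ I would invoke Proposition \ref{prop-small iso mobile cones}. Applied to the small isomorphism $g_\rP$, it yields, under the induced identification $\N^1(X/Z)=\N^1(Y_\rP/Z)$, the equalities $\rP=\Amp(Y_\rP/Z)$ and $\overline{\rP}=\Nef(Y_\rP/Z)$. Since $Y_\rP/Z$ is FT, the semiampleness for FT varieties used in the proof of Corollary \ref{cor-0log num cones} gives $\sAmp(Y_\rP/Z)=\Nef(Y_\rP/Z)$. Combining the two steps produces the decomposition $\Mob(X/Z)=\coprod_\rP \Amp(Y_\rP/Z)$ together with the identifications $\rP=\Amp(Y_\rP/Z)$ and $\overline{\rP}=\sAmp(Y_\rP/Z)=\Nef(Y_\rP/Z)$.

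No serious obstacle is expected; the heavy lifting has been done in establishing Corollary \ref{cor-FT eff decomposition} and Proposition \ref{prop-small iso mobile cones}. The one delicate point is the bookkeeping of numerical classes across $X$ and $Y_\rP$, which is precisely the role of Proposition \ref{prop-small iso mobile cones}, and the FT hypothesis guarantees that the cones transferred are genuinely closed convex rational polyhedral so that the assembled description is consistent.
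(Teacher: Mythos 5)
Your proof breaks down at the step "$E=0$ means $g_\rP$ contracts no divisor, i.e.\ is a small birational map." Vanishing of the exceptional part $E$ is indeed what characterizes the classes $\rP$ lying in $\Mob(X/Z)$, and it does rule out divisorial contractions, but it does not force $g_\rP$ to be birational at all: the rational $1$-contraction may well be a genuine fibration with $\dim Y_\rP < \dim X$ (e.g.\ a projection $\mathbb{P}^1\times\mathbb{P}^1\to\mathbb{P}^1$, where the pull-back of an ample class is mobile but the $1$-contraction drops dimension). For such fibration classes Proposition~\ref{prop-small iso mobile cones} simply does not apply, since it concerns small isomorphisms, so the second step of your argument has no foothold there. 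The paper's own proof is deliberately contraction-agnostic: it reads the identification $\rP=\Amp(Y_\rP/Z)$ directly from the mob+exc decomposition, noting that every $[D]\in\rP$ is $\equiv g^*[H]$ for an ample $H$ on $Y_\rP/Z$ (because $E=0$), so $\rP$ is exactly the image of the ample cone of $Y_\rP/Z$ under the injection $g^*\colon\N^1(Y_\rP/Z)\hookrightarrow\N^1(X/Z)$, and this works uniformly for small and fibering $g_\rP$. You can salvage your argument by replacing the appeal to Proposition~\ref{prop-small iso mobile cones} in the general case with this pull-back identification; Proposition~\ref{prop-small iso mobile cones} then remains useful as the special case of small $g_\rP$, but should not be presented as covering all classes of $\Mob(X/Z)$.
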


\begin{proof}
Transform the decomposition into
the classes $\rP$ of $\Mob(X/Y)$
into their polarization cones: $\rP=\Amp(Y_{\rP}/Z)$.
The equation is given by the pull back $g^*[H]$
of the numerical classes
of ample divisors $H$ on $Y_{\rP}/Z$ by
$ g\colon X\dashrightarrow Y_{\rP}$.
\end{proof}

\section{Geography generalities}\label{section5-general geo}

The points of polyhedron $\fN_S$ fills up the
the cube $\fB_S$.

\begin{proposition}\label{prop-monotone N}
$\fN_S$ satisfies the following:
\begin{enumerate}
\item (monotonicity) if $B\in \fN_S,B'\in\fB_S$ and $B'\ge B$,
then $B'\in\fN_S$;

\item (the face of geography)
the linear span of $\fN_S$ is the linear span of
a minimal face $\fF_S$ of $\fB_S$ containing $\fN_S$;
moreover, equations of the span have the form
$b_i=1$,
and are the linear equations for $B\in \fN_S$;

\item  $\dim_\R\fN_S=\dim_\R\fF_S$.

\end{enumerate}
\end{proposition}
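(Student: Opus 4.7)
The plan is to derive everything from the characterization $\fN_S = \{B \in \fB_S : \nu(X/Z, B) \ge 0\}$ noted after Convention~\ref{convention}, together with the convexity of $\fN_S$ established in Theorem~\ref{mainthrm-geography}; throughout, I assume $\fN_S \neq \emptyset$, the empty case being vacuous.

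For (1), the observation is simply that $K + B' = (K + B) + (B' - B)$ with $B' - B$ effective, so adding this divisor cannot decrease the numerical dimension, giving $\nu(X/Z, B') \ge \nu(X/Z, B) \ge 0$ and hence $B' \in \fN_S$.

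For (2), I recall that every face of the cube $\fB_S = [0,1]^m$ has the form $\{b_i = 0\ (i \in I_0)\} \cap \{b_j = 1\ (j \in I_1)\}$, and the minimal such face $\fF_S$ containing $\fN_S$ is determined by the indices for which the corresponding equation holds identically on $\fN_S$. The first step is to rule out $I_0 \neq \emptyset$: if $i \in I_0$ and $B \in \fN_S$, then $B + S_i \in \fB_S$ with $B + S_i \ge B$, so $B + S_i \in \fN_S$ by (1), but $b_i(B + S_i) = 1$, contradicting $b_i \equiv 0$ on $\fN_S$. Hence $\fF_S$ is cut out only by equations of the form $b_j = 1$, and its affine span is the rational subspace $\{b_j = 1 : j \in I_1\}$.

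What remains, and is the only nontrivial point, is to show that the affine span of $\fN_S$ already fills out that of $\fF_S$; this immediately yields (3) and completes (2). The plan is to produce a single $\bar B \in \fN_S$ with $b_j(\bar B) = 1$ for $j \in I_1$ and $b_i(\bar B) < 1$ for every $i \notin I_1$. Granted such $\bar B$, its upper set in $\fB_S$ lies in $\fN_S$ by (1) and has affine span $\bar B + \mathrm{span}\{S_i : i \notin I_1\}$, coinciding with the affine span of $\fF_S$. To construct $\bar B$, I would convex-average the vertex $\sum_\ell S_\ell$ (which lies in $\fN_S$ by (1) applied to any given element) with, for each $i \notin I_1$, a witness $B_i \in \fN_S$ satisfying $b_i(B_i) < 1$ (available by the definition of $I_1$), invoking the convexity of $\fN_S$ from Theorem~\ref{mainthrm-geography}. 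This averaging is the main technical step and is where the convexity of $\fN_S$ is genuinely used; once it is in place, the combinatorics of faces of the cube together with the monotonicity of $\nu$ finish everything off.
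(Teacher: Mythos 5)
Your proof is correct and follows essentially the same route as the paper: the Kodaira-dimension monotonicity $\nu(B')=\kappa(B')\ge\kappa(B)=\nu(B)$ gives (1), and (2)--(3) are combinatorial consequences of (1) together with convexity, which the paper merely asserts ``follow from the monotonicity and definitions'' --- your filling in of the $I_0=\emptyset$ step and the convex-averaging construction of $\bar B$ is exactly the intended argument. One omission worth noting: the paper immediately adds an alternative proof of (1) using only the slt LMMP (via stability of wlc models, \cite[Corollary~9 and Addendum~5]{orderedterm}, and a contradiction with a generic curve of a limiting Mori log fibration), precisely because the identification $\nu=\kappa$ you invoke relies on the semiampleness conjecture, while the paper's Assumptions section lists only the slt LMMP as what this proposition requires.
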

\begin{proof}
The monotonicity is immediate by
the inequality for Kodaira dimensions:
$\nd(B')=\kd(B')\ge\kd(B)=\nd(B)$.
However,  the slt LMMP is sufficient.
For this, one can use the stability of wlc models
\cite{orderedterm}.
Indeed, by the compact and convex
property of $\fN_S$,
there exists a maximal $D\in\fN_S$ in the direction
$\overrightarrow{BB'}$, that is, $\fN_S\cap\overrightarrow{BB'}=
(B,D]$.
If $\fB_S\cap\overrightarrow{BB'}=(B,D]$,
the monotonicity
holds in the direction.
Otherwise, let $(Y/Z,D\Lg_Y)$ be a slt wlc model of $(X/Z,D)$.
Then by \cite[Corollary~9 and Addendum~5]{orderedterm}
after finitely many log flops of this model, it
will become a wlc model such that, for each
$$
D'\in\fB_S\cap\overrightarrow{BB'}\setminus(B,D]
$$
near $D$,
$(Y/Z,D'\Lg_Y)$ has a Mori log fibration
(cf. the proof of Theorem~\ref{mainthrm-geography}).
Then, since $D'>D$,
for a generic curve $C/Z$ of the fibration,
$$
0>(C,K_Y+D'\Lg_Y)\ge (C,K_Y+D\Lg_Y)\ge 0,
$$
a contradiction.

The other two properties follow from
the monotonicity and definitions.
\end{proof}

\bs

\paragraph{Separatrix}
So, to find the largest lower bound of $\fN_S$, or
the minimum divisors (by the closedness) of $\fN_S$,
is sufficient to determine $\fN_S$.
Such a bound $\fS_S$ will be called a {\em separatrix\/}.
It consists of boundaries $B\in\fN_S$
which {\em separate\/} $\fN_S$:
for any neighborhood $U$ of $B$ in the face $\fF_S$,
there exists a boundary $B'\in U\setminus\fN_S$,
that is, $B'\in U$ and $\nu(B')=-\infty$.
Equivalently,
$B\in\fS_S$ if and only if
$B\in\fN_S$ and
is a limit of boundaries
$B'\in\fF_S$
with $\nu(B')=-\infty$.

\begin{proposition}\label{prop-S=closed rational poly}
$\fS_S$ is a closed rational polyhedron
(usually nonconvex), and
$$
\fS_S=\overline{\partial\fN_S\cap\Int\fF_S}=
\fN_S\cap\overline{\fF_S\setminus\fN_S}.
$$
More precisely, a
bordering facet of $\fN_S$
lies either

in a facet of $\fB_S$ (or of $\fF_S$), or

in $\fS_S$, and $\fS_S$ is
the closure of union of those facets.
\end{proposition}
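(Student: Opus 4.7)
By Proposition~\ref{prop-monotone N}, $\fN_S$ is a closed convex rational polyhedron whose linear span agrees with that of $\fF_S$ (given by equations $b_i=1$). Thus $\fN_S$ is full-dimensional in $\fF_S$ and $\partial\fN_S$ (taken in $\fF_S$) is a finite union of codimension-one rational polyhedral facets. The equality $\fS_S=\fN_S\cap\overline{\fF_S\setminus\fN_S}$ is immediate from the definition of the separatrix: $B\in\fS_S$ iff $B\in\fN_S$ and every neighborhood of $B$ in $\fF_S$ meets $\fF_S\setminus\fN_S$. In particular, $\fS_S$ is closed.

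Next I would classify the facets of $\fN_S$. Each facet $\fF$ is cut out by a linear inequality $l_\fF\ge 0$ on $\fN_S$ with $l_\fF=0$ on $\fF$. Call $\fF$ a \emph{coordinate facet} if $\fF\subseteq\{b_i=0\}$ or $\{b_i=1\}$ for some $i$ free in $\fF_S$, and \emph{non-coordinate} otherwise. The central observation is: $\fF$ is non-coordinate iff $l_\fF$ takes strictly negative values somewhere on $\fF_S$, equivalently, the supporting hyperplane $\{l_\fF=0\}$ cuts through $\Int\fF_S$. Indeed, if $l_\fF\ge 0$ held on all of $\fF_S$, then $l_\fF$ would be a nonnegative combination of the coordinate inequalities $b_i\ge 0,\ 1-b_i\ge 0$; a codimension count forces this combination to be a positive multiple of a single one, since otherwise $\{l_\fF=0\}\cap\fF_S$ would have codimension $\ge 2$ in $\fF_S$, contradicting $\dim\fF=\dim\fF_S-1$. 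For coordinate facets, monotonicity (Proposition~\ref{prop-monotone N}(1)) rules out local escape from $\fN_S$ inside $\fF_S$: near a relative interior point of a coordinate facet, all admissible perturbations in $\fF_S$ (that do not exit the cube) increase the relevant coordinate and remain in $\fN_S$.

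The key step is then: a non-coordinate facet $\fF$ lies in $\fS_S$, while no coordinate facet contributes to $\fS_S$ at relative interior points. For the first claim, pick $B\in\Int\fF\cap\Int\fF_S$, which is open and dense in $\fF$ because the non-coordinate hyperplane $\{l_\fF=0\}$ cuts through $\Int\fF_S$; perturbing $B$ toward any point of $\fF_S$ where $l_\fF<0$ produces a witness $B'\in\fF_S\setminus\fN_S$ arbitrarily close to $B$, so $B\in\fS_S$, and by closedness of $\fS_S$ we get $\fF\subseteq\fS_S$. For the second claim, the monotonicity/coordinate analysis above prevents such approximations. Conversely, for $B\in\fS_S$ with $B_k\in\fF_S\setminus\fN_S$ and $B_k\to B$, the first-exit point of $[B,B_k]$ from $\fN_S$ lies on some facet $\fF_k$ with $l_{\fF_k}(B_k)<0$; by finiteness of facets, a subsequence has constant $\fF_k=\fF$, and the coordinate analysis forces $\fF$ to be non-coordinate, with $B\in\fF$. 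This shows $\fS_S$ is precisely the union of all non-coordinate facets, hence a closed rational (in general nonconvex) polyhedron.

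Finally, for $\fS_S=\overline{\partial\fN_S\cap\Int\fF_S}$: the inclusion $\supseteq$ is clear since any $B\in\partial\fN_S\cap\Int\fF_S$ is approached by points of $\fF_S\setminus\fN_S$, hence lies in the closed set $\fS_S$. For $\subseteq$, any non-coordinate facet $\fF$ satisfies $\fF\cap\Int\fF_S$ dense in $\fF$ (since its supporting hyperplane, as well as the other coordinate hyperplanes, cut $\fF$ in lower dimension), so $\fF\subseteq\overline{\fF\cap\Int\fF_S}\subseteq\overline{\partial\fN_S\cap\Int\fF_S}$. The main obstacle I expect is the codimension argument in the classification step, which requires using both Proposition~\ref{prop-monotone N}(2) (the very specific $b_i=1$ form of the face equations of $\fF_S$) and the monotonicity of $\fN_S$ to rule out any facet defined by a nontrivial positive combination of coordinate functionals.
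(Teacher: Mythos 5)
Your proof is correct, and it makes precise what the paper dispatches as ``immediate from Theorem~\ref{mainthrm-geography} and Proposition~\ref{prop-monotone N}'': the closed convex rational polyhedral structure of $\fN_S$ and the $b_i=1$ description of $\fF_S$ set up the bordering-facet dichotomy, your Farkas/LP-duality step is a clean way to verify that a facet whose supporting functional is nonnegative on all of $\fF_S$ must be cut out by a single cube coordinate, and the density/first-exit arguments correctly deliver both displayed equalities together with closedness. So the route is essentially the one the paper intends, written out in full. Two small remarks. First, you classify facets of the convex polyhedron $\fN_S$ rather than facets of the geography; this is the parenthetical version in the paper's remark after the proposition, but it suffices, since by Corollary~\ref{cor-facet=face} and the last part of Theorem~\ref{mainthrm-geography} each bordering geography facet is contained in a bordering polyhedron facet, so your classification transfers. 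Second, your appeal to monotonicity in the coordinate-facet case is not what actually does the work there: at a relative interior point $B$ of such a facet, $\fN_S$ locally coincides with the cube half-space $\{b_i\ge 0\}$ or $\{b_i\le 1\}$ cutting it out, and $\fF_S$ always lies on the same side of that hyperplane, so $\fF_S\setminus\fN_S$ is empty near $B$ with no monotonicity required. Monotonicity enters only via Proposition~\ref{prop-monotone N}'s determination of $\fF_S$ and its $b_i=1$ equations, which you do use correctly in the Farkas step.
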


A facet in the proposition is a facet of geography $\fN_S$. (However, the statement holds also for facets of the polyhedron
$\fN_S$.) A facet $\fF$ of \emph{geography} $\fN_S$ is {\em bordering\/} if
$\fF\subset\partial\fN_S$.

\begin{proof}
Immediate by Theorem~\ref{mainthrm-geography} and
Proposition \ref{prop-monotone N}.
\end{proof}

The separatrix plays an important role
in birational geometry of uniruled  varieties,
because it divides Mori log fibrations
from wlc models (see Corollary~\ref{cor-geoface=cube for general}
below).
To factorize the
birational transformations of
those fibrations into more elementary
standard transformations (links), we will use
a path on $\fS_S$ connecting corresponding
points.
The best one would be a segment on $\fS_S$
connecting the points but
the separatrix is usually not convex.
Thus we can use {\em geodesics\/} on $\fS_S$
according to the Euclidean metric of the ambient
space $\fD_S$ or induced by a projection.
We prefer a natural projection from
the {\em origin\/} $0_S$ of the
face of geography $\fF_S$:
$0_S=\sum b_i S_i$ where $b_i=1$ for
the equations of $\fN_S$
and $=0$ otherwise.

\begin{lemma}\label{lemma-nonempty separatrix}
$\fS_S\not=\emptyset$ if and only if
$0_S\not\in\fN_S$ and $\fN_S\not=\emptyset$, or equivalently,
$\nu(S)\ge 0$ and $\nu(0_S)=-\infty$.
\end{lemma}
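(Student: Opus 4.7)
The plan is to reduce the statement to a topological one via the identity $\fS_S = \fN_S \cap \overline{\fF_S \setminus \fN_S}$ from Proposition~\ref{prop-S=closed rational poly}: since $\fN_S$ is closed (Lemma~\ref{lemma-closed N}) and contained in $\fF_S$, this intersection is exactly the topological boundary of $\fN_S$ relative to $\fF_S$, a convex, hence connected, face of the cube $\fB_S$. Therefore $\fS_S = \emptyset$ iff $\fN_S$ is clopen in $\fF_S$, iff $\fN_S \in \{\emptyset, \fF_S\}$. It thus suffices to prove that $\fN_S = \fF_S$ is equivalent to $0_S \in \fN_S$.

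The key observation is that $0_S$ is the componentwise minimum of $\fF_S$. Indeed, by Proposition~\ref{prop-monotone N}(2), the equations cutting $\fF_S$ out of $\fD_S$ have the form $b_i = 1$; by definition of $0_S$, it has $b_i = 1$ on exactly these indices and $b_i = 0$ on the rest, while any $B \in \fF_S$ has $b_i = 1$ on these indices and $b_i \in [0,1]$ elsewhere. Hence $B \geq 0_S$ componentwise in $\fB_S$ for every $B \in \fF_S$. Then the monotonicity of Proposition~\ref{prop-monotone N}(1) gives $\fF_S \subseteq \fN_S$ as soon as $0_S \in \fN_S$; the converse $0_S \notin \fN_S \Rightarrow \fN_S \neq \fF_S$ is trivial since $0_S \in \fF_S$.

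Finally, the equivalence with the Kodaira-dimension formulation is immediate from the definition $\fN_S = \{B \in \fB_S : \nu(X/Z,B) \geq 0\}$: the condition $0_S \notin \fN_S$ reads $\nu(0_S) = -\infty$, and $\fN_S \neq \emptyset$ forces $S \in \fN_S$ by monotonicity (since $S$ is the componentwise maximum of $\fB_S$), so $\fN_S \neq \emptyset$ iff $\nu(S) \geq 0$. No real obstacle arises; the content is entirely the identification of $0_S$ as the minimum of $\fF_S$, combined with monotonicity and the connectedness of $\fF_S$.
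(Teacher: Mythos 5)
Your proof is correct and follows essentially the paper's (very terse) intended argument: apply the topological description of $\fS_S$ from Proposition~\ref{prop-S=closed rational poly} together with the definitions. You've usefully spelled out the implicit steps — that $\fS_S$ is the relative boundary of the closed set $\fN_S$ in the connected face $\fF_S$, that $0_S$ is the componentwise minimum of $\fF_S$ by Proposition~\ref{prop-monotone N}(2), and that monotonicity then forces the dichotomy $\fN_S\in\{\emptyset,\fF_S\}$ exactly when the boundary is empty; the handling of the degenerate case $\fN_S=\emptyset$ and the passage to the $\nu$-formulation via the maximality of $S$ in $\fB_S$ are also right.
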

\begin{proof}
Immediate by Proposition~\ref{prop-S=closed rational poly} and definition.
\end{proof}
\bs

Let
$$
\pr \colon \fF_S\setminus\{0_S\} \to H
$$
be a projection from $0_S$ onto a hyperplane
$H$ in $\fF_S$ given by the
equation $\sum b_i=1$
(here we discard all $b_i$ corresponding
to the equations of $\fN_S$).
(See Diagram \ref{dia projection} below.)

\begin{figure}[htb]
\begin{overpic}[scale=0.7]{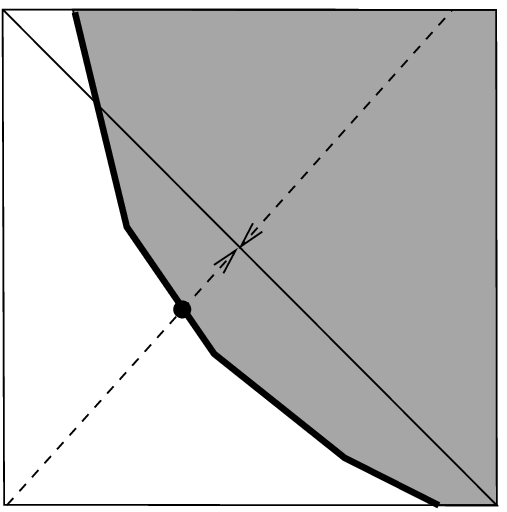}
\put(-7,-7){\small $0_S$}
\put(25,36){\tiny $B$}
\put(68,70){\tiny $D$}
\put(52,52){\tiny $\pr B=\pr D$}
\put(35,85){$\fN_S$}
\put(75,24){\small $H$}
\put(45,10){\small $\fS_S$}
\put(45,50){\tiny $\bullet$}
\put(63,70){\tiny $\bullet$}
\end{overpic}
\setlength{\abovecaptionskip}{20pt}
\caption{}\label{dia projection}
\end{figure}

\bs

By Lemma~\ref{lemma-nonempty separatrix},
$\pr$ is well-defined on $\fS_S$.

\begin{proposition}\label{prop-image of separatrix}
If $\fS_S\not=\emptyset$, the image
$$
\pr \fN_S=\pr \fS_S
$$
is a closed convex rational polyhedron.
Moreover,
$\pr$ is $1$-to-$1$ on $\fS_S$.
\end{proposition}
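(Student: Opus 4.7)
The plan is to use the monotonicity of $\fN_S$ from Proposition~\ref{prop-monotone N} to show that each ray from $0_S$ in $\fF_S$ meets $\fN_S$, if at all, in an ``upper segment'' whose minimum point lies in $\fS_S$. This will simultaneously give the injectivity of $\pr$ on $\fS_S$ and the equality $\pr\fN_S=\pr\fS_S$, while polyhedrality will follow by identifying $\pr\fN_S$ with a cone--hyperplane section.

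First I would fix the ray geometry. Since $0_S$ has all free coordinates equal to zero, the direction vector $B-0_S$ of any ray $r$ from $0_S$ in $\fF_S$ has zero face coordinates and nonnegative free coordinates, so moving outward along $r$ is increasing componentwise. By monotonicity, once $B\in \fN_S\cap r$ the whole tail of $r$ beyond $B$ lies in $\fN_S$; combined with the closedness of $\fN_S$ (Theorem~\ref{mainthrm-geography}), this forces $\fN_S\cap r$ to be either empty or of the form $[B^{\ast},\mathrm{end}]$ for a unique minimum $B^{\ast}$. Any point strictly above $B^{\ast}$ on $r$ admits a neighborhood in $\fF_S$ whose free coordinates still strictly exceed those of $B^{\ast}$, hence a neighborhood contained in $\fN_S$ by monotonicity; such a point is therefore not in $\fS_S$. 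On the other hand $B^{\ast}$ is a limit of ray points below it that are not in $\fN_S$, so $B^{\ast}\in\fS_S$. Hence each ray from $0_S$ contains at most one point of $\fS_S$, giving the injectivity of $\pr|_{\fS_S}$; and for every $B\in\fN_S$ the associated $B^{\ast}$ satisfies $\pr B^{\ast}=\pr B$, yielding $\pr\fN_S=\pr\fS_S$.

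For the polyhedral statement, set
\[
C_0\;:=\;0_S+\{\,t(B-0_S):B\in\fN_S,\;t\geq 0\,\}.
\]
Every point of $\pr\fN_S$ lies on a ray from $0_S$ through some $B\in\fN_S$ and on $H$, so $\pr\fN_S\subseteq C_0\cap H$. Conversely, $0_S\notin H$ (its free coordinates sum to $0$, not $1$), so every point of $C_0\cap H$ is the unique intersection of such a ray with $H$ and is the projection of the corresponding $B\in\fN_S$; thus $\pr\fN_S=C_0\cap H$. By Theorem~\ref{mainthrm-geography} the set $\fN_S$ is a compact rational convex polytope with finitely many rational vertices $v_1,\ldots,v_k$, and by Lemma~\ref{lemma-nonempty separatrix} we have $0_S\notin\fN_S$. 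Therefore $C_0$ is the translate by $0_S$ of the finitely generated rational polyhedral cone spanned by $v_1-0_S,\ldots,v_k-0_S$, which is closed, convex, and rational. Intersecting with the rational hyperplane $H$ yields a closed convex rational polyhedron, automatically bounded because contained in the bounded set $\fF_S\cap H$.

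The main point to get right is the interplay between the componentwise monotonicity governing $\fN_S$ and the central (projective, not linear) character of $\pr$; the key observation that ties them together is that rays from $0_S$ inside $\fF_S$ move monotonically in the free coordinates. Once that is set up, the polyhedrality is a standard conification argument applied to the compact polytope $\fN_S$ and the rational hyperplane $H$.
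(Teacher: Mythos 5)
Your proposal correctly fills in what the paper leaves as ``Immediate by Theorem~\ref{mainthrm-geography} and Proposition~\ref{prop-monotone N}'', and it follows the same route: use monotonicity and closedness to locate a unique minimum $B^{\ast}$ on each ray from $0_S$, show $B^{\ast}$ is the unique separatrix point on that ray, and then realize $\pr\fN_S$ as the $H$-section of the cone with apex $0_S$ over the compact rational polytope $\fN_S$. The conification step is a clean way to get closedness, convexity and rationality simultaneously, and your verification that $C_0\cap H=\pr\fN_S$ (using $0_S\notin H$ so each ray meets $H$ exactly once) is correct.

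One local imprecision worth fixing: the sentence ``admits a neighborhood in $\fF_S$ whose free coordinates still strictly exceed those of $B^{\ast}$'' is not literally true when the ray direction $d=B-0_S$ has some zero free coordinate $d_i=0$. In that direction $B^{\ast}_i=0$ and the point $B'=B^{\ast}+sd$ itself has $B'_i=0$, so no neighborhood can strictly exceed in that coordinate. What saves the argument is that $\fF_S\subseteq\fB_S$ forces every free coordinate of any nearby point to be $\geq 0=B^{\ast}_i$, while in the directions with $d_i>0$ a small enough neighborhood does give $>B^{\ast}_i$; together this gives componentwise $\geq B^{\ast}$, which is all that Proposition~\ref{prop-monotone N} requires. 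So replace ``strictly exceed'' by ``dominate componentwise'', invoking the cube constraint in the degenerate directions, and the proof is sound. (You also implicitly use $B^{\ast}\neq 0_S$ when asserting that $B^{\ast}$ is a limit of ray points below it; this is indeed guaranteed by $0_S\notin\fN_S$ from Lemma~\ref{lemma-nonempty separatrix}, which you cite later, but it would be cleaner to note it at that point.)
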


\begin{proof}
Immediate by Theorem \ref{mainthrm-geography} and Proposition~\ref{prop-monotone N}.
The $1$-to-$1$ property follows from definition.
\end{proof}

\paragraph{Geography of general type}
It is a geography
$\fN_S$ containing
a boundary $B$ such that
the pair $(X/Z,B)$ is of general type.
This assumes that the geography $\fN_S$
is equipped with certain functions, e.g.,
$p(C,B),e(D,B)$ of Section \ref{section2-geography},
$\nu(B),\kappa(B),$ etc.
One of those functions is the numerical  Kodaira dimension $\nu(B)$.
(For other functions, see Section \ref{section2-geography}.)
It behaves almost constant.

\begin{proposition}\label{prop-kd nd are constants}
For each $B\in \fN_S\setminus\pr^{-1}\partial\pr(\fN_S)$ (in particular, $B\in\Int\fN_S$),
$$
\nd(B)=\nd=\max\{\nd(B)|B\in\fB_S\}=\nd(S).
$$
The same holds for $\kd$.
\end{proposition}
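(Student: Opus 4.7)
The plan is to combine componentwise monotonicity of $\nd$ with the polyhedral structure of the geography and the projection $\pr$ to the separatrix.

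First, $\nd(S) = \max\{\nd(B) : B \in \fB_S\}$ is immediate from Proposition~\ref{prop-monotone N}(1): $B \le S$ componentwise for every $B \in \fB_S$, so monotonicity gives $\nd(B) \le \nd(S)$. Set $\nd := \nd(S)$. By Corollary~\ref{cor-property-p-e}(2), the lc contraction $I_\fP$ depends only on the class $\fP$, so $\nd(B) = \dim Y_\fP$ is constant on each class. Consequently $N := \{B \in \fN_S : \nd(B) = \nd\}$ is a polyhedral subunion of classes and, by monotonicity, is an upper set in the componentwise order.

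By Proposition~\ref{prop-image of separatrix}, $\pr$ restricts to a bijection $\fS_S \to \pr\fN_S$ with inverse $B_0(\cdot)$. For each $B \in \fN_S$ the points $B$ and $B_0(\pr B)$ sit on the same ray from $0_S$, with $B_0(\pr B)$ the lower endpoint of $\fN_S \cap \pr^{-1}(\pr B)$; thus $B \ge B_0(\pr B)$ componentwise and $\nd(B) \ge \nd(B_0(\pr B))$. It therefore suffices to prove that if $x \in \Int \pr\fN_S$ then $B_0(x) \in N$.

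For this last step I would argue in two stages. First, interior facets of $\fN_S$ (those between two countries inside $\fN_S$) cannot be of ``$p$-type'' in Corollary~\ref{cor-eqn for facets}---crossing such a facet produces a curve on which $K + B\Lg$ becomes negative, which takes us out of $\fN_S$---so they must be ``$e$-type'' (log flops), across which the lc contraction is preserved. Since $\fN_S$ is convex and hence connected, this shows $\nd$ takes the same value $\nd$ on every country of $\fN_S$. Second, at the point $B_0 := B_0(x)$ of the separatrix---the interior hypothesis $x \in \Int\pr\fN_S$ forcing the separatrix to be locally $(\dim\fN_S -1)$-dimensional at $B_0$---the class $\fP$ of $B_0$ is a codimension-one class in the closure of a unique country $\fC$, and the bordering facet $\overline{\fP}$ of $\fN_S$ is either ``$p$-type'' (leading out to a Mori fibration) or ``$e$-type'' (leading to failure of lc-ness). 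In the $e$-type case, the lc contraction is preserved across the facet and $\nd(\fP) = \nd(\fC) = \nd$; in the $p$-type case the collapse of an additional curve makes the separatrix degenerate at $B_0$ transversally to the projection direction, which forces $\pr B_0 \in \partial\pr\fN_S$, contrary to assumption. Either way $\nd(B_0) = \nd$.

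The statement for $\kappa$ follows immediately from the one for $\nd$ using the assumed semiampleness, which gives $\kappa = \nd$ on $\fN_S$. The principal obstacle is the last step: verifying that the $p$-type separatrix case indeed forces $\pr B_0 \in \partial\pr\fN_S$, which requires a careful local analysis of how the contracted curve in the $p$-type wall interacts with the direction of the projection $\pr$ from $0_S$.
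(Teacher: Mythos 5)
Your plan diverges substantially from the paper's argument, and it contains a genuine gap.

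The paper's proof is a short direct argument on a rather high projective model $V/Z$: if $D' \ge D$ are nef divisors on $V$, then for $a\ge 0$, $a'>0$, $a+a'=1$, the divisor $D'$ is big on a subvariety $W\subseteq V$ if and only if $aD+a'D'$ is, and bigness of $D$ on $W$ implies that of $D'$; hence $\nd(aD+a'D')=\nd(D')\ge\nd(D)$. Applied to $D=P(B)_V$ and $D'=P(S)_V$ (which dominates $P(B)_V$ for $B\le S$ by monotonicity of positive parts), this controls $\nd$ along the whole half-open segment $(B,S]$ and, together with the polyhedral structure from Theorem~\ref{mainthrm-geography} and the projection $\pr$, gives the constancy statement. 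No classification of facet types is needed, and in particular the argument does not rely on the generality conditions.

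The main problem with your plan is the claim that interior facets of $\fN_S$ ``cannot be of $p$-type,'' because crossing such a facet would produce a curve on which $K+B\Lg$ becomes negative. This is false. By Proposition~\ref{prop-p-e} the function $p(C,\cdot)$ is nonnegative and continuous on all of $\fN_S$, and Theorem~\ref{thrm-facet types} shows precisely that interior facets (both flopping and divisorial) \emph{are} cut out by equations $p(C,B)=0$ for suitable curves $C$. Crossing such a facet does not produce a negative intersection number: the wlc model changes across the facet (a flop or divisorial contraction occurs), so the ``new'' $K_Y+B\Lg_Y$ remains nef. Thus the dichotomy ``$p$-type $\Rightarrow$ leaves $\fN_S$, $e$-type $\Rightarrow$ stays'' is not the right one (that dichotomy is bordering versus interior). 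Beyond this, your step 3 invokes the flopping/divisorial classification, which the paper proves only later (Theorem~\ref{thrm-facet types}) and only under the generality conditions, which this proposition does not assume. Your step 4 (the separatrix analysis distinguishing $p$-type from $e$-type walls at $B_0$) inherits the same confusion, and you yourself flag its key transversality step as an unresolved obstacle. The monotonicity reduction in your first two paragraphs is sound and does match the spirit of the paper's first sentence, but the remaining architecture needs to be replaced by the convexity/monotonicity fact for nef divisors.
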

\begin{proof}
Immediate by Theorem \ref{mainthrm-geography}
and the following fact on a rather high projective model $V/Z$.
If $D'\geq D$ are nef divisors on $V/Z$, then for any subvariety $W\subseteq V$
and any real numbers $a\ge 0,a'> 0$ such that $a+a'=1$,
$D'$ is big on $W$ if and only if $aD+a'D'$ is big on $W$,
and if $D$ is big on $W$, then $D'$ is big on $W$.
Thus $\nu(aD+a'D')=\nu(D')\geq \nu(D)$.
For $\kd$, one can use semiampleness.
\end{proof}

\begin{corollary}\label{cor-geoface=cube for general}
 Let $\fN_S$ be a geography of general type.
Then $\fF_S=\fB_S,\dim_\R\fN_S=\dim_\R\fB_S$
(the number of prime components of $S$),
in particular, $\fN_S\not=\emptyset$
 and,
for any $B\in\fS_S$,
$0\leq \nu(B)<\nu=\dim_k X/Z$.
\end{corollary}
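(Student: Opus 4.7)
The plan is to extract from the general-type assumption a single ``big'' boundary $B^\ast \in \fN_S$, and then let the openness of bigness in $\fD_S$ together with Proposition~\ref{prop-monotone N} do all the work.

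First, I fix $B^\ast \in \fN_S$ with $(X/Z,B^\ast)$ of general type, so $\nu(B^\ast) = \dim_k X/Z$. Since $\nu(B) \le \dim_k X/Z$ for every $B \in \fB_S$, Proposition~\ref{prop-kd nd are constants} yields $\nu = \nu(S) = \dim_k X/Z$; by the monotonicity in Proposition~\ref{prop-monotone N}(1) we also get $S \in \fN_S$, so in particular $\fN_S \neq \emptyset$.

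Next I show $\fF_S = \fB_S$. By Proposition~\ref{prop-monotone N}(2), $\fF_S$ is cut out in $\fB_S$ by equations of the form $b_i = 1$; the task is to show that no such equation holds identically on $\fN_S$. Fix an index $i$. If $b^\ast_i < 1$, then $B^\ast$ itself witnesses $b_i < 1$ at a point of $\fN_S$. Otherwise $b^\ast_i = 1$, and I invoke the openness of bigness (the big cone is open in the N\'eron--Severi space of any rather high model, so bigness is preserved under sufficiently small perturbations in $\fD_S$): there is $\eps > 0$ such that $B := B^\ast - \eps S_i \in \fB_S$ is still of general type, hence $B \in \fN_S$ and $b_i = 1 - \eps < 1$. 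Either way, $b_i = 1$ fails on $\fN_S$, so $\fF_S = \fB_S$. Proposition~\ref{prop-monotone N}(3) then gives $\dim_\R \fN_S = \dim_\R \fF_S = \dim_\R \fB_S$, the number of prime components of $S$.

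For the separatrix bound, any $B \in \fS_S$ lies in $\fN_S$, so $\nu(B) \geq 0$. If instead $\nu(B) = \dim_k X/Z$, then $K + B$ is big, and openness of bigness produces a neighborhood $U$ of $B$ in $\fF_S = \fB_S$ on which every $K + B'$ is still big; by the LMMP each such $B'$ admits a wlc model, so $U \subseteq \fN_S$, contradicting the defining property of the separatrix that every neighborhood of $B$ in $\fF_S$ contains a point $B'$ with $\nu(B') = -\infty$. Hence $\nu(B) < \nu$.

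The only nontrivial ingredient is the openness of bigness in $\fD_S$, which is standard; everything else is direct bookkeeping with Propositions~\ref{prop-monotone N} and~\ref{prop-kd nd are constants}, so I do not anticipate any substantive obstacle.
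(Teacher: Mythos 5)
Your proof is correct and takes essentially the same route as the paper: the paper's own argument is ``Immediate by Proposition~\ref{prop-monotone N} and the open property of general type'' (with the numerical case justified by the slt LMMP), and you are simply spelling out that one-liner. Two small cosmetic remarks: the reference to Proposition~\ref{prop-kd nd are constants} is unnecessary (monotonicity already gives $\nu(S)\ge\nu(B^\ast)=\dim_k X/Z$), and the openness of bigness is most cleanly applied to the affine function $B\mapsto K_V+B\Lg_V$ on a fixed slt initial model $V$ (e.g.\ a log resolution of $(X,S)$) rather than on a ``rather high model'' in the paper's technical sense, since the latter need not be an initial model for the perturbed boundary; you do effectively acknowledge this when you invoke the LMMP to produce the wlc model in the separatrix step.
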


\begin{proof}
Immediate by Proposition~\ref{prop-monotone N}
and the open property of general type:
if $(X/Z,B), B\in\fB_S$ is of general type,
then $(X/Z,B')$ is also of general type
for any $B'\in \fB_S$ near $B$.
For the Kodaira dimension, this
follows from definition, for numerical one from the slt LMMP.
\end{proof}

If $S'\ge S$ are two reduced b-divisors on $X/Z$,
then there exists a natural inclusion of
geographies:
$$
\fN_S\to\fN_{S'}, B\mapsto B'=B+S'',
$$
and
$\fN_S+S''=\fN_{S'}\cap(\fB_S+S''), \fS_S+S''=\fS_{S'}\cap(\fB_S+S'')$
where $S''=S'-S-(S'-S)_X$, the exceptional
part of $S'-S$.
Indeed, by definition $(Y/Z,B\Lg_Y)$ is a wlc model of $(X/Z,B)$
if and only if this holds for $(X/Z,B')$.
This correspondence preserves $\wlc$.
The transition from $S$ to $S'$ will be called
an \emph{extension}.
It allows us to perturb any wlc model to a much
better model as follows.
For instance,
if $(Y/Z,B\Lg_Y)$ is a wlc model
of $(X/Z,B)$ not slt or not of general type,
e.g., for $B\in \fS_S$, we
can convert it to a polarized wlc model.
Indeed, if $S'$ is sufficiently larger than $S$,
the geography $\fN_{S'}$ is of general type, and
 $B'$ belongs to a face of a country $\fC\subseteq\fN_{S'}$.
Thus $(X/Z,B'')$ where $B''\in\fC$, has a lc model
$(X\cn/Z,B''\cn)$, and
by Lemma \ref{lemma-closed N} a limit of such models gives
a wlc model $(Y/Z,B\Lg_Y)$ of $(X/Z,B)$
(might be different from $Y$  above or/and
not slt; cf. with Convention~\ref{convention}) with a polarization
of $Y/Z=X\cn/Z$.
Moreover, as we will see below, for a good choice
of $S'$,  $Y/Z$ will be $\Q$-factorial.
Of course, the variety $X\cn/Z$ obtained by a perturbation is not unique.
Nonetheless it is useful
(see Corollary \ref{cor-small klt slt wlc models in elementary log flops} and Theorem~\ref{thrm-mori morphism is cte}
below).
\bs

\paragraph{Generality conditions}
In what follows we assume that $S$ is
a reduced divisor, $(X/Z,S)$ is a slt initial pair
of general type, and
components of $S$ generate $\N^1(X/Z)$.
It is easy to satisfy these conditions on a log resolution of $(X/Z,S)$ after an extension of $S$.

Each country in such a geography has
a unique model.

\begin{theorem}\label{thrm-unique wlc for general geo}
Under the generality conditions on $(X/Z,S)$,
let $B\in\Int\fC$ be an internal point
of a country $\fC$
in the geography  $\fN_S$.
Then  $(X/Z,B)$ has  a unique wlc model $(Y/Z,B\Lg_Y)$.
Moreover, the model is
a klt slt wlc model of $(X/Z,B)$, in particular,
$B\Lg_Y=B_Y$,
$Y/Z$ is projective $\Q$-factorial, depending only on $\fC$,
and the model is lc;
components of $S_Y$ generate $\N^1(Y/Z)$.
\end{theorem}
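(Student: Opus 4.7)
The plan is to derive the theorem from the strict positivity of $p(C,-)$ and $e(D,-)$ on the interior of the country $\fC$, combined with the stability of slt wlc models established inside the proof of Theorem~\ref{mainthrm-geography}.

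First I would verify that $(X/Z,B)$ is a big klt pair with $B\Lg=B$. The hypothesis that $(X/Z,S)$ is of general type and that the components of $S$ generate $\N^1(X/Z)$, together with Corollary~\ref{cor-geoface=cube for general}, forces $\fF_S=\fB_S$ and $\nu(X/Z,B)=\dim_k X$ on $\Int\fN_S\supseteq\Int\fC$, so $K+B$ is big. At an interior point of $\fC$ every multiplicity $b_i$ is strictly less than $1$: otherwise linearity of $b_i$ would force $b_i\equiv 1$ on the top-dimensional class $\fC$, contradicting that $\fC$ is open in $\fB_S$. Combined with the slt hypothesis on $(X/Z,S)$, this gives $(X,B)$ klt and hence $B\Lg=B$. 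The big klt slt LMMP then produces a projective $\Q$-factorial slt wlc model $(Y/Z,B_Y)$, of which there are only finitely many by Theorem~\ref{thrm-finite wlc big klt}.

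The heart is uniqueness. By Corollary~\ref{cor-property-p-e}, the signatures of $p(C,-)$ and $e(D,-)$ are constant on $\fC$, so the lc contraction $I_\fC$ and the set of prime divisors that are nonexceptional on any wlc model of the class are determined by $\fC$. Any two $\Q$-factorial slt wlc models $Y,Y'$ of $(X/Z,B)$ therefore carry the same prime divisors and must be related by a small birational modification. I would now invoke the stability step in the proof of Theorem~\ref{mainthrm-geography}: it yields a single stable $(Y/Z,B_Y)$ together with a neighborhood $U\ni B$ in $\fB_S$ such that $(Y/Z,B'\Lg_Y)$ is an slt wlc model of $(X/Z,B')$ for every $B'\in U\cap\fN_S$. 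A competing $Y'$ would be obtained from $Y$ by a nontrivial small log flop; applied uniformly to the family $\{B'\in U\cap\fN_S\}$, such a flop would split $U$ between two distinct wlc classes, contradicting $U\cap\fN_S\subseteq\fC$ (cf.\ case (iii-1) of Corollary~\ref{cor-eqn for facets}). Hence $Y\cong Y'$, and by Lemma~\ref{lemma-convex relations} this common model serves every $B'\in\fC$, so $Y/Z$ depends only on $\fC$.

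To finish, the strict positivity $p(C,B)>0$ on $\fC$ for every curve $C$ on a rather high $V/Z$ not contracted by $I_\fC$ forces $Y/Z$ to coincide with the lc model $X\cn/Z$: in the natural factorization $V\to Y\to X\cn$ of $I_\fC$, any curve contracted by the second map would violate that positivity. Consequently $K_Y+B_Y$ is ample and the pair is lc. For the last claim, surjectivity onto $\N^1$ is preserved under every divisorial contraction and every small birational modification, so the hypothesis $\fD_S\twoheadrightarrow\N^1(X/Z)$ is transported through the LMMP sequence $X\dashrightarrow Y$ to $\fD_{S_Y}\twoheadrightarrow\N^1(Y/Z)$, whence the components of $S_Y$ generate $\N^1(Y/Z)$. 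The main obstacle is the uniqueness step, i.e.\ converting ``$B\in\Int\fC$'' into ``no small log flop is available locally''; this rests on the stability statement inside the proof of Theorem~\ref{mainthrm-geography}, which itself uses \cite[Corollary~9 and Addendum~5]{orderedterm} together with the slt LMMP.
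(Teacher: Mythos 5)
Your argument that $Y$ coincides with $X\cn$ does not actually close, and this is where the paper's proof differs essentially from yours. You write that ``any curve contracted by the second map $Y\to X\cn$ would violate the positivity $p(C,B)>0$ for every $C$ not contracted by $I_\fC$.'' But a curve $C$ on $V$ lying over a fiber of $Y\to X\cn$ \emph{is} contracted by $I_\fC\colon V\to X\cn$ (since $V\to X\cn$ factors through $Y$), so for such $C$ the equality $p(C,B)=0$ is perfectly consistent with what Corollary~\ref{cor-property-p-e}(2) says; there is no violation. Your positivity statement only controls curves whose image on $X\cn$ is a curve, so it says nothing about whether $Y\to X\cn$ contracts anything. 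In short, the step is vacuous. Similarly, in the uniqueness step the claim that a competing small modification $Y'$ ``would split $U$ between two distinct wlc classes'' does not produce a contradiction: if $Y'$ is also a slt wlc model for $(X/Z,B)$ then by $\wlc$-equivalence it is a slt wlc model for every $B'\in U\cap\fC$, so both $Y$ and $Y'$ serve the whole of $U\cap\fC$ and nothing gets split.

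What actually forces $Y=X\cn$ in the paper is Lemma~\ref{lemma-rho of wlc/lc}, which you never invoke. That lemma gives the inequality $\rho(Y/T)\le\dim_\R\fB_S-\dim_\R\fP$ for the lc contraction $Y\to T=X\cn$, and since $\fC$ is a country of a general-type geography (so $\dim_\R\fC=\dim_\R\fB_S$ by Corollary~\ref{cor-geoface=cube for general}), this yields $\rho(Y/T)=0$, i.e.\ $Y=T$. Uniqueness is then immediate from uniqueness of the lc model, plus the remark that any non-slt wlc model is a rational $1$-contraction of $Y=X\cn$ over $X\cn$ and so equals $Y$. Your opening reduction (that $(X/Z,B)$ is big klt with $B\Lg=B$) and the closing transport of $\fD_S\twoheadrightarrow\N^1$ through the LMMP are fine, and they parallel the paper; it is the Picard-number dimension count of Lemma~\ref{lemma-rho of wlc/lc} that is missing, and the ad hoc positivity and stability arguments you propose in its place do not substitute for it.
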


The key instrument to investigate generic geographies
is the following.

\begin{lemma}\label{lemma-rho of wlc/lc}
Under the generality conditions on $(X/Z,B)$,
let $(Y/Z,B_Y)$ be a klt slt wlc model of $(X/Z,B)$
for a boundary $B$ in a subclass $\Int\fB_S\cap\fP$
and $Y\to T=X\cn$ be its lc contraction.
Then $Y/T$ is FT, components of $S_Y$ generate $\N^1(Y/Z)$, and
$$
\rho(Y/T)\le \dim_\R \fB_S-\dim_\R\fP.
$$
\end{lemma}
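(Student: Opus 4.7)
The plan is to derive all three assertions from an analysis of prime divisors on $Y$ combined with a single surjective linear map argument.

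First, I will show that every prime divisor $D$ on $Y$ which is exceptional over $X$ must be a component of $S_Y$. Since $B \in \Int\fB_S$, writing $B = \sum b_i S_i$ with $0 < b_i < 1$, if such a $D$ were not in $S$, then $\ild(D,X,B)=0$ and so $\mult_D B\Lg = 1$, contradicting the klt hypothesis on $(Y,B_Y)=(Y,B\Lg_Y)$. From this I will deduce that the components of $S_Y$ generate $\N^1(Y/Z)$: any prime divisor on $Y$ is either itself in $S_Y$, or is the strict transform of a prime divisor $D'$ on $X$; writing $[D']\equiv\sum a_i[S_i]$ in $\N^1(X/Z)$ (possible since $S$ generates by hypothesis) and transferring the relation to $Y$ through a common log resolution leaves a residual contribution supported on divisors exceptional over $X$, all of which lie in $S_Y$ by the previous observation.

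With the generation statement in hand, the FT claim follows from Lemma \ref{lemma-FT=0log}: the pair $(Y/T,B_Y)$ is a projective klt $0$-log pair, since $K_Y+B_Y\sim_\R 0/T$ via the lc contraction $f\colon Y\to T$, and $\Supp B_Y = S_Y$ generates $\N^1(Y/Z)$, hence also the quotient $\N^1(Y/T)$.

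Finally, for the Picard number inequality, I will consider the linear map $\psi\colon\fD_S\to\N^1(Y/Z)$ sending $D=\sum d_i S_i$ to $[D_Y]=\bigl[\sum d_i (S_i)_Y\bigr]$; this is surjective by the generation result. Composing with the natural surjection $\N^1(Y/Z)\twoheadrightarrow\N^1(Y/T)$ yields a surjective linear map $\phi\colon\fD_S\to\N^1(Y/T)$ with $\dim\ker\phi = \dim_\R\fB_S - \rho(Y/T)$. For any $B'\in\fP$, the crepancy $K_Y+B'_Y = f^*(K_T+B'_T)$ forces $[B'_Y]\equiv -[K_Y]$ in $\N^1(Y/T)$, since $f^*$-pullbacks vanish there. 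Hence $\phi|_\fP$ is constant, so the convex class $\fP$ (Lemma \ref{lemma-convex relations}) lies in a single fiber of $\phi$, and its linear span of dimension $\dim_\R\fP$ is contained in $\ker\phi$. This gives $\dim_\R\fP \le \dim_\R\fB_S - \rho(Y/T)$, as required. The main obstacle will be making the birational transfer in the generation step fully rigorous for Weil divisors modulo numerical equivalence; this requires a careful passage through a common log resolution, exploiting the klt-forced identification of all $X$-exceptional prime divisors on $Y$ with components of $S_Y$.
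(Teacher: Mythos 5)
Your proof is correct and reaches the same conclusion, but by a route that is more direct than the paper's in two ways, and these are worth noting. For the generation of $\N^1(Y/Z)$, the paper does not argue on the given model $Y$ at all: it runs the klt slt LMMP from $(X/Z,B)$ to produce \emph{some} klt slt wlc model where the generation property is built in, and then at the very end invokes Corollary \ref{cor-Qfact FT flops} to transport all conclusions to the actual $Y$ of the statement via a composition of elementary log flops. You instead prove the generation directly on the given $Y$: since $B\in\Int\fB_S$, any prime divisor on $Y$ exceptional over $X$ would carry multiplicity $1$ in $B\Lg_Y$, contradicting klt, so $X\dashrightarrow Y$ is a birational $1$-contraction of $\Q$-factorial varieties and the pushforward $\N^1(X/Z)\twoheadrightarrow\N^1(Y/Z)$ carries the generating set $S$ onto $S_Y$. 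This eliminates both the LMMP construction step and the final log flop comparison. For the Picard-number bound, the paper factors through $\fD_{S_Y}$, invokes the codimension-preservation result \cite[Lemma 3.3.2]{choi} to equate $\dim_\R\fB_S-\dim_\R\fP$ with $\dim_\R\fB_{S_Y}-\dim_\R\fP_Y$, and then observes that $\fP_Y$ lands in the kernel of $\fD_{S_Y}\twoheadrightarrow\N^1(Y/T)$; you skip the intermediate space entirely and map $\fD_S$ straight onto $\N^1(Y/T)$, observing that $\fP$ sits in a single fiber. The two dimension counts are equivalent, but yours avoids the external reference. The FT deduction via Lemma \ref{lemma-FT=0log} is identical to the paper's. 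The one place you rightly flag as needing care — that pushforward of numerical classes through a $\Q$-factorial birational $1$-contraction is well-defined — is a standard projection-formula verification and does not endanger the argument.
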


\begin{proof}
Since the initial model $(X/Z,B)$ is klt slt
and $\Supp B=S$,
by the klt slt LMMP there exists a
klt slt wlc model $(Y/Z,B_Y)$ of $(X/Z,B)$ and
components of $\Supp B_Y=S_Y$ generate $\N^1(Y/Z)$.
Otherwise, we get a Mori log fibration and nothing to prove.
By construction of the lc model,
$(Y/T,B_Y)$ is a $0$-log pair and
by Lemma~\ref{lemma-FT=0log},  $Y/T$ is FT.
The contraction exists by the base freeness \cite{nonvanishing},
because
$B_Y$ is big.
The klt slt properties imply also that the image $\fP_Y$ of $\fP$
 under
the natural linear map
$$
\fD_S\to\fD_{S_Y}, D\mapsto D_Y,
$$
has the same codimension: $\dim_\R\fB_S-\dim_\R\fP=\dim_\R\fB_{S_Y}-\dim_\R\fP_Y$
\cite[Lemma 3.3.2]{choi}.
By the open property in Theorem~\ref{mainthrm-geography},
$\Int\fP=\Int\fB_S\cap\fP$ when $\Int\fB_S\cap\fP\neq\emptyset$.
On the other hand, the generation property of $S_Y$ gives
the surjectivity of the natural linear map
$$
\fD_{S_Y}
\twoheadrightarrow
\N^1(Y/T), D\mapsto [K_Y+D].
$$
By construction and the definition of $\wlc$,
$\fP_Y$ lies in the kernel of the last map.
Thus

$$
\rho(Y/T)=\dim_\R\N^1(Y/T)\le \dim_\R\fD_{S_Y}-\dim_\R \fP_Y=
\dim_\R\fB_S-\dim_\R\fP.
$$
Finally,
note that the constructed model $Y/T$ can be different from
the one in
the statement of lemma.
However, the difference is in a log flop.
By Corollary~\ref{cor-Qfact FT flops},
it is a composition
of elementary small birational transformations which preserve the required properties.
\end{proof}

\begin{proof}[Proof of Theorem \ref{thrm-unique wlc for general geo}]
Let $(Y/Z,B\Lg_Y)$ be a slt wlc model of $(X/Z,B)$.
Since the latter pair is klt initial,
the former pair is klt and $B\Lg_Y=B_Y$ \cite[Lemma 2.4]{isksh}.
By our assumptions and Proposition~\ref{prop-kd nd are constants}, the pair $(Y/Z,B_Y)$
has general type and
a lc model $(T=X\cn,B\cn)$.
Thus by Lemma \ref{lemma-rho of wlc/lc} and Corollary \ref{cor-geoface=cube for general},
 $\rho(Y/T)\le 0$ and $Y=T$, the uniqueness.

The last uniqueness was established only for slt models.
Any other wlc model is a rational $1$-contraction of $Y/T$ and
thus coincide with $Y$ (see the projective case in Corollary~\ref{cor-FT eff decomposition};
in general, cf. the proof of Theorem~\ref{thrm-finite wlc big klt}).
\end{proof}

\bs

Models of facets and ridges for a generic  geography
also behave quite controllable.
In dimension $d=\dim_k X=1$, all facets are bordering and
ridges only cube bordering.
In dimension $d=2$, flopping facets are impossible and
the birational modifications of ridges are only divisorial
(see the classification of ridges below).

\bs

We divide facets into two types according to
their location in $\fN_S$: {\em bordering\/} and {\em internal\/}.
Furthermore, they are divided into two subtypes, respectively:
{\em cube bordering, (Mori) fibering\/} and {\em flopping, divisorial\/}.
Let $\fF$ be a facet of $\fN_S$ under the generality conditions,
$B\in\Int\fF$, and $(Y/Z,B\Lg_Y)$
be a projective wlc model of $(X/Z,B)$.
First, we describe
neighboring classes and properties of
their models (only projective, possibly not $\Q$-factorial)
and then prove them (see the proof of Theorem~\ref{thrm-facet types} below).
\bs

\paragraph{Cube bordering}
The facet $\fF$ is bordering for $\fN_S$ and
its closure $\ofF$ lies in a facet of $\fB_S$, or
satisfies an equation $b_i=0$ or $1$ in $\fN_S$.
In this case,
$(Y/Z,B\Lg_Y)$ can be non klt.
If it is klt, the wlc models can be classified as types below.
We are not  interested
in this because after a perturbation
we have internal boundaries (see the proof of
Corollary~\ref{cor-small klt slt wlc models in elementary log flops} below).

\paragraph{(Mori) Fibering} The facet $\fF\subseteq\fS_S$ is also
bordering for $\fN_S$ but $B\in\Int\fB_S$.
The pair $(Y/Z,B\Lg_Y),$ $B\Lg_Y=B_Y$, is
a klt slt wlc model of $(X/Z,B)$
and $Y$ has a Mori log fibration $Y\rightarrow T= X\cn/Z$
given by the lc contraction.
The base $T$ is $\Q$-factorial.
Such a Mori fibration will be called {\em polarized\/} by
the boundary $B_Y$.
The model $(Y/Z,B_Y)$ and the fibration are unique for $B$ and
both morphisms $Y/Z,Y/T$ depend
only on the facet $\fF$.
Moreover, there exists a unique
(neighboring) country $\fC$ of $\fN_S$
such that $\ofF$ is its facet.
For any $B'\in\fC$, $(Y/Z,B'_Y)$ is a
lc model of $(X/Z,B')$, and $(Y/Z,B_Y)$ is limiting from $\fC$.
On the other side,
if $B'\in \fB_S\setminus\fN_S$ and is
sufficiently close to $B$, the Mori log
fibration $Y\to T/Z$ is
a slt Mori log fibration of
$(X/Z,B')$.
The facet is given by the equation $p(C,B)=0$ in
$\overline{\fC}$
where $C$ is a curve on a rather high model $V/Z$ of $X/Z$,
contracted on $T$ but not on $Y$.
The function is $>0$ on $\fC$ and $<0$ in the points of $\fB_S\setminus\fN_S$ near the points of $\fF$
for the linear extension of $p(C,B')$ from $\fC$.
The functions $e(D,B')$ have
the same signatures on $\fC,\fF$.

\bs
The next two types are possible respectively only in dimension $d\ge 3$ and $d\ge 2$.

\paragraph{Flopping}
The facet is internal for $\fN_S$:
$\Int \fF\subset\Int\fN_S$, and
$\ofF$ is a facet of two (neighboring) countries $\fC_1,\fC_2$ in $\fN_S$.
The pair $(X/Z,B)$ has exactly three wlc models which
are klt of general type and
related by an elementary small flop, extraction or contraction.
Countries $\fC_1,\fC_2$ give exactly
two slt wlc models $(Y_1/Z,B_1),$ $B_1=B_{Y_1}=B\Lg_{Y_1},$
$(Y_2/Z,B_2),$ $B_2=B_{Y_2}=B\Lg_{Y_2}$ of $(X/Z,B)$, where
$(Y_1/Z,B'_{Y_1}),$ $(Y_2/Z,B'_{Y_2})$ are
lc models of $(X/Z,B')$ for $B'\in\fC_1,\fC_2$, respectively.
Those models of $(X/Z,B)$ are limiting
from $\fC_1,\fC_2$, respectively.
The third model $(T/Z,B_T), B_T=B\Lg_T$, of $(X/Z,B)$ is a lc one. 
Both contractions $Y_1,Y_2\to T$ are elementary small.
In particular, $T$ is not $\Q$-factorial and the third model is not slt.
The facet is given by
equations $p(C_1,B)=p(C_2,B)=0$
in $\overline{\fC_1}\cup\overline{\fC_2}$
where $C_1,C_2$ are curves on a rather high model $V/Z$ of $X/Z$
contracted on $T$ but not on $Y_1,Y_2$, respectively;
$p(C_1,B')>0$ on $\fC_1$ and $\ge 0$ on $\fC_2$;
$p(C_2,B')\ge0$ on $\fC_1$ and $> 0$ on $\fC_2$.
The functions $e(D,B')$ have the same signatures on $\fC_1,\fC_2,\fF$.

\paragraph{Divisorial}
The facet is also internal for $\fN_S$: $\Int \fF\subset\Int\fN_S$, and
$\ofF$ is a facet of two (neighboring) countries $\fC_1,\fC_2$ in $\fN_S$.
The pair $(X/Z,B)$ has exactly two wlc models which
are klt of general type and
related by an elementary divisorial extraction or contraction
of a prime divisor $D$.
The country $\fC_1$
gives exactly one slt wlc model $(Y_1/Z,B_1), B_1=B_{Y_1}=B\Lg_{Y_1}$, of $(X/Z,B)$, where
$(Y_1/Z,B'_{Y_1})$ is a lc model of $(X/Z,B')$ for $B'\in\fC_1$.
The model $(Y_1/Z,B_1)$ is limiting from $\fC_1$.
The second model $(T/Z,B_T), B_T=B\Lg_T$, of $(X/Z,B)$ is the lc one,
the contraction $Y_1\to T$ of $D$ is elementary divisorial.
In particular, $T$ is $\Q$-factorial, $(T/Z,B_T)$ is a slt wlc model,
but not a slt wlc model of $(X/Z,B)$.
The model $(T/Z,B_T)$ is limiting from $\fC_2$.
The facet $\fF$ is given by the equation $p(C,B)=0$ in
$\overline{\fC_1}$ where
$C$ is a curve on a rather high model $V/Z$ of $X/Z$
contracted on $T$ but not on $Y_1$;
$p(C,B')=0$ on $\ofC_2$ and $>0$ on $\fC_1$.
Similarly, $\fF$
can also be given by the equation
$e(D,B)=0$ in $\overline{\fC_2}$:
$e(D,B')=0$ on $\ofC_1$ and $>0$ on $\fC_2$.

\begin{theorem}\label{thrm-facet types}
Under the generality conditions on $(X/Z,S)$,
let $\fF$ be a facet of the geography $\fN_S$.
Then it has one of  four above types: cube bordering,
fibering, flopping or divisorial, and behaves
accordingly.
(See Diagram \ref{dia4 facets} below.)
\begin{figure}[htb]
\begin{overpic}[scale=0.65]{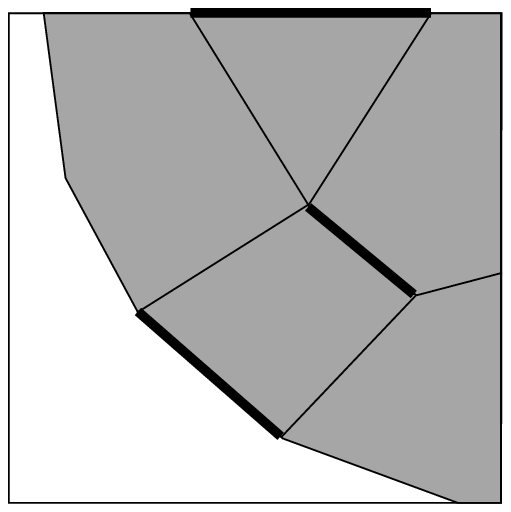}
\put(49,103){\small (1)}
\put(72,53){\small (3,4)}
\put(24,17){\small (2)}
\end{overpic}
\setlength{\abovecaptionskip}{10pt}
\caption{}\label{dia4 facets}
\end{figure}
$$
\begin{array}{lll}
\text{(1) Cube bordering type}& :\fF\subset \partial\fB_S;\\
\text{(2) (Mori) Fibering type}&: \fF\subseteq \fS_S;\\
\text{(3,4) Flopping or divisorial type}& : \Int\fF\subseteq \Int\fN_S.
\end{array}
$$
\end{theorem}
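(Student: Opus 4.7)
The plan is to apply Corollary~\ref{cor-eqn for facets} to reduce to the three possible ways the facet $\fF$ can be cut out, and then to use Lemma~\ref{lemma-rho of wlc/lc} to bound the relative Picard number of the wlc-to-lc contraction of any slt wlc model limiting from an adjacent country. This bound, together with the uniqueness from Theorem~\ref{thrm-unique wlc for general geo}, will force the four possibilities in the statement.

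First I would dispose of the cube bordering case: if $\ofF$ lies in a facet of $\fB_S$, we are done by definition. Otherwise Theorem~\ref{mainthrm-geography} gives an interior point $B \in \Int\fF \cap \Int\fB_S$, and by Corollary~\ref{cor-eqn for facets} the facet is cut out in any adjacent country $\ofC$ by an equation $p(C,B)=0$ or $e(D,B)=0$. Fix such a country $\fC$ with $\ofF$ as a facet (it exists by Corollary~\ref{cor-facet=face}), and form the slt wlc model $(Y/Z,B_Y)$ of $(X/Z,B)$ as the limit via Lemma~\ref{lemma-closed N} of the unique slt wlc models $(Y/Z, B'_Y)$ provided by Theorem~\ref{thrm-unique wlc for general geo} for $B' \in \fC$ approaching $B$. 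Under the generality assumptions this limit is klt slt of general type (Theorem~\ref{thrm-unique wlc for general geo} and Proposition~\ref{prop-kd nd are constants}), and applying Lemma~\ref{lemma-rho of wlc/lc} to the codimension-one class $\fF$ yields $\rho(Y/T)\le 1$ where $T=X\cn$.

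Next, since $Y/T$ is FT (Lemma~\ref{lemma-FT=0log}) with $\rho(Y/T)\le 1$, the contraction $Y\to T$ is elementary. If it is non-birational, it is a Mori log fibration; any direction leaving $\fN_S$ through $B$ produces a general fibral curve $C$ with $(K_Y+B'_Y,C)<0$ for $B'$ just outside $\fN_S$, so $\fF\subseteq\fS_S$ and we are in the (Mori) fibering case, the uniqueness of $Y$ and of $Y\to T$ being inherited from Theorem~\ref{thrm-unique wlc for general geo} on the adjacent country. If $Y\to T$ is birational, the opposite side of $\fF$ also lies in $\fN_S$ and therefore belongs to a second country $\fC_2$ by Corollary~\ref{cor-facet=face}. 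When $Y_1\to T$ is small, the unique slt wlc model of $\fC_2$ must be the second small contraction $Y_2\to T$ produced by the elementary flop, and $T$ itself is not $\Q$-factorial: this is the flopping case. When $Y_1\to T$ is elementary divisorial, contracting a prime $D$, the target $T$ is already $\Q$-factorial and must itself serve as the unique slt wlc model limiting from $\fC_2$ (a further $Q$-factorial wlc model would contradict Theorem~\ref{thrm-unique wlc for general geo} via $\rho(Y/T)\le 1$): this is the divisorial case.

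Finally, the sign behaviour of $p(C,B')$ and $e(D,B')$ along $\ofC_i$ is read off from Corollary~\ref{cor-property-p-e} and Corollary~\ref{cor-eqn for facets}: a curve $C$ on a rather high model contracted by $Y_i\to T$ but not by $Y_i$ itself realises $p(C,\fC_i)>0$ and $p(C,\fF)=0$, while in the divisorial case the contracted prime $D$ realises $e(D,\fC_2)>0$ and $e(D,\fC_1\cup\fF)=0$. The main obstacle I expect is the bookkeeping in the birational elementary case: one must rule out, using $\rho(Y/T)\le 1$ and the uniqueness on countries, the appearance of a third neighbouring country or of a non-slt $\Q$-factorial wlc model that would destroy the clean dichotomy between flopping and divisorial, and one must verify that the constructed $Y$ is limiting from the prescribed country rather than an unrelated wlc model obtained by a different log flop.
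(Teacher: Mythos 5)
Your plan follows the same skeleton as the paper's proof: dispose of the cube-bordering case, take an interior boundary $B\in\Int\fF\cap\Int\fB_S$, bound $\rho(Y/T)$ by $1$ with Lemma~\ref{lemma-rho of wlc/lc}, and read off the dichotomy from the lc contraction being non-birational, small, or elementary divisorial. The classification itself and the use of Corollary~\ref{cor-facet=face}, Corollary~\ref{cor-eqn for facets} and Corollary~\ref{cor-property-p-e} for the sign behaviour are all on the right lines.

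The gap is in how you produce the model to which Lemma~\ref{lemma-rho of wlc/lc} is applied. You construct $(Y/Z,B_Y)$ as a limit, via Lemma~\ref{lemma-closed N}, of the lc models of boundaries $B'\in\fC$ approaching $B$. But the paper warns immediately after Lemma~\ref{lemma-closed N} that this limit need not be a slt wlc model \emph{of} $(X/Z,B)$ even when all approximants are slt; the strict initial-model inequalities $\ild(E,X,B')<a(E,Y,B'_Y)$ can degenerate to equalities in the limit. This is precisely what happens when $\fF$ is divisorial and you happen to have chosen the country $\fC_2$ whose lc model is $T$: the limit is $T$ itself, which is $\Q$-factorial and klt, but it contracts a divisor $D$ with $e(D,B)=0$, so $(T/Z,B_T)$ is a slt wlc model but not a slt wlc model of $(X/Z,B)$. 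Applied blindly, your argument would then yield $\rho(Y/T)=0$ with $Y=T$ and you would wrongly route the facet out of the birational case. Lemma~\ref{lemma-rho of wlc/lc} is stated and proved precisely for slt wlc models \emph{of} $(X/Z,B)$, and its proof relies on this for the codimension comparison between $\fP$ and $\fP_Y$, so this hypothesis cannot be dropped.

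The paper avoids this by constructing the slt wlc model directly with the slt LMMP: under the generality conditions, $(X/Z,B)$ is itself a klt slt initial model (since $B\in\Int\fB_S$ and $(X,S)$ is slt), so running the slt LMMP produces a slt wlc model of $(X/Z,B)$ in the required strong sense, and only then is Lemma~\ref{lemma-rho of wlc/lc} invoked. If you want to keep the limiting construction, you would have to argue separately that the chosen country is the one whose limit remains slt of $(X/Z,B)$ — but since the whole point of the theorem is to determine which case you are in, that is circular. A second, smaller slip: you assert the limit is ``of general type'' before distinguishing the fibering case, but when $\fF\subset\fS_S$ the numerical Kodaira dimension of $B\in\Int\fF$ is strictly less than $\dim_k X/Z$, so $(X/Z,B)$ is not of general type there; the paper uses Proposition~\ref{prop-kd nd are constants} only in the internal (birational) case after establishing $\Int\fF\subset\Int\fN_S$.
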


\begin{proof}
A bordering facet $\fF$ of $\fN_S$ has the cube bordering
or fibering type by
definition, Proposition~\ref{prop-S=closed rational poly} and Corollary \ref{cor-geoface=cube for general}.

Cube bordering type: $\fF$ satisfies
an equation $b_i=0$ or $1$
and lies in the corresponding
facet of $\fF_S=\fB_S$ by Corollary~\ref{cor-geoface=cube for general}.

For the boundaries $B\in\Int\fF$ of the other type,
$B$ is \emph{internal}: $B\in\Int\fB_S$.
By the generality assumption,
$(X/Z,B)$ is a  klt slt initial pair of itself and
components of $\Supp B=S$ generate $\N^1(X/Z)$.
In particular $B$ is big.
Thus any wlc model $(Y/Z,B\Lg_Y)$ of $(X/Z,B)$ is klt \cite[Lemma 2.4]{isksh}
with big $B\Lg_Y=B_Y$  and has
a lc contraction $Y\to T=Y\cn$ \cite{nonvanishing}.
If $Y/T$ is projective, it is FT by Lemma~\ref{lemma-FT=0log}.
Such a model $(Y/Z,B_Y)$ exists because $B\in\fN_S$ and
it satisfies Lemma~\ref{lemma-rho of wlc/lc} when it is a slt wlc model of $(X/Z,B)$.
The slt model can be constructed by the slt LMMP
from the above initial model.

Fibering type.
Suppose that $\fF$ is fibering and
$(Y/Z,B_Y)$ is a slt wlc model of $(X/Z,B)$.
By construction and Lemma~\ref{lemma-rho of wlc/lc},
$Y/T$ is a Mori log fibration.
Indeed, it is not isomorphism
and an extremal fibration
by Corollary~\ref{cor-geoface=cube for general}.
The model (in the projective class) and
contraction are unique by Corollary~\ref{cor-FT eff decomposition}.
Since $Y$ is $\Q$-factorial and $Y/T$ is FT,
$T$ is $\Q$-factorial too (cf. \cite[2.13.5]{nonvanishing}).
The boundary $B_Y$ is a polarization of $Y/T$.
Thus if we perturb $B$ to $B'$ in a positive direction,
$B'\in\Int\fN_S$, then
$B'\in\fC$ for a unique neighboring country
$\fC$ of $\fN_S$  and
$(Y/Z,B'_Y)$ is a wlc (actually lc) model of $(X/Z,B')$
with a polarization $K_Y+B'_Y$ over $T$.
Hence the functions $e(D, B')$ have
the same signatures on $\fC,\fF$.
By Lemma~\ref{lemma-closed N}, $(Y/Z,B_Y)$ is limiting from $\fC$, and
by Theorem \ref{mainthrm-geography}, $\ofF$ is a facet of $\fC$.
A perturbation $(Y/Z,B'_Y)$ in a negative direction,
$B'\in \fB_S\setminus\fN_S$, gives
a slt Mori log fibration $Y\to T/Z$ of $(X/Z,B')$.
By construction and Theorem~\ref{thrm-unique wlc for general geo},
a required equation $p(C,B)=0$ can be
given by a curve $C$ on $V/Z$ with
the curve image in a fiber of $Y/T$.

The next two types are
possible only in dimension $d\ge 2$
and internal:
$\Int\fF\subset\Int\fN_S$, by Theorem \ref{mainthrm-geography}
$\ofF$ is a facet of two
(neighboring) countries $\fC_1,\fC_2$
in $\fN_S$,
and $B\in\Int\fN_S$.
(According to the open property in Theorem~\ref{mainthrm-geography},
$\fC_1\cap\fF=\fC_1\cap\fF=\emptyset$.)
By our assumptions and Proposition~\ref{prop-kd nd are constants}, any wlc model $(Y/Z,B_Y)$ of $(X/Z,B)$
has general type
and a lc model $(T/Z,B_T)=(X\cn/Z,B\cn)$
with the lc contraction $Y\to T$.
Let $(Y/Z,B_Y)$ be a slt wlc model of $(X/Z,B)$.
Note that it is not lc: $Y\not= T$.
Otherwise, any small perturbation $B'$ of $B$ in $\fB_S$
is $\wlc B$: $\fF\subset\fC_1,\fC_2$, a contradiction.
Hence $\rho(Y/T)=1$, or equivalently, $Y/T$ is elementary
by Lemma~\ref{lemma-rho of wlc/lc}.

Flopping type.
Now we claim that $\fF$ is of flopping type if
$Y/Z$ is small.
It is possible only in dimension $d\geq 3$.
By Corollary~\ref{cor-FT eff decomposition} the pair $(X/Z,B)$ has exactly three
projective wlc models
$Y_1/Z,Y_2/Z,T/Z$
which are
related by an elementary small flop,
extraction or contraction:
$$
\xymatrix{
Y_1\ar@{-->}[rr]\ar[rd]& &Y_2\ar[ld]\\
  &T/Z.&
}
$$

\noindent (The cone $\Eff(Y/T)$ is a line $\R$ with three
subcones (classes):  $(-\infty,0),0,$
$(0,+\infty)$.)
The boundary $B_Y$ is a polarization of $Y/T$.
If we perturb $B$ to $B'$ in a positive direction,
$B'\in\Int\fN_S$, that is, $K_Y+B'_Y$ becomes
a polarization, then $Y/Z=Y_1/Z$ is limiting
from $\fC_1$ if $B'\in \fC_1$, otherwise
$Y/Z=Y_2/Z$ is  limiting  from $\fC_2$.
So, the boundary $B'\in\fC$ for
one of the two neighboring countries $\fC$
of $\fN_S$ and
$(Y/Z,B'_Y)$ is a wlc (actually lc) model of $(X/Z,B')$
with a polarization $K_Y+B'_Y$.
The same holds for another neighboring country after a log flop of $Y/T$.
By Theorem~\ref{mainthrm-geography},
$\ofF$ is a facet of $\fC_1$ and of $\fC_2$.
The third wlc model of $(X/Z,B)$ is $(T/Z,B_T)$.
In particular, $T$ is not $\Q$-factorial and the model is not slt.
By construction, the functions $e(D,B)$
have the same signatures on $\fC_1,\fC_2,\fF$.
By construction and Theorem~\ref{thrm-unique wlc for general geo},
required equations $p(C_1,B)=p(C_2,B)=0$ can be
given by curves $C_1,C_2$ on $V/Z$ with
the curve image in a fiber of $Y_1,Y_2/T$, respectively.

Divisorial type.
In the remaining
last type case, $Y/T$ is a divisorial
contraction of a unique prime divisor $D$
by the extremal and $\Q$-factorial properties.
Again by Corollary~\ref{cor-FT eff decomposition} the pair $(X/Z,B)$ has exactly two
projective wlc models which are
related by an elementary divisorial
extraction or contraction.
(The cone $\Eff(Y/T)$ is a line $\R$ with two
subcones (classes): $(-\infty,0],(0,+\infty)$.)
The boundary $B_Y$ is a polarization of $Y/T$.
Thus if we perturb $B$ to $B'$ in a positive direction,
$B'\in\Int\fN_S$, that is, $K_Y+B'_Y$
becomes a polarization, say for $B'\in \fC_1$,
then $Y/Z=Y_1/Z$ is limiting  from $\fC_1$.
The second wlc model of $(X/Z,B)$ is $(T/Z,B_T)$.
In particular, $T$ is $\Q$-factorial, $(T/Z,B_T)$ is a slt wlc model,
but not a slt wlc model of $(X/Z,B)$: $e(D,B)=0$.
The model $(T/Z,B_T)$ is limiting from $\fC_2$.
By construction and Theorem~\ref{thrm-unique wlc for general geo},
required equations $e(D,B)=0,p(C,B)=0$ can be
given respectively by the contracted divisor $D$
and by a curve $C$ on $V/Z$ with
the curve image in a fiber of $Y/T$.
\end{proof}

\begin{corollary}\label{cor-small klt slt wlc models in elementary log flops}
Let $(Y/Z,B_Y),(Y'/Z,B_{Y'})$ be two klt slt wlc models.
Then any small log flop $(Y/Z,B_Y)\dashrightarrow(Y'/Z,B_{Y'})$
can be factored into elementary log flops.

It is sufficient to assume that the
transformation is small and
$B'_Y$ is the birational image of $B_Y$.
\end{corollary}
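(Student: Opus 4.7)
The plan is to reduce the statement to the general-type case settled by Corollary \ref{cor-slt flops}, using the extension trick of the ``Generality conditions'' paragraph preceding Theorem \ref{thrm-unique wlc for general geo}. This brings the full facet classification of Theorem \ref{thrm-facet types} to bear.

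First I would take a common log resolution $V/Z$ of $Y$ and $Y'$ and extend $\Supp B_Y\cup\Supp B_{Y'}$ (identified via the small birational map) to a reduced divisor $S$ on $V$ for which $(V/Z,S)$ satisfies the generality conditions: $(V/Z,S)$ is klt slt initial of general type with the components of $S$ generating $\N^1(V/Z)$. Adding sufficiently many general components always achieves this. Next, extend $B_Y$ and $B_{Y'}$ to boundaries $B,B'\in\fB_S$ by assigning small positive multiplicities to the new components; by the stability of slt wlc models under small perturbation established in the proof of Theorem \ref{mainthrm-geography}, the pairs $(Y/Z,B\Lg_Y)$ and $(Y'/Z,B'\Lg_{Y'})$ remain wlc models. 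A further generic perturbation moves $B,B'$ into the interiors of countries $\fC,\fC'\subseteq\fN_S$, whose unique slt wlc models (Theorem \ref{thrm-unique wlc for general geo}) are small modifications of $Y$ and $Y'$. Since those small modifications are themselves general-type wlc models, they factor into elementary log flops by Corollary \ref{cor-slt flops}, so it suffices to factor the model of $\fC$ into that of $\fC'$.

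Next I would join $\fC$ and $\fC'$ by a generic piecewise-linear path inside $\Int\fN_S$, transversal to facets and avoiding ridges. Fibering facets are bordering, hence disjoint from $\Int\fN_S$. Because $Y\dashrightarrow Y'$ is small, the $\fix$-signatures on $\fC$ and $\fC'$ agree; by further perturbing the path I keep it within the subset of $\Int\fN_S$ on which this common $\fix$-signature is preserved, which also excludes divisorial facets, since each such crossing flips the sign of $e(D,\cdot)$ for some prime $D$ by the divisorial case of Theorem \ref{thrm-facet types}. Every facet crossed is therefore of flopping type, and by the flopping case of Theorem \ref{thrm-facet types} each such crossing corresponds to an elementary small log flop between the unique slt wlc models of the two adjacent countries. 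Concatenating these elementary log flops yields the desired factorization of $Y\dashrightarrow Y'$.

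The main obstacle is justifying that a ``chamber-preserving'' path between $\fC$ and $\fC'$ actually exists. This is the content of the polyhedral decomposition of the movable cone of $Y/T$, where $T=V\cn$ is the common log canonical model in the extended geography. After the extension, $Y/T$ is a $\Q$-factorial FT variety (Lemma \ref{lemma-FT=0log}), and by Proposition \ref{prop-FT mobile cones} applied relative to $T$ the chambers $\Amp(Y_\rP/T)$ tile $\overline{\Mob}(Y/T)$ and correspond precisely to those countries of $\fN_S$ sharing the common $\fix$-signature of $\fC$ and $\fC'$. Their union is connected through flopping walls by Corollary \ref{cor-Qfact FT flops}, which supplies the required path.
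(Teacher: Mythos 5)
Your plan is broadly the right circle of ideas — extend $S$ to a general-type geography and cross only flopping facets while invoking Theorem~\ref{thrm-facet types} — and it does converge toward the paper's second (geographic) argument. But two steps are genuinely unjustified as written.

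First, the reduction via Corollary~\ref{cor-slt flops}. You claim the unique slt wlc models of the countries $\fC,\fC'$ are ``small modifications of $Y$ and $Y'$'' and then factor those modifications by Corollary~\ref{cor-slt flops}. That corollary only applies to klt slt wlc models \emph{of general type}, but the original $(Y/Z,B_Y)$, $(Y'/Z,B_{Y'})$ need not be of general type, and the small modification $Y\dashrightarrow Y_{\fC}$ relates wlc models of \emph{different} boundaries ($B$ vs.\ a perturbation $D$), so it is not a log flop of a single pair of general type in the sense required by Corollary~\ref{cor-slt flops}. Moreover, for a generic perturbation nothing forces $Y_{\fC}$ to be small over $Y$: a badly chosen perturbation can make some component of $B_Y$ or an exceptional $E_i$ change its $e$-signature, producing a divisorial rather than flopping passage. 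The paper sidesteps this entire detour by perturbing \emph{in the direction of polarizations} $\varepsilon H,\varepsilon' H'$: then $(Y/Z,B_Y+\varepsilon H)$ is a klt lc model, so $Y$ \emph{is} the country's model ($Y_{\fC}=Y$) and there is nothing extra to factor.

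Second, the connectivity argument. You appeal to Proposition~\ref{prop-FT mobile cones} and Corollary~\ref{cor-Qfact FT flops} ``relative to $T=V\cn$'' to produce a path in $\Int\fN_S$ crossing only flopping facets. But as the path passes through countries, the lc model $T$ of the corresponding class changes (for $B'\in\fC_i$ the lc model is $Y_i$, not a fixed $T$), so there is no single base $T$ over which to identify the $\fix$-preserving countries with chambers of $\Mob(Y/T)$. In the paper this issue never arises because one works with a single segment $[D,D']$ inside the conical neighborhood of the original boundary $B$ (Lemma~\ref{lemma-closed N}, the stability argument in the proof of Theorem~\ref{mainthrm-geography}): the conical structure forces the segment to cross only facets whose closure contains $B$. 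The remaining content — that every such crossing is flopping — is then established by the explicit choice of extension: all added divisors are big and mobile (so $H,H'$ and the extension divisors cannot become fixed components along $[D,D']$), and $\mult_{E_i}B$ is taken close to $1$, forcing $e(E,B')>0$ near $B$ for every divisor $E$ exceptional on $Y$. You assert the ``$\fix$-signature preserving subset is connected and open,'' but neither claim is a consequence of the results you cite, and the paper does not prove it in that generality; it proves what is needed for the specific segment.

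In short: your extension-and-flopping-facets strategy is the paper's, but the mechanism that makes it work — the directed perturbation $\varepsilon H$ so that $Y$ becomes a country model, the conical segment $[D,D']$, and the careful choice of ample/mobile extension divisors and multiplicities to exclude divisorial crossings — is missing, and the two substitutes you propose (Corollary~\ref{cor-slt flops}, $\Mob(Y/T)$) do not apply.
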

\begin{proof}
We can prove the corollary using
elementary log flops of the $D$-MMP with
$D=H'_Y$ where $H'$ is an effective
polarization on $Y'$ \cite{isksh}.
According to the stability of \cite[Addendum 5]{orderedterm},
the $D$-MMP of $Y/Z$
is equivalent to the klt slt LMMP
of the initial model
$(Y/Z,B_Y+\varepsilon D)$
for some $0<\varepsilon\ll 1$.

We can use a geography instead of the $D$-LMMP
which is more symmetric.
Pick up two effective (generic) polarizations $H,H'$ on $Y,Y'$ respectively.
(In dimension $\ge 2$, one can suppose that $H,H'$
are prime.)
Take a log resolution $(X/Z,S)$
of $(Y/Z,B_Y+H+H'_Y)$
where
$S=\Supp(B_Y)+H+H'+\sum E_i$ and
$E_i$ are the exceptional divisors of $X\to Y$.
Then for an appropriate klt boundary
$B\in\fB_S$,
both models $(Y/Z,B_Y)$ and $(Y'/Z,B_{Y'})$ are
klt slt wlc models of $(X/Z,B)$.
After a perturbation by polarizations $H,H'$,
they will become klt log canonical models
$(Y/Z,B_Y+\varepsilon H),(Y'/Z,B_{Y'}+\varepsilon' H')$
respectively of $(X/Z,D),(X/Z,D')$ where $D,D'$ are
perturbations of $B$ in $\fB_S$.
In other words, there are two classes $\fC,\fC'$ in
the geography $\fN_S$ such that $D\in \fC,D'\in\fC'$ and
$(Y/Z,B_Y),(Y'/Z,B_{Y'})$ are limiting from $\fC,\fC'$ respectively;
$B\in\ofC,\ofC'$.
As above by the stability \cite[Addendum 5]{orderedterm}, the geography is conical
near $B$.
Hence for a perturbation, $\fC,\fC'$ are actually countries and
the segment $[D,D']$ intersects only facets $\fF$,
which are also adjacent to $B$: $B\in\ofF$
is limiting for the facets.
By Theorem~\ref{thrm-facet types}, this gives
a required factorization.
But for this we need to expand $S$ to fulfil
the generality conditions.
We also need to verify that all intersections $[D,D']\cap\fF$
have flopping type.
In other words, if a prime divisor $F$ is not exceptional on $Y$,
then $F$ should not be exceptional on the 
wlc models of all $B'\in [D,D']$
and vice versa.
By construction, $H,H'$ are mobile and big on the
divisors of all those wlc models.
Thus if all divisors in the extension of $S$
are also mobile and big (e.g., ample on $X$;
cf. the perturbation in the proof of
Theorem~\ref{thrm-mori morphism is cte}) on those
divisors, we secure the required nonexceptionality:
each wlc model for $B'\in [D,D']$ corresponds
to an effective perturbation of $B_Y$.
On the other hand,
for $B$ with all $\mult_{E_i}B=1$ or close to $1$ and any exceptional
on $Y$ b-divisor $E$, by the klt assumption for $(Y,B_Y)$
the inequality $e(E,B')>0$ holds near $B$.
Hence $E$ will be exceptional on the wlc models of all $B'\in [D,D']$.

Finally, note that this proof allows us
to relax the klt slt LMMP to the klt slt big one.

A birational transformation $Y\dashrightarrow Y'/Z$ gives a (small) log flop
if and only if the transformation is small and
$B_{Y'}$ is the birational image of $B_Y$ \cite[Proposition 2.4]{3fold-logmod}.
\end{proof}

We divide ridges also into two types according to
their location in $\fN_S$: {\em bordering\/} and {\em internal\/}.
Furthermore, the bordering ones are divided into two
subtypes:
{\em cube bordering\/} and {\em (Mori) fibering\/}.
The internal ridges correspond to general type and
will be also referred to as {\em birational}.
In its turn, fibering and birational
types have three subtypes each.
Essentially they can be divided into types
as {\em only flopping\/}, {\em divisorial and flopping\/}, etc.
But we prefer just a
conventional ordering
2A, 2B, etc: e.g., 2B means fibering type B.

So,  let $\fR$ be a ridge of $\fN_S$ under the generality conditions,
$B\in\Int\fR$, and $(Y/Z,B\Lg_Y)$
be a projective wlc model of $(X/Z,B)$.
First, we describe the neighboring classes and properties of
their models (only projective, possibly not $\Q$-factorial)
and then prove the statements (see the proof of Theorem~\ref{thrm-ridge types} below).

\smallskip

\paragraph{Cube bordering}
The ridge $\fR$ is {\em bordering\/},
that is, $\fR\subset\partial \fN_S$, and
the closure $\ofR$ either lies in a ridge of $\fB_S$, or
in a facet $\fF$ of $\fB_S$.
In the former case, $\fR$ satisfies
two equations $b_i=0$ or $1$, $b_j=0$ or $1$ in $\fN_S$.
In the latter case, $\fF$ satisfies an equation $b_i=0$ or $1$ in
$\fB_S$ and $\fR$ is
a ridge of $\fN_S$ in $\fF$.
In both cases,
$(Y/Z,B\Lg_Y)$ can be non klt.
If it is klt, the wlc models
can be classified as types below.
\smallskip

For the internal types,
the wlc models $(Y/Z,B\Lg_Y)$ of $(X/Z,B), B\in \fR$,
are klt and $B\Lg_Y=B_Y$.
Each of them has the lc contraction
$Y\rightarrow T=X\cn/Z$ and $Y/T$ is FT.
The base $T$ depends only on
the ridge $\fR$.
The pair $(Y/Z,B_Y)$ with projective $Y/Z$ is a slt wlc model of $(X/Z,B)$
if and only if $\rho(Y/T)=2$ and there exist exactly two
extremal contractions of $Y$ over $T$.

\smallskip

\paragraph{Fibering or 2}
The ridge $\fR\subset\fS_S$ is also
bordering for $\fN_S$ but $B\in\Int\fB_S$.
Equivalently, $\Int\fR\subset\fS_S\setminus\partial\fB_S$.
The closure $\ofR$ is a facet of $m+1\ge 2$ (neighboring) facets
$\fF_1,\dots,\fF_{m+1}$ in $\fN_S$, and
a ridge of $m$ countries $\fC_1,\dots,\fC_m$;
$\fC_i$ has (closed) facets $\ofF_i,\ofF_{i+1}$
(see Diagram \ref{dia5 facets2} below).

\begin{figure}[h]
\begin{overpic}[scale=0.65]{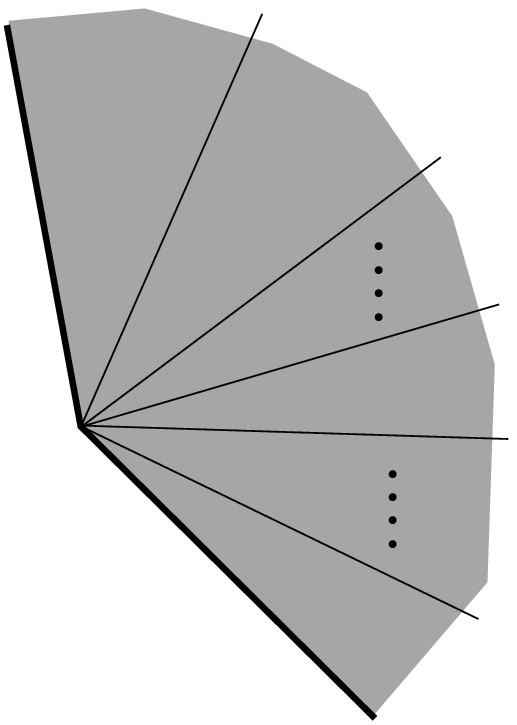}
\put(10,90){\small $\fC_1$}
\put(37,80){\small $\fC_2$}
\put(59,45){\small $\fC_i$}
\put(47,10){\small $\fC_m$}
\put(-5,103){\small $\fF_1$}
\put(32,103){\small $\fF_2$}
\put(63,83){\small $\fF_3$}
\put(72,60){\small $\fF_i$}
\put(74,40){\small $\fF_{i+1}$}
\put(50,-8){\small $\fF_{m+1}$}
\put(68,8){\small $\fF_m$}
\put(2,35){$\fR$}
\put(9.5,39.5){\small $\bullet$}
\end{overpic}
\setlength{\abovecaptionskip}{20pt}
\caption{}\label{dia5 facets2}
\end{figure}
\bs

\noindent Facets $\fF_1,\fF_{m+1}$ are fibering, and
$\fF_2,\dots,\fF_m,m\ge 2,$ are \emph{birational}:
flopping or divisorial.
Let $Y_1/T_1,Y_m/T_{m+1}$ be Mori log fibrations of facets $\fF_1,\fF_{m+1}$,
$T_i, 2 \le i\le m$, be lc models of
the birational facets, and
$Y_i,1\le i\le m$, be lc models of countries $\fC_i$.
If $\fF_m$ is divisorial,
we suppose that $\fF_2$ is divisorial
(after interchanging $\fC_1$ with $\fC_m$).
Each $\fF_i, 3\le i\le m-1$, is flopping, and
corresponding $T_i$ are not $\Q$-factorial.
The varieties $T_1,T_{m+1}$ are
$\Q$-factorial.
The varieties $T_2,T_m$ are $\Q$-factorial if and only if
$\fF_2,\fF_m$ respectively are divisorial.
More precisely, if $\fF_2$
is divisorial, then $Y_2\to Y_1=T_2$
is an elementary divisorial contraction of
a prime divisor $D_2$.
The similar holds for divisorial $\fF_m$
 with contraction $Y_{m-1}\to Y_m=T_m$ of a prime divisor $D_m$
(possibly the same as b-divisor $D_2$).
For $2\le i\le m-1$,
the variety $Y_i/T$ has (only) two elementary contractions $Y_i/T_i,T_{i+1}$.
The same holds for $Y_1,Y_m$
 if $\fF_2,\fF_m$ respectively are flopping.
All contractions $Y_i/T_i,T_{i+1}, 3\le i\le m-2, m\geq 5 $
 are small.
The same holds for $Y_2,Y_{m-1}$
if $\fF_2,\fF_m$ respectively are flopping.

Each $T_i,1 \le i\le m+1$, has a natural contraction to $T$,
$T_i/T$ is extremal, FT and fibered for $2\le i\le m$.
The base $T$ is  not $\Q$-factorial exactly when
$T_1/T$ is small.
The base $T$ is $\Q$-factorial for $\dim T\le 2$,
in particular, in dimension $d\le 3$, and
for types 2A with $m=1$, 2B-C.
In all other cases, $T$ can be non
$\Q$-factorial.

The pair $(X/Z,B)$ has exactly two wlc models
$(Y_1/Z,B_1),$ $B_1=B_{Y_1}$, $(Y_m/Z,B_m)$, $B_m=B_{Y_m}$, $ m\ge 2$,
equipped with a Mori log fibration: $Y_1/T_1,$ $Y_m/T_{m+1}$ over $T$, respectively.
The fibrations $Y_1/T_1,Y_m/T_{m+1}$ are
{\em square} birational,
that is, both contractions $T_1,T_{m+1}/T$ are birational
if and only if
the facets $\fF_1,\fF_{m+1}$ are coplanar.
For $m=1$, 2A is only possible,
the pair $(X/Z,B)$ has exactly one wlc model
$(Y_1/Z,B_1)$, the latter is slt and has two
Mori log fibrations: $Y_1/T_1,T_2$ over $T$,
the base $T$ is $\Q$-factorial and
$T_1,T_2/T$ are Mori log fibrations too,
$\fF_1,\fF_2$ are never coplanar.

The flopping (small) contractions
exist only for $d\ge 3$.
Thus, $m\le 3$ in dimension $d=2$
(there are no fibering ridges in dimension $d=1$).
In dimension $d\ge 3$, the only nonflopping cases are:
2A with $m=1$, 2B with $m=2$ and 2C with $m=3$.

Each pair $(Y_i/Z,B_i)$, $B_i=B_{Y_i}$,
$2\le i\le m-1$, is
a klt slt wlc model of $(X/Z,B)$, limiting from $\fC_i$.
The pairs $(Y_1/Z,B_1),(Y_m/Z,B_m)$ satisfy
exactly the same when respectively $\fF_2,\fF_m$ are flopping, and
they are limiting from $\fC_1,\fC_m$, respectively.
Each pair $(T_i/Z,B_{T_i}), 3\le i\le m-1$,
is a non $\Q$-factorial nonslt
projective wlc model of $(X/Z,B)$, limiting from $\fF_i$.
The pairs $(T_2/Z,B_{T_2}),(T_m/Z,B_{T_m})$ satisfy
exactly the same when respectively $\fF_2,\fF_m$ are flopping.
If $\fF_2$ is divisorial, $(Y_1/Z,B_1)=(T_2/Z,B_{T_2})$ is
an $\Q$-factorial projective wlc model of $(X/Z,B)$
but not a slt wlc model of $(X/Z,B)$.
The similar holds for $\fF_m$.
There are no other projective wlc models of $(X/Z,B)$ than above.

The closures $\ofF_1,\ofF_{m+1}$ are given by
equations $p(C_1,B')=0,p(C_{m+1},B')$
$=0$ in $\ofC_1,\ofC_m$, respectively
where $C_1,C_{m+1}$ are curves on a rather high model $V/Z$ of $X/Z$,
contracted on $T_1,T_{m+1}$ but not on $Y_1,Y_m$, respectively.
Moreover, if $C_1$ is rather general,
then $p(C_1,B')>0$ on
$\fC_i, 1\leq i \leq m,\fF_j, 2\leq j\leq m$, and $\ge 0$
on $\ofF_{m+1}$ whereas $=0$ if and only if the fibration
$Y_1/T_1$ factors through $Y_m/T_{m+1}$: the diagram
$$
\xymatrix{
Y_1\ar[d]\ar@{-->}[r]& Y_m\ar[d]\\
T_1\ar@{-->}[r] &T_{m+1},
}
$$
with the natural birational isomorphism
$Y_1\dashrightarrow Y_m$ and
a (natural) rational contraction $T_1\dashrightarrow T_{m+1}$, is commutative.

The similar holds for general $C_{m+1}$.
The models $Y_1/T_1,Y_m/T_{m+1}$ are square birational if and only if
$p(C_1,\fF_{m+1})=p(C_{m+1},\fF_1)=0$. Moreover, $Y_1/T_1,Y_m/T_{m+1}$ are
{\em  generically isomorphic\/}, that is, they are natural isomorphic over the general points of
$T_1, T_{m+1}$, if and only if the facets $\fF_1,\fF_{m+1}$ are coplanar.

If $\fF_2$ is divisorial, its closure $\ofF_2$ is
given by the equation $e(D_2,B')=0$ in $\ofC_1$:
$e(D_2,B')>0$ on $\fF_1,\fC_1$ and $=0$ on
$\fC_i, 2\leq i \leq m-1,\fF_j, 2\leq j\leq m,\fR$;
$e(D_2,B')=0$ on $\fC_m,\fF_{m+1}$ if 
 $D_2\neq D_m$ as b-divisor or $\fF_m$ is flopping.
If $D_2=D_m$, then $e(D_2,B')>0$ on $\fC_m,\fF_{m+1}$.
The similar holds for $D_m$ if $\fF_m$ is divisorial.

Other equations for the divisorial and
flopping facets were given in Theorem~\ref{thrm-facet types} above.

\medskip

\paragraph{Fibering A or 2A}
(The Sarkisov link of type IV.)
Both $T_1,T_{m+1}\not\cong T, m\ge 1$.
The pair  $(Y_1/Z,B_1)$ is a slt wlc model of $(X/Z,B)$.
The base $T$ is possibly not $\Q$-factorial for $m\ge 2$.
All intermediate facets $\fF_i, 2\le i\le m$, are
flopping.
The only possible other slt wlc models $(Y_i/Z,B_i)$ of $(X/Z,B)$
correspond to the countries $\fC_i, 2\le i\le m$, and
they are log flops of $(Y_1/Z,B_1)$, directed by
a polarization on $Y_i/T$.
More precisely, those flops are composition of elementary ones:
$$
Y_1\dashrightarrow Y_2\dashrightarrow\dots\dashrightarrow
Y_i/T.
$$
The total birational transformation of fibrations:
$$
{\small \xymatrix{
Y_1\ar@{-->}[r]\ar[d]&Y_2\ar@{-->}[r] &\cdots \ar@{-->}[r]&Y_i\ar@{-->}[r]&\cdots\ar@{-->}[r]& Y_{m-1}\ar@{-->}[r] & Y_m\ar[d]\\
T_1\ar[rrrd]^{\not\cong} & & & && &T_{m+1}\ar[llld]_{\not\cong}\\
          &  &&T/Z.& &&
}}
$$

\paragraph{Fibering B or 2B}
(The Sarkisov link of type I and its inverse type III.)
$T_1\cong T$ but $T_{m+1}\not\cong T, m\ge 2$.
The base $T$ is $\Q$-factorial.
The facet $\fF_2$ is divisorial with the elementary divisorial
contraction $Y_2\to T_2\cong Y_1$ and the only possible extremal contraction
of $Y_1/T$ is its Mori fibration.
All other intermediate facets $\fF_i, 3\le i\le m$, are
flopping.
The only slt wlc models of $(X/Z,B)$ are
$(Y_i/Z,B_i), 2\le i\le m$,
limiting from the countries $\fC_i$,
 and they are log flops of $(Y_2/Z,B_2)$,
directed by
a polarization on $Y_i/T$.
More precisely, those flops are composition of elementary ones:
$$
Y_2\dashrightarrow\dots\dashrightarrow
Y_i/T.
$$
The total birational transformation of fibrations:

$$
{\small \xymatrix{
Y_1\ar[dd]&Y_2\ar[l]\ar@{-->}[r]&\cdots \ar@{-->}[r]&Y_i\ar@{-->}[r]&\cdots\ar@{-->}[r] & Y_{m-1}\ar@{-->}[r] &Y_m\ar[d]\\
 & & &&& &T_{m+1}\ar[llld]_{\not\cong}\\
T_1\ar[rrr]^{\cong}   &       &  &T/Z.& &&
}}
$$

\paragraph{Fibering C or 2C}
(The Sarkisov link of type II.)
$T_1\cong T_{m+1}\cong T, m\ge 3$.
The base $T$ is $\Q$-factorial.
Facets $\fF_2,\fF_m$ are divisorial with elementary divisorial
contractions $Y_2\to T_2\cong Y_1,Y_{m-1}\to T_m\cong Y_m$, respectively.
The only possible extremal contractions
of $Y_1,Y_m/T$ are their Mori fibrations.
All other intermediate facets $\fF_i, 3\le i\le m-1$, are
flopping.
The only slt wlc models of $(X/Z,B)$ are
$(Y_i/Z,B_i), 2\le i\le m-1$,
which are limiting from the countries $\fC_i$,
 and they are log flops of $(Y_2/Z,B_2)$, directed by
a polarization on
$Y_i/T$.
More precisely, those flops are composition of elementary ones:
$$
Y_2\dashrightarrow\dots\dashrightarrow
Y_i/T.
$$

The total birational transformation of fibrations:
$$
\xymatrix{
Y_1\ar[d] &Y_2\ar[l]\ar@{-->}[r] &\cdots\ar@{-->}[r]&Y_i\ar@{-->}[r]&\cdots\ar@{-->}[r] &Y_{m-1}\ar[r]& Y_m\ar[d] \\
T_1\ar[rrrd]^{\cong} &  && && & T_{m+1}\ar[llld]_{\cong}\\
 & &&T/Z. && &
}
$$

\medskip

\paragraph{Internal, or birational, or 3}
The ridge $\fR$ is {\em internal\/} if $\Int\fR\subset\Int\fN_S$.
In this case, as for internal facets, all wlc models $(Y/Z,B\Lg_Y)$
are klt with $B\Lg_Y=B_Y$ and of general type with the lc model
$(T/Z,B_T)=(Y\cn/Z,B\cn)$.
The model $T/Z$ of $X/Z$ depends only on $\fR$.
The closure $\ofR$ is a facet of $m\ge 3$ (neighboring) facets
$\fF_1,\dots,\fF_m$ in $\fN_S$, and
a ridge of $m$ countries $\fC_1,\dots,\fC_m$;
$\fC_i$ has (closed) facets $\ofF_i,\ofF_{i+1},m+1=1$
(see Diagrams 6-8 below).
Any two subsequent facets $\fF_i,\fF_{i+1}$ are
strictly in a half-plane.
All facets are birational: flopping or divisorial.
Let
$T_i,Y_i, 1 \le i\le m$, be lc models of
the birational facets $\fF_i$ and of countries $\fC_i$, respectively.
If there is a divisorial facet, we suppose that $\fF_2$ is divisorial
with contraction $Y_2\to T_2\cong Y_1$
after shifting indices $i$.

Any variety
 $Y_i$ is $\Q$-factorial, and
$T_i$ is $\Q$-factorial if and only if
$\fF_i$ is divisorial.
The base $T$ is $\Q$-factorial if and only if
there are two subsequent divisorial facets $\fF_1,\fF_2$
with contractions $Y_2\to T_2\cong Y_1\to T\cong T_1\cong Y_m$.

All varieties $Y_i/T_i,T_{i+1}/T$ are FT extremal or identical,
and $Y_i/T$ are FT with $\rho(Y_i/T)\le 2$.
The pairs $(T_i/Z,B_{T_i}),(T/Z,B_T)$ are the nonslt
wlc models of $(X/Z,B)$.

\bs

\paragraph{Birational A or 3A}
The Weil-Picard number $\dim_\R\N^1(T/T)=2$ all facets $\fF_i$ are flopping, and all
contractions $Y_i/T_i,T_{i+1}, T_i/T$ respectively elementary and extremal small.
The pairs $(Y_i/Z, B_i),$ $B_i=B_{Y_i}$,  are
the only slt wlc models of $(X/Z,B)$.
The models $T_i$ and base $T$ are not $\Q$-factorial.
There exists a sequence of elementary log flops in the facets $\fF_i$:
$$
Y_1\dashrightarrow Y_2\dashrightarrow\dots
\dashrightarrow Y_m\dashrightarrow Y_1/T.
$$
A subsequence $Y_i\dashrightarrow\dots\dashrightarrow Y_j$ of flops
is directed with respect to a polarization on $Y_j/T$ if
the flopping facets $\fF_l$
are strictly in a half-plane.
In general, the configuration of facets and countries is
not symmetric, in particular, facets are not coplanar.
However two facets $\fF_i,\fF_j$ are
coplanar if and only if $T_i\dashrightarrow T_j$ is
a generalized extremal and directed log flop$/T$ with respect to
their polarizations.
Otherwise the span of $\fF_i$ intersects $\fC_j$ and $T_i\dashrightarrow Y_j$ is
a generalized non-extremal and directed log flop$/T$ with respect to
a polarization.
An equation $p(C_i,B)=0$ for the closure $\ofF_i$
in $\ofC_i,\ofC_{i-1}, 1-1=m$,
can be given by a curve  $C_i$
on a rather high model $V/Z$ of $X/Z$.
This allows us to find coplanar facets in terms
of linear extensions for functions $p(C_i,B')$ from
certain countries.
The commuting, centrally symmetric, case is possible
only for $m=4$ with coplanar pairs $\fF_1,\fF_3$ and $\fF_2,\fF_4$;
both flopping loci are disjoint.
See Diagram \ref{dia6} below:

$$
\xymatrix{
&\ar@{-->}[ld]&&\ar@{--}[ll]\ar@{--}[rd]&\\
Y_1\ar@{-->}[r]\ar[rrd]^{\not\cong}&Y_2\ar@{-->}[r] &\cdots\ar@{-->}[r] &Y_{m-1}\ar@{-->}[r]&Y_{m}\ar[lld]_{\not\cong}\\
& &T & &
}
$$
\bs
 \begin{table}[htp]
  \begin{center}
    \begin{tabular}{ l c r }
 \begin{overpic}[scale=0.8]{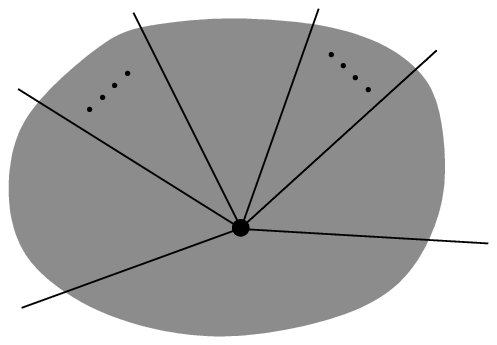}
\put(25,-20){($m\geq 4$)}
\put(-5,-1){\small $\fF_2$}
\put(23,70){\small $\fF_i$}
\put(43,50){\small $\fC_i$}
\put(-8,55){\small $\fF_3$}
\put(100,14){\small $\fF_1=\fF_{m+1}$}
\put(63,73){\small $\fF_{i+1}$}
\put(90,59){\small $\fF_m$}
\put(45,12){\small $\fR$}
\put(60,10){\small $\fC_1$}
\put(6,23){\small $\fC_2$}
\put(70,30){\small $\fC_m$}
\end{overpic} & \qquad\qquad\qquad\qquad
 & \begin{overpic}[scale=0.5]{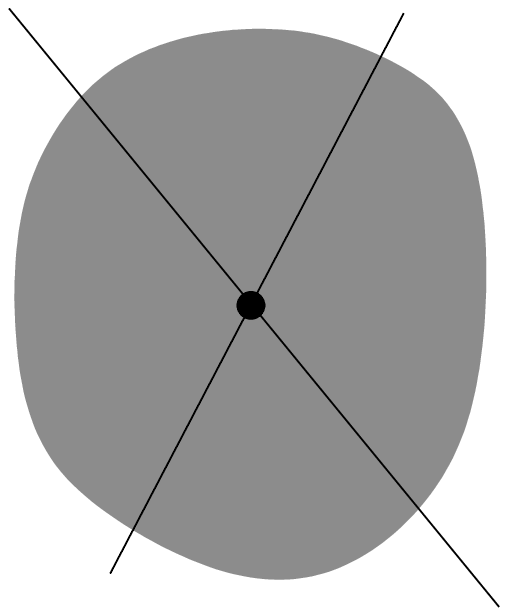}
\put(23,46){\small $\fR$}
\put(85,-8){\small $\fF_1$}
\put(8,-7){\small $\fF_2$}
\put(-8,105){\small $\fF_3$}
\put(67,103){\small $\fF_4$}
\put(40,15){\small $\fC_1$}
\put(2,50){\small $\fC_2$}
\put(30,80){\small $\fC_3$}
\put(64,50){\small $\fC_4$}
\put(-64,-28){($m=4$, flopping loci are disjoint)}
\end{overpic} \\
    \end{tabular}
  \end{center}
  \captionsetup{type=figure}
  \setlength{\abovecaptionskip}{30pt}
  \caption{}\label{dia6}
\end{table}

\bs

\paragraph{Birational B or 3B}
The Weil-Picard number $\dim_\R\N^1(T/T)=1$, and
the only divisorial facets are $\fF_2,\fF_m$
with elementary divisorial contractions
$Y_2\to T_2\cong Y_1,Y_{m-1}\to T_m\cong Y_m$ of
a unique prime b-divisor $D_2$.
All other facets $\fF_i$ are flopping with
elementary small  contractions $Y_{i-1},Y_i/T_i, i=1$ with $1-1=m$ or $3\leq i \leq m-1$,
and with extremal divisorial contractions $T_i/T$, $3\leq i\leq m-1$.
The pairs $(Y_i/Z,B_i),$ $B_i=B_{Y_i}$,  $2\le i\le m-1$, are
the only slt wlc models of $(X/Z,B)$.
The models $T_2,T_m$ are $\Q$-factorial.
All other models $T_i$ and the base $T$ are not $\Q$-factorial.
There exists a sequence of  an elementary divisorial extraction, elementary log flops
in the facets $\fF_1,\fF_i, 3\le i\le m-1$, and of an elementary divisorial contraction:
$$
Y_1\leftarrow Y_2\dashrightarrow\dots
\dashrightarrow Y_{m-1}\to Y_m\dashrightarrow Y_1/T.
$$

\begin{table}[bht]
\begin{center}
\begin{tabular}{ccc}
\multicolumn{3}{c}{$\xymatrix{
Y_2\ar[d]\ar@{-->}[r]&Y_3\ar@{-->}[r] &\cdots\ar@{-->}[r] &Y_{m-2}\ar@{-->}[r]&Y_{m-1}\ar[d]\\
Y_1\cong T_2 \ar[rrd]\ar@{<--}[rrrr] &  & & & Y_{m}\cong T_{m}\ar[lld]\\
& &T_1\cong T & &
}$} \\
&  &\\
& &\\
\begin{overpic}[scale=0.9]{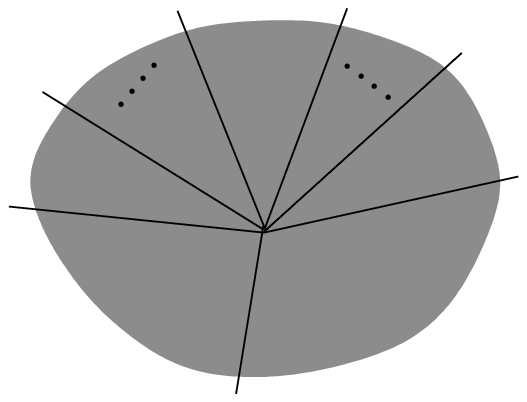}
\put(30,-20){($m\geq 4$)}
\put(-11,38){\small $\fF_2$}
\put(-6,59){\small $\fF_3$}
\put(26,79){\small $\fF_i$}
\put(44,62){\small $\fC_i$}
\put(102,40){\small $\fF_m$}
\put(65,80){\small $\fF_{i+1}$}
\put(90,67){\small $\fF_{m-1}$}
\put(35,-7){\small $\fF_1=\fF_{m+1}$}
\put(52,24){\small $\fR$}
\put(23,17){\small $\fC_1$}
\put(10,43){\small $\fC_2$}
\put(75,47){\small $\fC_{m-1}$}
\put(75,20){\small $\fC_m$}
\put(48,30){\small $\bullet$}
\end{overpic}& \qquad\qquad\qquad & \begin{overpic}[scale=0.55]{bir-ridge-AB4.eps}
\put(-55,-30){($m=4, \text{flopping locus} \cap D_2=\emptyset$)}
\put(23,46){\small $\fR$}
\put(-8,105){\small $\fF_3$}
\put(67,103){\small $\fF_4$}
\put(85,-8){\small $\fF_1$}
\put(8,-4){\small $\fF_2$}
\put(40,15){\small $\fC_1$}
\put(2,50){\small $\fC_2$}
\put(30,80){\small $\fC_3$}
\put(65,50){\small $\fC_4$}
\end{overpic} \\
\end{tabular}
\end{center}
  \captionsetup{type=figure}
  \setlength{\abovecaptionskip}{30pt}
\caption{}\label{dia7}
\end{table}

Any subsequence $Y_i\dashrightarrow\dots\dashrightarrow Y_j$
of flops
is directed with respect to a polarization on $Y_j/T$, and
the flopping facets $\fF_l$ are strictly in a half-plane.
Moreover, the facets $\fF_i,2\le i\le m$, are in a half-plane
and strictly exactly when $\fF_2,\fF_m$ are not coplanar.
In general, the configuration of facets and countries is
not symmetric, in particular, pairs of facets
$\fF_1,\fF_j$ and $\fF_2,\fF_m$ are not coplanar.
However, two facets $\fF_1,\fF_j$ are
coplanar if and only if 
$T_j\rightarrow T_1\cong T$ is a generalized divisorial contraction, that is, for $B'\in\fF_1$,
$K_{T_j}+B'_{T_j}$ is
negative on $T_j/T$, and $K_T+B'_T$ is $\R$-Cartier.
If $\fF_1, F_j, 3\le j\le m-1,$ are not coplanar and the span of $\fF_j$ intersects $\fC_1$ (respectively $\fC_m$),
then $T_j\dashrightarrow T_2$ (respectively $T_j\dashrightarrow T_m$) is a generalized extremal and directed
log flop$/T$ with respect to a polarization.
The facets $\fF_2,\fF_m$ are coplanar if and only if
the flopping locus of  $Y_m\dashrightarrow Y_1$
does not contain $\Center_{Y_1} D_2$.
This is possible only for $m\ge 4$.
Moreover, the commuting, centrally symmetric, case is possible
only for $m=4$ with coplanar pairs $\fF_1,\fF_3$ and $\fF_2,\fF_4$;
the flopping locus is disjoint from $D_2$.
An equation $p(C_i,B)=0$ for the closure
$\ofF_i$ in $\ofC_i,\ofC_{i-1}$,
can be given by a curve $C_i$ on a rather high model $V/Z$ of $X/Z$.
For closures  $\ofF_2,\ofF_m$ respectively in $\ofC_1,\ofC_m$,
the equations $e(D_2,B)=0$ are given by the contractible divisor $D_2$.
This allows us to find coplanar facets in terms
of linear extensions for functions $p(C_i,B'),e(D_2,B')$ from
certain countries.

\bs

\paragraph{Birational C or 3C}
The Weil-Picard number $\dim_\R\N^1(T/T)=0$, $m\ge 4$,
the only divisorial facets are $\fF_1,\fF_2,\fF_3,\fF_m$
with elementary divisorial contractions
$Y_3\to T_3\cong Y_2\to  T_2\cong Y_1\cong T_1\cong T,$
$Y_{m-1}\to T_m\cong Y_m\to  Y_1$ of
two prime divisors $D_3,D_2$, respectively.
The other contractions $T_i/T, 4 \leq i \leq m-1,$ are extremal divisorial with
both exceptional divisors $D_2,D_3$.
The contractions $Y_{i-1}, Y_i/T_i$
corresponding to $\fF_i, 4\leq i\leq m-1,$
are elementary small.
The pairs $(Y_i/Z,B_i),$ $B_i=B_{Y_i}$, $3\le i\le m-1,$ are
the only slt wlc models of $(X/Z,B)$.
The models $T_3,T_m,T\cong T_1\cong T_2$
are $\Q$-factorial.
All other models $T_i, 4\leq i\leq m-1$,
are not $\Q$-factorial.
There exists a sequence of elementary divisorial extractions, elementary log flops
in the facets $\fF_i, 4\le i\le m-1$, and of elementary divisorial contractions:
$$
Y_1\leftarrow Y_2\leftarrow Y_3\dashrightarrow\dots
\dashrightarrow Y_{m-1}\to Y_m\to Y_1/T.
$$
Any subsequence $Y_i\dashrightarrow\dots\dashrightarrow Y_j$
of flops
is directed with respect to a polarization on $Y_j/T$, and
the flopping facets $\fF_l$ are strictly in a half-plane.
Moreover, the facets $\fF_i, 3\leq i \leq  m+1=1$, and
$2\le i\le m,$ respectively, are in a half-plane
and strictly exactly when
$\fF_1,\fF_3$ and $\fF_2,\fF_m$,  respectively, are not coplanar.
In general, the configuration of facets and countries is
not symmetric, in particular, pairs of facets
$\fF_1,\fF_3$ and $\fF_2,\fF_m$ are not coplanar.
However, pairs $\fF_1,\fF_3$ and $\fF_2,\fF_m$ are
coplanar if and only if
respectively $\Center_T D_3\not\subset \Center_TD_2$ and
$\Center_T D_2\not\subset \Center_T D_3$.
Each case or both cases simultaneously can occur for
$m\ge 5$.
However, for $m=4$, both pairs are coplanar or
none of the pairs are coplanar.
The former  is possible only for the commuting centrally symmetric case
with disjoint contractible divisors $D_2,D_3$.
An equation $p(C_i,B)=0$ for the closure
$\ofF_i$ in $\ofC_i,\ofC_{i-1}$,
can be given by a curve
$C_i$ on a rather high model $V/Z$ of $X/Z$.
For closures $\ofF_2,\ofF_m$ respectively in $\ofC_1,\ofC_m$,
the equations $e(D_2,B)=0$ are
given by the contractible divisor $D_2$.
For closures $\ofF_3,\ofF_1$ respectively in $\ofC_1,\ofC_2$
the equations $e(D_3,B)=0$ are
given by the contractible divisor $D_3$.
This allows to find coplanar facets in terms
of linear extensions for functions $e(D_2,B'),e(D_3,B')$ from
certain countries.

$$
\xymatrix{
Y_3\ar@{-->}[r]\ar[d]&Y_4\ar@{-->}[r] &\cdots\ar@{-->}[r] &Y_{m-2}\ar@{-->}[r]&Y_{m-1}\ar[d]\\
Y_2\cong T_3 \ar[rrd] &  & & & Y_m\cong T_m\ar[lld]\\
& &Y_1\cong T_1\cong T_2\cong T & &
}
$$

\begin{table}[th]
\begin{center}
    \begin{tabular}{lll}
\begin{overpic}[scale=0.8]{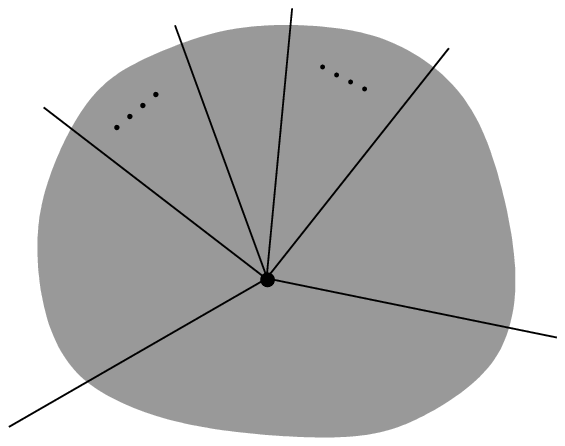}
\put(44,19){\small $\fR$}
\put(-8,-3){\small $\fF_2$}
\put(-2,60){\small $\fF_3$}
\put(24,78){\small $\fF_{i}$}
\put(39,65){\small $\fC_{i}$}
\put(50,80){\small $\fF_{i+1}$}
\put(81,69){\small $\fF_m$}
\put(100,13){\small $\fF_1=\fF_{m+1}$}
\put(55,5){\small $\fC_1$}
\put(10,25){\small $\fC_2$}
\put(77,35){\small $\fC_m$}
\end{overpic}&
\qquad\qquad\qquad\qquad&
\begin{overpic}[scale=0.55]{bir-ridge-AB4.eps}
\put(-45,-30){($m=4$, $D_2, D_3$\;\text{ are disjoint.})}
\put(25,46){\small $\fR$}
\put(-8,105){\small $\fF_3$}
\put(67,103){\small $\fF_4$}
\put(85,-8){\small $\fF_1$}
\put(8,-4){\small $\fF_2$}
\put(40,15){\small $\fC_1$}
\put(3,50){\small $\fC_2$}
\put(30,80){\small $\fC_3$}
\put(65,50){\small $\fC_4$}
\end{overpic}
\end{tabular}\end{center}
  \captionsetup{type=figure}
  \setlength{\abovecaptionskip}{30pt}
\caption{}
\end{table}

\bs
\bs
\bs
\bs
\bs
\bs
\bs
\bs
\bs
\bs
\bs

\begin{theorem}\label{thrm-ridge types}
Under the generality conditions on $(X/Z,S)$,
let $\fR$ be a ridge of the geography $\fN_S$.
Then it is one of the above types:
cube bordering, fibering, or birational, and behaves
accordingly.
(See Diagram \ref{dia9} below.)

\begin{figure}[htp]
\begin{center}
\begin{overpic}[scale=0.65]{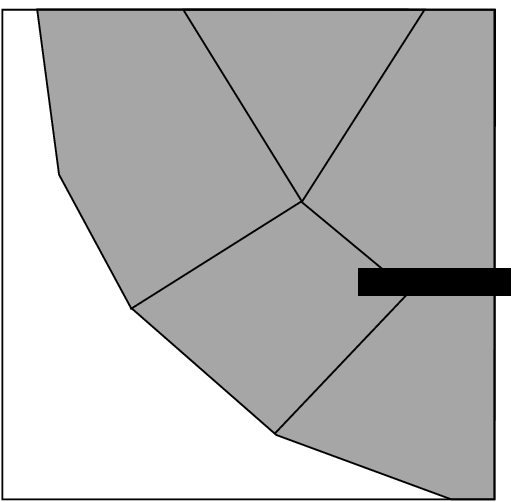}
\put(31,103){\small (1)}
\put(65,58){\small (3)}
\put(15,25){\small (2)}
\put(23,36){\small $\bullet$}
\put(35,96){\small $\bullet$}
\put(57,57){\small $\bullet$}
\end{overpic}
\end{center}
\caption{}\label{dia9}
\end{figure}
$$
\begin{array}{ll}
\text{(1) Cube bordering type}&: \fR\subset \partial\fB_S;\\
\text{(2) (Mori) Fibering type}&:\Int\fR\subset \fS_S\setminus\partial\fB_S;\\
\text{(3) Birational type}&:\Int\fR\subset \Int\fN_S.
\end{array}
$$

\end{theorem}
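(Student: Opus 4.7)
The plan is to establish the trichotomy based on location first, then to analyze each type through the structure of the associated lc model $T$ and the neighboring facets (using Theorem \ref{thrm-facet types}), and finally to read off subtypes from the Weil--Picard number $\rho(T/T)$ and the combinatorics of divisorial versus flopping facets around $\fR$.

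First I would separate the three location cases by a direct appeal to Proposition \ref{prop-S=closed rational poly} and Corollary \ref{cor-geoface=cube for general}: if $\ofR$ meets two facets of $\fF_S = \fB_S$, or lies in one, we are in the cube bordering case; if $\Int \fR$ meets $\partial \fN_S$ but not $\partial \fB_S$, then $\Int \fR \subseteq \fS_S$; otherwise $\Int \fR \subseteq \Int \fN_S$. For boundaries $B \in \Int \fR$ of the latter two types, $B \in \Int \fB_S$, so by the generality conditions $(X/Z,B)$ is klt initial with $B$ big; thus any slt wlc model $(Y/Z,B_Y)$ is klt of general type with an lc contraction $Y \to T$ and $Y/T$ is FT by Lemma \ref{lemma-FT=0log}. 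Lemma \ref{lemma-rho of wlc/lc} then gives $\rho(Y/T) \leq \dim_\R \fB_S - \dim_\R \fR = 2$, and Theorem \ref{thrm-unique wlc for general geo} shows the lc model $T$ depends only on $\fR$.

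For the fibering type ($\fR \subseteq \fS_S$), I would argue that $\ofR$ is a facet of a chain of facets $\fF_1, \dots, \fF_{m+1}$ with $\fF_1, \fF_{m+1}$ fibering (they must abut $\fS_S$) and $\fF_2, \dots, \fF_m$ birational (internal), by running through the classification of Theorem \ref{thrm-facet types} and using that going around $\fR$ must cross the boundary of $\fN_S$ exactly on the two extreme facets. The extremal contractions of $Y_i/T$ enumerate the facets through $\fR$; the $\Q$-factorial versus non-$\Q$-factorial dichotomy for $T_i$, and for $T$ itself, follows from whether the corresponding elementary contraction is divisorial or small. Subdividing into 2A, 2B, 2C according to whether $0$, $1$ or $2$ of $T_1, T_{m+1}$ coincide with $T$ (equivalently, whether $\fF_2, \fF_m$ are divisorial) gives the three fibering subtypes. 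The polarization of $Y_j/T$ directs the sequence of flops inside the chain by Corollary \ref{cor-small klt slt wlc models in elementary log flops}, and the equations $p(C_i, B) = 0$ come from Corollary \ref{cor-eqn for facets} applied to fiber curves of $Y_1/T_1, Y_m/T_{m+1}$; coplanarity of $\fF_1, \fF_{m+1}$ translates into $T_1 \dashrightarrow T_{m+1}$ being an isomorphism over the generic point via linearity of $p(C, \cdot)$ across the ridge.

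For the birational type, the bound $\rho(Y/T) \leq 2$ and $B \in \Int \fN_S$ force $\ofR$ to be surrounded by $m \geq 3$ internal facets $\fF_1, \dots, \fF_m$, all birational. The subtypes 3A, 3B, 3C correspond to $\rho(T/T) = 2, 1, 0$, equivalently, to $0$, $1$, or $2$ pairs of adjacent divisorial facets whose exceptional divisors get contracted all the way to $T$. The cyclic description of the sequence $Y_1 \dashrightarrow \dots \dashrightarrow Y_m \dashrightarrow Y_1$ and the $\Q$-factoriality verdict for each $T_i$ follow from Theorem \ref{thrm-facet types} applied facet by facet. For coplanarity and the commuting centrally symmetric case at $m=4$, I would work on $\N^1(Y_1/T)$, a plane, and use that the rays $p(C_i, \cdot) = 0$ around the ridge form a cyclic fan whose opposite rays coincide exactly when the corresponding flopping (respectively contractible) loci on $T$ are disjoint, equivalently $\Center_T D_i \not\subseteq \Center_T D_j$.

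The main obstacle will be the combinatorial bookkeeping in the birational case: checking that the classification $3A/3B/3C$ is exhaustive, that no further subtypes arise from mixed configurations of divisorial and flopping facets (e.g., ruling out non-adjacent divisorial facets in $3B$, or three divisorial facets in $3C$), and that the coplanarity criteria in terms of $\Center_T D_i$ and generalized extremal log flops really match the linear geography. This amounts to a careful use of $\rho(Y_i/T) = 2$ together with the fact that each elementary contraction of $Y_i$ over $T$ appears as precisely one neighboring facet, which pins down both the cyclic order and the possible transitions.
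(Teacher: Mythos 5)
Your proposal tracks the paper's own proof very closely: the same trichotomy via Proposition~\ref{prop-S=closed rational poly} and Corollary~\ref{cor-geoface=cube for general}, the same use of Lemma~\ref{lemma-rho of wlc/lc} to get $\rho(Y/T)\le 2$, the same reduction to Theorem~\ref{thrm-facet types} for the facet-by-facet analysis, and crucially the same organizing invariant $\dim_\R\N^1(T/T)\in\{2,1,0\}$ for the birational subtypes 3A/3B/3C. The fibering subtypes by whether $T_1$ or $T_{m+1}$ coincides with $T$ also matches the paper exactly. So at the level of strategy, this is the paper's proof.

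The one place your description is imprecise is in the combinatorics of divisorial facets around a birational ridge, which you flag as the ``main obstacle.'' You phrase the 3A/3B/3C split as ``$0$, $1$, or $2$ pairs of adjacent divisorial facets,'' but in 3B the two divisorial facets $\fF_2,\fF_m$ are \emph{not} adjacent (the flopping facet $\fF_1$ sits between them), whereas in 3C the four divisorial facets $\fF_m,\fF_1,\fF_2,\fF_3$ are consecutive. The clean mechanism the paper uses, and which resolves your bookkeeping worry, is this: each prime divisor $D$ of a slt wlc model $Y_i$ that is exceptional on $T$ contributes \emph{exactly two} divisorial facets around $\fR$ (the one where $D$ is extracted and the one where it is contracted), because $e(D,\cdot)$ is a nonnegative convex piecewise-linear function vanishing on an arc of facets and countries and positive on the complementary arc. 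Thus the number of divisorial facets is $2\cdot(\text{number of prime divisors exceptional on }T)=2\cdot\dim_\R\N^1(T/T)$, automatically ruling out odd counts, non-matching placements, or three divisorial facets per divisor. Likewise, the coplanarity criteria via $\Center_T D_i\not\subset\Center_T D_j$ come from linearity of $e(D_i,\cdot)$ across the opposite facet, which is the convexity of Proposition~\ref{prop-p-e} in action. Incorporating this counting argument would close the gap you identified.
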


\begin{proof}
A bordering ridge $\fR$ of $\fN_S$
has the cube bordering
or fibering type by
definition,  Proposition~\ref{prop-S=closed rational poly} and
Corollary \ref{cor-geoface=cube for general}.
Indeed, if $\fR\subset\partial\fB_S$, it is
cube bordering.
Otherwise, $\Int\fR\subset\fS_S\setminus\partial\fB_S$ and
$\fR$ is fibering.

Cube bordering type: $\fR$ satisfies at least one equation
$b_i=0$ or $1$.
If $\fR$ satisfies another equation $b_j=0$ or $1$, it
lies in the ridge of $\fB_S$ given by these equations.
Otherwise, $\fR\subset\partial\fN_S\cap\fF$ for a facet $\fF$ of $\fB_S$
which satisfies the equation $b_i=0$ or $1$.

For the boundaries $B\in\Int\fR$ of the other type ridges,
$B$ is internal: $B\in\Int\fB_S$.
As in the proof of Theorem~\ref{thrm-facet types},
any wlc model $(Y/Z,B\Lg_Y)$ of $(X/Z,B)$ is klt
with big $B\Lg_Y=B_Y$  and has
a lc contraction $Y\to T=X\cn$.
If $Y/T$ is projective, it is FT by Lemma~\ref{lemma-FT=0log} with
$\rho(Y/T)\le 2$ and with at most two extremal contractions$/T$
by Lemma \ref{lemma-rho of wlc/lc}.
As we see from the following, $\rho(Y/T)=2$
 holds and there exist two
extremal contractions of $Y$ over $T$ exactly when
$(Y/Z,B_Y)$ is a slt wlc model of $(X/Z,B)$.
In particular, $d\ge 2$.
Such a model $(Y/Z,B_Y)$ exists because $B\in\fN_S$ and
it satisfies the assumptions of
Lemma~\ref{lemma-rho of wlc/lc}.
The slt model can be constructed by the slt LMMP
from the above initial model.

Fibering type. Suppose that $\fR$ is fibering.
Then by the open property of classes and the last
statement in Theorem~\ref{mainthrm-geography},
there exist required countries $\fC_i$ and facets $\fF_j$.
Most of stated properties follow from Theorem~\ref{thrm-facet types} and
standard facts.

Type 2A. The pair $(Y_1/Z,B_1)$ is a wlc model of $(X/Z,B)$ by Lemma~\ref{lemma-closed N},
limiting from $\fC_1$.
By definition it has a Mori log fibration $Y_1\to T_1/T$ and $T_1\not=T$.
Thus $\rho(Y/T)\ge 2$.
On the other hand, since there exists a slt wlc model of $(X/Z,B)$ over $Y_1$,
this model is $(Y_1/Z,B_1)$ and $\rho(Y_1/T)=2$ by Lemma~\ref{lemma-rho of wlc/lc}.
Similarly, $(Y_m/Z,B_m)$ is a slt wlc model of $(X/Z,B)$ too.
Moreover, all intermediate facets $\fF_j, 2\le j\le m$, are
flopping and the models $(Y_i/Z,B_i), 1\le i\le m,$ are
the only slt wlc models of $(X/Z,B)$.
If $m=1$, $Y_1/T$ has two Mori log fibrations
$Y_1\rightarrow T_1,T_2/T$,
which are the only
extremal contractions$/T$, and by Corollary~\ref{cor-small klt slt wlc models in elementary log flops},
$(Y_1/Z,B_1)$ is the only wlc model of $(X/Z,B)$.
In the last case, the facets $\fF_1,\fF_2$ are noncoplanar by Corollary~\ref{cor-facet=face}.
For $m\ge 2$, another extremal contraction $Y_1\to T_2/T$ is
birational and flopping by Corollary~\ref{cor-small klt slt wlc models in elementary log flops}.
Indeed, if it is divisorial, $Y_1\to T_2=Y_2$ and $Y_2/T$ has
a Mori log fibration $Y_2/T_3=T$, a contradiction.
Similarly, all other intermediate facets are flopping.
Now the description of all wlc models can be obtained from
Corollary~\ref{cor-FT eff decomposition} and Theorem~\ref{thrm-facet types}.

Type 2B. Again the pair $(Y_1/Z,B_1)$ is a wlc model of $(X/Z,B)$,
limiting from $\fC_1$.
By definition, it has a Mori log fibration $Y_1\to T_1 \cong T$.
Thus $\rho(Y_1/T)=1$ and
$Y_1\rightarrow T_1 \cong T$
is the only extremal contraction for $Y_1/T$.
By Theorem~\ref{thrm-facet types}, $\fF_2$ is divisorial.
Otherwise, $Y_1$ has another extremal contraction to $T_2/T$.
In particular, $m\ge 2$.
So, the pair $(Y_2/Z,B_2)$ is a wlc model of $(X/Z,B)$, limiting from $\fC_2$,
with an elementary divisorial contraction $Y_2\to T_2 \cong Y_1$.
Again $\rho(Y_2/T)=2$ and $(Y_2/Z,B_2)$ is a slt model of $(X/Z,B)$.
Moreover, all intermediate facets $\fF_j, 3\le j\le m$, are
flopping and the models $(Y_i/Z,B_i), 2\le i\le m$ are
the only slt wlc models of $(X/Z,B)$.
If $m=2$, $Y_2/T$ has exactly one more contraction$/T$,
the Mori log  fibration $Y_2/T_3\not\cong T$, and by Corollary~\ref{cor-small klt slt wlc models in elementary log flops},
$(Y_2/Z,B_2)$ is the only slt wlc model of $(X/Z,B)$.
For $m\ge 3$, another extremal contraction $Y_2\to T_3/T$ is
birational and flopping by Corollary~\ref{cor-small klt slt wlc models in elementary log flops}.
Indeed, if it is divisorial, $Y_2\to T_3 \cong Y_3$ and $Y_3/T$ has
a Mori log fibration $Y_3/T_4 \cong T$, a contradiction.
Similarly, all other intermediate facets are flopping.
As above the description of all wlc models can be obtained from
Corollary~\ref{cor-FT eff decomposition} and Theorem~\ref{thrm-facet types}.

Type 2C.
As in type 2B, $\fF_2$ is divisorial and
$(Y_2/Z,B_2)$ is a slt
model of $(X/Z,B)$ with
an elementary divisorial contraction $Y_2\to T_2 \cong Y_1$.
Similarly, $\fF_m$ is divisorial and
$(Y_{m-1}/Z,B_{m-1})$ is a slt
model of $(X/Z,B)$ with
an elementary divisorial contraction $Y_{m-1}\to T_m \cong Y_m$ and
the subsequent Mori log fibration $Y_m/T_{m+1}\cong T$.
In particular, $m\ge 3$.
As above all intermediate facets $\fF_j, 3\le j\le m-1$, are
flopping and the models
$(Y_i/Z,B_i),2\le i\le m-1$ are
the only slt wlc models of $(X/Z,B)$.
The description of all wlc models can be obtained from
Corollary~\ref{cor-FT eff decomposition} and Theorem~\ref{thrm-facet types}.

Note that flopping facets only possible for $d\ge 3$.

The properties of equations given by $p(C_1,B'),$ $p(C_{m+1},B')$
and $e(D_2,B'),$ $e(D_m,B')$ follow
from Theorem~\ref{thrm-facet types}.
Moreover, if $C_1$ is a sufficiently general
curve of a Mori log fibration $Y_1/T_1$ (from a rather high model $V/Z$ of $X/Z$), it is not contracted
on all models $Y_i,T_i,2\le i\le m$.
Hence $p(C_1,B')>0$ on
$\fC_i, 1\leq i \leq m,\fF_j, 2\leq j\leq m$, and $\ge 0$
on $\ofF_{m+1}$.
More precisely, $p(C_1,B')=0$ on $\ofF_{m+1}$ if and only if the birational image
of $C_1$ belongs to a fiber of $Y_m/T_{m+1}$, that is,
$Y_1/T_1$ factors through
the last fibration.
The similar holds for a general
curve $C_{m+1}$ on $Y_m/T_{m+1}$.
Thus the fibrations $Y_1/T_1,Y_m/T_{m+1}$ are
square birational to each other if and only if
$T_1,T_{m+1}/T$ are birational (the same general curves in fibers), or
$p(C_1,\fF_{m+1})=p(C_{m+1},\fF_1)=0$ for general $C_1,C_{m+1}$.
Moreover, $Y_1/T_1,Y_m/T_{m+1}$ are generically isomorphic  if and only if the facets
$\fF_1,\fF_{m+1}$ are coplanar. Indeed, if the isomorphism holds then $C_1=C_{m+1}$
and the natural modification $Y_1\dashrightarrow Y_m$ does not touch the general fiber.
Thus $p(C_1,B')$ is linear on $\fN_S$ near $\fR$ and $\fF_1,\fF_{m+1}$ are coplanar.
Conversely, if $\fF_1,\fF_{m+1}$ are coplanar, the convexity of Proposition \ref{prop-p-e}
implies that $p(C_1,\fF_{m+1})=p(C_{m+1}, \fF_1)=0$ and $p(C_1,B')$ is linear on
$\fN_S$ near $\fR$. Thus the modifications
$Y_1\dashrightarrow Y_2\dashrightarrow\cdots \dashrightarrow Y_m$ do not touch $C_1$
and $Y_1/T_1,Y_m/T_{m+1}$ are  generically isomorphic.

Birational type.
Suppose that $\fR$ is birational.
Then by the open property of classes and the last
statement in Theorem~\ref{mainthrm-geography},
there exist required countries $\fC_i$ and facets $\fF_i$.
Most of stated properties follow from Theorem~\ref{thrm-facet types} and
standard facts. We discuss the others below.

In this case as for internal facets, all wlc models $(Y/Z,B\Lg_Y)$
are klt with $B\Lg_Y=B_Y$,
but now of general type with the lc model
$(T/Z,B_T)=(X\cn/Z,B\cn)$ by our assumptions and Proposition~\ref{prop-kd nd are constants}.
The facets $\fF_i,\fF_{i+1}$ are in a half-plane
by convexity of $\fC_i$ and strictly by Corollary~\ref{cor-facet=face}.
In particular, $m\ge 3$.

Type 3A.
Since $\dim_\R\N^1(T/T)=2$ and by Lemma~\ref{lemma-rho of wlc/lc},
for a slt wlc model $(Y/Z,B_Y)$ of $(X/Z,B)$,
the contraction $Y/T$ is small with $\rho(Y/T)=2$ and 
two extremal contractions$/T$ are also elementary small.
As above, for some $i$,
 $Y=Y_i$, limiting from a country $\fC_i$
with two flopping facets $\fF_i,\fF_{i+1}$,
corresponding to two extremal contractions$/T$
by Theorem~\ref{thrm-facet types}.
The rest also follows from the classification of
facets in the theorem and Corollaries~\ref{cor-FT eff decomposition},
\ref{cor-small klt slt wlc models in elementary log flops}.
Note also, that if two facets $\fF_i,\fF_j$ are
coplanar, then $T_i\dashrightarrow T_j$ is a generalized extremal
and directed log flop$/T$ with respect to
a polarization.
Indeed, if the facets are coplanar, the one dimensional subspace of $\R$-Cartier
divisors in $\N^1(T_i/T)$ is generated by $B'\in\fF_i$ or
$B'\in\fF_j$, and respectively
$K_{T_i}+B'_{T_i}$ is ample or antiample$/T$.
The converse follows from definition and
the description of all projective rational $1$-contractions
of $Y/T$.
Similarly, one can treat the case with the span of $\fF_i$ intersecting $\fC_j$.
An equation $p(C_i,B)=0$ for a facet
$\fF_i$ in $\ofC_i,\ofC_{i-1}$
can be given by a curve $C_i$
on a rather high model $V/Z$ of $X/Z$
which is contracted on $T_i$ but not on $Y_i,Y_{i-1}$, respectively.
The commuting case can be treated with
the convexity of Proposition~\ref{prop-p-e}.

Type 3B.
Since $\dim_\R\N^1(T/T)=1$, in particular,
$T$ is not $\Q$-factorial, and by Lemma~\ref{lemma-rho of wlc/lc},
for a slt wlc model $(Y/Z,B_Y)$ of $(X/Z,B)$,
the contraction $Y/T$ is not small with $\rho(Y/T)=2$ but
two extremal contractions$/T$ can be small.
However, applying the $D_2$-LMMP for a prime divisor $D_2$ of $Y$
exceptional on $T$, we obtain a new slt wlc model of $(X/Y,Z)$,
say, $(Y_2/Z,B_2)$, limiting from $\fC_2$ and with
an elementary divisorial contraction $Y_2\to T_2 \cong Y_1$ of $D_2$.
Equivalently, $\fF_2$ is divisorial.
Thus $Y_1$ is $\Q$-factorial and $(Y_1/Z,B_1)$ is
a nonslt model of $(X/Z,B)$, limiting from $\fC_1$ .
The second facet $\fF_1$ of $\fC_1$ is flopping by Theorem~\ref{thrm-facet types}, because
it corresponds to a small extremal contraction $Y_1/T$.
In particular, $D_2$ is the only prime divisor of $Y$
exceptional on $T$.
The log flop in $\fF_1$ gives another $\Q$-factorial
nonslt model of $(X/Z,B)$: $(Y_m/Z,B_m)$, limiting from $\fC_m$.
Again by Theorem~\ref{thrm-facet types}, the facet $\fF_m$ is divisorial with
an elementary divisorial extraction $T_m \cong Y_m\leftarrow Y_{m-1}$ of
$D_2$.
Moreover, $\fF_2,\fF_m$ are
the only divisorial facets, e.g., by Corollary~\ref{cor-small klt slt wlc models in elementary log flops}.
The rest follows from the classification of
facets and Corollaries~\ref{cor-FT eff decomposition}, \ref{cor-small klt slt wlc models in elementary log flops}.
By construction and definition,
$e(D_2,B')>0$ on $\fC_1,\fF_1,\fC_m$ and
$=0$ on the other countries and facets.
Thus by the convexity of $e(D_2,B')$,
the facets $\fF_i,2\le i\le m$, are in a half-plane
and strictly exactly when $\fF_2,\fF_m$ are not coplanar.
The facets $\fF_2,\fF_m$ are coplanar if and only if
the function $e(D_2,B')$ is linear on the half-plane of $\fF_1$,
or equivalently, the flop in $\fF_1$ does not touch $\Center_{Y_1} D_2$
generically.
This is possible only for $m\ge 4$ by Corollary~\ref{cor-facet=face}.
By construction facets $\fF_1,\fF_j$ are
coplanar if and only if
$T_j\to T_1\cong T$ is a generalized divisorial contraction of $D_2$. 
Otherwise, $T_j\dashrightarrow T_2$ or $T_m$ is
a generalized extremal and directed log flop$/T$ with respect to
a polarization \cite[Proof-Explanation of Corollary 1.9]{plflip}.
Note that the contraction $T_j/T$ is not small, but
extremal, and $K_{T_j}+B'_{T_j}$ is negative on $T_j/T$
for $B'\in \fC_1\cup \fF_1\cup \fC_m$ when $K_{T_j}+B_{T_j}$ is
$\R$-Cartier, or equivalently $B'$ is coplanar with $\fF_j$.
In this situation, for $B'\in \fC_1$ (respectively $B'\in \fC_m$),
a generalized log flip with respect to $B'$ or a generalized directed log flop of
$(T_j/T,B)$ with the polarization $K_{T_2}+B'_{T_2}$ (respectively $K_{T_m}+B'_{T_m})$ is
$T_j\dashrightarrow T_2$ (respectively $T_j\dashrightarrow T_m$);
it is neither a divisorial contraction or
a small flip/flop. For $B'\in\fF_1$, a log flip or a log flop of $T_j/T$ is
a generalized divisorial contraction $T_j\to  T_1$.
The commuting case can be treated as above.

Type 3C.
Since $\dim_\R\N^1(T/T)=0$, $T$ is $\Q$-factorial.
As in type 2C, we can construct a slt wlc model of $(X/Z,B)$,
say, $(Y_3/Z,B_3)$ with $\rho(Y_3/T)=2$, limiting from $\fC_3$ and with
an elementary divisorial contraction $Y_3\to T_3=Y_2$ of
a prime divisor $D_3$.
Thus $\fF_3$ is divisorial,
$Y_2$ is $\Q$-factorial and $(Y_2/Z,B_2)$ is
a limiting from $\fC_2$ nonslt model of $(X/Z,B)$.
However, now by the $\Q$-factorial property of $T$,
the second facet $\fF_2$ of $\fC_2$ is also divisorial
with an elementary divisorial contraction $Y_2\to T=T_2=Y_1$
of a prime divisor $D_2$.
Again by Theorem~\ref{thrm-facet types} the facets $\fF_1,\fF_m$
are divisorial with elementary divisorial contractions $Y_{m-1}\to T_m=Y_m\to T=T_1=Y_1$
of divisors $D_2,D_3$, respectively.
In particular, $m\ge 4$.
All other facets are flopping.
The rest follows from the classification of
facets and Corollaries~\ref{cor-FT eff decomposition}, \ref{cor-small klt slt wlc models in elementary log flops}.
By construction and definition,
$e(D_2,B')>0$ on $\fC_1,\fF_1,\fC_m$ and
$0$ on the other countries and facets.
Thus by the convexity of $e(D_2,B')$,
the facets $\fF_i,2\le i\le m$, are in a half-plane
and strictly exactly when $\fF_2,\fF_m$ are not coplanar.
The facets $\fF_2,\fF_m$ are coplanar if and only if
the function $e(D_2,B')$ is linear on the half-plane of $\fF_1$,
or equivalently, the extraction $D_3$ in $\fF_1$ does not touch
$\Center_{Y_1} D_2$ generically,
that is,
$\Center_T D_2\not\subset \Center_T D_3$.
Similarly, the facets $\fF_i,3\leq i \leq m+1=1$, are in a half-plane
and strictly exactly when $\fF_1,\fF_3$ are not coplanar.
The facets $\fF_1,\fF_3$ are coplanar exactly when
$\Center_T D_3\not\subset \Center_T D_2$.
If $D_2,D_3$ are disjoint, then $m=4$ and
the commuting case holds with coplanar pairs
$\fF_1,\fF_3$ and $\fF_2,\fF_4$.
Conversely, if $m=4$, then $Y_3$ has two extremal
divisorial contractions$/T$ of $D_2,D_3$.
If $D_2\cap D_3=\emptyset$, the commuting case holds
with the coplanar pairs.
Otherwise, the extremal property implies that
$\Center_T D_2= \Center_T D_3$ and both pairs
$\fF_1,\fF_3$ and $\fF_2,\fF_4$ are not coplanar.
\end{proof}

\bs

\paragraph{Central model}
Let $\fR$ be a fibering or birational ridge of
a geography under the generality conditions,
$B\in\Int\fR$ be a boundary with a decomposition
$B=F+M$, where $F\ge 0$ and $M\ge 0$ is big and $\R$-mobile
over the lc model $T$ of $(X/Z,B)$.
Such a decomposition always exists and,
in applications, the decomposition mob+exc is typical.
A klt slt wlc model $(Y/Z,B_Y)$ of $(X/Z,B)$ is {\em central\/}
with respect to the decomposition $B=F+M$ if
$(Y/T,F_Y)$ is a weak log Fano \cite[Definition 2.5]{prsh}.
Since $M_Y\equiv -(K_Y+F_Y)/T$,
$Y$ depends only on $\fR$ when
$M_Y=\Mob(B_Y)=\Mob(-K_Y)$ (and $F_Y=0$) for the mob+exc
decomposition of $B_Y$.
Theorem \ref{thrm-ridge types} implies the following properties
of a central model $(Y/Z,B_Y)$:

\begin{enumerate}
\item[(i)] $Y/T$ is a $\Q$-factorial FT contraction with $\rho(Y/T)=2$.

\item[(ii)]
$(Y/Z,B_Y)=(Y_i/Z,B_i)$,
and all other slt wlc models $(Y_j/Z,B_j)$
can be reconstructed from $(Y/Z,B_Y)$ by
elementary log flops in both directions:
$$
Y_1,Y_2 \text{ or } Y_3\dashleftarrow\dots\dashleftarrow Y_i
\dashrightarrow\dots\dashrightarrow Y_{m-1}\text{ or }Y_m;
$$
$Y_i\dashrightarrow\cdots\dashrightarrow Y_3,Y_2,Y_1$ and $Y_i\dashrightarrow \cdots\dashrightarrow Y_{m-1},Y_m$
are sequences of  a log flop and log flips with respect to $F_{Y_i}$ (the modifications are optional),
with at most one log flop for both chains.
In particular, this restores
all other divisorial and fibering contractions over $T$:
$Y_3\to Y_2,Y_2\to Y_1,Y_{m-1}\to Y_m,Y_m\to T_{m+1},$
$Y_1\to T_1$
and all other wlc models of $(X/Z,B)$.
Thus a modification (link) of $Y_1$ (respectively $Y_2,Y_3$) into $Y=Y_{m-1}$ (respectively $Y_m$)
is factored into log antiflips, a log flop (optional) and log flips
with respect to $F_Y$.
The pair $(Y/T,F_Y)$ is a log Fano exactly when
there are no flops; flops are possible only in dimension $d\ge 3$.

\item[(iii)]
A prime divisor $D$ of $Y$ is contractible$/T$ by $-(K_Y+F_Y)$ if and only if
there exists a $\Q$-factorial wlc model $(Y'/Z,B_{Y'})$ of $(X/Z,B)$
where $D$ is exceptional on $Y'$ and with
$a(D,Y',F_{Y'})=1-\mult_DF\ge 0$. In particular, there are no
such divisors if, for any $\Q$-factorial wlc model $(Y'/Z,B_{Y'})$ of $(X/Z,B)$,
$(Y',F_{Y'})$ is terminal
(the last condition can be extended for non $\Q$-Cartier $K_Y'+F_{Y'}$,
assuming that $(Y',F_{Y'})$ is terminal when it is terminal for
a $\Q$-factorialization).

\item[(iv)]
If $(Y/Z,B_Y)$ is $\varepsilon$-lc (in codimension $\geq 2$), then $(Y/Z,F_Y)$ is also $\varepsilon$-lc
 (respectively, in codimension $\geq 2$). For example, this holds if $(X,B)$ is $\varepsilon$-lc
 (respectively, in codimension $\geq 2$, $\varepsilon \le 1$ and
the components of $B$ are nonexceptional on $Y$).
\end{enumerate}

\bs

\begin{corollary}\label{cor-central model exists}
Let $\fR$ be a fibering or birational ridge in $\fN_S$ under
the generality assumptions and $B\in\Int\fR$.
Then for any decomposition $B=F+M$
as above, $(X/Z,B)$ has a central model $(Y/Z,B_Y)$.
For such a decomposition,
the model $(Y/Z,B_Y)$ is unique up to a
log flop of $(Y/T,F_Y)$ over $T$.
Conversely, any klt weak log Fano $(Y/T,F_Y)$ with the property
(i) in Central model above corresponds to
such a ridge.
\end{corollary}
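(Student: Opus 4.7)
The plan is to read centrality off the structural picture of Theorem \ref{thrm-ridge types}: at a fibering or birational ridge $\fR$, the klt slt wlc models of $(X/Z,B)$ are exactly the small modifications $Y_i/T$ of a single $\Q$-factorial FT contraction with $\rho(Y_i/T)=2$, and Proposition \ref{prop-FT mobile cones} applied over $T$ tiles the relative mobile cone $\Mob(Y_0/T)$ into chambers $\Amp(Y_i/T)$. Since $K_{Y_i}+B_{Y_i}\equiv 0/T$ gives $-(K_{Y_i}+F_{Y_i})\equiv M_{Y_i}/T$, centrality of $Y_i$ reduces to the single condition $[M_{Y_i}]\in\Nef(Y_i/T)$, and the whole statement reduces to locating $[M]$ in this chamber structure.

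For existence, I would fix any klt slt wlc model $(Y_0/Z,B_{Y_0})$ supplied by Theorem \ref{thrm-ridge types}. By Lemma \ref{lemma-rho of wlc/lc}, $\rho(Y_0/T)=2$ and $Y_0/T$ is FT, so Proposition \ref{prop-FT mobile cones} yields the chamber decomposition $\Mob(Y_0/T)=\coprod_i\Amp(Y_i/T)$. The hypothesis that $M$ is $\R$-mobile and big over $T$ places $[M_{Y_0}]$ in $\Mob(Y_0/T)$, hence in the closure of some chamber $\Amp(Y/T)$; for this model, $M_Y$ is nef and big over $T$, i.e., $-(K_Y+F_Y)$ is nef and big, so $(Y/T,F_Y)$ is a klt weak log Fano and property (i) holds by construction.

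For uniqueness, suppose $(Y/Z,B_Y)$ and $(Y'/Z,B_{Y'})$ are both central. Then $[M_{Y_0}]$ lies in the closure of both $\Amp(Y/T)$ and $\Amp(Y'/T)$, so the chain of elementary log flops connecting $Y$ to $Y'$ supplied by Corollary \ref{cor-small klt slt wlc models in elementary log flops} traverses only walls along which $M$ vanishes. On each flopping curve $C$ we then have $(K_Y+B_Y,C)=(M,C)=0$, whence $(K_Y+F_Y,C)=0$, so each step is simultaneously an elementary log flop of $(Y/T,F_Y)$. For the converse, starting from a klt weak log Fano $(Y/T,F_Y)$ with property (i), I would use semiampleness of $-(K_Y+F_Y)/T$ to pick an $\R$-mobile $M_Y\sim_\R -(K_Y+F_Y)/T$, set $B_Y:=F_Y+M_Y$ so that $(Y/T,B_Y)$ is a $0$-log pair, and transport $B_Y$ back to $X$ along a common log resolution (extending $S$ as necessary) to obtain a boundary $B\in\fN_S$ admitting $(Y/Z,B_Y)$ as a slt wlc model. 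Because $\rho(Y/T)=2$, Lemma \ref{lemma-rho of wlc/lc} forces the class of $B$ to have codimension exactly two in $\fB_S$, so $B\in\Int\fR$ for a ridge $\fR$, fibering or birational according to whether $T/Z$ is a fibration or birational.

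The main obstacle I anticipate is twofold. First, in the existence half, one has to verify that the chamber containing $[M_{Y_0}]$ really corresponds to an slt wlc model appearing in Theorem \ref{thrm-ridge types} and not to one of the endpoints $Y_1,Y_m$ of the chains in types 2B, 2C, 3B, 3C (which have lower relative Picard number and so cannot satisfy property (i)); this amounts to showing that $[M_{Y_0}]$ lands in an internal flopping chamber, and follows from $B\in\Int\fR$ together with the bigness and mobility of $M$ over $T$. Second, in the converse, pinning down the codimension of the constructed class as exactly two (rather than higher) hinges on the equality case of Lemma \ref{lemma-rho of wlc/lc}, which in turn relies on the generality conditions on $(X/Z,S)$ to prevent degenerations into facets of $\fB_S$ or coincidences among the prime components of $S_Y$.
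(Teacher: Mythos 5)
Your proposal is correct and, while it proves the same thing, it packages the argument differently from the paper. The paper's own proof is terser: for existence it simply runs the $M_Y$-MMP over $T$ with any slt wlc model as a starting point, observes that $\R$-mobility of $M_Y$ forces all steps to be small, and notes that the output is a weak log Fano since $M_Y\equiv-(K_Y+F_Y)/T$ becomes nef; uniqueness up to $F$-log flop is then asserted from the definition of the $M_Y$-MMP's termination. You instead invoke the chamber decomposition $\Mob(Y_0/T)=\coprod_i\Amp(Y_i/T)$ of Proposition \ref{prop-FT mobile cones} and locate $[M_{Y_0}]$ in the closure of some $\Amp(Y/T)$, which is really the global picture of which the $M_Y$-MMP is the local navigation; the two are mathematically equivalent, but your framing makes the uniqueness claim transparent (both central models' nef cones share the class $[M]$, so the connecting flops all satisfy $(M,C)=0$, hence $(K_Y+F_Y,C)=0$). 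The paper's argument buys brevity; yours buys an explicit cone-theoretic reason for why the ambiguity is exactly a log flop of $(Y/T,F_Y)$ and a cleaner accounting of why the endpoint models in types 2B--3C (with $\rho/T<2$) cannot be hit by a big $M$. For the converse both proofs proceed the same way: build the $0$-log pair $(Y/T,B_Y=F_Y+M_Y)$ via an $\R$-complement, pass to a log resolution so the generality conditions hold, and use $\rho(Y/T)=2$ together with Lemma \ref{lemma-rho of wlc/lc} to pin the codimension of the class at exactly two, which puts $B$ in the interior of a ridge.
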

\begin{proof}
To construct a central model one can use the $M_Y$-MMP$/T$ with
any slt wlc model as an initial one.
Since $M_Y$ is $\R$-mobile, the modifications are small.
Note also that $M_Y\equiv -(K_Y+F_Y)$ by definition of $T$: $K_Y+B_Y\equiv 0/T$.
Thus a required model is defined up to a log flop of $(Y/T,F_Y)$.

Conversely, take $Z=T$, $B=F_Y+M$  where $M\ge 0$ and
$\sim_\R -(K_Y+F_Y)$
($\R$-complement).
The generality conditions can be satisfied near $B$ according to
the big and the semiample property of $M$.
Note that, for $S=\Supp B$, $(Y,S)$ is possibly non slt. Thus to apply Theorem \ref{thrm-ridge types},
we can consider its local version near $B$ or we can replace $(Y,S)$ by an appropriate log resolution
 $(X,S)$.
The latter uses the klt property of $(Y,B)$: for some $D\in \fB_S$, $\Supp D_Y=S_Y$ and $(Y/T,B=D_Y)$
is a slt wlc model of $(X/T,D)$ corresponding to a ridge.
\end{proof}

\bs

The following finiteness is
a key to birational factorizations.

\begin{corollary}\label{cor-central models bounded}
Let $\varepsilon$ be a positive real number.
Suppose that \cite[Conjecture 1.1]{prsh} holds
for $\varepsilon$-lc varieties of
dimension $d$.
Then the family of central models $Y/T$ and thus their  links in dimension $d$
for $\varepsilon$-lc slt models $(Y/Z,B_Y)$ are bounded over $k(T)$.
\end{corollary}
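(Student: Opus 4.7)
The plan is to reduce the boundedness of central models to the conjectured boundedness of $\varepsilon$-lc weak log Fanos (\cite[Conjecture 1.1]{prsh}) by working fiber by fiber over $T$, and then to deduce the boundedness of the links from the central model using the factorization supplied by properties (i)--(iv) of Central model.

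First I would verify that a central model $(Y/T, F_Y)$ satisfies the hypotheses of \cite[Conjecture 1.1]{prsh} over $k(T)$. By (i), $Y/T$ is $\Q$-factorial FT with $\rho(Y/T)=2$, and by definition $(Y/T, F_Y)$ is a klt weak log Fano. The $\varepsilon$-lc assumption on the slt model $(Y/Z, B_Y)$ transfers to $(Y/T, F_Y)$ by property (iv). Passing to the generic fiber over $T$ preserves all these properties: $-(K_Y+F_Y)$ remains nef and big, the $\varepsilon$-lc condition is preserved, and the relative $\Q$-factorial FT property yields a $\Q$-factorial FT variety over $k(T)$. Thus \cite[Conjecture 1.1]{prsh} applies and gives a bounded family containing all central models $Y/T$ over $k(T)$.

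Next I would handle the links. By property (ii), every other slt wlc model $Y_j$ arising in the same ridge $\fR$ is obtained from the central model $Y=Y_i$ by a sequence of at most one elementary log flop followed by log flips with respect to $F_Y$, both in the forward and backward directions. The total number of steps is controlled by $m$, which by Theorem \ref{thrm-ridge types} (cases 2A--2C and 3A--3C) is uniformly bounded in terms of the type of $\fR$; the number of possible ridge types is universally finite. Moreover, the intermediate divisorial contractions and Mori fibrations $Y_i\to T_i$, $Y_i\to T_{i+1}$ are all extracted from $Y/T$ by extremal contractions or their inverses. Since the family of central models is bounded over $k(T)$, the finitely many extremal contractions from each $Y$ and the bounded-length chains of flips/flops connecting them produce a bounded family of links over $k(T)$. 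This is standard: boundedness is preserved under a bounded number of steps of the MMP when the relevant discrepancies remain $\varepsilon$-lc, which holds here because each intermediate model $(Y_j/Z, B_j)$ is $\varepsilon$-lc by the monotonicity of log discrepancies along $K+B$-trivial operations (log flops) and by \cite[Lemma 2.4]{isksh} for the flips with respect to $F_Y$.

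The main obstacle I foresee is the second step: ensuring that boundedness propagates from the central model to all links uniformly over $k(T)$ rather than only pointwise. This requires that the extractions and small modifications connecting $Y$ to its neighbors form a bounded family themselves, which should follow from Hilbert scheme arguments applied to the exceptional loci inside the bounded family of central models, combined with the uniform bound on $m$ from Theorem \ref{thrm-ridge types}. Granting this, the corollary follows.
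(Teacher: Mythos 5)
Your approach to the first half (boundedness of central models) matches the paper's: the paper's proof is simply ``Immediate by Corollary~\ref{cor-central model exists} and by Conjecture 1.1 in \cite{prsh}. For links, see Section~\ref{section6-linkage}.''\ Your more explicit version correctly invokes properties (i) and (iv) of the Central model discussion and passes to the generic fiber over $T$; this is a reasonable unpacking of what the paper leaves implicit via Corollary~\ref{cor-central model exists}, whose converse clause identifies central models with klt weak log Fanos $(Y/T,F_Y)$ satisfying property (i).

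However, your argument for the links contains a false intermediate claim. You assert that $m$ is ``uniformly bounded in terms of the type of $\fR$'' and that ``the number of possible ridge types is universally finite.'' The second statement is true but irrelevant; the first is not: in Theorem~\ref{thrm-ridge types}, each type (2A, 2B, 2C, 3A, 3B, 3C) only has a \emph{lower} bound on $m$, and $m$ can be arbitrarily large within a fixed type. The correct order of logic is the reverse of what you first write: the bound on $m$ is an \emph{output} of the boundedness of central models, not an input. Once the family of central models $Y/T$ is bounded over $k(T)$, the count of rational $1$-contractions $Y\dashrightarrow Y'/T$ is uniformly bounded across the family (Corollary~\ref{cor-FT finite contractions} gives finiteness for each FT $Y/T$, and boundedness of the family upgrades this to a uniform bound), and since every intermediate model in a link arising from the ridge $\fR$ is such a $1$-contraction, $m$ is therefore bounded. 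Your closing paragraph essentially arrives at this corrected viewpoint, so the proof is salvageable, but the earlier appeal to a type-dependent bound on $m$ should be deleted, as it is both wrong and unnecessary.
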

\begin{proof}
Immediate by Corollary~\ref{cor-central model exists} and
by Conjecture 1.1 in \cite{prsh}.
For links, see Section~\ref{section6-linkage} below.
\end{proof}

\begin{example}\label{example-d=2 links}
All possible central models with links in dimension $2$ with $Z= \pt,F=0, k=\overline{k}$,
e.g., $k=\mathbb C$, and
terminal singularities i.e., nonsingular, (see links in Mori category in Section~\ref{section6-linkage} below)
are as follows:

2A:
$\mathbb P^1\leftarrow\mathbb F_0=\mathbb P^1\times\mathbb P^1\to\mathbb P^1/T=\pt$,
a nonsingular quadric;

2B:
$\mathbb P^2\leftarrow\mathbb F_1\to \mathbb P^1/T=\pt$, a blowup of the plane $\mathbb P^2$;

2C:
$\mathbb F\leftarrow \Bl \mathbb F\to\mathbb F'/T=C$,
the standard fiber modification, where
$\mathbb F,\mathbb F'$ are $\mathbb P^1$-fibrations over
a nonsingular curve $C$ and $\Bl\mathbb F$ is a blowup of $\mathbb F$ with
blowdown of another curve to $\mathbb F'$.
Note that the last model is $\cong\mathbb P^1/k(C)$.
\end{example}

There are much more such links $(\geq70)$ and central models
 in dimension $3$ in the nonsingular case, and thousands are expected for
terminal singularities.

\section{Linkage}\label{section6-linkage}

\paragraph{Links}  Let $Y_1/T_1,Y_2/T_2$ be two extremal contractions (possibly
birational) of $\Q$-factorial projective$/T_1,T_2$
birationally equivalent varieties $Y_1,Y_2$.
Their {\em link\/} $Y_1\dashrightarrow Y_2$
is a composition (chain) of extremal divisorial
extractions, contractions and small modifications (e.g., flips, flops or
antiflips) over another variety $T$ with contractions $T_1,T_2\to T$:
$$
\xymatrix{
Y_1\ar[d]\ar@{-->}[r]&\cdots\ar@{-->}[r]& Y_2\ar[d]\\
T_1\ar[rd]& &T_2\ar[ld]\\
& T. &
}
$$
Such a link from $Y_1/T_1$ into $Y_2/T_2$ is {\em elementary\/}
if $Y_1,Y_2$ and all intermediate models
$Y/T$ in the link are $\Q$-factorial FT with $\rho(Y/T)\le 2$, and
modifications are elementary in the following order:
(at most 2) divisorial extractions, small modifications,
(at most 2) divisorial contractions;
all modifications are optional.
The sequences of modifications of fibering 2A-C and birational 3A-C ridges
are elementary.

If, in addition,
$Y_1/T_1,Y_2/T_2$ are Mori fibrations and all the
models in the link are terminal,
it is one of Sarkisov links of types I,II,III, or IV \cite{corti1}.
If $(Y_1/T_1,B_1),(Y_m/T_{m+1},B_m)$ are two wlc models
of $(X/Z,B)$ for $B$ in a fibering ridge $\fR$
connected by generalized log flops as in types 2A-C, then
these log flops give an elementary link from $Y_1/T_1$ into $Y_m/T_{m+1}$ by
Theorem~\ref{thrm-ridge types}.
We can take also their inverses but the only new will be for nonsymmetric type 2B.
They are Sarkisov links when all varieties $Y_i$ are terminal.
Those links will be referred to as
 {\em cte\/} ({\em centralized terminal elementary\/})
if  they have  a  terminal \emph{central} model $Y/T$ with $F_T=0$,
in particular,
$-K_Y\equiv B_Y/T$
 is big and $\R$-mobile itself$/T$
on any wlc model $(Y/Z,B_Y)$ of $(X/Z,B)$.
In addition, $Y$ should be terminal
for all $\Q$-factorial wlc models $(Y/Z,B_Y)$ of $(X/Z,B)$.
E.g., this is true if a slt model
$(Y_i/Z,B_i)$ in the link is terminal and
$-K_{Y_i}$ or
$B_i$ is big$/T$ on any divisor of $Y_i$ (cf. the mobility below).
Conversely, the Sarkisov links are cte.
The terminal and elementary properties  follow from definition.
The central property also follows from definition.
More precisely, if the first small modification is a flip or a flop,
$Y_1/T$ is a central model.
Otherwise, a central model $Y_i/T$ is a result of maximal chain
of antiflips; if the next modification is a flop, $Y_i/T$ is a weak Fano fibration,
otherwise, $Y_i/T$ is a Fano fibration.
In total, the link is factored into a divisorial extraction, antiflips,  a flop, flips and a divisorial contraction
(all factors are optional; cf. (ii) in Central model).
Flops are possible only in dimension $d\ge 3$.

The contractions $Y/T$ in a cte  or Sarkisov link satisfy the {\em mobility$/T$\/}:
$-K_Y$ is  $\R$-mobile and big$/T$ in codimension $1$. The latter means that $-K_Y$ is big$/T$
 and big$/T$ on any divisor of $Y$. Thus
$$
K_Y^2\ge 0/T \text{ as a cycle modulo}\equiv \text{ and even } >0/T \text{ for } \dim_k X/T\ge 2.
$$
For $\dim_k X/T=1$, $-K_Y$ is nonzero effective$/T$ modulo $\sim_{\R}$.

\bs
The boundedness of Corollary~\ref{cor-central models bounded}
with $\varepsilon=1$ holds also for the Sarkisov links (cf. \cite[Proposition 6]{bcz}).

\begin{corollary}\label{cor-links/pt bounded in 3}
(Global) Cte links with $T=\pt$ are bounded in dimension $d=3$.
\end{corollary}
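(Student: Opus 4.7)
The plan is to deduce the boundedness from two ingredients: the reduction of cte links to their central models provided by Corollary~\ref{cor-central models bounded}, and the classical boundedness of terminal weak Fano threefolds.

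First I would unpack what a cte link with $T = \pt$ contributes. By definition of cte, every wlc model $Y$ in the link is $\Q$-factorial terminal, and the link admits a terminal central model $(Y/T,F_T)$ with $F_T = 0$. Property~(i) of Central model then gives a $\Q$-factorial FT central model with $\rho(Y/T) \leq 2$, and since $T = \pt$ this is simply a $\Q$-factorial terminal weak Fano threefold with Picard number at most $2$; the mobility requirement for cte links forces $-K_Y \equiv B_Y$ to be big and $\R$-mobile, so $Y$ is genuinely weak Fano.

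Next I would invoke Corollary~\ref{cor-central models bounded} with $\varepsilon = 1$ and $d = 3$. Its sole hypothesis is that \cite[Conjecture~1.1]{prsh} holds for $1$-lc (that is, terminal) varieties in dimension $3$. This amounts to the BAB-type statement that terminal weak Fano threefolds of bounded Picard number form a bounded family, which is a classical theorem going back to Kawamata and Koll\'ar--Miyaoka--Mori (and in any case is subsumed by Birkar's solution of BAB in full generality). Feeding this input into the corollary yields boundedness of the family of central models $Y$.

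Finally I would propagate boundedness to the full link. By property~(ii) of Central model and the classification of ridges in Theorem~\ref{thrm-ridge types}, every cte link is reconstructed from its central model by a uniformly bounded chain of elementary modifications over $T$: at most two divisorial extractions, a bounded number of small flips, flops and antiflips, and at most two divisorial contractions. Since each of these modifications stays within the family of $\Q$-factorial terminal FT threefolds with Picard number at most $2$, boundedness of the central model propagates to boundedness of the entire link.

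The main obstacle I foresee is matching the precise formulation of \cite[Conjecture~1.1]{prsh} with the threefold results on record. A subtle point is the distinction between plain weak Fanos and weak log Fanos $(Y,F_Y)$ with $F_Y \neq 0$; but because $F_T = 0$ in the cte setting, only the plain (non-log) boundedness of terminal weak Fano threefolds is needed, and one avoids the full $\varepsilon$-lc log Fano BAB in dimension $3$.
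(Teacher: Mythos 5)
Your proof is correct and follows the same route as the paper: apply Corollary~\ref{cor-central models bounded} with $\varepsilon=1$ and supply the hypothesis (\cite[Conjecture~1.1]{prsh} for terminal threefolds) from the known boundedness of (canonical/terminal) $\Q$-Fano threefolds, for which the paper cites \cite{kmmt} (Koll\'ar--Miyaoka--Mori--Takagi). The only difference is cosmetic: your final "propagation" step is already built into the statement of Corollary~\ref{cor-central models bounded} (which bounds the links, not just the central models), so it is not needed as a separate argument, though it is consistent with how the paper justifies that part of the corollary.
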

\begin{proof}
Immediate by Corollary~\ref{cor-central models bounded}
with $\varepsilon=1$  and \cite{kmmt}.
\end{proof}

\bs

\paragraph{Mori category}
Let $X/Z$ be a variety.
Its Mori category is the category of minimal resulting models of $X/Z$
with birational transformations as morphisms.
So, an object is a projective terminal $\Q$-factorial model $Y/Z$ of $X/Z$ with
nef $K_Y$, a minimal model, or
a projective terminal $\Q$-factorial model $Y/Z$ of $X/Z$ equipped
with a Mori fibration $Y\to T/Z$.
A factorization of morphisms in the case of minimal models is
known (see Corollary~\ref{cor-small klt slt wlc models in elementary log flops} above).
A factorization between Mori fibrations was suggested by
Sarkisov.

\begin{theorem}\label{thrm-mori morphism is cte}
A morphism between Mori fibrations can be
factorized into cte links,
or equivalently,
into Sarkisov links.
\end{theorem}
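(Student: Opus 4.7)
The plan is to reduce to the ridge classification of Theorem~\ref{thrm-ridge types}, travelling along the separatrix $\fS_S$ of a geography where both Mori fibrations $Y_1/T_1$ and $Y_2/T_2$ show up as limiting slt wlc models. Since we are factorizing a birational \emph{morphism} $\varphi\colon Y_1\to Y_2$, the situation is simpler than the general birational case: every $Y_i$-exceptional prime divisor of $Y_1$ will be controllable from one model.

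First I would construct the setup. Pick ample polarizations $H_1$ on $Y_1/Z$ and $H_2$ on $Y_2/Z$, and let $g\colon X\to Y_1$ be a common log resolution of $(Y_1,H_1+\varphi^{*}H_2)$. Take
$$
S=\Supp(g^{*}H_1)+\Supp(g^{*}\varphi^{*}H_2)+\sum E_j,
$$
where $E_j$ are the $g$-exceptional divisors, and enlarge $S$ further (by ample divisors on $X$ and by exceptional divisors over $Y_2$) to fulfil the generality conditions preceding Theorem~\ref{thrm-unique wlc for general geo}. Choose two boundaries $B^{(1)},B^{(2)}\in\fB_S$ with small positive perturbations by $g^{*}H_1$ and $g^{*}\varphi^{*}H_2$ of the boundaries of Mori type: by the perturbation discussion after Corollary~\ref{cor-geoface=cube for general} and by Theorem~\ref{thrm-facet types}, $B^{(1)}$ (resp.\ $B^{(2)}$) lies on a fibering facet of $\fN_S$ whose unique slt wlc model is exactly $(Y_1/Z,B^{(1)}\Lg_{Y_1})$ with fibration $Y_1\to T_1$ (resp.\ $(Y_2/Z,B^{(2)}\Lg_{Y_2})$ with $Y_2\to T_2$).

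Next I would join $B^{(1)}$ to $B^{(2)}$ by a path inside $\fS_S\cap\Int\fF_S$. The straight segment $[B^{(1)},B^{(2)}]$ need not stay in $\fS_S$ (the separatrix is usually nonconvex), so I would instead use a geodesic for the projection $\pr$ of Proposition~\ref{prop-image of separatrix}, which is $1$-to-$1$ on $\fS_S$ and has convex rational polyhedral image. After a small generic wiggle in $\pr\fS_S$ we may assume the geodesic avoids all classes of codimension $\ge 3$ and crosses only the relative interiors of fibering ridges and fibering facets. By Proposition~\ref{prop-S=closed rational poly} the separatrix is a finite union of bordering facets, so the geodesic decomposes into finitely many segments, each contained in one bordering facet, with transitions occurring exactly in fibering ridges. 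The fibering ridge classification~2A,~2B,~2C in Theorem~\ref{thrm-ridge types} identifies each such transition with an elementary link between the two adjacent Mori fibrations (Sarkisov links of types~IV, I/III, II respectively, read off the commutative diagrams printed before the theorem). Concatenating gives a factorization of the original birational map $Y_1\dashrightarrow Y_2$ into elementary links.

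Finally I would upgrade the factorization to a \emph{cte}/Sarkisov factorization and check that it recovers the original morphism $\varphi$. The central models associated to the crossed ridges are $\Q$-factorial FT with $\rho(\cdot/T)=2$ by Corollary~\ref{cor-central model exists}, so they are automatically elementary. To guarantee the terminal condition on every intermediate model, I would choose the perturbing extension of $S$ so that the boundaries along the path have arbitrarily small coefficients outside $\Supp H_1\cup\Supp\varphi^{*}H_2$; terminality of $Y_1$ and $Y_2$ together with openness of the terminal condition in the boundary (as used in Central model property~(iv)) then propagates terminality to each central model along the path, so each link is cte. Since $\varphi\colon Y_1\to Y_2$ is a morphism and all exceptional divisors of $\varphi$ appear in $S$, the composition of elementary links produced by the geodesic agrees with $\varphi$ on the open set where $\varphi$ is an isomorphism, hence coincides with $\varphi$ birationally.

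The main obstacle will be step~two: arranging the geodesic so that it crosses only fibering ridges of the classified types (avoiding cube-bordering ridges in $\partial\fB_S$ and simultaneous incidences to higher-codimension classes), and showing that the resulting sequence of elementary Sarkisov links indeed composes to the given morphism rather than to some other birational map with the same source and target. Both issues are handled by enlarging $S$ sufficiently, by a small generic choice of the perturbation inside $\fS_S$, and by the rigidity statements in Theorem~\ref{thrm-ridge types} that identify each transition with a canonical birational modification of the adjacent Mori fibration.
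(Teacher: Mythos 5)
Your overall approach is essentially the same as the paper's: both reduce the factorization problem to the classification of fibering ridges in Theorem~\ref{thrm-ridge types}, by moving along a path in the separatrix $\fS_S$ of a sufficiently extended geography where both Mori fibrations appear as limiting slt wlc models. You construct the path directly as a lift of a straight segment under the projection $\pr$ (using Proposition~\ref{prop-image of separatrix}), whereas the paper first invokes an already-known factorization into (non-cte) links via \cite[Example~2.11]{isksh} to obtain an initial path $\cup\ofP_i$ in $\fS_S$ and then perturbs it to pass only through internal points of fibering facets and ridges; since $\pr$ is $1$-to-$1$ on $\fS_S$ with convex image, your shortcut is legitimate, and because all the $Y_i$ are models of the common $X$ with compatible natural birational maps, the concatenation automatically recovers $\varphi$.

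The genuine gap is your terminality argument. You write that ``terminality of $Y_1$ and $Y_2$ together with openness of the terminal condition in the boundary ... propagates terminality to each central model along the path'', and that choosing ``arbitrarily small coefficients outside $\Supp H_1\cup\Supp\varphi^*H_2$'' would ensure this. Neither claim is how terminality is actually controlled. The intermediate slt wlc models $Y_i$ are not obtained from $Y_1$ or $Y_2$ by any terminality-preserving operation, and small coefficients on the extra components of $S$ give klt but not terminal initial pairs in general (blowing up an intersection point of two boundary components on a smooth model already produces a crepant divisor). What the paper does is arrange the initial pair $(X,B')$ to be terminal in codimension $\ge 2$ for all $B'$ near the path: the polarizations are taken as $B_Y=D/N$ with $D$ a \emph{generic irreducible} member of a very ample linear system on $Y$, so that $(Y,B_Y)$ is terminal slt with the Mori fibration as its lc contraction, then $D,D'$ are made disjoint, big and mobile on a log resolution $X$, and one checks $(X,bD+b'D')$ is terminal for $b,b'\in(0,1)$. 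Terminality then passes to the slt wlc models by monotonicity of discrepancies under the LMMP combined with the fact that $D,D'$ are big (hence never contracted). Finally, for the intermediate $\Q$-factorial models across a divisorial facet, the only cn center is $\Center_W E$ for the contracted divisor $E$, and the paper observes that this center lies in $\Supp D''_W$, so $W$ itself is terminal. None of this is captured by ``openness of the terminal condition'', and your appeal to Central model property~(iv) points at the right direction (terminality of $(X,B)$, not of $Y_1,Y_2$) but you then invert it. To close the gap you need to choose the polarizations and the extension of $S$ so that the initial pair itself is terminal along the path, and treat the divisorial contractions as above.
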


However,
this is not a factorization of the genuine Sarkisov program:
the Sarkisov's invariants may not decrease.
So, this is not a solution to the Sarkisov program.

\bs

\begin{proof} Follows from a linkage \cite[Example 2.11]{isksh} after a perturbation.
For simplicity, we assume the absolute case: $Z=\pt=\Spec k$, and
omit the notation $/Z$ below.
We suppose also that the dimension $d=\dim Y\ge 2$.
So, let
$$
\xymatrix{
Y\ar[d]_g\ar@{-->}[r] & Y'\ar[d]^{g'} \\
T&T'
}
$$
be a birational transformation for Mori fibrations $Y/T,Y'/T'$.

We can convert each Mori fibration $Y/T$ into a polarized one $(Y/T,B_Y)$ with
an ample boundary $B_Y$. 
Take a sufficiently (very) ample divisor $H$ on $T$ and put
$B_Y=D/N$
where $D$ is a generic reduced irreducible divisor in
a linear system $\linsys{N(-K_Y+g^*H)},$ $N\gg 0$, and $g\colon Y\to T$.
By construction $(Y,B_Y)$ is a terminal (in codimension $\ge 2$)
slt model with the lc contraction
$g$, the Mori fibration.
Similarly, $Y'/T'$ has a polarization $B'_{Y'}$ with $B'_{Y'}=D'/N'$.
We can assume also that
$(Y,B_Y),(Y',B'_{Y'})$
have a common log resolution $X$ on which
(the birational transformations of) $D,D'$ are big,
mobile (even free) and disjoint.

According to \cite[Example 2.11]{isksh} and Corollary~\ref{cor-small klt slt wlc models in elementary log flops} (see also factorization for a birational transformation of wlc models in Section \ref{section4-finiteness}), there exists a factorization
of $Y/T\dashrightarrow Y'/T'$ into links.
The intermediate models $(Y_i,B_i)$
in the factorization are
slt wlc models of $(X,B_i)$ for $B_i\in \fP_i$, a class
(an open interval) in the separatrix $\fS_S,S=D+D'$.
(Since each $B_i$ is $\R$-mobile, we identify $B_i$ with its birational transform
$(B_i)_{Y_i}$
on $Y_i$ by writing the same $B_i$.)
The union $\cup\ofP_i$ gives a path from $B$ to $B'$
in $\fS_S$.
By construction each $Y_i$ has only terminal singularities and
has a Mori fibration $Y_i/T_i$ (possibly after a log flop of $(Y_i,B_i)$)
by Corollary~\ref{cor-geoface=cube for general}.
We can suppose that $Y_1/T_1=Y/T$
for the first interval $\fP_1$
and $Y'/T'=Y_l/T_l$ for the last interval $\fP_l$.
See Diagram \ref{dia10} below.
A link from $Y_i/T_i$ into $Y_{i+1}/T_{i+1}$ corresponds to the point $V_i$ of intersection
$\ofP_i\cap\ofP_{i+1}$.
Note that $(Y_i,V_i),(Y_{i+1},V_i)$ are wlc models of $(X,V_i)$ by Lemma~\ref{lemma-closed N}.
The link may not be elementary, in particular, $T_i$ is not necessarily
the lc model of $(Y_i,B_i)$, and
the Picard ranks for intermediate models $Y_i,Y_{i+1}$ over the lc model of $(X,V_i)$
can be rather high.

\bs
\begin{table}
\begin{center}
\begin{tabular}{ccc}
\begin{overpic}[scale=0.65]{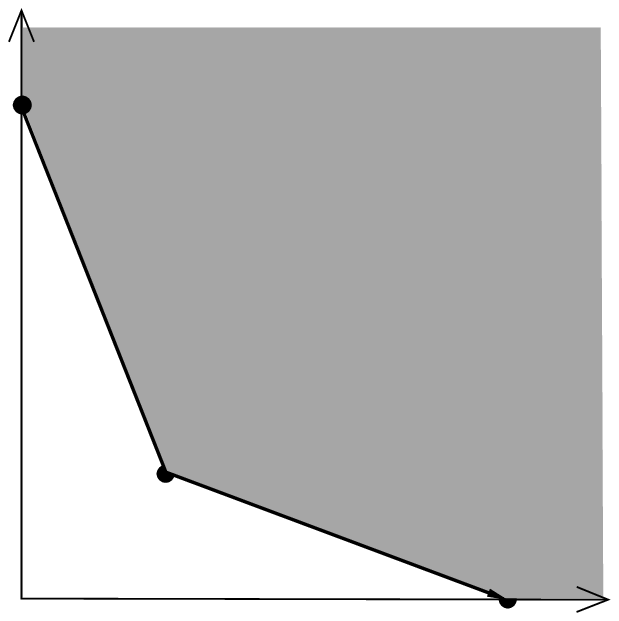}
\put(60,26){\tiny $B'$}
\put(60.5,30){\tiny $\bullet$}
\put(7,59){\small $\fP_1$}
\put(32,35){\small $\fP_2$}
\put(-2,87){\tiny $B$}
\put(22,48){\tiny $V_1$}
\put(21,44.8){\tiny $\bullet$}
\put(82,50){\LARGE $\longrightarrow$}
\put(75,56){\small perturbation}
\end{overpic}&
\qquad
\qquad
\;\;
&
\begin{overpic}[scale=0.65]{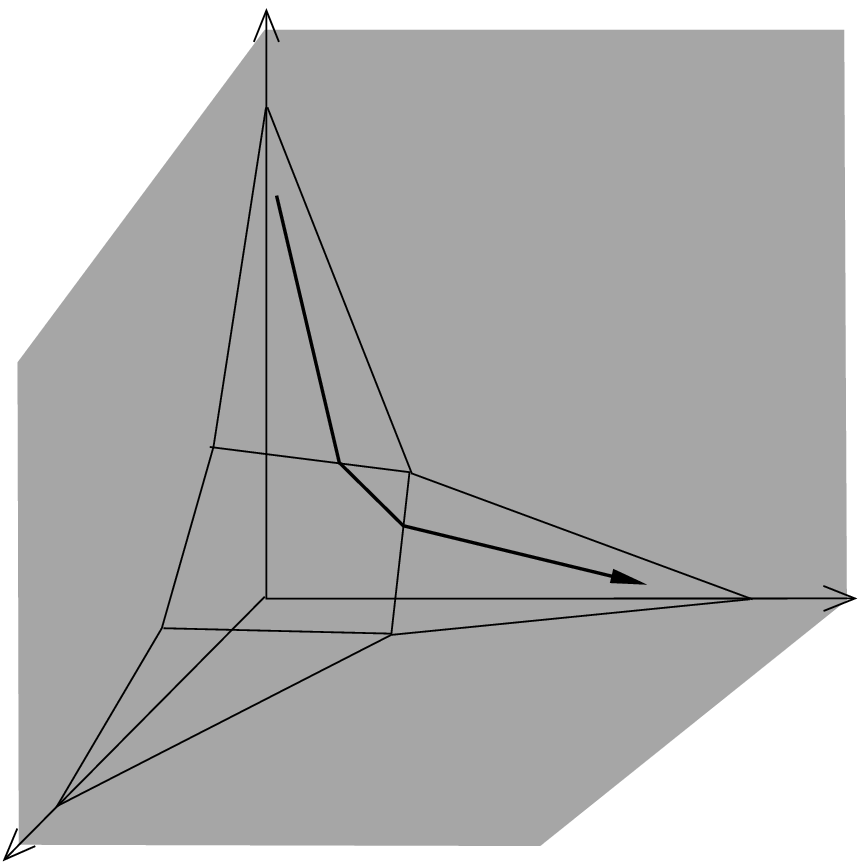}
\put(25,86){\tiny $B$}
\put(84,26){\tiny $B'$}
\put(5,-4){\tiny $\sum S_i$}
\put(35,78){\tiny $B_1$}
\put(40,49){\tiny $V_2$}
\put(48,34){\tiny $V_3$}
\put(73,37){\tiny $B_4=B_l$}
\put(35,39){\small $\fP_2$}
\put(29,55){\small $\fP_1$}
\put(56,31){\small $\fP_3$}
\put(31,76){\tiny $\bullet$}
\put(38.5,45){\tiny $\bullet$}
\put(45.5,38.5){\tiny $\bullet$}
\put(74,31){\tiny $\bullet$}
\end{overpic}
\end{tabular}
\end{center}
\captionsetup{type=figure}
\caption{}\label{dia10}
\end{table}

To attain cte links we use a perturbation and Theorem~\ref{thrm-ridge types}.
To fulfil the generality conditions for $(X,S)$, we add to
$D+D'$ generic (very ample) divisors $S_i$ so that
the conditions hold for  a suitable extension $S:=D+D'+\sum S_i$.
Note that after a perturbation our new subclasses $\fP_i$ and
$V_i$ are embedded into $\fS_S\subset\fN_S$.
(If $S\not=D+D'$, the subclasses are cube
bordering: $\fP_i\subset\partial \fB_S$.)
Now we perturb the whole path
$\cup \ofP_i$ into a path in $\fS_S$
away from $\partial\fB_S$.
We perturb the path in such a way that it goes
only through internal points of
fibering facets and ridges.
(This path can be straight under the projection of Proposition~\ref{prop-image of separatrix}.)
Thus now we suppose that new intervals $\fP_i$ are in fibering facets $\fF_i$ and new
vertexes $V_i$ in fibering ridges $\fR_i$ (we use the same notations
for the perturbed path even
though there is no $1$-to-$1$ correspondence
for intervals and vertexes under the perturbation).
By construction $Y=Y_1/T_1,Y'=Y_l/T_l$ are the Mori log  fibrations of
$\fF_1,\fF_l$, $Y_1,Y_l$ are terminal and $Y_i$
is terminal for
other Mori log fibrations of facets $\fF_i$.
Indeed, for the perturbation $B_1\in \fS_S\setminus\partial\fB_S$ of $B$,
the pair $(Y_1,B_1)$ has the same
contraction $Y/T$ because the contraction is extremal FT.
The same works for $Y'$: for the perturbation $B_l$ in the facet $\fF_l$
close to $B'$, a slt model
$(Y_l,B_l)$ is terminal, and so is $Y_l=Y'$.
The terminal property is also stable
under perturbation.
Each slt wlc model $(W,D''_W)$ of $(X,D'')$ for a (klt) boundary
$D''=bD+b'D', b,b'\in(0,1)$,
is terminal because $(X,D'')$ is terminal and $D''$ is non contractible.
(This model may have cn singularities on the lc model.)
Similarly,
the terminal property holds for intermediate $Y_i$.
Moreover, the same holds for any intermediate $\Q$-factorial model of
a link from $Y_i/T_i$ into $Y_{i+1}/T_{i+1}$ by Theorem~\ref{thrm-facet types}.
Indeed, if we have the lc model
$(W,D''_W)$ for $D''$ in a divisorial facet of $\fN_S$,
with a contracted divisor $E$ of the facet, then for pair $(W,D''_W)$,
the only cn singularity  is $\Center_WE$.
But by our assumptions $\Supp  D''_W$ contains the center, and
$W$ is terminal.
Since $V_i\in \Int\fR_i$ is big and
$\R$-mobile, the link is cte by Corollary~\ref{cor-central model exists}
 and a required factorization is $Y=Y_1\dashrightarrow\dots\dashrightarrow Y_l=Y'$.
\end{proof}

\paragraph{Polarized linkage}
So, we have constructed a chain of links with {\em polarizations} as follows.
There exist
a model $(X,S)$ of $Y$, where
$S=D+D'+\sum S_j$ is a reduced divisor with only
big and mobile prime components, with components of $S_{Y_i}$ generating
each $N^1(Y_i)$, and intervals $\fP_i=(V_i,V_{i+1})\subset\fS_S\setminus\partial\fB_S$
such that:

all boundaries in the interval $\fP_i$
are $\wlc$ equivalent with
terminal klt wlc models $(Y_i,B_i), B_i=B_{Y_i}$, of $(X,B_i)$;
actually $B_i$ on $Y_i$ is the birational transform of $B_i$ from $X$;

$Y_i\to T_i$ is the lc contraction of $(Y_i,B_i)$, that is,
$K_{Y_i}+B_i$ is a pull back of an ample divisor from $T_i$; and

each vertex $V_i$ gives a cte
or Sarkisov link between limiting models
$(Y_i,V_i),(Y_{i+1},V_i)$;
they can be cn and with lc models
$Z_i$ over which
the link goes: for $V_i$,

2A: $T_i\not\cong Z_i\not\cong T_{i+1}$ and each $Y_i,Y_{i+1}$ have 
at most one another elementary
small and numerically trivial contractions$/Z_i$ with respect to $K_{Y_i}+V_i, K_{Y_{i+1}}+V_{i}$;

2B: $T_i\cong Z_i\not\cong T_{i+1}$, an elementary divisorial extraction $Y_i'\to Y_i$ corresponds
to the unique \emph{maximal} center $D_i$ (exceptional divisor) with cn
singularity: $a(D_i,Y_i,V_i)=1$ with center $\Center_{Y_i}D_i$
(a small linear prolongation in direction
$\overrightarrow{V_{i-1}V_i}$
gives a unique noncn center), and
$Y_{i+1}$ has at most one another elementary small and numerically trivial 
 contraction$/Z_i$ with respect to $K_{Y_{i+1}}+V_i$;
the same can hold after interchanging $i$ and $i+1$;

2C: $T_i=Z_i=T_{i+1}$ with elementary divisorial extractions $Y_i'\to Y_i,Y_{i+1}'\to Y_{i+1}$
as in 2B;

each of those links can be centralized (see
Links above).

A limit to boundaries
in $\fS_{D+D'}$ supported in $D+D'$ (the original path) gives similar conditions
with cn singularities of pairs instead of terminal and
semiampleness instead of ampleness on $T_i$ and $Z_i$.

\bs

\paragraph{Rigidity} A Mori fibration
$Y/T$ is {\em rigid\/} if,
for any other Mori fibration $Y'/T'$,
any birational isomorphism  $Y\dashrightarrow Y'$
gives an isomorphism of the fibrations $Y/T,Y'/T'$.
The definition and the definitions below work over any $Z$.
However, if $Z=\pt$ the rigidity is
only possible when $\dim_k T\le 1$.
Thus, for $\dim_k T \ge 2$, it is
reasonable to use a weaker version: $Y/T$ is birationally rigid
(see below) and any Mori fibration $Y'/T$ satisfies the above
property.
There are other weaker versions of the rigidity.
If it holds over the general points of $T,T'$
we say that $Y/T$
is {\em generically\/} rigid.
For example, so do the conic bundles
satisfying Sarkisov's criterion below.
A Mori fibration
$Y/T$ (or over the general point of $T$) is {\em birationally\/} rigid if,
for any other Mori fibration $Y'/T'$,
any birational isomorphism  $Y\dashrightarrow Y'$
gives a birational isomorphism of the fibrations $Y/T$ and $Y'/T'$, that is,
they are square birational under the isomorphism.
The fibration is {\em weakly\/} rigid if, in addition,
any Mori fibration $Y'/T'$ square birational to $Y/T$
is isomorphic to $Y/T$ over the general point of $T$
(possibly under a different birational isomorphism $Y\dashrightarrow Y'$).
Such an isomorphism allows us to treat the birational isomorphism $Y\dashrightarrow Y'$
as a birational automorphism of $Y/T$ and
vice versa.
Del Pezzo fibrations of degree $2$ and $3$ give nontrivial
examples of the last rigidity
(see Corollary~\ref{cor-delpezzo123rigid} below). In general, it is
expected that a Mori fibration $Y/T$ is weakly rigid if its
general fiber is weakly rigid and $Y/T$ is sufficiently twisted,
e.g., the discriminant locus is large or the map of $T$ into
moduli of fibers has a large degree.
Moreover, in higher dimensions,
a rather general Mori fibration $Y/T$ is usually rigid$/T=Z$
if its general fiber is rigid.
We will treat this with more details and applications elsewhere.
Here we only illustrate this in some special cases
usually assuming $Z=\pt$.

\bs
\bs

\begin{corollary}\label{cor-conic rigid threshold}
Suppose \cite[Conjecture 1.1]{prsh} for terminal weak Fano varieties of dimension $d\ge 4$.
In dimension $d$, there exists a rational number $4>\mu_d \geq 1$
({\em the birational rigidity threshold for conic bundles of dimension $d$\/})
satisfying the following.
A Mori conic bundle $X/T$ with $\dim X=d$ is generically  rigid if,
for some real number
$\mu>\mu_d$,
$(T,C/\mu)$ is cn and $\mu K_T+C$ is pseudo-effective
where $C$ is the discriminant locus of the conic bundle $X/T$.

Moreover, for $\mu\geq 4$, we can omit \cite[Conjecture 1.1]{prsh}.
\end{corollary}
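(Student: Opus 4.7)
I plan to argue by contradiction via the Sarkisov program supplied by Theorem \ref{thrm-mori morphism is cte}. Assume $X/T$ satisfies the cn condition on $(T,C/\mu)$ and the pseudo-effectiveness of $\mu K_T+C$, but admits a birational isomorphism $\varphi\colon X\dashrightarrow X'$ to a Mori fibration $X'/T'$ which is not generically an isomorphism of fibrations. By Theorem \ref{thrm-mori morphism is cte}, $\varphi$ factors into cte links
$$
X=X_0\dashrightarrow X_1\dashrightarrow\cdots\dashrightarrow X_n=X',
$$
each organized around a central model $W_i/S_i$ with $\rho(W_i/S_i)=2$, terminal $\Q$-factorial singularities, and $-K_{W_i}$ big and $\R$-mobile$/S_i$ in codimension $1$ (the mobility$/S_i$ property of Section \ref{section6-linkage}). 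I focus on the first link that is not a $T$-isomorphism.

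The invariant I track along the chain is the adjunction threshold attached to $\mu K_T+C$, pushed from each central model down to $T$. For the first non-$T$-isomorphic link, the central model $W_0$ dominates both $S_0$ and, through $X_0=X$, the base $T$; adjunction along this composition together with the conic-bundle relation
$$
K_{X/T}^2\equiv -4F+(\text{discriminant correction})
$$
on a general section of $X/T$ converts the mobility $K_{W_0}^2>0/S_0$ into an effective divisor on $T$ of the form $\mu_\alpha K_T+C-E_\alpha$ with $E_\alpha\geq 0$ and $\mu_\alpha\in\Q_{\geq 1}$ depending only on the family $\alpha$ of $W_0/S_0$. A Noether--Fano-type obstruction at the maximal singularity of $W_0\dashrightarrow X_0/T$, made available by the cn hypothesis on $(T,C/\mu)$, then shows that for $\mu>\mu_\alpha$ the pseudo-effectiveness of $\mu K_T+C$ cannot coexist with such a link. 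Applying Corollary \ref{cor-central models bounded} with $\varepsilon=1$, under \cite[Conjecture 1.1]{prsh} there are only finitely many families of central models $W_i/S_i$ in dimension $d$, so I set
$$
\mu_d:=\max_\alpha\mu_\alpha\in\Q_{\geq 1},
$$
and for any $\mu>\mu_d$ no non-$T$-isomorphic first link can occur, yielding generic rigidity.

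For the unconditional statement at $\mu\geq 4$, no appeal to \cite[Conjecture 1.1]{prsh} is needed: the first non-$T$-isomorphic link restricts over the generic point $\eta$ of $T$ to a link of Mori fibrations in the $\mathbb{P}^1$-fibration $X_\eta/k(\eta)$, and the conic adjunction $(-K_{X/T})^2\cdot F=4$ on the general fibre $F$ produces the universal bound $\mu_\alpha\leq 4$ directly, without invoking boundedness of full-dimensional central models. The main obstacle is the uniform extraction of $\mu_\alpha$ in the conditional range $\mu_d<\mu<4$: this requires a finite case analysis of each family of central models $W_i/S_i$, organized according to the elementary ridge types 2A, 2B, 2C of Theorem \ref{thrm-ridge types} pulled back along the projection to $T$, together with the positivity calculus for the decomposition $B=F+M$ used to define centrality. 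The rationality of $\mu_d$ then follows from the rationality of discrepancies and of the polyhedral data supplied by Theorem \ref{mainthrm-geography} and Proposition \ref{prop-p-e}.
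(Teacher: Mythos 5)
Your outline is in the right neighborhood---cte factorization via Theorem \ref{thrm-mori morphism is cte}, mobility of the central model, and boundedness via Corollary \ref{cor-central models bounded}---but the load-bearing numerical step is mislocated and the $\mu\geq 4$ argument would fail as stated.

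The paper does not use any Noether--Fano inequality or maximal-singularity analysis here, and the formula you write, $K_{X/T}^2\equiv-4F+(\text{discriminant correction})$, is not the one that carries the argument. The identity actually used is the Iskovskikh--Sarkisov pushforward formula $4K_T+C\equiv -g_*K_X^2$, where $g\colon X\to T$ is the conic bundle structure. The logic is then: a non-square-birational cte link goes over some $W$ with a \emph{fibering} contraction $T\to W$ and $\dim_k X/W\ge 2$; the mobility$/W$ property of a cte link (part of the definition in Section \ref{section6-linkage}, not an input from Noether--Fano) forces $K^2>0/W$ on the central model; pushing forward gives $4K_T+C\equiv -g_*K^2<0/W$; and pseudo-effectiveness of $\mu K_T+C$ for $\mu\ge 4$ then yields $0\le \frac{4}{\mu}(\mu K_T+C)\le 4K_T+C$, a contradiction. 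The cn hypothesis on $(T,C/\mu)$ is not used to enable a maximal-singularity argument; it is what allows the pseudo-effectiveness condition to survive transport across square-birational preliminary links (this is the Sarkisov--Iskovskikh lemma the paper cites). Your ``adjunction threshold'' heuristic does not produce the precise inequality needed, and without the pushforward formula the contradiction does not close.

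Your unconditional $\mu\geq 4$ step is also incorrect as stated. You claim the first non-$T$-isomorphic link ``restricts over the generic point $\eta$ of $T$ to a link of Mori fibrations in the $\mathbb{P}^1$-fibration $X_\eta/k(\eta)$.'' But a non-square-birational link is precisely one that does not stay over $T$: it goes over a strictly smaller $W$, and the target base $T'$ is not birational to $T$, so there is no meaningful restriction to $\eta\in T$ (moreover, $X_\eta$ is a conic, which has no nontrivial Mori fibration structure to link). The correct unconditional argument is exactly the Iskovskikh formula computation above, which requires no boundedness. Boundedness via Corollary \ref{cor-central models bounded} (with $\varepsilon=1$) enters only when one wants to push $\mu_d$ strictly below $4$: it bounds the possible Mori log fibrations $T/W$, and $\mu_d$ is then defined as the maximum of the numerical thresholds $\tau_{T/W}=\max\{r<4\mid rK_{T/W}+C\equiv 0/W\}$ over this bounded family, from which rationality follows. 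Your definition $\mu_d:=\max_\alpha\mu_\alpha$ is structurally similar but you have not supplied a computation of $\mu_\alpha$, and the ``finite case analysis of ridge types 2A, 2B, 2C'' you invoke is not what produces the bound.
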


For $\mu=4$,
the corollary becomes the Sarkisov theorem \cite{sarkisov} \cite{sarkisov2}.
For any $\mu>0$ with
cn $(T,C/\mu)$, the condition
$\mu K_T+C$ is pseudo-effective is
equivalent to the condition $\mu K_T+C\in \Eff(T)$ by Conjecture \ref{conj-semiample}.
According to Zagorskii \cite{zag} in dimension $3$ and to Sarkisov \cite[Theorem 1.13]{sarkisov2}
in higher dimensions, any conic bundle $X/T$ has a Mori conic bundle model $Y/T$ (square birational to $X/T)$
with cn $(T,C/\mu)$ for any  $\mu \ge 2$.
Actually, in the corollary one can suppose that $X/T$ is a morphism which is
a Mori or standard conic bundle over codimension $1$.
Such a weak standard model with cn $(T,C/\mu)$ can be easily constructed
for any conic bundle and any $\mu\ge 2$.

\begin{proof}
Suppose that $X'/T'$ is another Mori fibration,
$X$ and $X'$ are birational, but not
 square birational.
Note that square birational links preserve
all conditions except for the cn property
  $(T,C/\mu)$.
However, if $K_T+C/\mu$ is pseudo-effective on such a cn model, it
is true on any model with the birational transform of $C$
plus any exceptional divisors (cf. \cite[Lemma 2.2]{sarkisov2}
\cite[Lemma 4]{isk3}).
Now consider a cte
link from $X/T$ into $X'/T'$ which is not square birational, that is,
over some $W$ with $\dim_k X/W\geq 2$,
there exists a fibering contraction $T\to W$.
According to the big and $\R$-mobile property of $-K/W$,
$K^2>0/W$.
Hence, for $\mu\ge 4$,  by the key formula $\frac{4}{\mu}(\mu K_T+C)\le 4K_T+C\equiv-g_*K^2<0/W $
\cite[Lemma 2]{isk3}, where $g\colon X\to T$,
a contradiction.

Now by Corollary~\ref{cor-central models bounded},
the FT varieties $X/W$ and thus $T/W$ are bounded
generically$/W$.
Note that $\rho(T/W)=1$ and they are Mori log
fibrations$/W$ with $B_T=0$.
This implies our improvement for
$$
\mu_d=\max\{\tau_{T/W}<4\}\text{ where } \tau_{T/W}=\max
\{r<4\mid\ rK_{T/W}+C\equiv 0/W\},
$$
for all Weil reduced divisors $C$ on $T$ and assuming $\dim_k T/W\le d-1$.
Actually, $\mu_d\ge \mu_2=7/2=\tau_{\mathbb P^1/\pt}$.
Indeed, the cn property of $(T,C/\mu)$ and
the pseudo-effective property of $\mu K_T+C$ for any $\mu>\mu_d$
implies that of $(T,C/\mu')$ and of $\mu'K_T+C$ for $\mu'=\max \{\mu,4\}\ge 4$.
\end{proof}

\begin{corollary}\label{cor-delpezzo123rigid}
 Let $X/C$ be a $3$-fold Mori fibration of degree $d=1,2,3$ over a curve $C$: $\dim X=3$,
$C$ is a nonsingular curve (not necessarily complete), $X/C$ is a Mori fibration whose
general fibers are Del Pezzo surfaces of degree $d$.
Then $X/C$ is weakly rigid if one of the following holds:
\begin{enumerate}
\item[(i)]
 $X/C$ has sufficiently many singular fibers over a nonsingular completion
 $\overline{C}$ of $C$, that is,
 there exists a natural number $n_d$ such that the number of
 nonsmoothable fibers on the completion of $C$ is $\ge n_d$;
\item[(ii)] the map
$C_{\overline{k}}\dashrightarrow \mathcal{M}_d$, $c\mapsto$ the class of the
smooth geometric fiber $X_c$ of $X_{\overline{k}}/C_{\overline{k}}$,
has a sufficiently large degree, where $\mathcal{M}_d$ is
a coarse moduli space of smooth Del Pezzo surfaces of degree $d$;

\item[(iii)] for each Mori model $Y/\overline{C}$
of $X/C$ over the completion $\overline{C}$,
$$
-K_Y\not\in\Int \Mob(Y)
$$
holds; or

\item[(iv)] for each Mori model $Y/\overline{C}$ of $X/C$,
$$
K_Y^2\not\in\Int \NE(Y)
$$
holds, where $\NE(Y)$ denotes the cone of curves on $Y$.
\end{enumerate}
Moreover, for degree $1$, $X/C$ is generically rigid and,
for degree $2,3$, any birational automorphism of $X/C$
is  a composition of
(Bertini and Geiser) involutions and of isomorphisms over the general point of $C$.
\end{corollary}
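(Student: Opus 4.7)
The plan is to apply the linkage theorem (Theorem~\ref{thrm-mori morphism is cte}) to factor any birational isomorphism $\varphi\colon X/C\dashrightarrow X'/T'$ into a chain of cte (Sarkisov) links between intermediate Mori fibrations, and then to show that, under any of the hypotheses (i)--(iv), every link in such a chain is \emph{square birational} over $\overline C$, forcing $T'$ to be birational to $C$. Any square birational automorphism of the generic fiber is then classical: for $d=1$ it is the identity (since $|-K_F|$ has a unique base point, rigidifying the generic fiber $F$), while for $d=2,3$ it is generated by the Geiser respectively Bertini involutions coming from the double cover of $\mathbb P^2$ respectively the cubic-surface structure determined by $|-K_F|$, composed with isomorphisms over the generic point of $C$.

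First I would classify the non-square links issuing from a Mori model $Y/\overline C$. By Theorem~\ref{thrm-ridge types}, a cte link is of type 2A, 2B, or 2C; among these, the only non-square possibilities with Mori bases of dimension $1$ are those whose central model $\tilde Y/T$ has $T=\pt$ (type 2A linking two curve-based fibrations, or type 2B going to a Fano $3$-fold). Such a $\tilde Y$ is a $\Q$-factorial terminal Fano $3$-fold with $\rho(\tilde Y)=2$, admitting a fibering extremal contraction to a curve birational to $\overline C$, and satisfying the mobility condition $-K_{\tilde Y}\in \Int \Mob(\tilde Y)$ with $-K_{\tilde Y}$ big. In particular $\tilde Y$ itself is a Mori model of $X/C$ over $\overline C$, so any numerical condition imposed on Mori models of $X/\overline C$ must hold on $\tilde Y$.

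Hypotheses (iii) and (iv) then obstruct such a central model directly. For (iii), since $\tilde Y$ is a Mori model of $X/C$ over $\overline C$, the mobility condition $-K_{\tilde Y}\in\Int\Mob(\tilde Y)$ is incompatible with the assumption. For (iv), on a terminal Fano $3$-fold with $-K_{\tilde Y}$ big and $\R$-mobile, writing $-K_{\tilde Y}=A+E$ with $A$ ample and $E$ effective gives $K_{\tilde Y}^2=A\cdot(-K_{\tilde Y})+E\cdot(-K_{\tilde Y})$, which lies in $\Int\NE(\tilde Y)$ by standard positivity of $A^2$ on $3$-folds; this contradicts the assumption. For (i) and (ii), I would reduce to (iii) or (iv) via Corollary~\ref{cor-links/pt bounded in 3} (equivalently \cite{kmmt}): the possible central models $\tilde Y/\pt$ form a bounded family over $k$, so the number of singular fibers of any fibration $\tilde Y\to\overline C'$ and the degree of the induced moduli map $\overline C'_{\overline k}\dashrightarrow\mathcal{M}_d$ are uniformly bounded; choosing the threshold $n_d$ in (i) (respectively the degree threshold in (ii)) strictly larger than these uniform bounds rules out a central model.

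The main obstacle will be this final reduction of (i) and (ii) to (iii) or (iv): since only $\tilde Y$ is directly bounded by \cite{kmmt}, one must control how the singular-fiber count and the moduli degree of the original $X/C$ transform along the intervening square birational (type 2C) links that surround the hypothetical non-square link in the factorization of $\varphi$, and check that these invariants are genuinely inherited by the central model up to bounded error. This bookkeeping, together with the uniform bounds of \cite{kmmt}, is where the real work lies; once it is in place, conditions (i) and (ii) feed into (iii) or (iv) and weak rigidity follows from the first two paragraphs.
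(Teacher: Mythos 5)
Your overall strategy — factor a birational map into cte links via Theorem~\ref{thrm-mori morphism is cte}, then kill the non-square links using boundedness (for (i)--(ii)) and mobility (for (iii)--(iv)) — is exactly the paper's, and the identification of the central model over $T=\pt$ as the object to analyze is the right move. But there are two genuine gaps.

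First, you never rule out the \emph{other} kind of non-square link: a type~2B link \emph{over} $T=C$, which would send the Del Pezzo fibration to a conic bundle. Your first paragraph jumps from ``every link is square birational'' to ``any square birational automorphism of the generic fiber is classical,'' but the input you'd need for the former is precisely that the generic fiber, a minimal Del Pezzo surface of degree $\le 3$ over $k(C)$, is birationally rigid (the paper cites \cite[Proposition~3.3]{grin2}). Your classical remarks about $|-K_F|$ and the involutions give only the \emph{automorphism} part; they do not show the absence of links to conic bundles. Conditions (i)--(iv) say nothing about this case (they obstruct only the global, $T=\pt$ links), so without the surface rigidity input the argument cannot close.

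Second, your treatment of (iii) misfires. You assert that the central model $\tilde Y$ ``is a Mori model of $X/C$ over $\overline C$'' and that the mobility condition reads $-K_{\tilde Y}\in\Int\Mob(\tilde Y)$. Neither is correct. The central model has $\rho(\tilde Y/T)=2$ with $T=\pt$ and is only a \emph{weak} Fano, so $\tilde Y\to\overline C'$ (if it exists) need not have $-K_{\tilde Y}$ ample$/\overline C'$ and hence need not be a Mori fibration; and the mobility condition in the definition of cte links is ``$-K$ is $\R$-mobile and big$/T$ in codimension~1,'' which is not the statement $-K\in\Int\Mob$. The paper instead transfers the obstruction to the actual Mori model $Y$: the chain of flops to the central model plus the small anticanonical contraction gives a small modification $\varphi\colon Y\dashrightarrow Y''$ onto a Fano $Y''$, and pulling back a small ample perturbation of $-K_{Y''}$ yields $-K_Y\sim_\Q M+fF$ with $M$ big $\Q$-mobile, $F$ a fiber and $f>0$; since $M=-K_Y-fF$ has negative $F$-coefficient, this violates the cone inclusion $\Mob(Y)\subseteq\R_+[-K_Y]\oplus g^*\Eff(C)$, i.e., condition (iii). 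Your argument for (iv) is likewise shaky (the positivity claim $K^2_{\tilde Y}\in\Int\NE(\tilde Y)$ needs justification); the paper simply uses the known implication ``$K^2$-condition $\Rightarrow$ $K$-condition'' and handles only (iii). Finally, your (i)--(ii) reduction is in spirit what the paper does (boundedness of global links gives uniform bounds), but the paper's extra ingredient, which you should not skip, is that (i) and (ii) are \emph{geometric} conditions (preserved under passing to $\overline k$, invoking Park's uniqueness of the smooth compactification for (i)); and the propagation of (i)--(iv) along the intervening square links over $T=C$ is precisely part of Grinenko's surface-rigidity statement, so there is no independent ``bookkeeping'' to do once that citation is in place.
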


The number in (i) is the sum of degrees of points in $\overline{C}$,
the fibers over which are not smoothable.
The condition (ii) is geometrical and, according to the proof below,
(i) can be treated geometrically too.
The condition  in (iii) for $Y/\overline{C}$ was introduced by Grinenko
\cite{grin} and is known as the $K$-{\em condition\/} (cf. \cite[Conjecture 1.2 (ii)]{isk2}).
For a Mori fibration $Y/C$ over a nonsingular projective curve $C$,
it is equivalent to the inclusion:
$$
\Mob(Y)\subseteq \R_+[-K_Y]\oplus g^*\Eff(C)
$$
where $g\colon Y\to C$ and $\R_+=\{r\in\R\mid r\ge 0\}$.
The condition in (iv) for $Y/\overline{C}$ was introduced by Pukhlikov \cite{pukh} and is known as the $K^2$-{\em condition\/}.
The $K^2$-condition implies the $K$-condition.

\begin{proof}
After a completion we can suppose that $X$ and $C=\overline{C}$ are projective.
By the weak rigidity of minimal Del Pezzo surfaces of degree $d\le 3$ \cite[Proposition~3.3]{grin2},
each link over $T=C$ is square birational and preserves all conditions (i-iv).
For $d=1$, the links do not change $X/C$ generically over $C$.
For $d=2,3$, the only nontrivial fiberwise links are (Bertini and Geiser) involutions and
they give the same fibration generically.
But the (global) links$/T=\pt$ are
bounded by Corollary~\ref{cor-links/pt bounded in 3} that contradicts (i) and (ii) with
sufficiently large constants.
The property (ii) is geometrical because the degree is geometrical
and, actually, we need to consider moduli over an algebraically closed field.
The property (i) is geometrical too because the smoothness is geometrical due to Park: the smooth
compactification is unique for $d\le3$ (even $\le 4$) \cite{park}.

By the mobility of cte links,
there are no links  of $Y/C$ under the conditions (iii-iv).
More precisely, the $K^2$-condition implies the $K$-condition and
the latter contradicts the mobility.
Indeed, the link has type 2B or 2C.
Let $Y'$ be its central model and $Y''$ be an anticanonical model of $Y'$
with a small contraction $Y'\to Y''$.
Thus there exists a (natural) small modification $\varphi \colon Y\dashrightarrow Y''$
into a Fano $3$-fold $Y''$.
This gives a decomposition $-K_Y\sim_\Q M+ f F$, that
contradicts the $K$-condition, where $M$ is a (big) $\Q$-mobile divisor,
$F$ is a fiber of $Y/C$, and $f$ is a positive rational number.
Note that $-K_{Y''}=-\varphi(K_Y)$ is an ample divisor on $Y''$ and
$\varphi(F)$ is a Weil divisor on $Y''$.
\end{proof}

\begin{corollary}\label{cor-dp1nonsing rigid=k}
Let $X/C$ be a $3$-fold Mori fibration
of degree $1$ over a nonsingular projective curve $C$.
Then $X/C$ is generically rigid if and only if,
for each Mori model $Y/C$ of $X/C$,
the $K$-condition holds.
\end{corollary}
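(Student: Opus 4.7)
The forward direction is immediate from Corollary~\ref{cor-delpezzo123rigid}(iii) together with its ``Moreover'' clause in degree $1$: the $K$-condition holding for every Mori model of $X/C$ already yields generic rigidity, not just weak rigidity.

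For the converse I plan to argue contrapositively. Suppose $X/C$ is generically rigid but the $K$-condition fails for some Mori model $Y/C$, so that $-K_Y\in\Int\Mob(Y)$. Since $C$ is a nonsingular projective curve, $\rho(Y)=2$ and the mobile cone $\Mob(Y)$ has the fiber class $[F]$ of $Y/C$ as one of its two extremal rays; the failure of the $K$-condition then places $-K_Y$ strictly in the interior of $\Mob(Y)$, yielding a decomposition
\[
-K_Y \sim_\Q M + fF, \qquad f\in\Q_{>0},
\]
with $M$ big and $\Q$-mobile, representing a class strictly beyond the ray $\R_{+}[-K_Y]$. After passing to a log resolution and embedding the data into a suitable $0$-log pair fulfilling the generality conditions of Section~\ref{section5-general geo}, Corollary~\ref{cor-FT eff decomposition} supplies a rational $1$-contraction $\varphi\colon Y\dashrightarrow Y''$ determined by the class $[M]$, with $Y''$ a $\Q$-factorial projective threefold whose anticanonical class $-K_{Y''}=\varphi_{\ast}M+f\varphi_{\ast}F$ is big (since $\varphi_{\ast}M$ is big and $f>0$).

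The next step is to invoke Theorem~\ref{thrm-mori morphism is cte} and factor $\varphi$ into cte (equivalently Sarkisov) links. Because $Y''$ has big anticanonical class it is birational to a Fano threefold over $\pt$, whereas $Y$ sits over the curve $C$; comparing base dimensions, at least one link in the chain must have type 2B or 2C and produce an intermediate Mori fibration $Y'''/T'''$ with $T'''\not\cong C$. Such a $Y'''/T'''$ is not square birational to $Y/C$, directly contradicting the assumed generic rigidity of $X/C$.

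The main technical obstacle is certifying that the factorization of $\varphi$ truly contains a non-square-birational link, rather than consisting only of square birational moves over $C$ followed by a trivial tail. I would handle this by combining the mobility property of cte links highlighted after ``Links'' in Section~\ref{section6-linkage} with the classification of type~$2$ ridges in Theorem~\ref{thrm-ridge types}: the $K$-condition is preserved along any chain of square birational cte links over $C$, so such a chain could never terminate at a Fano-type model over $\pt$, forcing a genuinely non-square-birational link to appear and completing the contradiction.
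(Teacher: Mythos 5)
Your handling of the ``$\Leftarrow$'' direction ($K$-condition implies generic rigidity) matches the paper exactly: both invoke Corollary~\ref{cor-delpezzo123rigid}(iii) together with its ``Moreover'' clause in degree~$1$.

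For the ``$\Rightarrow$'' direction (generic rigidity implies the $K$-condition), however, the paper does not argue from the geography machinery at all --- it simply cites [Grin2, Theorem~4.5], where Grinenko establishes this implication by a direct analysis of the anticanonical geometry of degree-$1$ del Pezzo fibrations. Your proposal instead tries to derive the implication from the paper's own results, which is a genuinely different route, and as written it has a gap. The chain of reasoning is: failure of the $K$-condition gives a decomposition $-K_Y \sim_\Q M + fF$; this yields a rational $1$-contraction $\varphi\colon Y\dashrightarrow Y''$ with $-K_{Y''}$ big; then Theorem~\ref{thrm-mori morphism is cte} is invoked to factor $\varphi$. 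But Theorem~\ref{thrm-mori morphism is cte} factors a morphism \emph{between two Mori fibrations}, while $Y''$ is merely a projective threefold with big anticanonical. To apply it you must first pass to a Mori fibration model $W/V$ of $Y''$ and factor $Y/C\dashrightarrow W/V$; and it is at exactly this point that you need to know $\dim V < 1$, i.e.\ $W/V$ is a Mori--Fano over a point. Your step ``Because $Y''$ has big anticanonical class it is birational to a Fano threefold over $\pt$'' does not follow: a threefold with big $-K$ can perfectly well have its MMP terminate at a conic bundle over a surface or a del Pezzo fibration over a curve (e.g.\ $\mathbb P^1 \times S$). Without $\dim V < 1$ the ``comparing base dimensions'' step is unsupported, and the claimed contradiction (that some link must be non-square-birational) does not materialize. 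Your proposed resolution of ``the main technical obstacle'' presupposes that the chain terminates at a Fano-type model over $\pt$, which is precisely the unverified premise, so the argument is circular at that point. This is the step for which Grinenko's theorem supplies the missing geometric content, and the paper's one-line citation is doing real work there.
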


This is a weak version of the $K$-conjecture \cite[Conjecture~4.4]{grin2} for Del Pezzo fibrations of degree $1$.

\begin{proof}
Immediate by Corollary~\ref{cor-delpezzo123rigid} (iii) and [Grin2, Theorem~4.5].
\end{proof}

It is not difficult to construct many examples of weakly rigid
$3$-fold Mori fibrations satisfying Corollary~\ref{cor-delpezzo123rigid} (i-ii).

Those examples and Corollary~\ref{cor-dp1nonsing rigid=k} allow us to construct
many rigid Mori fibrations of degree $1$ satisfying Corollary~\ref{cor-delpezzo123rigid} (iii).
Examples of such fibrations satisfying Corollary~\ref{cor-delpezzo123rigid} (iv) for degree $1$ or
(iii-iv) for degrees $2,3$ are unknown at present,
but they are expected \cite[Problem~5.9 (3)]{corti2}.

\bs

\paragraph{Questions}
For what degrees $d$,
does the $K$-, or $K^2$-condition hold, or the mobility not hold
for almost all Del Pezzo fibrations with terminal singularities?
Equivalently, for what degrees $d$,
does the condition $-K\in \Int \Mob(X), K^2\in\Int\NE(X)$,
or the mobility is {\em bounded\/}, that is,
the Del Pezzo fibrations $X/C$ of degree $d$ with given property and
only with terminal singularities form
a bounded family?
The same questions are especially important for Mori fibrations.

\bs

For example, the condition $K^2\in \Int \NE(X)$ is unbounded for
cubic Mori fibrations \cite[Lemma~3.6]{bcz}.
This and Corollary~\ref{cor-delpezzo123rigid} give an unbounded family of
terminal Gorestein weakly rigid cubic Mori fibrations$/\mathbb{P}^1$
without the $K^2$-condition.
On the other hand, all nonsingular Mori fibrations with
$K$-condition of degrees $1,2$ and rather general of degree $3$ are
weakly rigid \cite[Theorems~5.6, 6.4 and 7.1]{grin2}.
Some of them does not satisfy the $K^2$-condition.
However, it is unknown whether the condition (iii) of Corollary~\ref{cor-delpezzo123rigid}
holds for some of them.
The proofs of these rigidity results use the maximal singularity method and
some elements of the Sarkisov program \cite{pukh} \cite{grin} \cite{corti2}.
Probably, the latter can be replaced by Polarized linkage.
Maximal singularities play a crucial role in birational geometry.
Anyway,
our approach improves the maximal singularity method.

\begin{theorem}\label{thrm-nonrigid Mori}
Let $X/T$ be a Mori fibration.
The following properties are equivalent:

\begin{enumerate}
\item[(i)] $X/T$ is nonrigid$/T=Z$;

\item[(ii)] there exists a mobile$/T$ linear system $L\subseteq|-dK|$ of degree
(possibly a rational number) $d>0$
which has a single exceptional maximal b-divisor;

\item[(iii)] there exists a mobile$/T$ linear system $L\subseteq|-dK|$ of degree $d>0$
which has an exceptional maximal b-divisor;

\item[(iv)] there exists a big mobile$/T$ linear system $L\subseteq|-dK|$ of degree $d>0$
which has a single exceptional cn b-divisor and is big$/T$ on all its cn b-divisors;

\item[(v)] there exists a big mobile$/T$ linear system $L\subseteq|-dK|$ of degree $d>0$
which has an exceptional cn b-divisor and is big$/T$ on all its cn b-divisors;

\item[(vi)] $X/T$ has a model $Y/T$ such that $Y/T$ is a weak projective terminal $\Q$-factorial Fano
variety with $\rho(Y/T)=2$ and with big$/T$ $-K_Y$ in codimension $1$;

\item[(vii)] $X/T$ has a model $Y/T$ such that $Y/T$ is a terminal Fano variety
with $\dim_\R \N^1(Y/T)=2$;

\item[(viii)] $X/T$ has a model $Y/T$ such that $Y/T$ is a weak projective terminal $\Q$-factorial
Fano variety with $\rho(Y/T)\ge 2$ and with big$/T$ $-K_Y$ in codimension $1$; and

\item[(ix)] $X/T$ has a model $Y/T$ such that $Y/T$ is a terminal Fano variety
with $\dim_\R \N^1(Y/T)\ge 2$.
\end{enumerate}
\end{theorem}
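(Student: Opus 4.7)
The plan is to organize the proof around the Sarkisov factorization of Theorem~\ref{thrm-mori morphism is cte} as the bridge between non-rigidity and the existence of a two-ray weak Fano model, and then attach the linear system statements (ii)--(v) via the geography dictionary of Sections~\ref{section2-geography}--\ref{section5-general geo}.

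First I would dispose of the easy implications. Clearly (ii)$\Rightarrow$(iii), (iv)$\Rightarrow$(v), (iv)$\Rightarrow$(ii), (v)$\Rightarrow$(iii), (vi)$\Rightarrow$(viii), and (vii)$\Rightarrow$(ix) are immediate. For (vi)$\Leftrightarrow$(vii) and (viii)$\Leftrightarrow$(ix), note that a terminal $\Q$-factorial weak Fano $Y/T$ is FT by Lemma~\ref{lemma-FT=0log}, hence $-K_Y$ is semiample by Corollary~\ref{cor-Qsamp=Rsamp}, and its anticanonical contraction yields a terminal Fano $Y''/T$ with $\dim_\R\N^1(Y''/T)=\rho(Y/T)$; conversely a terminal Fano admits a small $\Q$-factorialization preserving the relative numerical dimension via \cite[Corollary~6.7]{isksh}. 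The ``big in codimension~$1$'' clause is automatic on an actual Fano and is preserved by small modifications.

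The main equivalence (i)$\Leftrightarrow$(viii) I would prove in two steps. For (i)$\Rightarrow$(viii): non-rigidity produces a birational but non-isomorphic Mori fibration $X'/T'$; by Theorem~\ref{thrm-mori morphism is cte}, the birational map $X\dashrightarrow X'$ decomposes into cte Sarkisov links, and by the very definition of cte (paragraph on Links) the central model $Y$ of the first nontrivial link is a terminal $\Q$-factorial weak Fano with $\rho(Y/W)=2$ and $-K_Y$ big in codimension~$1$ over the link's lc base $W$. When $W=T$ (types 2B or 2C on the $X$-side) this is exactly (viii); when $W\neq T$ (type 2A, or the other side of 2B), I combine $Y\to W$ with the Mori contraction $T\to W$ on the base, obtaining a model $Y/T$ with $\rho(Y/T)\ge 2$ whose weak Fano and big-in-codimension-$1$ properties descend from the corresponding properties over $W$. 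For (viii)$\Rightarrow$(i): given $Y/T$ with $\rho(Y/T)\ge 2$ and $-K_Y$ big in codimension~$1$, the relative cone $\overline{\NE}(Y/T)$ has at least two $K_Y$-negative extremal rays, and the two-ray $K_Y$-MMP starting from $Y$ terminates by Corollary~\ref{cor-DMMP on FT} (since $Y/T$ is FT) in two Mori fibrations, birational to $X/T$, at least one of which differs from $X/T$ as a Mori fibration.

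For (ii)--(v) I would translate the linear system to the boundary $B=\frac{1}{d}L$, with $K+B\equiv 0/T$, and place $B$ on a ridge of the geography $\fN_S$ for $S$ extending $\Supp L$ so that the generality conditions of Section~\ref{section5-general geo} are satisfied. An exceptional maximal b-divisor $D$ of $L$ with $a(D,X,B)\le 0$ corresponds to a divisor contracted across a divisorial facet of the ridge, and $e(D,B')$ drops to $0$ across this facet by Theorem~\ref{thrm-ridge types}. The central model $Y/T$ of such a ridge (Corollary~\ref{cor-central model exists}) produces the weak Fano of (vi)--(ix), giving the circuit. Within this dictionary, (ii) and (iv) pick out ridges of type 2B or 3B (a unique divisorial facet adjacent to the country containing $B$), while (iii) and (v) also allow 2C and 3C; ``big on all cn divisors'' and the cn clause in (iv), (v) match precisely the condition that the central model be actually Fano with $-K$ big in codimension~$1$. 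The main obstacle will be the type-2A descent $W\rightsquigarrow T$ for (i)$\Rightarrow$(viii), together with the perturbation within $\fN_S$ required to land $B$ on the interior of the correct divisorial facet while preserving the maximal singularity information of $L$; both are analogous to the perturbation along the separatrix $\fS_S$ used in the proof of Theorem~\ref{thrm-mori morphism is cte}, and require extending $S$ by sufficiently ample generic divisors without contracting or introducing any maximal b-divisors other than the desired ones.
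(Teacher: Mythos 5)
Your overall skeleton is the same as the paper's: use Theorem~\ref{thrm-mori morphism is cte} and the first cte link to get from nonrigidity to a central model over $T$, invoke Corollary~\ref{cor-central model exists} to run the dictionary backwards, and relate the linear-system statements to ridges of the geography. However there are two genuine gaps.

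First, you list $(\mathrm{iv})\Rightarrow(\mathrm{ii})$ and $(\mathrm{v})\Rightarrow(\mathrm{iii})$ among the ``immediate'' implications, but they are not: a cn b-divisor of $L$ is one with cn threshold $t\le 1$, while a maximal b-divisor requires $t<1$. So a system with a single exceptional cn b-divisor at threshold exactly $t=1$ satisfies (iv) but not (ii). The paper closes this by going $(\mathrm{iv})\Leftrightarrow(\mathrm{i})\Rightarrow(\mathrm{iii})\Rightarrow(\mathrm{ii})$, where the last step is a genuine perturbation of the linear system $L$ (and $(\mathrm{i})\Rightarrow(\mathrm{iii})$ takes $L$ with the exceptional divisor of the central model fixed). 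Your proposal silently skips this perturbation. Relatedly, your inequality $a(D,X,B)\le 0$ for a maximal b-divisor is off; in the paper's log-discrepancy convention, ``maximal'' (non-canonical) means $a(E,X,D/d)<1$, not $\le 0$ (which would mean non-lc).

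Second, in $(\mathrm{i})\Rightarrow(\mathrm{viii})$, the worry that the first link might go over some $W\neq T$ does not arise: rigidity is being tested over $Z=T$, so $W$ is a proper model of $T$ over $T$ admitting a section $T=T_1\to W$, which forces $W\cong T$. This is exactly what the paper means by ``the link goes over $T$.'' Moreover, your proposed workaround (composing $Y\to W$ with $T\to W$) would not produce a model of $X/T$ at all when $W\neq T$, since $Y$ only lies over $W$, not over $T$. Finally, your $(\mathrm{viii})\Rightarrow(\mathrm{i})$ argument via ``the two-ray $K_Y$-MMP'' is not well posed when $\rho(Y/T)>2$; the paper instead reduces to $\rho=2$ via $(\mathrm{viii})\Leftrightarrow(\mathrm{ix})\Leftrightarrow(\mathrm{vii})\Leftrightarrow(\mathrm{vi})$ and then invokes the converse direction of Corollary~\ref{cor-central model exists}, which places $Y/T$ as the central model of a fibering ridge and hence of a genuine link.
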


In the statement and below $\subseteq$ means that, for any Weil divisor $D \in L$,
$|D|=|-dK|$, that is, $D\sim -dK$.
In other words, $-dK$ denotes an integral Weil divisor $D$ such that
$D/d\sim_\Q -K$.
(Of course, for the integral numbers $d$,
$dK$ has a canonical choice up to $\sim$.)
Note that $D$ is not unique even if $d$ is integral.
The linear system $L$ {\em has a cn $b$-divisor\/} $E$ if, for general $D\in L$,
the cn threshold $t$ of $(X,D/d)$ is $\le 1$ and $a(E,X,tD/d)=1$.
The cn $b$-divisor is {\em maximal\/} if $t<1$.
Note that $t$ is independent of general $D$ and each
nonexceptional divisor is cn if $t\leq 1$.

\begin{proof}
Suppose that $X/T$ is nonrigid$/T=Z$ and $X/T\dashrightarrow X'/T'$ with $T'/T$
is the first cte link.
The link goes over $T$ and let $Y/T$ be its central model.
The model $Y/T$ satisfies (vi) and (viii), and (iv-v) with $L$,
the birational transformation of $|-NK_Y|$
for some $N\gg 0$.
Taking $L$ with the fixed exceptional
divisor for $Y\dasharrow X$, we get (iii).
The contraction of flopping curves of $Y/T$ in (vi) (the anticanonical model) gives
a model in (vii) and (ix).
Conversely, the $\Q$-factorialization of models in (vii) and (ix)
gives models in (vi) and (viii) respectively.
Thus (vi), (viii) and (vii), (ix) are equivalent respectively.
Note that, (i), (iv) and (vi) are equivalent
(see the proof of Corollary~\ref{cor-central model exists}), and follow
from (ii).
Similarly, (v) and (viii) are equivalent and follow from (iii).
A perturbation of the linear system $|L|$ in (iii) allows us to construct
a linear system in (ii).
Hence (ii) and (iii) are equivalent.
Similarly, (iv) and (v) are equivalent. Finally, (ii) implies (iv) by definition.
\end{proof}

\begin{corollary}\label{cor-rigid Mori}
A Mori fibration $X/T$ is rigid$/T$ if and only if
any mobile$/T$ linear system $L\subseteq |-dK|$ of rational degree $d>0$
has no exceptional maximal b-divisors.
\end{corollary}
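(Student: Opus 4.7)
The plan is to read off this corollary directly as the logical negation of the equivalence (i) $\Leftrightarrow$ (iii) already established in Theorem~\ref{thrm-nonrigid Mori}. There is essentially nothing new to prove; the only care needed is to match the quantifiers and to confirm that the interpretation of rigidity matches.

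First I would note that in the hypotheses of Theorem~\ref{thrm-nonrigid Mori}, nonrigidity is taken in the sense $T=Z$, which is precisely the meaning of ``rigid$/T$'' in the corollary (cf.\ the Rigidity paragraph). So ``$X/T$ is rigid$/T$'' is the literal negation of condition (i) of the theorem. Likewise, the condition ``any mobile$/T$ linear system $L\subseteq|-dK|$ of rational degree $d>0$ has no exceptional maximal b-divisors'' is the literal negation of condition (iii) of the theorem. (The adjective ``rational'' on the degree is harmless: clearing denominators multiplies $L$ by a positive integer without changing its base locus, its mobility, or the location and multiplicity of any cn or maximal b-divisor, and so has no effect on the existence of an exceptional maximal b-divisor.)

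Having matched the statements, the proof is then just the contrapositive of (i) $\Leftrightarrow$ (iii). For the forward direction, if $X/T$ is rigid and a mobile$/T$ system $L\subseteq|-dK|$ with an exceptional maximal b-divisor existed, then (iii) $\Rightarrow$ (i) would give nonrigidity, a contradiction. For the reverse direction, if $X/T$ is nonrigid, (i) $\Rightarrow$ (ii) $\Rightarrow$ (iii) produces a mobile$/T$ linear system in $|-dK|$ possessing (a single, hence) an exceptional maximal b-divisor. Since there is no genuine step beyond invoking the theorem, there is no real obstacle; the only point I would double-check is that the rigidity convention used in the corollary indeed coincides with the $T=Z$ convention of the theorem, which the paper's definitions confirm.
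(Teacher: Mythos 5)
Your proposal is correct and takes essentially the same route as the paper, which simply cites Theorem~\ref{thrm-nonrigid Mori} (the corollary is the negation of the equivalence (i) $\Leftrightarrow$ (iii)). Your extra remarks about matching the $T=Z$ rigidity convention and the harmlessness of allowing rational degree $d$ are accurate, just making explicit what the paper's one-line proof leaves to the reader.
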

\begin{proof}
Immediate by Theorem~\ref{thrm-nonrigid Mori}.
\end{proof}

This implies that rigidity and nonrigidity are constructive conditions
(cf. \cite{chegrin}).
Recall that a {\em Mori-Fano variety\/} is a Mori fibration over
a point $T=\pt$.

\begin{corollary}\label{cor-rigid Mori Fano}
Suppose that the base field k is algebraically closed and
assume \cite[Conjecture 1.1]{prsh} for projective weak Fano varieties of dimension $d\ge 4$.
Then the rigid (respectively nonrigid) Mori-Fano varieties of dimension $d$
form a constructive set in the coarse moduli of Mori-Fano varieties of dimension $d$.
\end{corollary}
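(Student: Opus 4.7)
The plan is to combine Theorem~\ref{thrm-nonrigid Mori} with the assumed boundedness to reduce the corollary to a standard constructibility argument in moduli. For the absolute case $T=\pt$, part (ix) of Theorem~\ref{thrm-nonrigid Mori} characterizes nonrigidity of a Mori-Fano variety $X$ as the existence of a birational model $Y$ of $X$ which is a terminal Fano variety with $\dim_\R \N^1(Y)\ge 2$. Thus the set of nonrigid Mori-Fano varieties in the moduli is precisely the image, under the ``is birational to'' relation, of the family of terminal Fano varieties of dimension $d$ with Picard number $\ge 2$. Our task reduces to showing that this image is constructible.

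First I would invoke the hypothesis (\cite[Conjecture 1.1]{prsh} for projective weak Fano varieties of dimension $d\ge 4$; classical in lower dimension) to conclude that both families involved are bounded: the family $\mathcal{F}_d$ of Mori-Fano varieties of dimension $d$ and the family $\mathcal{G}_d$ of terminal Fano varieties of dimension $d$ with $\dim_\R\N^1\ge 2$. Since $\mathcal{G}_d$ is bounded, after passing to a suitable polarization by a multiple of $-K$, its members can be embedded in a fixed projective space with bounded Hilbert polynomial; similarly for $\mathcal{F}_d$. Both carry coarse moduli spaces $M_{\mathcal{F}_d}$ and $M_{\mathcal{G}_d}$ which are constructible sets in finite-type schemes over $k$.

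Next I would parametrize birational equivalences. Over $M_{\mathcal{F}_d}\times M_{\mathcal{G}_d}$ (with the respective universal families $\mathcal{X}$ and $\mathcal{Y}$), the graph of a birational map $\mathcal{X}_s\dashrightarrow \mathcal{Y}_{s'}$ is a closed subvariety of $\mathcal{X}_s\times \mathcal{Y}_{s'}$ projecting birationally to each factor. Because the total families are bounded, such graphs have bounded numerical invariants and are parametrized by a finite-type (relative) Hilbert scheme $H$; the locus in $H$ where the two projections are birational is locally closed. The image $B\subseteq M_{\mathcal{F}_d}\times M_{\mathcal{G}_d}$ of $H$ is then constructible by Chevalley's theorem. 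By Theorem~\ref{thrm-nonrigid Mori}(ix), the projection of $B$ to $M_{\mathcal{F}_d}$ is exactly the locus of nonrigid Mori-Fano varieties, and its complement in $M_{\mathcal{F}_d}$ is the rigid locus. Both are therefore constructible.

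The main obstacle is making rigorous the uniform parametrization of birational maps by a finite-type Hilbert scheme, which requires bounding the Hilbert polynomials of graphs uniformly across the two bounded families; this uses the boundedness of $-K$-polarizations guaranteed by \cite[Conjecture 1.1]{prsh} together with standard Hilbert scheme techniques. Once this parameter space is in place, the rest of the argument is formal: Chevalley produces the constructible image, and Theorem~\ref{thrm-nonrigid Mori} identifies this image with the nonrigid locus.
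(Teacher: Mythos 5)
Your reduction to Theorem~\ref{thrm-nonrigid Mori} and the use of the boundedness hypothesis to get finite-type moduli are both in the spirit of the paper's proof. However, your central technical step has a genuine gap. You claim that because the two families are bounded, the graphs $\Gamma\subset X\times Y$ of birational maps $X\dashrightarrow Y$ ``have bounded numerical invariants and are parametrized by a finite-type (relative) Hilbert scheme.'' This is false in general: bounding $X$ and $Y$ does \emph{not} bound the Hilbert polynomial of the graph of a birational map between them. Cremona transformations of $\mathbb{P}^n$ already have unbounded bidegree, and the same phenomenon occurs for birational maps between members of two bounded families of Fanos. You flag this as ``the main obstacle'' and assert it can be handled by ``boundedness of $-K$-polarizations together with standard Hilbert scheme techniques,'' but that only bounds the polarized varieties, not the graphs; nothing in the proposal controls the complexity of the birational map itself. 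As stated, the Chevalley step has no finite-type parameter space to apply to.

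The paper sidesteps this precisely by \emph{not} attempting to parametrize arbitrary birational maps. Instead it uses characterization (vii) of Theorem~\ref{thrm-nonrigid Mori} ($\dim_\R\N^1(Y)=2$), and notes that the passage from $Y$ to the Mori-Fano variety is a \emph{specific, controlled} modification: take a $\Q$-factorialization of $Y$, fix one of its two extremal rays, and run the corresponding MMP. Because the intermediate models are all FT, the number of steps is universally bounded (Corollary~\ref{cor-num DMMP 0log} / Corollary~\ref{cor-FT finite contractions}), and each step is an elementary contraction/flip/flop; this bounds the resulting correspondence uniformly and makes the image constructible. If you want to salvage your argument, you should replace ``all birational maps'' by ``the specific chain of MMP modifications starting from a chosen extremal ray on a $\Q$-factorialization,'' and invoke the bounded termination to get a finite-type parameter space. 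That would bring your approach in line with the paper's and close the gap.
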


\begin{proof}
It is enough to consider nonrigidity.
By Theorem~\ref{thrm-nonrigid Mori}, the nonrigid Mori-Fano varieties are the images
of the terminal Fano varieties $X$ with $\dim_\R \N^1(X)=2$.
More precisely, the latter should have a fixed $\Q$-factorialization and
a fixed extremal ray on it. Then the image is given by a modifications starting from the ray.
The image gives Mori-Fano varieties for a constructive subset and is constructive itself.
\end{proof}

\section{LMMP and semiampleness}\label{section7-LMMP-semi}

This section gives an overview of the LMMP and semiampleness and detailed assumptions
for the statements of the paper.
We use the basic notions
and notations from \cite{kmm} \cite{komo} \cite{isksh}.
Below we recall some of them.

\ms

\paragraph{Model}
Let $X/Z$ be a morphism of varieties.
A variety means a geometrically reduced and irreducible normal algebraic variety over some field $k$,
usually, of characteristic 0 (not necessarily algebraically closed).
The role of the last assumption will be explained below.
According to the following we can suppose that $X/Z$ is proper.
A model of $X/Z$ is another {\em proper\/} morphism $Y/Z$ of varieties such that
$X$ and $Y$ are birationally isomorphic$/Z$.
Moreover, the birational isomorphisms $\chi\colon X\dashrightarrow Y/Z$ is fixed and
called {\em natural\/} for the model $Y/Z$.
In other words, a {\em model\/} is such an isomorphism $X\dashrightarrow Y/Z$.
A {\em natural birational isomorphism\/} of models
$Y_1\dashrightarrow Y_2$ for $X/Z$ is a unique birational isomorphism in
the commutative diagram:
$$
\xymatrix{
& X\ar@{-->}[ld]\ar@{-->}[rd]& \\
Y_1\ar@{-->}[rr]& & Y_2
}
$$
with natural birational isomorphisms $X\dashrightarrow Y_1,Y_2$.
Similarly, if $Y_1\to T_1$, $Y_2 \to T_2$ are two proper morphisms of
models$/Z$, their {\em (rational) morphism\/} is a commutative square:
$$
\xymatrix{
Y_1\ar[d]\ar@{-->}[r]&Y_2 \ar[d]\\
T_1\ar@{-->}[r]&T_2
}
$$
with the natural birational isomorphism $Y_1\dashrightarrow Y_2$;
the {\em natural\/} rational morphism $T_1\dashrightarrow T_2$ is unique.
Two models $Y_1,Y_2/Z$ are considered as the {\em same (equal)\/} if
the natural birational isomorphism is biregular, that is,
they are naturally {\em isomorphic\/}.
Respectively, $Y_1/T_1,Y_2/T_2$ are {\em naturally isomorphic\/}
if $Y_1\dashrightarrow Y_2,T_1\dashrightarrow T_2$ are biregular isomorphisms.
Usually, everybody omit the adjective ``natural".
The symbol $\cong$ or $=$ denotes a natural isomorphism and
$\approx$ any other.
For example, a quadratic transformation $X=\mathbb{P}^2\dashrightarrow Y= \mathbb{P}^2$
gives a model $Y= \mathbb{P}^2$ of $X=\mathbb{P}^2$ which is unnaturally isomorphic to $X=\mathbb{P}^2$
(see Example \ref{example-cremona} above).

There are two subtleties in the model theory.
First, we can consider local and even formal theory, that is,
we can replace $Z$ by a local or formal germ of $Z$ at some
scheme point $z_0\in Z$.
This gives {\em local\/} and {\em formal\/} models over $Z$ whereas
models over a variety $Z$, especially for projective $Z$,
are known as {\em global\/}.
For simplicity, a reader can suppose that all models are global.
However, everything works ditto for local and formal models.
Second, that is more important for us, we consider models of pairs.
This leads to log models.
\ms
\paragraph{Pair}
A pair $(X/Z,D)$ consists of a proper morphism $X/Z$ and a b-divisor $D/Z$.
Usually, we suppose that $D=B$ is a b-boundary.
A b-boundary means a finite b-$\R$-divisor
$B=\sum b_i D_i$ where the sum runs over a finite set of
distinct prime b-divisors $D_i$ with multiplicities $b_i$
in the unit segment $[0,1]$ of real numbers.
The divisors $D_i$ are supposed to be fixed.
For example, all $D_i=S_i$ and
$$
B\in \fB_S=\oplus_{i=1}^m [0,1] S_i \cong [0,1]^m,
$$
where $S=\sum_{i=1}^m S_i$ is a fixed finite reduced b-divisor.
An element $B\in\fB_S$ will be referred to as a
{\em boundary\/} divisor even when it is actually a b-boundary.

The relations $\equiv,\sim_\R$ and the properties such as
ampleness, nefness, $\equiv0$, bigness for divisors on $X/Z$ will always be treated
relatively over $Z$.

\ms

\paragraph{Log model}
Let $(X/Z,B)$ be a pair with a boundary $B\in\fB_S$.
A log pair $(Y/Z,B\Lg_Y)$ is a {\em log model\/} of $(X/Z,B)$ if
$Y/Z$ is a model of $X/Z$, with a natural birational isomorphism
$\chi\colon X\dashrightarrow Y/Z$, and
$B\Lg_Y$ is the {\em log transformation\/} of $B$ on $Y$, that is,
$B\Lg_Y=B_Y+\sum D_i$ where
$B_Y$ is the trace of $B$ on $Y$ and
the summation runs over the distinct $\chi^{-1}$-exceptional prime
divisors $D_i$ on $Y$, not components of $S$.
Since $(Y/Z,B\Lg_Y)$ is a log pair,
for any canonical divisor $K_Y$ of $Y$,
the divisor $K_Y+B\Lg_Y$ is $\R$-Cartier.
Note that the boundary $B\Lg_Y$ is also the trace on $Y$ of
an {\em infinite\/} b-boundary $B\Lg:=B+\sum D_i$ where $D_i$
are the distinct prime divisors exceptional on $X$ and not
prime components of $S$.
Both boundaries $B\Lg$ and $B\Lg_Y$ depend on $S$.
However, if $S$ is a usual divisor (i.e., a b-divisor without exceptional components) on $X$,
the divisors $B\Lg$ and $B\Lg_Y$ are independent of $S$.

\ssk

\paragraph{Initial model}
It is a lc pair with a boundary.
Such a pair $(Y/Z,B\Lg_Y)$ is called an {\em initial model\/} of $(X/Z,B)$ if
$(Y/Z,B\Lg_Y)$ is a log model of $(X/Z,B)$ and
the following inequalities hold:
for each prime $\chi$-exceptional divisor $E$ of $X$,
$a(E,X,B)\le a(E,Y,B\Lg_Y)$ and,
for each prime component $S_i$ of $S$ exceptional on $Y$,
$1-b_i=1-\mult_{S_i}B\le a(E,Y,B\Lg_Y)$.
Using the initial log discrepancy
$\ild(D,X,B)$ defined in Section \ref{section2-geography},
we can combine the the two inequalities into one:
$\ild(E,X,B)\leq a(E,Y,B\Lg_Y)$.

An initial {\em strictly lt (slt)\/} model is a slt pair with a boundary.
Recall that the slt property means that
the variety is $\Q$-factorial and projective  over the base,
the pair is lt.
An initial model $(Y/Z,B\Lg_Y)$ of $(X/Z,B)$ is called
an initial {\em slt\/} model of $(X/Z,B)$ if
$(Y/Z,B\Lg_Y)$ is a slt pair and the above inequalities are strict
for all divisors $E$ of $X$ and all prime components of $S$
exceptional on $Y$.

Existence of an initial model for some pair $(X/Z,B)$ is not obvious and
related to log resolutions.
If a pair $(X,S)$ is log nonsingular and the divisor $S_X=S$ is nonexceptional,
then $(X/Z,B)$ is initial for any boundary $B\in \fB_S$.
For any pair $(X/Z,S)$ and for any of its projective$/Z$ log resolution $(Y,S\Lg_Y)$,
the pair $(Y/Z,B\Lg_Y)$ is a slt initial model of $(X/Z,B)$ for all $B\in\fB_S$.

\ssk

\paragraph{Weakly log canonical model}
It is an initial model $(X/Z,B)$ with a nef divisor $K+B$.
The pair is a {\em log canonical (lc) model\/} if $K+B$ is ample.
An initial model $(Y/Z,B\Lg_Y)$ of $(X/Z,B)$ is called
a {\em weakly log canonical (wlc) model\/} of $(X/Z,B)$ if
$(Y/Z,B\Lg_Y)$ is a wlc model.
It will be a {\em log canonical (lc) model\/} of $(X/Z,B)$ if
$(Y/Z,B\Lg_Y)$ is a lc model.
A lc model of $(X/Z,B)$ is unique (up to a natural isomorphism) if
it exists and will be denoted by $(X\cn/Z,B\cn)$ where
$B\cn:=B\Lg_{X\cn}$.

A wlc model is called a {\em strictly log terminal (slt)\/} wlc model
if it is also slt.
Respectively, a {\em strictly log terminal (slt)\/}
wlc model $(Y/Z,B\Lg_Y)$ of $(X/Z,B)$ is a slt wlc model which
is a slt initial model of $(X/Z,B)$.

\bs

\paragraph{Mori log fibration}
It is an elementary Fano log fibration \cite[Definition~1.6 (v)]{isksh}.
More precisely, it is an initial model $(X/Z,B)$ with
a contraction $X\rightarrow T/Z$ such that:
(a) $\dim_k T < \dim_k X$;
(b) $(X/Z,B)$ has only $\Q$-factorial and lt singularities;
(c) the relative Picard number $\rho(X/T):=\rho(X/Z)-\rho(T/Z) = 1$; and
(d) $-(K+B)$ is relatively ample$/T$.
Such a model without (b-c) (respectively only without (b)) is
known as an (elementary) Fano log fibration.
Note that the contraction $X/T$ is a fixed structure of
the Mori log fibration.
It is an {\em slt} Mori log fibration if, additionally,  $T/Z$ is projective.
A Mori log fibration is \emph{generic} if it is considered over the general point of $T$.

An initial model $(Y/Z,B\Lg_Y)$ of $(X/Z,B)$ with a contraction $Y\rightarrow T/Z$
is called a {\em Mori log fibration of\/} $(X/Z,B)$ if
$(Y/Z,B\Lg_Y)$ is a Mori log fibration with the contraction $Y/T$.
Similarly, one can define a {\em Fano log fibration of\/} $(X/Z,B)$.
The model $(Y/Z,B\Lg_Y)$ is an {\em slt} Mori log fibration of $(X/Z,B)$ if
$(Y/Z,B\Lg_Y)$ is a slt initial model of $(X/Z,B)$ and
$(Y/Z,B\Lg_Y)$ is also a slt Mori log fibration.
Thus, for each slt Mori log fibration of $(X/Z,B)$,
the pair $(Y/Z,B\Lg_Y)$ is a slt Mori log fibration, but not conversely in general
even when $(Y/Z,B\Lg_Y)$ is a Mori log fibration of $(X/Z,B)$.
Similarly, for a wlc model $(Y/Z,B\Lg_Y)$ of $(X/Z,B)$,
a slt wlc model $(Y/Z,B\Lg_Y)$ is not the same
as a slt model of $(X/Z,B)$, in general.

If $B=0$ and $X$ is trm then
a slt Mori log fibration $(X/Z,B)$ is simply
a {\em Mori fibration\/}.
Respectively, if $B=B\Lg_Y=0$ and $Y$ is trm then
a slt Mori log fibration $(Y/Z,0)$ is
a {\em Mori fibration\/} $Y/Z$ or a {\em Mori model\/} {\em of\/} $X/Z$.

\ssk

\paragraph{Resulting model}
It is a wlc model or a Mori log fibration
(or even a Fano log fibration).
A {\em resulting model\/} of a pair $(X/Z,B)$ is
either a wlc model or a Mori log fibration
(or even a Fano log fibration) of $(X/Z,B)$.
According to \cite[2.4.1]{3fold-logmod},
any pair $(X/Z,B)$ cannot have both resulting models simultaneously.
However, a pair can have many wlc models or Mori (Fano) log fibrations.
It is expected that every pair $(X/Z,B)$ with a boundary divisor $B$ has at least one of them, that is,
a resulting model.

If $S$ is a usual divisor, that is, nonexceptional on $X$
then $B$ is a boundary and the above concepts are
the same as in \cite{3fold-logmod} \cite{isksh}.

\paragraph{LMMP}
The Log Minimal Model Program (LMMP) allows us
to construct a resulting model.
The {\em LMMP\/} is a special case of
the $D$-Minimal Model Program ($D$-MMP) \cite[1.1]{isksh}.
An initial model for the LMMP is
an initial model $(X/Z,B)$ and $D= K+B$
(usually, it works with $\equiv$ instead of $=$).
We suppose also that $X/Z$ is projective for
reasons explained (see Characteristic $0$ below).
Then a $D$-minimal model is a projective wlc model and
a nonbirational $D$-contraction $X\to T$ is
a projective elementary Fano log fibration.
The program itself is a chain of birational transformations
of the initial model which are supposed to exist and terminate.
Each transformation is a (generalized) log flip
of a projective negative extremal contraction \cite{3fold-logmod},
but not necessarily elementary (see below).
Intermediate models obtained after each transformation are again
projective initial models.
The last model under the LMMP is either
a projective wlc model or
a projective elementary Fano log fibration of the initial model.
The last model under the LMMP starting from some initial
model of $(X/Z,B)$ is a resulting model of $(X/Z,B)$.

However, the slt LMMP is easier and more common.
An initial model of the {\em slt LMMP\/} is
a slt initial  model $(X/Z,B)$, in particular, $X/Z$ is projective.
Each transformation is either a log flip or a divisorial contraction
which are supposed to be elementary.
Intermediate models obtained after each transformations are again slt initial.
The last model under the LMMP is either
a slt wlc model or a slt Mori log fibration of the initial model.
The last model under the LMMP starting from some slt initial
model of $(X/Z,B)$ is a slt resulting model of $(X/Z,B)$:
either a slt wlc model or
a slt Mori log fibration of $(X/Z,B)$.

An {\em elementary\/} contraction (extraction) or
{\em elementary\/}
small transformation (modification) is respectively
a projective contraction (extraction) $Y\to Y'=T$ or
a small nonregular projective modification $Y\dashrightarrow Y'/T$ such that
$Y,Y'$ are $\Q$-factorial and $\rho(Y/T)=1,\rho(Y'/T)\le 1$.
Actually, for an elementary contraction (extraction),
$\rho(Y/T)=1,\rho(Y'/T)=0$ and,
for an elementary small transformation, $\rho(Y/T)=\rho(Y'/T)=1$.

An {\em extremal\/} contraction or
{\em extremal\/} transformation (modification) is respectively
a contraction (extraction) $Y\to Y'=T$ or a transformation  $Y\dashrightarrow Y'/T$ such that
$\rho(Y/T)=1,\rho(Y'/T)\le 1$.
Actually, for an extremal contraction (extraction),
$\rho(Y/T)=1,\rho(Y'/T)=0$ and,
for an extremal transformation, $\rho(Y/T)=\rho(Y'/T)=1$

Thus each pair $(X/Z,B)$ has a resulting model by the LMMP
and a slt resulting model of $(X/Z,B)$ by the slt LMMP,
of course, if such a program exists for $(X/Z,B)$,
including existence of an initial model
(see Characteristic $0$ below).
For such a construction, a {\em weak\/} termination
is enough, that is, some sequence of birational
transformations terminates, gives a resulting model.
For example, the (slt) LMMP is established over any perfect field
in dimension $2$: for any algebraic surface over such a field,
the LMMP exists.

\begin{conjecture}[semiampleness]\label{conj-semiample}\cite[2.6]{3fold-logmod} \cite[1.16]{isksh}
Let $(X/Z,B)$ be a wlc model. Then $K + B$ is semiample.
\end{conjecture}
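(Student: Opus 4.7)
The plan is to follow the standard reduction strategy for log abundance, working by induction on $d = \dim_k X/Z$ and splitting into cases according to the numerical Kodaira dimension $\nu := \nu(X/Z, K+B)$. Since $(X/Z,B)$ is a wlc model, by definition $(X,B)$ is lc and $K+B$ is nef over $Z$; I may freely replace $X$ by a $\Q$-factorial dlt blowup, because such a modification preserves the wlc property and semiampleness descends along a crepant contraction. The nonvanishing input $\kappa(X/Z,K+B) \ge 0$ I would take as granted (it is supplied, at least in the cases relevant to the paper, by the LMMP package assumed elsewhere, cf.\ \cite{nonvanishing}).

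Next I would treat three cases. When $\nu = d$, the divisor $K+B$ is nef and big, and the Kawamata--Shokurov basepoint-free theorem yields semiampleness directly. When $\nu = 0$, the nef divisor $K+B$ is numerically trivial, and after a klt perturbation one obtains a klt-trivial fibration structure from which semiampleness follows by the Nakayama--Gongyo argument, together with an Ambro-type canonical bundle formula to handle the lc locus. In the intermediate range $0 < \nu < d$, I would build the Iitaka fibration associated with $K+B$ (which exists once nonvanishing is in place), resolve to a morphism $f \colon X' \to W/Z$ onto a lower-dimensional base, and apply the canonical bundle formula of Kawamata--Ambro--Fujino--Gongyo to descend $K+B$ to an lc pair $(W/Z, B_W)$ with nef log canonical divisor; the inductive hypothesis would furnish semiampleness on $W$, and pulling back would then recover semiampleness of $K+B$ upstairs.

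The main obstacle --- and indeed the heart of the conjecture --- is the intermediate range, specifically the task of closing the gap $\kappa \le \nu$: one must show that a nef lc log canonical divisor whose numerical Kodaira dimension is positive already has sufficiently many sections to generate an Iitaka fibration of the expected dimension. This $\nu = \kappa$ problem is genuinely open beyond dimension three; any honest attack on it must invoke deep positivity results for Hodge-theoretic direct image sheaves (Kawamata positivity, Viehweg-type weak positivity) or exploit structural features of the moduli b-divisor in the canonical bundle formula. I do not expect a short route: the realistic output of a proposal is to isolate precisely the lower-dimensional input (abundance plus effective adjunction for lc pairs in dimension $d-1$) that makes the induction go through, and to verify that the reductions above --- dlt modification, splitting by $\nu$, Iitaka base change --- preserve the lc and nef hypotheses cleanly so that once that input is secured, the conjecture follows in dimension $d$.
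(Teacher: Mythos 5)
This statement is a \emph{conjecture} in the paper, not a theorem: the paper explicitly assumes the LMMP and the semiampleness conjecture throughout (see the introduction and Section~\ref{section7-LMMP-semi}), and offers no proof of it. The only partial result in the paper touching this is Corollary~\ref{cor-Qsamp=Rsamp}, which uses the geography theorem to reduce the $\R$-boundary case to the $\Q$-boundary case under the slt LMMP --- a reduction, not a proof. So there is no proof of the statement in the paper to compare your proposal against, and any attempt to present one as a completed proof would be out of place.

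Your sketch is an accurate summary of the standard reduction strategy for log abundance (dlt blowup, trichotomy by $\nu$, basepoint-free theorem when $\nu=d$, klt-trivial fibration arguments when $\nu=0$, canonical bundle formula and induction in the intermediate range), and you are candid that the $\kappa=\nu$ problem is genuinely open beyond dimension three. That candor is precisely the point: what you have written is a research program, not a proof, and it should not be labeled \texttt{\textbackslash begin\{proof\}}. If the goal is to record the state of the art, the right move is to keep the statement as a conjecture, cite the known low-dimensional cases (dimension $\le 3$), and note the reduction of Corollary~\ref{cor-Qsamp=Rsamp} from $\R$- to $\Q$-boundaries. If instead you intended to supply a new argument, the gap is exactly the one you name --- closing $\kappa\le\nu$ for lc pairs in the range $0<\nu<d$ --- and nothing in your sketch addresses it beyond invoking unproven positivity inputs in dimension $d-1$.
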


One of the major applications of geography:

\begin{corollary}\label{cor-Qsamp=Rsamp}
 In any fixed dimension $d$, the slt LMMP  in dimension $d$ and
the semiampleness with $\Q$-boundaries imply
the semiampleness with $\R$-boundaries.
Moreover, for the klt semiampleness with $\R$-boundaries,
the klt slt LMMP and the klt semiampleness with $\Q$-boundaries
are sufficient.
\end{corollary}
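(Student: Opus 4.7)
The plan is to reduce the semiampleness of $K+B$ for an $\R$-boundary to a finite collection of $\Q$-semiampleness statements using the geography of log models. Start with a wlc pair $(X/Z,B)$ with $B$ an $\R$-boundary; we want to prove $K+B$ is semiample. First I would pass to a slt wlc model: by the slt LMMP (applied to any log resolution of $(X/Z,\Supp B)$) there exists a slt wlc model $(Y/Z,B\Lg_Y)$ of $(X/Z,B)$, and $K_Y+B\Lg_Y$ is crepant related to $K+B$; semiampleness descends and ascends under crepant birational maps, so it suffices to handle the slt case. Replace $(X/Z,B)$ by this slt model, and fix $S=\Supp B\Lg$ (expanded if needed to satisfy any generality required).

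Next, consider the wlc geography on $\fB_S$. The boundary $B$ lies in a unique wlc class $\fP\subseteq\fN_S$, which by Theorem \ref{mainthrm-geography} is a convex rational polyhedron open in its affine span inside $\fB_S$. Since $\Q$-points are dense in the relative interior of any rational polyhedron, I can choose finitely many $\Q$-boundaries $B_1,\dots,B_k\in\fP$ and positive reals $t_1,\dots,t_k$ with $\sum t_i=1$ and $B=\sum t_i B_i$. Because $B_i\wlc B$, the given $X/Z$ is simultaneously a slt wlc model of each $(X/Z,B_i)$, so each $K+B_i$ is nef. By the assumed $\Q$-semiampleness in dimension $d$, every $K+B_i$ is then semiample.

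The key step is to glue these $\Q$-semiample divisors into one morphism. By Corollary \ref{cor-property-p-e}(2), the lc contraction $\varphi\colon X\to X\cn/Z$ depends only on the class $\fP$, hence it is the common semiample morphism for every $K+B_i$; write $K+B_i\sim_\R\varphi^*A_i$ for $\Q$-Cartier ample divisors $A_i$ on $X\cn$. Then
\[
K+B=\sum_i t_i(K+B_i)\sim_\R\varphi^*\Bigl(\sum_i t_i A_i\Bigr),
\]
and $\sum t_i A_i$ is an $\R$-ample divisor on $X\cn$ as a positive $\R$-linear combination of ample divisors. Therefore $K+B$ is semiample, proving the first statement. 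The klt addendum is identical: if $(X/Z,B)$ is klt, every slt wlc model and every approximating $(X/Z,B_i)$ remains klt by monotonicity of log discrepancies, so only klt slt LMMP and klt $\Q$-semiampleness are invoked.

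The main obstacle is not the linear algebra of the approximation but making sure that the common lc contraction is available for all $B_i$ before the result is in hand. This is exactly what Corollary \ref{cor-property-p-e}(2) and the structure of $\fP$ provide once the geography is set up; the subtle point in practice is that Theorem \ref{mainthrm-geography} and Corollary \ref{cor-property-p-e} already presuppose the slt LMMP and $\Q$-semiampleness in the ambient dimension, which is precisely our hypothesis, so the argument is self-consistent. A secondary technical point is the crepant descent in the initial reduction to the slt case, which requires only that $K+B$ pulls back to $K_Y+B\Lg_Y$ modulo effective exceptional divisors, and these do not affect semiampleness.
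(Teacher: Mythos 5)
Your proposal follows essentially the same approach as the paper's (one-line) proof: express $B$ as a convex combination of $\Q$-boundaries inside the rational polyhedral class $\fP$, invoke $\Q$-semiampleness for each, and glue. One caveat deserves a note. You appeal to Corollary~\ref{cor-property-p-e}(2) to obtain a common lc contraction for all the $B_i$, but that corollary, as stated and as flagged in the assumptions of Section~\ref{section7-LMMP-semi}, presupposes Conjecture~\ref{conj-semiample} for $\R$-boundaries --- exactly what is being proved. Your self-consistency remark is therefore a little too quick. The fix is easy: either apply the corollary only to the rational $B_i$ (where $\Q$-semiampleness supplies the lc contraction) and use that $\wlc$-equivalence forces $K+B_i$ and $K+B_j$ to vanish on the same curves, so the Stein factorizations agree; or dispense with the common fibration entirely and observe that any positive $\R$-linear combination of semiample $\Q$-divisors is $\R$-semiample (take the Stein factorization of $X\to Y_1\times_Z\cdots\times_Z Y_k$; each $\pi_i^*A_i$ is nef there, their sum is ample, hence so is any strictly positive combination). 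The second route is closest to the paper's ``semiampleness for vertices implies it for all boundaries'' and also handles the case where some $B_i$ lands on a face of $\ofP$, where the contraction genuinely degenerates.
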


\begin{proof}
Immediate by Theorem \ref{mainthrm-geography}.
By definition, semiampleness for
vertexes of geography implies that of for all boundaries.
\end{proof}

\bs

\paragraph{Log canonical model \cite[1.17 (ii)]{isksh}}
Let $(X/Z,B)$ be a wlc model.
Then by Conjecture \ref{conj-semiample}
the $\R$-divisor $K+B$ defines a
{\em lc (Iitaka) contraction} $I\colon X\rightarrow X\cn/Z$
onto a normal projective variety $X\cn/Z$, called a
{\em lc (Iitaka) model } of $(X/Z,B)$.
In particular, if $I$ is birational,
the pair $(X\cn/Z,B\cn)$ with $B\cn=B\Lg_{X\cn}$
is the lc model of $(X/Z,B)$ (see
Weakly log canonical model above).

If both the LMMP and Conjecture~\ref{conj-semiample} hold
for a pair $(X/Z,B)$ and the pair has a wlc model $(Y/Z,B\Lg_Y)$, then
one can define a rational Iitaka contraction $I:X\dashrightarrow X\cn/Z$ as
a composition of the birational transformation $X\dashrightarrow Y$ and
of the Iitaka contraction of $Y\to Y\cn=X\cn$.
If $(X,B)$ is lc, the rational Iitaka contraction is a $1$-contraction.

By \cite[2.4.3-4]{3fold-logmod}, the image $X\cn$ and the rational map $X\dashrightarrow X\cn$ depend
only on the pair $(X/Z,B)$. Thus we can write $X\cn$ instead of  $Y\cn$.
Furthermore, it is known that the rational Iitaka contraction $I$ and its image $X\cn$ depend only
on the $\lcm$-class by definition(see Corollary \ref{cor-property-p-e}).
In particular, the contraction $I$ is independent of $B$ modulo $\wlc$.

\ms

\paragraph{Mobile decomposition}
The \emph{positive part} of $\sK+B\Lg$ is the b-Cartier divisor $P(B)=\overline{K_Y+B\Lg_Y}$
where $(Y/Z,B\Lg_Y)$ is a wlc model of $(X/Z,B)$.
By definition $P(B)$ is nef and $\R$-mobile by
the semiampleness \ref{conj-semiample}.
If $(X/Z,B)$ does not have a wlc model, that is,
$\nu(B)=-\infty$, put $P(B)=-\infty$.

We can define the {\em mobile decomposition\/}: $\sK+B\Lg=P(B)+F(B)$.
Here, $F(B)$ is called the \emph{fixed part} of $\sK+B\Lg$.
If $\nu(B)=-\infty$, put $F(B)=+\infty$.

\paragraph{Numerical Kodaira dimension}\cite[2.4.4]{3fold-logmod}
Given a pair $(X/Z,B)$ with  $B\in\fB_S$, we define the {\em relative numerical Kodaira dimension}
$\nu(X/Z,B)(=\nu(B))$ as $-\infty$ if the resulting model of $(X/Z,B)$ is a Mori log fibration and
as the integer
$$
\nu(B)=\max \{ m\in\Z \mid (K_Y+B\Lg_Y)^m\not\equiv 0 \text{ generically over }Z\}
$$
if $(X/Z,B)$ has a wlc model $(Y/Z,B\Lg_Y)$.

By definition, the following are equivalent:
(i) $\nu(X/Z,B)\geq 0$,
(ii) there exists a wlc model of $(X/Z,B)$,
(iii) $B\in \fN_S$ (see Section \ref{section2-geography} above),
(iv) $\kappa(X/Z,B)\geq 0$ (if the semiampleness holds \ref{conj-semiample}).
If this is the case, the integer $\nu(B)$ is independent of the choice of the
wlc model $Y$ \cite[2.4.4]{3fold-logmod}.

\paragraph{Kodaira dimension}\cite[Definition 2.2.3]{choi}
Let $B$ be an $\R$-boundary on $X/Z$ and $Y/Z$ be a model of $X/Z$
such that the log pair $(Y/Z,B\Lg_Y)$ is lc. Then the {\em relative invariant log
Kodaira dimension} $\kappa(X/Z,B)$ of the pair $(X/Z,B)$ is defined as the integer:
$$\kappa(X/Z,B):= \id{Y/Z,K_Y + B\Lg_Y},$$
where $\id{X/Z,D}$ denotes the relative Iitaka dimension of $D$.
For a $\Q$-divisor $D$,
this is the same as the usual definition, but
for an $\R$-divisor $D$, see \cite[Definition 2.2.3]{choi}.

\paragraph{Characteristic $0$}
Most of the results in this paper, if not all,
can be established under the following assumptions:
existence of log resolutions, the LMMP and the semiampleness.
Of course, by the subsection LMMP above we can include log resolutions
into the LMMP.
Indeed, to construct an (slt) initial model of $(X/Z,B)$,
one can use a log resolution (see Initial model above).
However, usually it is not useful especially in characteristic $0$,
because in that case a log resolution always exists.
Moreover, in characteristic $0$ some parts of the LMMP are established or announced:
the cone theorem, the contraction of an extremal ray and
existence of some special flips (e.g., slt).
According to the proofs, these
results require projectivity of $Y/Z$.
The main remaining problem is termination.
A {\em weak\/} termination, that is, some sequence of flips terminates,
is sufficient for many results in birational geometry, in particular,
for this paper (cf. Assumptions for our results below).
Thus we suppose the weak termination in the LMMP.
Note also, that in characteristic $0$, the semiampleness holds
for any nef divisor of relative FT varieties.

The LMMP with usual termination is established only in dimension $\le 3$ \cite{3fold-logmod} and
the slt LMMP with weak termination in dimension $4$ \cite{orderedterm}.
For the klt big LMMP with weak termination in all dimensions see in \cite{bchm}.
The {\em ``big"\/} means that $B$ is big for an initial model,
or equivalently, the same holds for each initial model.

\bs

\paragraph{Assumptions for our results}
Since the LMMP is still not established even
in characteristic $0$ and some of our results can be
use in its proof, we prefer to put explicitly what
is needed in each statement even some of assumptions
is already proved.
In the low dimension applications, we do not do this.
So, in addition to the assumptions in Characteristic $0$,
we need the following.
Note that the big Conjecture \ref{conj-semiample} means
that of for big $K+B$.
Needed assumptions are mentioned after colons and each assumption is sufficient
in the dimension of varieties of the statement and the dimension $\ge 4$.

\bs

\noi Proposition \ref{prop-qfact-proj}:
the slt LMMP and big Conjecture \ref{conj-semiample}.

\noi Proposition~\ref{prop-equivalence}: Conjecture \ref{conj-semiample}.

\noi Theorem \ref{mainthrm-geography}: the slt LMMP;
the klt (big) LMMP for any convex closed polyhedral subset of
klt (respectively big) boundaries in $\fB_S$
on some slt initial model.

\noi Proposition \ref{prop-p-e}: the slt LMMP; etc as Theorem \ref{mainthrm-geography}.

\noi Corollary \ref{cor-property-p-e}: the slt LMMP;
etc as Theorem \ref{mainthrm-geography};
Conjecture \ref{conj-semiample} in (2)
(not needed for the klt big case).

\noi Corollary \ref{cor-facet=face}: the slt LMMP; etc as
Theorem \ref{mainthrm-geography}.

\noi Corollary \ref{cor-eqn for facets}: the slt LMMP; etc as
Theorem \ref{mainthrm-geography}.

\ms

\noi Proposition \ref{prop-0log cones coincide in U}: the klt slt LMMP and
Conjecture~\ref{conj-semiample} for klt pairs with $\Q$-boundaries;
the klt slt big LMMP for any convex closed polyhedral subset of
klt big boundaries.

\noi Proposition \ref{cor-0log cones}: the klt slt LMMP and
Conjecture~\ref{conj-semiample} for klt pairs with $\Q$-boundaries;
etc as Proposition \ref{prop-0log cones coincide in U}.

\noi Corollary \ref{cor-0log num cones}: the klt slt big LMMP.
Comment: It looks that $\Char k=0$ is only needed for the coincidence
$\equiv$ and $\sim_\R$.
For arbitrary characteristic,
one can use Conjecture \ref{conj-semiample}.

\noi Corollary \ref{cor-positive cones}: the klt slt big LMMP.
Comment as in Corollary \ref{cor-0log num cones}.

\ms

\noi Corollary \ref{cor-0log eff decomposition}: the klt slt LMMP and Conjecture~\ref{conj-semiample}
for klt pairs with $\Q$-boundaries; etc as Proposition \ref{prop-0log cones coincide in U}.

\noi Corollary \ref{cor-0log num eff decomposition}: the klt  slt big LMMP.

\noi Corollary \ref{cor-FT eff decomposition}:  the klt  slt big LMMP.

\noi Corollary \ref{cor-0log finite contractions}:  the klt  slt big LMMP.

\noi Corollary \ref{cor-FT finite contractions}:  the klt  slt big LMMP.

\noi Theorem \ref{thrm-finite wlc big klt}:  the klt  slt big LMMP.

\noi Corollary \ref{cor-finite big minimal}: the klt  slt big LMMP.

\noi Corollary \ref{cor-DMMP on 0log}: the klt slt LMMP and Conjecture~\ref{conj-semiample}
for klt pairs with $\Q$-boundaries; etc as Proposition \ref{prop-0log cones coincide in U}.

\noi Corollary \ref{cor-num DMMP 0log}: the klt  slt big LMMP.

\noi Corollary \ref{cor-DMMP on FT}: the klt  slt big LMMP.

\noi Corollary \ref{cor-Qfact FT flops}: the klt  slt big LMMP.

\noi Corollary \ref{cor-slt flops}: the klt  slt big LMMP.

\noi Proposition \ref{prop-small iso mobile cones}: for FT varieties, the klt slt big LMMP.

\noi Proposition \ref{prop-FT mobile cones}:  the klt  slt big LMMP.

\ms

\noi Proposition \ref{prop-monotone N}*: the slt LMMP.

\noi Proposition \ref{prop-S=closed rational poly}*: the slt LMMP.

\noi Lemma \ref{lemma-nonempty separatrix}*: the slt LMMP.

\noi Proposition \ref{prop-image of separatrix}*: the slt LMMP.

\noi Proposition \ref{prop-kd nd are constants}*: the slt LMMP.

\noi Corollary \ref{cor-geoface=cube for general}*: the slt LMMP.

(* can be treated as Theorem \ref{mainthrm-geography} for
any convex closed polyhedral subset in $\fB_S$, respectively
any closed polyhedral subset in $\fS_S$.)

\noi Theorem \ref{thrm-unique wlc for general geo}: the slt LMMP;
for any closed convex polyhedral subset in $\fB_S$ with klt boundaries,
the klt slt big LMMP.

\noi Lemma~\ref{lemma-rho of wlc/lc}: the klt slt big LMMP.

\noi Theorem~\ref{thrm-facet types}: the slt LMMP; etc as Theorem \ref{thrm-unique wlc for general geo}.

\noi Corollary~\ref{cor-small klt slt wlc models in elementary log flops}: the klt slt big LMMP.

\noi Theorem \ref{thrm-ridge types}: the slt LMMP;
etc as Theorem \ref{thrm-unique wlc for general geo}.

\noi Corollary~\ref{cor-central model exists}: the klt slt big LMMP.

\noi Corollary \ref{cor-central models bounded}: the klt slt big LMMP.

\ms

\noi Theorem~\ref{thrm-mori morphism is cte}: the trm slt big LMMP.
Comment: trm slt implies klt in dimensions $\ge 2$.
We prefer slt because it includes the $\Q$-factorial and
projective properties.

\noi Corollary~\ref{cor-conic rigid threshold}: the trm slt big LMMP.

\noi Theorem \ref{thrm-nonrigid Mori}: the trm slt big LMMP.

\noi Corollary \ref{cor-rigid Mori}: the trm slt big LMMP.

\noi Corollary \ref{cor-rigid Mori Fano}: the trm slt big LMMP.

\bs
\bs

\end{document}